\documentclass[11pt,a4paper]{article}
\usepackage[margin=1in]{geometry}
\usepackage[authoryear,round]{natbib}
\usepackage[T1]{fontenc}
\usepackage[utf8]{inputenc}
\usepackage{amsmath,amssymb,amsthm,mathtools}
\usepackage{thmtools,thm-restate}
\usepackage{booktabs,tabularx,longtable,array,multirow}
\usepackage{algorithm}
\usepackage[noend]{algpseudocode}
\usepackage{graphicx,xcolor}
\usepackage{microtype}
\usepackage{caption}
\usepackage{subcaption}
\usepackage{enumitem}
\usepackage{url}
\usepackage{etoc}
\usepackage{xurl}  % long URLs in the bibliography break inside words
\usepackage[colorlinks=true,linkcolor=blue!50!black,citecolor=blue!50!black,
  urlcolor=blue!50!black,
  pdftitle={The Complexity of Convex Optimization with Mismatched Geometry}]{hyperref}
\usepackage[capitalise,nameinlink]{cleveref}

\declaretheorem[name=Proposition,numberwithin=section]{proposition}
\declaretheorem[name=Lemma,sibling=proposition]{lemma}

\declaretheorem[name=Definition,sibling=proposition,style=definition]{definition}

\crefname{algorithm}{Algorithm}{Algorithms}
\crefname{equation}{}{}
\Crefname{equation}{Equation}{Equations}

\newcommand{\R}{\mathbb{R}}
\newcommand{\N}{\mathbb{N}}
\newcommand{\E}{\mathbb{E}}
\newcommand{\Prob}{\mathbb{P}}

\newcommand{\nrm}[1]{\lVert#1\rVert}
\newcommand{\ip}[2]{\left\langle#1,#2\right\rangle}
\newcommand{\opt}{x^\star}
\newcommand{\fstar}{f^\star}

\newcommand{\cF}{\mathcal{F}}

\newcommand{\cE}{\mathcal{E}}
\newcommand{\cA}{\mathcal{A}}
\newcommand{\cG}{\mathcal{G}}
\newcommand{\cB}{\mathcal{B}}
\newcommand{\tO}{\widetilde{O}}
\DeclareMathOperator*{\argmin}{arg\,min}
\DeclareMathOperator*{\argmax}{arg\,max}
\DeclareMathOperator{\dist}{dist}
\DeclareMathOperator{\conv}{conv}

\DeclareMathOperator{\tr}{tr}
\DeclareMathOperator{\sign}{sign}

\DeclareMathOperator{\spn}{span}
\DeclareMathOperator{\proj}{proj}

\DeclareMathOperator{\diam}{diam}
\DeclareMathOperator{\vol}{vol}
\DeclareMathOperator{\Lip}{Lip}
\newcommand{\ceilp}[1]{\bigl\lceil#1\bigr\rceil_{+}}
\newcommand{\1}{\mathbf{1}}
\providecommand{\noopsort}[1]{}% sort key for the concurrent preprints in references.bib
\newcommand{\alg}[1]{\textsf{#1}}
\newcommand{\SPC}{\alg{SP-Cut}}
\newcommand{\SPL}{\alg{SP-Level}}
\newcommand{\SPA}{\alg{SP-Accel}}
\newcommand{\SPT}{\alg{SP-Taylor}}
\newcommand{\SPV}{\alg{SP-VI}}
\newcommand{\SPR}{\alg{SP-Restart}}
\newcommand{\SPP}{\alg{SP-Prox}}
\newcommand{\CGC}{\alg{CG-Cut}}
\newcommand{\CLA}{\alg{CLA}}

\newcommand{\AGDE}{\alg{AGD-E}}
\newcommand{\AGDEnt}{\alg{AGD-Ent}}
\newcommand{\PG}{\alg{PG}}
\newcommand{\PSG}{\alg{PSG}}
\newcommand{\MDE}{\alg{MD-Ent}}
\newcommand{\EG}{\alg{EG}}

\newcommand{\sub}[1]{\textup{\textsc{#1}}}
\newcommand{\st}[1]{\operatorname{st}(#1)} % Steiner point
\newcommand{\hs}[1]{h_{#1}}                % support function
\newcommand{\gw}[1]{\omega(#1)}            % Gaussian width (root mean square)
\newcommand{\bN}{\mathsf{b}_N}
\newcommand{\cn}{\mathsf{c}_n}
\newcommand{\PN}{P_N}
\newcommand{\PT}[1]{P_{#1}}                % path budget with a general horizon
\newcommand{\flo}{\underline{f}}
\newcommand{\fhi}{\overline{f}}
\newcommand{\Dnu}{D_\nu}

\setlist{leftmargin=*}
\newcolumntype{Y}{>{\raggedright\arraybackslash}X}
\newcolumntype{C}{>{\centering\arraybackslash}X}

\algrenewcommand\algorithmiccomment[1]{\hfill$\triangleright$\ #1}
\makeatletter
\providecommand{\theHALG@line}{}
\renewcommand{\theHALG@line}{\thealgorithm.\arabic{ALG@line}}
\makeatother

\title{The Complexity of Convex Optimization with Mismatched Geometry}
\author{\small
\textbf{Timofei Loginov}\textsuperscript{1}, \textbf{Alexander Gasnikov}\textsuperscript{2}, \textbf{Yuriy Dorn}\textsuperscript{3},\\
\small\textbf{Aleksandr Shestakov}\textsuperscript{4}, \textbf{Nazarii Tupitsa}\textsuperscript{4},\\
\small\textbf{Osman Osmanov}\textsuperscript{2}, \textbf{Darina Dvinskikh}\textsuperscript{5}\\[3pt]
\normalfont\small\textsuperscript{1} Moscow Independent Research Institute of Artificial Intelligence (miriai.org)\\
\normalfont\small\textsuperscript{2} Innopolis University (innopolis.university)\\
\normalfont\small\textsuperscript{3} AI Institute MSU (msu.ru)\\
\normalfont\small\textsuperscript{4} Mohamed bin Zayed University of Artificial Intelligence (mbzuai.ac.ae)\\
\normalfont\small\textsuperscript{5} HSE University (hse.ru)}
\date{}

\begin{document}
\maketitle
\etocdepthtag.toc{mtmain}
\begin{abstract}
Optimal first-order methods on non-Euclidean domains such as the $\ell_1$ ball $B_1^n(R)=\{x\in\mathbb R^n:\|x\|_1\le R\}$ pair the prox-function with the norm in which smoothness is measured. When the gradient is $L$-Lipschitz in the Euclidean norm only, the accelerated method with a Euclidean prox-setup reduces the functional gap $f(x_N)-\min_{B_1^n(R)}f$ to $O(LR^2/N^2)$ after $N$ first-order queries (an entropic $\ell_1$ prox-setup replaces $L$ by the $\ell_1\to\ell_\infty$ constant $L_1\le L$, at the cost of a factor $\log n$ and with the same exponent). The lower bound of Guzm\'an and Nemirovski is of order $LR^2/N^3$, and whether the upper bound can be brought down to that order is a question of A. S. Nemirovski. We show that for this mismatched problem the minimax value of the gap is of order $LR^2/N^3$ up to logarithmic factors, for deterministic and for randomized methods alike, and already in dimension proportional to the number of queries. The upper bound is attained by a Steiner-point level method: every supporting hyperplane seen so far is kept as a level cut, and the next query is made near the Steiner point of the resulting localization polytope. The analysis rests on a single geometric fact: along nested subsets of the ball the Steiner points travel a distance that is polylogarithmic in the dimension, in contrast to $\sqrt n$ for the Euclidean ball. All queries stay feasible, and a randomized selector keeps the internal work polynomial. The same geometry yields optimal rates for nonsmooth objectives, H\"older gradients, higher-order oracles and Lipschitz monotone operators, the last with a matching deterministic lower bound. For convex quadratics, a curvature-learning method attains the optimal rate on every $\ell_p$ ball with $1\le p<2$ and without logarithmic loss. Experiments confirm the predicted $N^{-3}$ behaviour on objectives that are hard for Euclidean methods.
\end{abstract}

\section{Introduction}
\label{sec:intro}

Consider minimizing a convex function $f$ over the $\ell_1$ ball
$B_1^n(R)=\{x\in\R^n:\nrm x_1\le R\}$, or over the probability simplex, when all
we know about the smoothness of $f$ is a bound in the Euclidean norm,
\[
 \nrm{\nabla f(x)-\nabla f(y)}_2\le L\nrm{x-y}_2 .
\]
This situation is common. The $\ell_1$ ball and the simplex are the standard
feasible sets for sparse and probabilistic models, while for a least-squares
loss the natural smoothness constant is the largest eigenvalue of the Gram
matrix, a Euclidean quantity.

First-order methods for constrained problems are built around a
\emph{prox-setup}: a prox-function that is strongly convex with respect to the
same norm in which the gradient is Lipschitz
\citep{nemirovski1983,bental2001lectures,beck2003mirror,gasnikov2026nesterov};
methods for relatively smooth functions follow the same principle
\citep{hanzely2021}. With the Euclidean prox-function, accelerated projected
gradient reduces the optimality gap to $O(LR^2/N^2)$ after $N$ gradient
queries. The entropy prox-function is adapted to the geometry of the
$\ell_1$ ball, but it needs the constant $L_1\le L$ of $\nabla f$ as a map from
$(\R^n,\nrm\cdot_1)$ to $(\R^n,\nrm\cdot_\infty)$ and gives
$O(L_1R^2\log n/N^2)$ \citep{juditsky2011first}; when $L_1$ is close to $L$,
this is no better.

We compare rates by their \emph{exponent}: a bound of order $N^{-\alpha}$, up
to logarithmic factors, has exponent $\alpha$. Both methods above have exponent
two. The lower bound of \citet{guzman2015lower} for this class is of order
$LR^2/N^3$ when $n>N$, so it has exponent three. Which exponent is correct was
posed as an open problem by \citet{guzman2015open}; \citet{vorontsova2019}
attribute the question to A.~S.~Nemirovski. The gap does not come from a poor
choice of constants: the lower bound holds already for functions whose smallest
Euclidean and $\ell_1$ smoothness constants coincide (\cref{app:equal-constants}).

Two known approaches each capture half of what is needed. Prox methods
accelerate, but their progress is measured in the norm of the prox-function,
and in the Euclidean norm the $\ell_1$ ball is as large as the Euclidean ball
of the same radius. Cutting-plane methods, such as the method of centers of
gravity or the volumetric method, use the shape of the feasible set: they keep
every cut and query a center of the remaining localization set
\citep{levin1965,vaidya1996,lee2015cutting,jiang2020cutting}. Their guarantees
come from volume reduction and require order $n\log(1/\varepsilon)$ queries,
which says nothing in the regime $N\le n$ where the lower bound lives. Bundle
and level methods combine cuts with prox steps and are optimal when the
prox-function matches the smoothness norm
\citep{lemarechal1995bundle,lan2015bundle}. For convex quadratics,
\citet{ouyang2026cubic} recently obtained the rate $O(LR^2/N^3)$ on the $\ell_1$
ball by learning the Hessian from gradient differences; that argument does not
extend beyond quadratics.

\paragraph{Our approach.} We show that the exponent is three. The method is an
accelerated level method that keeps all its cuts, but replaces the prox step by
a move of its center to the \emph{Steiner point} of the current localization
polytope, the average of the maximizers of a random Gaussian linear function
over the polytope \citep{shephard1968,schneider2014}; as in every accelerated
method, the queries are convex combinations of this center and the current
iterate. \citet{bubeck2020chasing} used this
point for chasing nested convex bodies, because it moves little when the body
shrinks. On the $\ell_1$ ball this effect is strong: along any nested sequence of
$N$ subsets of $B_1^n(R)$, the Steiner points travel a total Euclidean distance
of order $R\sqrt{\log N\log n}$, whereas inside a Euclidean ball the bound is of
order $R\sqrt{n\log N}$. An accelerated level method needs exactly such a bound.
Every query either permits an accelerated step, whose error term is the squared
displacement of the center, or moves the center far; the short path limits both
kinds of queries, and balancing them gives the error $LR^2/N^3$ up to
logarithmic factors.

\paragraph{Contributions.}
\begin{itemize}[leftmargin=1.2em,itemsep=2pt,topsep=2pt]
\item \emph{The smooth case.} For convex functions with Euclidean $L$-Lipschitz
gradient on $B_1^n(R)$, the minimax error after $N$ first-order queries is
$LR^2/N^3$ up to the factor $\log(2N)\log(2n)$, for deterministic and for
randomized methods, when $n$ is at least proportional to $N$ (\cref{sec:setting}). The
upper bound is attained by an explicit method whose queries all lie in the
feasible set; its randomized version solves polynomially many linear programs.
\item \emph{Other problem classes.} Keeping the geometric argument and changing
the one-step inequality, we obtain methods with the optimal exponent for
nonsmooth objectives ($1$ instead of $1/2$), H\"older continuous gradients and
higher-order oracles, and with the exponent $2$ instead of $1$, optimal for
deterministic methods, for monotone variational inequalities and
convex--concave saddle-point problems. Restarts give the strongly convex and
strongly monotone versions.
\item \emph{Lower bounds.} We extend the deterministic first-order lower bounds
of \citet{guzman2015lower} to randomized methods and higher-order oracles, and
prove lower bounds for quadratics, smooth strongly convex functions and
Lipschitz variational inequalities. All of them match the rates above up to
logarithmic factors, and they also hold for methods that take prox steps with
any known prox-function, for instance the entropy.
\item \emph{Quadratics.} A curvature-learning method removes the logarithmic
factors for convex quadratics and extends the result of \citet{ouyang2026cubic}
from the $\ell_1$ ball to every $\ell_p$ ball with $1\le p<2$, with the optimal
exponent $1+2/p$.
\end{itemize}
\Cref{tab:intro} compares these rates with those of standard methods, which
measure all distances in the Euclidean norm, and with the lower bounds known
before. Lower bounds proved for the Euclidean ball do not apply on
$B_1^n(R)$: our upper bounds are below them. Experiments on instances that are
hard for Euclidean methods confirm the short Steiner paths and show fewer oracle
calls for a given accuracy, at a much higher cost per call
(\cref{sec:experiments}).

\begin{table}[!htb]
\centering
\caption{Known bounds and ours on $B_1^n(R)$ with Euclidean constants; new
results are in bold.}
\label{tab:intro}
\footnotesize
\setlength{\tabcolsep}{2.2pt}
\renewcommand{\arraystretch}{1.3}
\renewcommand{\crefpairconjunction}{, }\renewcommand{\crefmiddleconjunction}{, }%
\renewcommand{\creflastconjunction}{, }
\newcommand{\nw}[1]{\boldsymbol{#1}}
\resizebox{\linewidth}{!}{%
\begin{tabular}{@{}llllll@{}}
\toprule
 & \multicolumn{2}{c}{Upper bound} & \multicolumn{2}{c}{Lower bound} & \\
\cmidrule(lr){2-3}\cmidrule(lr){4-5}
Class & known & ours & known & ours & Results\\
\midrule
\multicolumn{6}{@{}l}{\emph{Convex minimization: error after $N$ queries}}\\
$G$-Lipschitz $f$ & $O(\frac{GR}{\sqrt N})$ & $\nw{\tO(\frac{GR}{N})}$
 & $\Omega(\frac{GR}{N})^\dagger$ & $\nw{\Omega(\frac{GR}{N})}$
 & \labelcref{thm:spcut,thm:first-order-lower,thm:rand-nonsmooth}\\
$\nu$-H\"older $\nabla f$ & $O(\frac{L_{1,\nu}R^{1+\nu}}{N^{(1+3\nu)/2}})$
 & $\nw{\tO(\frac{L_{1,\nu}R^{1+\nu}}{N^{1+2\nu}})}$
 & $\Omega(\frac{L_{1,\nu}R^{1+\nu}}{N^{1+2\nu}})^\dagger$
 & $\nw{\Omega(\frac{L_{1,\nu}R^{1+\nu}}{N^{1+2\nu}})}$
 & \labelcref{thm:holder,thm:first-order-lower,thm:rand-tensor}\\
$L$-Lipschitz $\nabla f$ & $O(\frac{LR^2}{N^2})$ & $\nw{\tO(\frac{LR^2}{N^3})}$
 & $\Omega(\frac{LR^2}{N^3})^\dagger$ & $\nw{\Omega(\frac{LR^2}{N^3})}$
 & Thm.~\ref{thm:overview}\\
H\"older $D^kf$ & $O(\frac{L_{k,\nu}R^s}{N^{(3s-2)/2}})$
 & $\nw{\tO(\frac{L_{k,\nu}R^s}{N^{2s-1}})}$ & ---
 & $\nw{\Omega(\frac{L_{k,\nu}R^s}{N^{2s-1}})}$
 & \labelcref{thm:tensor,thm:coordinate-tensor-lower,thm:rand-tensor}\\
quadratic, $p=1$ & $O(\frac{LR^2}{N^3})$ & $O(\frac{LR^2}{N^3})$ & ---
 & $\nw{\Omega(\frac{LR^2}{N^3})^\dagger}$
 & \labelcref{thm:cla,thm:power-chain}\\
quadratic, $p>1$ & $O(\frac{LR^2}{N^2})$ & $\nw{O(\frac{LR^2}{N^{1+2/p}})}$ & ---
 & $\nw{\Omega(\frac{LR^2}{N^{1+2/p}})^\dagger}$
 & \labelcref{thm:cla,thm:power-chain}\\
\multicolumn{6}{@{}l}{\emph{Strongly convex minimization: queries for error $\varepsilon$}}\\
$G$-Lipschitz $f$ & $O(\frac{G^2}{\mu_1\varepsilon})$
 & $\nw{\tO(\frac{G}{\sqrt{\mu_1\varepsilon}})}$ & --- & --- & \labelcref{thm:restart}\\
$\nu$-H\"older $\nabla f$
 & $O(\varrho^{\frac1{1+3\nu}})$
 & $\nw{\tO(\varrho^{\frac1{2+4\nu}})}$
 & --- & --- & \labelcref{thm:restart}\\
$L$-Lipschitz $\nabla f$ & $O(\kappa_1^{1/2}\log\frac1\varepsilon)$
 & $\nw{\tO(\kappa_1^{1/3}\log\frac1\varepsilon)}$ & ---
 & $\nw{\widetilde\Omega(\kappa_1^{1/3}\log\frac1\varepsilon)^{\dagger\ddagger}}$
 & \labelcref{thm:restart,thm:strong-map}\\
H\"older $D^kf$
 & $O(\bar\kappa^{\frac2{3s-2}}+\log\log\frac1\varepsilon)$
 & $\nw{\tO(\bar\kappa^{\frac1{2s-1}}+\log\frac1\varepsilon)}$
 & --- & --- & \labelcref{cor:tensor-objective-restart}\\
quadratic & $O(\kappa_p^{1/2}\log\frac1\varepsilon)$
 & $\nw{O(\kappa_p^{p/(p+2)}\log\frac1\varepsilon)}$ & ---
 & $\nw{\Omega(\kappa_p^{p/(p+2)})^\dagger}$
 & \labelcref{cor:cla-restart}, \labelcref{thm:sc-constant-reduction}\\
\multicolumn{6}{@{}l}{\emph{Monotone variational inequalities: weak gap after
$N$ calls}}\\
$L$-Lipschitz $F$ & $O(\frac{LR^2}{N})$ & $\nw{\tO(\frac{LR^2}{N^2})}$ & ---
 & $\nw{\Omega(\frac{LR^2}{N^2})^\dagger}$ & \labelcref{thm:vi,thm:vi-lower}\\
$\nu$-H\"older $F$ & $O(\frac{L_{0,\nu}R^{1+\nu}}{N^{(1+\nu)/2}})$
 & $\nw{\tO(\frac{L_{0,\nu}R^{1+\nu}}{N^{1+\nu}})}$ & --- & --- & \labelcref{thm:holder-tensor-vi}\\
\multicolumn{6}{@{}l}{\emph{Strongly monotone variational inequalities: calls
for $\nrm{x-x^\star}_1\le\varepsilon$}}\\
$L$-Lipschitz $F$
 & $O(\kappa_1\log\frac R\varepsilon)$ & $\nw{\tO(\kappa_1^{1/2}\log\frac R\varepsilon)}$
 & --- & --- & \labelcref{thm:vi}\\
\bottomrule
\end{tabular}}
\par\smallskip
{\footnotesize\raggedright
$L_{k,\nu}$ is the $\nu$-H\"older constant of $D^kf$ (of $F$ for $k=0$) and
$s=k+\nu$, with $\nu<1$ and $s>2$ in the strongly convex H\"older rows; $\mu_p$
is the modulus of strong convexity or monotonicity in $\ell_p$,
$\kappa_p=L/\mu_p$, $\bar\kappa=L_{k,\nu}R^{s-2}/\mu_1$,
$\varrho=L_{1,\nu}^2\mu_1^{-1-\nu}\varepsilon^{\nu-1}$; the quadratic rows are
on $B_p^n(R)$, $1\le p<2$. Known upper bounds are those of Euclidean methods and,
for $p=1$, of \citet{ouyang2026cubic}; known lower bounds are those of
\citet{guzman2015lower,guzman2015open}. $\tO$ and $\widetilde\Omega$ hide
polylogarithmic factors; lower bounds need $n$ at least proportional to $N$;
$^\dagger$: deterministic methods only; $^\ddagger$: if
$\kappa_1\ge2n\ge2\kappa_1^{1/3}$; ---: none known.\par}
\end{table}

\section{Setting and main result}
\label{sec:setting}

For $1\le p\le\infty$, $\nrm\cdot_p$ is the $\ell_p$ norm on $\R^n$,
$B_p^n(R)=\{x:\nrm x_p\le R\}$, and $e_1,\dots,e_n$ are the coordinate vectors.
Unless stated otherwise, regularity constants
are Euclidean: $f$ is \emph{$L$-smooth} if
$\nrm{\nabla f(x)-\nabla f(y)}_2\le L\nrm{x-y}_2$ for all $x,y$. We write
$\fstar=\min_Kf$ for the feasible set $K$, and $\log$ is the natural
logarithm.

\paragraph{Oracle model.} A method for minimizing $f$ over a known set $K$
chooses query points $x_1,\dots,x_N\in\R^n$ one after another, each depending on
the previous answers; at every query the \emph{first-order oracle} returns
$f(x_t)$ and $\nabla f(x_t)$. After $N$ queries the method outputs a point
$\widehat x\in K$. Only oracle calls are counted, while computations with known
objects, such as linear programs over $K$, are free; this is the
information-based complexity of \citet{nemirovski1983}. A query is
\emph{feasible} if it lies in $K$. Let $\cF_L$ be the class of convex $L$-smooth
functions on $\R^n$. The minimax error of deterministic methods on the $\ell_1$
ball is
\[
 \cE_N^{\mathrm{det}}=\inf_{\text{methods}}\ \sup_{f\in\cF_L}
 \Bigl[f(\widehat x)-\min_{B_1^n(R)}f\Bigr],
\]
and $\cE_N^{\mathrm{ran}}$ is defined in the same way for randomized methods,
with the error averaged over the internal randomness of the method.

\begin{restatable}[Minimax rate for smooth functions]{theorem}{thmoverview}
\label{thm:overview}
For every $N\ge1$,
\[
 \cE_N^{\mathrm{det}}\ge\frac{LR^2}{16(N+1)^3}\quad(n\ge N+1),\qquad
 \cE_N^{\mathrm{ran}}\ge\frac{LR^2}{2^{50}N^3}\quad(n\ge32N+1),
\]
and, in every dimension $n$,
\[
 \cE_N^{\mathrm{ran}}\le\cE_N^{\mathrm{det}}
 \le\frac{2^{35}\,LR^2\log(2N)\log(2n)}{N^3}.
\]
The upper bound is attained by a method whose queries all lie in $B_1^n(R)$.
\end{restatable}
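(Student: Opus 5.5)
The statement has three parts: a deterministic lower bound, a randomized lower bound, and a deterministic upper bound. The inequality $\cE_N^{\mathrm{ran}}\le\cE_N^{\mathrm{det}}$ is immediate, because deterministic methods are randomized methods with trivial randomness. I would treat the three parts separately.

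\emph{Deterministic lower bound.} I would follow \citet{guzman2015lower} and use a smoothed Nemirovski chain, as a resisting oracle. Choose orthonormal directions $e_{i_1},\dots,e_{i_{N+1}}$ adaptively among the coordinate vectors, and take
\[
 f(x)=\tfrac{L}{2}\,\phi\bigl(\max_{t\le N+1}\,(\sigma_t\langle e_{i_t},x\rangle - t\delta)\bigr),
\]
with $\phi$ a Huber-type smoothing of the max; the squared distance to the box $\{\langle e_{i_t},x\rangle\ge\dots\}$ is a convenient concrete form. Since $\nrm{x}_2\le\nrm{x}_1\le R$, each coordinate is bounded by $R$. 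The $t$-th answer reveals only $i_1,\dots,i_t$, so after $N$ queries some direction $i_{N+1}$ is unexplored. Against this, the point $-\frac{R}{N+1}\sum_t\sigma_te_{i_t}$ lies in $B_1^n(R)$, and there every coordinate is pushed down by $R/(N+1)$. The gap between the value at the output and at this point is then of order $L(R/(N+1))^2/(N+1)\cdot$const. Calibrating $\delta\asymp R/(N+1)^{3/2}$ in the smoothed max gives $LR^2/(16(N+1)^3)$. This needs $n\ge N+1$ coordinates.

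\emph{Randomized lower bound.} I would use Yao's principle: hide the chain inside a uniformly random choice of $N+1$ coordinates out of $n\ge 32N+1$, with random signs. Plain coordinate resistance fails against randomization, so I would rotate the chain by a random orthogonal map confined to the coordinate structure. Alternatively, I would use the Woodworth--Srebro style argument with a random sign-permuted set of coordinates and a margin, so that a query correlates with an undiscovered chain direction only with small probability. A union bound over $N$ queries then shows that with constant probability the last link is undiscovered. Keeping the $\ell_1$ feasibility of the comparator is what forces coordinate-aligned rather than fully rotated hiding. The large constant $2^{50}$ absorbs the margins.

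\emph{Upper bound.} This is the main part. I would analyze the Steiner-point level method \SPL. It keeps all cuts $\{x:f(x_t)+\langle\nabla f(x_t),x-x_t\rangle\le \ell_k\}$ intersected with $B_1^n(R)$, forming nested polytopes $Q_k$ with centers $c_k=\st{Q_k}$, and it makes accelerated queries $y_k=(1-\tau_k)x_k+\tau_kc_k$. Two ingredients are needed. The first is the geometric path lemma: for nested convex subsets of $B_1^n(R)$, $\sum_k\nrm{c_{k+1}-c_k}_2\le C R\sqrt{\log N\log n}$. I would prove it via the formula $\st{K}=\E_g[\argmax_K\langle g,\cdot\rangle]$ and Bubeck et al.'s bound, which gives movement at most the decrease of the mean width times $\sqrt n$ scaled. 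On $B_1^n$ I would replace this by the Gaussian width $\E\nrm{g}_\infty\asymp R\sqrt{\log n}$, and a dyadic pigeonhole over $\log N$ scales would give the claimed total. The second ingredient is a one-step inequality: by $L$-smoothness, $f(x_{k+1})\le \ell_k+\frac{L}{2}\tau_k^2\nrm{c_{k+1}-c_k}_2^2$ whenever the center remains feasible for the level. Otherwise the center jumps by at least a threshold $r_k$. I would then classify the steps. ``Good'' steps reduce the gap by an accelerated factor with error $L\tau_k^2d_k^2$. ``Bad'' steps have $d_k\ge r_k$, and the path bound allows at most $CR\sqrt{\log N\log n}/r$ of them. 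Choosing $\tau_k\sim 1/k$ and Cauchy--Schwarz, $\sum d_k^2\le(\sum d_k)^2$, gives an error of order $L\,(R^2\log N\log n)/N^3$ after balancing the number of bad steps against the accuracy. Feasibility holds because $c_k\in B_1^n(R)$ and $y_k$ is a convex combination.

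I expect the main obstacle to be this balancing: showing that bad steps cost at most a constant fraction of the budget while good steps achieve the $N^{-3}$ rate. I would first try a potential $\Phi_k=k^3(f(x_k)-\ell_k)$ and let the level be updated by bisection.
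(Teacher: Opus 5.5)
\textbf{Upper bound.} Your outline (Steiner centers, nested level cuts, a jump threshold, the path lemma) is the paper's. However, the summation step as you state it only gives exponent $2$.

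The correct one-step inequality (\cref{lem:level-step}), in your notation, is $f(x_{k+1})-\ell\le(1-\tau_k)[f(x_k)-\ell]+\frac{L\tau_k^2}{2}d_k^2$. Yours drops the contraction term. This inequality holds at \emph{every} trial, because the new center is chosen inside the cut polytope. So the dichotomy is not ``feasible versus jump''. It must be imposed by the method: a trial with $d_k$ above the threshold is \emph{rejected}, meaning the cut and the new center are kept but the incumbent and the weight are left unchanged; otherwise its large $d_k^2$ enters the error.

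Unrolling with $\tau_k\asymp1/k$ gives $f(x_N)-\ell\lesssim\frac{L}{N^2}\sum_{\mathrm{acc}}d_k^2$. Your bound $\sum d_k^2\le(\sum d_k)^2\lesssim\PN^2$ then yields only $L\PN^2/N^2$. The missing idea is to use the threshold itself. With $\delta_N=4\PN/N$, accepted trials give $\frac12\sum_{\mathrm{acc}}d_k^2\le\frac{\delta_N}{2}\sum_kd_k\le3\PN^2/N$. The same path bound allows fewer than $3N/8$ rejections, so the weight still reaches $N^2/(16L)$. This is exactly where the extra power of $N$ in \cref{thm:splevel} comes from.

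Two further points.
\begin{enumerate}
\item Plain bisection on the level with a full horizon in every phase costs a factor $\log(LR^2/\varepsilon)$ in queries. That becomes an extra $\log^3N$ in the error, which the statement does not allow. The paper takes, for each bracket width, the smallest power-of-two horizon meeting the guarantee; the horizons then decay geometrically and the total is $1+24N(\varepsilon)$ (\cref{thm:main}). An empty cut polytope certifies $\min_Kf>\ell$ and raises the lower end of the bracket.
\item The mean-width route to the path lemma carries a factor polynomial in $n$ on $B_1^n(R)$. The bound $\sqrt{\log N\log n}$ instead comes from
$\sum_t\nrm{\st{C_{t-1}}-\st{C_t}}_2=\E\sum_t\ip{v_t}Z(\hs{C_{t-1}}-\hs{C_t})(Z)\le\E[A(Z)(\hs{C_0}-\hs{C_T})(Z)]$ with $A(Z)=\max\{0,\ip{v_1}Z,\dots,\ip{v_T}Z\}$. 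This step uses that the differences are nonnegative and telescope. Then apply Cauchy--Schwarz, $\E A(Z)^2\le2(1+\log T)$ and $\hs{C_0}-\hs{C_T}\le2R\nrm Z_\infty$.
\end{enumerate}

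\textbf{Lower bounds.} The deterministic construction needs repair on several counts.
\begin{enumerate}
\item Smoothness: applying a scalar $\phi$ to the maximum leaves kinks wherever two forms tie. The squared distance to a box is a sum of squared hinges and has no chain structure: a query at which all hinges are active reveals all coordinates at once. You must smooth the vector maximum itself, as the paper does with $f=L\lambda\,e_\lambda\varphi$.
\item Selection rule: at each query the adversary must take the unused coordinate with the largest $|x_{t,i}|$ and the aligned sign. This puts every later form at least $\tau$ below at $x_t$, so all completions agree near $x_t$.
\item Offset: your $\delta\asymp R/N^{3/2}$ is too large. The output is only guaranteed a value of about $-N\delta$, while the comparator has value about $-R/(N+1)$, so you need $N\delta<R/(N+1)$. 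The paper takes $\tau=R/(2(N+1)^2)$ and $\lambda=\tau/4$.
\item Scale: the factor $1/(N+1)$ you left unexplained comes from the scale $L\lambda$. With $d=R/(N+1)$, the error is at least $L\lambda(d-N\tau-\lambda/2)\ge L\lambda d/2=LR^2/(16(N+1)^3)$.
\end{enumerate}

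For randomized methods, the mechanism you invoke belongs to rotation-invariant hiding and fails for coordinate hiding. A dense query such as $x=\mathbf 1$ has inner product $\pm1$ with every candidate coordinate, so there is no rare correlation event to union-bound; what limits information is the max structure. Moreover, with the link order fixed in advance, a method that places widely separated values on the unknown coordinates reveals an exchangeable unrevealed link at each query. The last link is therefore hit with probability of order $1/(\text{number of unrevealed links})$ per query, and it survives all $N$ queries only with probability tending to zero.

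A working argument needs three things:
\begin{enumerate}
\item each query reveals $O(1)$ hidden parameters in expectation, including those the method may crowd into the smoothing window;
\item the conditional law of the unrevealed parameters stays explicit after answers that reveal nothing;
\item ignorance of many parameters converts into a function gap on the $\ell_1$ ball.
\end{enumerate}
The paper (\cref{thm:rand-smooth}) does this with $\max_i(x_i-a_i)$ and independent exponential shifts of mean $\delta=R/(n+1)$. The smoothing scale $\lambda=\delta/(2^{17}n)$ makes at most two new shifts revealed per query on average. The memoryless property keeps the conditional law explicit. Taking $32N$ shifted coordinates leaves about $n/8$ of them uncertain, and \eqref{eq:l1-distance} passes from $\ell_1$ error to function error.
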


Thus the answer to the question of \cref{sec:intro} is $N^{-3}$: when $n$ is at
least proportional to $N$, the minimax error is $LR^2/N^3$ up to the factor
$\log(2N)\log(2n)$, and randomization does not change the exponent. Some
condition on the dimension is necessary, because in a fixed dimension
cutting-plane methods converge linearly. The deterministic lower bound is the
construction of \citet{guzman2015lower} with an explicit constant; the
randomized lower bound and the upper bound are new. The upper bound holds, with
the same proof, on every polytope contained in a translate of $B_1^n(R)$, in
particular on the simplex. Its constant is large, so \cref{thm:overview} is a
complexity result rather than a practical iteration count.

\Cref{sec:method} presents the method and proves the upper bound up to routine
steps, \cref{sec:lower-main} explains the lower bounds, and
\cref{sec:extensions} shows how the same construction gives the other rows of
\cref{tab:intro}.

\section{The method: short Steiner paths and an accelerated level method}
\label{sec:method}

Throughout this section $K$ is a nonempty polytope, given explicitly by linear
inequalities, such that $K\subseteq c+B_1^n(R)$ for some known $c\in\R^n$ and
$R>0$. The main example is $K=B_1^n(R)$ with $c=0$.

\subsection{Steiner points move little}
\label{sec:steiner-main}

Let $Z$ be a standard Gaussian vector in $\R^n$. For a nonempty compact convex
set $C\subseteq\R^n$, the maximizer $x_C(Z)$ of the linear function
$x\mapsto\ip Zx$ over $C$ is unique with probability one, and the
\emph{Steiner point} of $C$ is its expectation,
\begin{equation}\label{eq:steiner-def-main}
 \st C=\E\,x_C(Z)\ \in C .
\end{equation}
This is the classical Steiner point of convex geometry written in Gaussian form
\citep{shephard1968,schneider2014}. It belongs to $C$ because it is an average
of points of $C$, and it can be estimated by averaging the solutions of a few
linear programs. The property we need is its stability: when $C$ shrinks, the
point $\st C$ moves little.

\begin{restatable}[Short Steiner paths]{lemma}{lemsteinerpath}
\label{lem:steiner-path}
Let $T\ge1$ and let $C_0\supseteq C_1\supseteq\dots\supseteq C_T$ be nonempty
compact convex subsets of $c+B_1^n(R)$. Then
\begin{equation}\label{eq:steiner-path}
 \sum_{t=1}^T\nrm{\st{C_t}-\st{C_{t-1}}}_2\ \le\
 4R\sqrt{(1+\log T)(1+\log(2n))} .
\end{equation}
\end{restatable}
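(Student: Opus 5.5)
The plan is to follow the Gaussian-integration-by-parts argument used by \citet{bubeck2020chasing} for chasing nested convex bodies, with the Euclidean width replaced by the $\ell_1$-ball width. Write $h_C(z)=\max_{x\in C}\ip zx$ for the support function. Then $\nabla h_C(z)=x_C(z)$ almost everywhere. Gaussian integration by parts ($\E[Z\phi(Z)]=\E[\nabla\phi(Z)]$ for Lipschitz $\phi$) turns \eqref{eq:steiner-def-main} into
\[
 \st C=\E\bigl[Z\,h_C(Z)\bigr].
\]
Set $g_t=h_{C_{t-1}}-h_{C_t}$. Since $C_t\subseteq C_{t-1}$, we have $g_t\ge0$. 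Hence for every unit vector $v$, writing $Y=\ip Zv\sim N(0,1)$,
\[
 \ip{\st{C_{t-1}}-\st{C_t}}{v}=\E[Y g_t(Z)]\le \E[|Y|\,g_t(Z)],
\]
and $\nrm{\st{C_t}-\st{C_{t-1}}}_2$ is the supremum of the left side over $v$. It therefore suffices to bound $\E[|Y|g_t]$ uniformly in $v$.

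We need two facts about $g_t$.

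\begin{enumerate}
\item \emph{Pointwise bound.} Both sets lie in $c+B_1^n(R)$, so $g_t(z)\le\max_{x,y\in c+B_1^n(R)}\ip z{x-y}\le 2R\nrm z_\infty$. The shift $c$ cancels in the difference.
\item \emph{Telescoping.} With $a_t=\E g_t\ge0$,
\[
 \sum_{t=1}^T a_t=\E h_{C_0}(Z)-\E h_{C_T}(Z)\le 2R\,\E\nrm Z_\infty\le 2R\sqrt{2\log(2n)}.
\]
\end{enumerate}

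For a threshold $\lambda>0$ we split on the event $\{|Y|\le\lambda\}$:
\[
 \E[|Y|g_t]\le\lambda a_t+2R\,\E\bigl[|Y|\1\{|Y|>\lambda\}\nrm Z_\infty\bigr]
 \le \lambda a_t+2R\bigl(\E[Y^2\1\{|Y|>\lambda\}]\bigr)^{1/2}\bigl(\E\nrm Z_\infty^2\bigr)^{1/2}.
\]
Here $\E\nrm Z_\infty^2\lesssim 1+\log(2n)$, and $\E[Y^2\1\{|Y|>\lambda\}]\lesssim(1+\lambda)e^{-\lambda^2/2}$. The second term is independent of $v$ and of $t$. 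Summing over $t$ gives
\[
 \sum_t\nrm{\st{C_t}-\st{C_{t-1}}}_2\le 2\lambda R\sqrt{2\log(2n)}+T\cdot O\bigl(R\sqrt{1+\log(2n)}\,(1+\lambda)^{1/2}e^{-\lambda^2/4}\bigr).
\]
Choosing $\lambda\approx 2\sqrt{1+\log T}$ (up to a constant) makes the $T$ term $O(R\sqrt{1+\log(2n)})$. The total is then $O(R\sqrt{(1+\log T)(1+\log(2n))})$.

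The conceptual steps are the integration-by-parts identity and the truncation. I expect the main work to be tracking constants so as to land exactly at $4R\sqrt{(1+\log T)(1+\log(2n))}$. For that, one may need a sharper bound than Cauchy--Schwarz for the tail term $\E[|Y|\1\{|Y|>\lambda\}\nrm Z_\infty]$. One option is to decompose $\nrm Z_\infty\le|Y|+\nrm{Z-Yv}_\infty$ and use independence of $Y$ from $Z-Yv$. A second option is to optimize $\lambda$ more carefully, for instance $\lambda^2=2\log T+O(\log\log)$. A third is to use the Orlicz (entropy) form $\E[|Y|g]\le a\sqrt{2\log(\E[\cdot]/a)}$ together with concavity across $t$.

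I would also verify the almost-everywhere differentiability of $h_C$ and the integration-by-parts identity for support functions. These hold because $h_C$ is Lipschitz, with constant $\max_{x\in C}\nrm x_2$.
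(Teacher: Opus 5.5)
Every ingredient you use is correct: the identity $\st C=\E[Z\,h_C(Z)]$, $g_t\ge0$, $g_t\le2R\nrm Z_\infty$, and the telescoping of the means. The argument does give $\sum_t\nrm{\st{C_t}-\st{C_{t-1}}}_2=O\bigl(R\sqrt{(1+\log T)(1+\log(2n))}\bigr)$. But the lemma asserts the explicit constant $4$, and the paper relies on it: the path budget $\PN=4R\sqrt{\bN\cn}$ and everything built on it are calibrated to this constant. The chain of inequalities you wrote cannot be tuned to give $4$, so this is more than bookkeeping.

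After summing, your bound is $2R\bigl[\lambda\,\E\nrm Z_\infty+T\,\tau(\lambda)^{1/2}(\E\nrm Z_\infty^2)^{1/2}\bigr]$, where $\tau(\lambda)=\E[Y^2\1\{|Y|>\lambda\}]\ge2\lambda\phi(\lambda)$ and $\phi$ is the standard normal density. As $n\to\infty$, its ratio to the right-hand side of \eqref{eq:steiner-path} tends to $\bigl(\lambda+T\tau(\lambda)^{1/2}\bigr)/\sqrt{2(1+\log T)}$.
\begin{itemize}
\item Keeping $T\tau(\lambda)^{1/2}$ below $\sqrt{2(1+\log T)}$ forces $\lambda\ge(2-o(1))\sqrt{\log T}$. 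The main term alone is then about $4\sqrt2\,R\sqrt{\log T\log(2n)}$, so the per-step truncation loses a factor $\sqrt2$.
\item The failure is not only asymptotic. At $T=2$ one has $\lambda+2\tau(\lambda)^{1/2}\ge2>\sqrt{2(1+\log2)}\approx1.84$ for every $\lambda\ge0$, so no $\lambda$ works once $n$ is large.
\item Your first remedy does not fix this. After the split $\nrm Z_\infty\le|Y|+\nrm{Z-Yv}_\infty$, the coefficient of $\sqrt{2\log(2n)}$ becomes $\lambda+2T\phi(\lambda)$. At $T=10$ its minimum over $\lambda$ is about $2.85$, above $\sqrt{2(1+\log10)}\approx2.57$.
\item The other two remedies are only named, not carried out.
\end{itemize}

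The missing idea is to stop bounding each step separately with its own truncated tail and to use one maximum over all steps. Fix the unit vectors $v_t$ along $\st{C_{t-1}}-\st{C_t}$ and put $A(Z)=\max\{0,\ip{v_1}Z,\dots,\ip{v_T}Z\}$. Since every $g_t\ge0$, the telescoping holds pointwise, not only in expectation:
\[
 \sum_{t=1}^T\nrm{\st{C_{t-1}}-\st{C_t}}_2=\E\sum_{t=1}^T\ip{v_t}Z\,g_t(Z)\le\E\bigl[A(Z)\bigl(h_{C_0}(Z)-h_{C_T}(Z)\bigr)\bigr].
\]
Now apply Cauchy--Schwarz once. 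Use $\E A(Z)^2\le2(1+\log T)$, from a union bound over $T$ Gaussians of variance at most one. Use also $0\le h_{C_0}-h_{C_T}\le2R\nrm Z_\infty$ and $\E\nrm Z_\infty^2\le2(1+\log(2n))$. This gives exactly $\sqrt{2(1+\log T)}\cdot2R\sqrt{2(1+\log(2n))}=4R\sqrt{(1+\log T)(1+\log(2n))}$. This is how the paper argues: the single maximum $A$ costs $\sqrt{2\log T}$ once, where your per-step truncation costs $2\sqrt{\log T}$.
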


\begin{proof}[Proof sketch]
Let $h_C(z)=\max_{x\in C}\ip zx$ be the support function of $C$. Gaussian
integration by parts gives $\st C=\E[Z\,h_C(Z)]$. Let $v_t$ be the unit vector
in the direction of $\st{C_{t-1}}-\st{C_t}$ and
$A(Z)=\max\{0,\ip{v_1}Z,\dots,\ip{v_T}Z\}$. The sets are nested, so every
difference $h_{C_{t-1}}-h_{C_t}$ is nonnegative, and the differences telescope:
\begin{equation}\label{eq:path-step1}
 \sum_{t=1}^T\nrm{\st{C_{t-1}}-\st{C_t}}_2
 =\E\sum_{t=1}^T\ip{v_t}Z\bigl(h_{C_{t-1}}(Z)-h_{C_t}(Z)\bigr)
 \le\E\Bigl[A(Z)\bigl(h_{C_0}(Z)-h_{C_T}(Z)\bigr)\Bigr].
\end{equation}
Since $C_T\subseteq C_0\subseteq c+B_1^n(R)$, we have
$0\le h_{C_0}(Z)-h_{C_T}(Z)\le2R\nrm Z_\infty$, and the Cauchy--Schwarz
inequality gives
\begin{equation}\label{eq:path-step2}
 \E\Bigl[A(Z)\bigl(h_{C_0}(Z)-h_{C_T}(Z)\bigr)\Bigr]
 \ \le\ 2R\,\bigl(\E A(Z)^2\bigr)^{1/2}\bigl(\E\nrm Z_\infty^2\bigr)^{1/2}.
\end{equation}
Both factors are maxima of Gaussian variables with variance at most one:
$\E A(Z)^2\le2(1+\log T)$ and $\E\nrm Z_\infty^2\le2(1+\log(2n))$. The full proof
is in \cref{app:steiner}.
\end{proof}

The dimension enters only through $\log n$, because an $\ell_1$ ball has small
Gaussian width: its support function is $R\nrm Z_\infty$, of order
$R\sqrt{\log n}$. For a Euclidean ball the same argument gives
$2R\sqrt{2(1+\log T)\,n}$, and the experiments of \cref{sec:experiments} show
that movement of order $\sqrt n$ indeed occurs there. Neither logarithm in
\eqref{eq:steiner-path} can be dropped in general: along the faces
$\conv\{Re_1,\dots,Re_j\}$, $j=n,n-1,\dots,1$, of the simplex, the Steiner
points travel a distance of order $R\log n$ (\cref{prop:path-tight}). In the
algorithms we use the slightly larger \emph{path budget}
\begin{equation}\label{eq:main-P}
 \PN=4R\sqrt{\bN\cn},\qquad
 \bN=1+\lceil\log_2N\rceil,\quad \cn=1+\lceil\log_2(2n)\rceil ,
\end{equation}
which bounds the left-hand side of \eqref{eq:steiner-path} for every chain of
length $T\le N$.

\subsection{An accelerated level method with Steiner centers}
\label{sec:level-main}

Fix a target level $\ell\in\R$. A query at $x$ defines the \emph{level cut}
$\{u:f(x)+\ip{\nabla f(x)}{u-x}\le\ell\}$; by convexity it contains every point
$u$ with $f(u)\le\ell$. Intersecting $K$ with the cuts produces nested
localization polytopes $C_0\supseteq C_1\supseteq\cdots$, and if one of them is
empty, then $\min_Kf>\ell$. Accelerated level methods \citep{lan2015bundle}
combine such cuts with the three sequences $x_t,y_t,z_t$ of an accelerated
method and choose $z_t$ by a prox step onto the current polytope.
\Cref{alg:splevel} chooses instead a point $z_t$ close to the Steiner point of
$C_t$. This point is not selected to decrease any distance, so each iteration,
or \emph{trial}, checks how far the center moved: a short move leads to the
accelerated update, while a long move is \emph{rejected}, although its cut is
kept. We call a point $z\in C$
with $\nrm{z-\st C}_2\le\eta$ an \emph{$\eta$-approximate Steiner point} of $C$.

\begin{algorithm}[htbp]
\caption{\SPL$(\ell,N)$: accelerated level method with Steiner centers}
\label{alg:splevel}
\begin{algorithmic}[1]
\Require polytope $K\subseteq c+B_1^n(R)$; first-order oracle of $f$;
constant $L$; level $\ell$; horizon $N$
\State $\eta_N\gets\PN/(4N)$, $\delta_N\gets4\PN/N$
\Comment{center accuracy and acceptance threshold}
\State $C_0\gets K$, $A_0\gets0$; $y_0\gets$ any point of $K$;
$z_0\gets$ an $\eta_N$-approximate Steiner point of $K$
\For{$t=1,\dots,N$}
 \State $a_t\gets\bigl(1+\sqrt{1+4LA_{t-1}}\bigr)/(2L)$, \ $B_t\gets A_{t-1}+a_t$
 \Comment{so that $La_t^2=B_t$}
 \State $x_t\gets(A_{t-1}y_{t-1}+a_tz_{t-1})/B_t$; query $f(x_t)$ and
 $g_t=\nabla f(x_t)$
 \State $C_t\gets C_{t-1}\cap\{u:f(x_t)+\ip{g_t}{u-x_t}\le\ell\}$
 \If{$C_t=\varnothing$} \Return ``$\min_Kf>\ell$'' \EndIf
 \State $z_t\gets$ an $\eta_N$-approximate Steiner point of $C_t$
 \If{$\nrm{z_t-z_{t-1}}_2\le\delta_N$}
 \Comment{accept}
  \State $y_t\gets(A_{t-1}y_{t-1}+a_tz_t)/B_t$, \ $A_t\gets B_t$
 \Else
 \Comment{reject}
  \State $y_t\gets y_{t-1}$, \ $A_t\gets A_{t-1}$
 \EndIf
\EndFor
\State \Return $y_N$
\end{algorithmic}
\end{algorithm}

\begin{restatable}[Level method]{proposition}{thmsplevel}
\label{thm:splevel}
Let $K$ be a nonempty polytope given by linear inequalities with
$K\subseteq c+B_1^n(R)$, and let $f$ be convex with $L$-Lipschitz Euclidean
gradient on $K$. For every level $\ell\in\R$ and horizon $N\ge1$,
\cref{alg:splevel} makes at most $N$ first-order queries, all in $K$, and either
certifies that $\min_Kf>\ell$ or returns $y_N\in K$ with
\begin{equation}\label{eq:level-error}
 f(y_N)\ \le\ \ell+\frac{48L\PN^2}{N^3}
 \ =\ \ell+\frac{768\,LR^2\,\bN\cn}{N^3}.
\end{equation}
\end{restatable}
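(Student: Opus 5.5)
The plan is to combine a one-step accelerated inequality on accepted trials with a counting bound on rejected trials, and to use \cref{lem:steiner-path} as the only source of the count.

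\emph{Feasibility and the certificate.} These are immediate. Each $C_t$ is an intersection of $K$ with halfspaces, so whenever it is nonempty $z_t\in C_t\subseteq K$. Every $y_t$ is a convex combination of $y_0$ and earlier $z_s$, and $x_t$ is a convex combination of $y_{t-1}$ and $z_{t-1}$, so all queries and the output lie in $K$. Every level cut contains $\{u\in K:f(u)\le\ell\}$, so an empty $C_t$ certifies $\min_Kf>\ell$. From now on I assume that no $C_t$ is empty.

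\emph{One-step inequality for accepted trials.} On an accepted trial,
\[
 y_t-x_t=\frac{a_t(z_t-z_{t-1})}{B_t}.
\]
By $L$-smoothness,
\[
 f(y_t)\le f(x_t)+\ip{g_t}{y_t-x_t}+\frac L2\nrm{y_t-x_t}_2^2 .
\]
Split the linear part along $y_t=(A_{t-1}y_{t-1}+a_tz_t)/B_t$. Convexity gives
\[
 f(x_t)+\ip{g_t}{y_{t-1}-x_t}\le f(y_{t-1}).
\]
Since $z_t\in C_t$ lies in the $t$-th level cut,
\[
 f(x_t)+\ip{g_t}{z_t-x_t}\le\ell .
\]
The quadratic term equals $\frac{La_t^2}{2B_t^2}\nrm{z_t-z_{t-1}}_2^2\le\frac{\delta_N^2}{2B_t}$, because $La_t^2=B_t$. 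Combining these,
\[
 B_t\bigl(f(y_t)-\ell\bigr)\le A_{t-1}\bigl(f(y_{t-1})-\ell\bigr)+\tfrac12\delta_N^2 .
\]
Rejected trials leave both $A$ and $y$ unchanged. Telescoping from $A_0=0$ over the $m$ accepted trials gives
\[
 A_N\bigl(f(y_N)-\ell\bigr)\le \tfrac12 m\delta_N^2 .
\]
The standard recursion, $\sqrt{A_t}\ge\sqrt{A_{t-1}}+1/(2\sqrt L)$ on each accepted step, yields $A_N\ge m^2/(4L)$. Hence
\[
 f(y_N)-\ell\le \frac{2L\delta_N^2}{m}=\frac{32L\PN^2}{N^2m}.
\]

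\emph{Counting rejections: the main step.} On a rejected trial, $\nrm{z_t-z_{t-1}}_2>\delta_N$. Since $z_t$ and $z_{t-1}$ are $\eta_N$-approximate Steiner points, the triangle inequality gives
\[
 \nrm{\st{C_t}-\st{C_{t-1}}}_2>\delta_N-2\eta_N=\frac{7\PN}{2N}.
\]
The sets $C_0\supseteq\cdots\supseteq C_T$, with $T\le N$, form a nested chain in $c+B_1^n(R)$. By \cref{lem:steiner-path} and the choice of $\PN$ in \eqref{eq:main-P}, the total path is at most $\PN$. Therefore the number of rejections is below $2N/7$, and $m\ge 5N/7$.

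Substituting $m\ge 5N/7$ gives
\[
 f(y_N)-\ell\le \frac{224\,L\PN^2}{5N^3}\le\frac{48L\PN^2}{N^3},
\]
which is \eqref{eq:level-error} after inserting the expression for $\PN$. The points I would check most carefully are two. First, that $\PN$ indeed dominates the right-hand side of \eqref{eq:steiner-path} for all $T\le N$, which holds because $\log_2$ dominates $\log$ with the ceilings in $\bN,\cn$. Second, that $m\ge1$, so the $A_N$ bound is meaningful; this follows since $5N/7>0$ and $m$ is an integer.
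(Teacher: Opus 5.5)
Your proof is correct and follows the paper's argument. It applies the energy inequality of \cref{lem:level-step} on accepted trials, using $La_t^2=B_t$ and $A_N\ge m^2/(4L)$, and counts rejected trials with \cref{lem:steiner-path}. The only difference is bookkeeping: you count rejections against the exact Steiner path with threshold $\delta_N-2\eta_N$ (so $r<2N/7$) and bound $\sum_{\mathrm{acc}}d_t^2\le m\delta_N^2$ term by term, whereas the paper uses the selected-center path $\sum_t d_t\le\tfrac32\PN$ both for the count ($r<3N/8$) and for $\sum_{\mathrm{acc}}d_t^2\le\delta_N\sum_t d_t$, which is why you arrive at the constant $224/5$ instead of $48$.
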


\begin{proof}[Proof sketch]
The points $x_t$, $y_t$ and $z_t$ are convex combinations of points of $K$, so
all queries are feasible. If $\min_Kf\le\ell$, a minimizer satisfies every level
cut, so an empty $C_t$ certifies $\min_Kf>\ell$. Assume that all $C_t$ are
nonempty. Two facts drive the proof.

\emph{The energy inequality.} For $A\ge0$, $a>0$, $B=A+a$, $x=(Ay+az)/B$ and
$y^+=(Ay+az^+)/B$, if $z^+$ lies in the level cut at $x$, then
\begin{equation}\label{eq:main-energy}
 B\,[f(y^+)-\ell]\ \le\ A\,[f(y)-\ell]+\frac{La^2}{2B}\nrm{z^+-z}_2^2 .
\end{equation}
It follows from the descent lemma between $x$ and $y^+$, after the linear term
is split into the convexity inequality at $x$, evaluated at $y$ with weight $A$,
and the level cut at $z^+$ with weight $a$. The rule $La_t^2=B_t$ makes the last
coefficient equal to $\frac12$.

\emph{The path bound.} Let $d_t=\nrm{z_t-z_{t-1}}_2$. The polytopes $C_t$ are
nested, so \cref{lem:steiner-path} and the accuracy $\eta_N$ of the centers
give, for accepted and rejected trials together,
\begin{equation}\label{eq:actual-path}
 \sum_{t=1}^Nd_t\ \le\ \PN+2N\eta_N\ =\ \tfrac32\PN .
\end{equation}
A rejected trial has $d_t>\delta_N=4\PN/N$, so fewer than $3N/8$ trials are
rejected and more than $N/2$ are accepted. Summing \eqref{eq:main-energy} over
the accepted trials, where $d_t\le\delta_N$, gives
\[
 A_N\,[f(y_N)-\ell]\ \le\ \frac12\sum_{t\ \mathrm{accepted}}d_t^2
 \ \le\ \frac{\delta_N}2\sum_{t=1}^Nd_t\ \le\ \frac{3\PN^2}{N},
\]
and the weights of more than $N/2$ accepted steps satisfy $A_N\ge N^2/(16L)$.
Dividing by $A_N$ gives \eqref{eq:level-error}; \cref{app:level} has the
details.
\end{proof}

The mechanism behind the rate is visible in the last display. As in every
accelerated method, the weight $A_N$ grows like $N^2/L$. For a prox step the
squared displacements would sum to order $R^2$; here the path bound makes them
sum to $O(\PN^2/N)$, which gains one power of $N$.

\subsection{Finding the level and computing the centers}
\label{sec:bracket-main}

The level method needs a target level, while $\fstar=\min_Kf$ is unknown. One
query at a point $x_0\in K$ brackets $\fstar$: with $g_0=\nabla f(x_0)$ and a
vertex $v$ minimizing $\ip{g_0}u$ over $u\in K$, convexity and smoothness give
\begin{equation}\label{eq:bracket-main}
 f(x_0)+\ip{g_0}{v-x_0}\ \le\ \fstar\ \le\ f(v)\ \le\
 f(x_0)+\ip{g_0}{v-x_0}+\tfrac L2\nrm{v-x_0}_2^2 .
\end{equation}
The complete method, \SPA{} (\cref{alg:spaccel} in \cref{app:level}), bisects
this bracket. At the midpoint level it runs \cref{alg:splevel} with the smallest
horizon $N=2^j$ whose guarantee \eqref{eq:level-error} is at most a quarter of
the current width. A certificate $\min_Kf>\ell$ halves the bracket; otherwise
the returned point shrinks it by the factor $3/4$. The widths decrease
geometrically and the horizons are minimal powers of two, so the horizons are
dominated by a geometric series: the total number of queries is at most
$1+24N(\varepsilon)$, where $N(\varepsilon)$ is the horizon that the rule
assigns to the width $\varepsilon$, which bounds every horizon used
(\cref{thm:main}). The search for the level therefore costs only a constant
factor, and choosing $\varepsilon$ from the budget gives the upper bound of
\cref{thm:overview}.

It remains to compute the centers. Computing the Steiner point of a known
polytope costs no oracle calls, and a finite, though exponential-time,
procedure approximates it to any accuracy (\cref{prop:finite-selector}). A
practical estimator uses sampling: for independent Gaussian vectors
$Z_1,\dots,Z_m$, the average of the maximizers of $\ip{Z_i}u$ over $u\in C$ lies in
$C$, is an unbiased estimate of $\st C$, and concentrates at the rate
$R/\sqrt m$. With $m$ polynomial in $N(\varepsilon)$ and in the logarithm of the
inverse failure probability, all guarantees hold with high probability, and the
method solves polynomially many linear programs (\cref{thm:random-selector}).

\section{Lower bounds}
\label{sec:lower-main}

\paragraph{Deterministic methods.} The hard functions of
\citet{guzman2015lower} are maxima of signed coordinates, built by a resisting
oracle. At the $t$-th query $x_t$, the oracle takes, among the coordinates it has
not used yet, one with the largest $|x_{t,i}|$, sets
$v_t=\sign(x_{t,i})\,e_i$, and answers according to
\[
 \varphi_t(x)=\max_{j\le t}\bigl\{\ip{v_j}x-(j-1)\tau\bigr\}
\]
with a small offset $\tau>0$. Near every past query, each form revealed later
lies at least $\tau$ below the current maximum, so every completion of $\varphi_t$
gives the same answers, and one fixed function reproduces the whole
interaction. After $N$ queries one coordinate is still unused. Giving it the
sign of the corresponding coordinate of the output $\widehat x$ defines
$v_{N+1}$ and the final function $\varphi=\varphi_{N+1}$ with $\varphi(\widehat x)\ge-N\tau$,
while the feasible point $-R\sum_{j\le N+1}v_j/(N+1)$ has value $-R/(N+1)$. Smoothing $\varphi$ by a Moreau
envelope makes it $L$-smooth and gives the bound $LR^2/[16(N+1)^3]$ of
\cref{thm:overview}. For order-$k$ oracles we smooth by convolution with a
compactly supported product kernel, whose derivative bounds do not depend on
the dimension; this gives the exponent $2(k+\nu)-1$. The bounds allow queries
anywhere in $\R^n$ and any computation between queries, so they also apply to
methods that take prox steps with a known prox-function, for example the
entropy on the simplex.

\paragraph{Randomized methods.} Against a randomized method, the hard function
has to be chosen before the interaction, as a random function
\citep{diakonikolas2020lower,agarwal2018higher}. We use a smoothed maximum
$\max_i(x_i-a_i)$ with independent exponential shifts $a_i$. Given the answers
so far, the shifts that have not been revealed are independent and, by the
memoryless property, exponential above known thresholds. Because the smoothing
parameter is small compared with the mean shift, one query reveals at most two
new coordinates on average, a constant fraction of the coordinates remains
hidden after $N$ queries when $n\ge32N+1$, and the output cannot locate the
minimizer. This gives the randomized bound of \cref{thm:overview} and its
analogue for higher-order oracles. \Cref{app:lower} contains the proofs.

\section{The same geometry for other problem classes}
\label{sec:extensions}

The proof of \cref{thm:splevel} uses smoothness only through the energy
inequality \eqref{eq:main-energy} and the geometry only through
\cref{lem:steiner-path}. Keeping the second and replacing the first gives
methods for the other rows of \cref{tab:intro}, except the quadratic ones, which
use a different mechanism; \crefrange{app:classes}{app:quadratics} contain the
statements and proofs.

\paragraph{Nonsmooth objectives.} Let $f$ be convex and $G$-Lipschitz, with a
subgradient $g_t$ returned at each query. The method \SPC{} queries an
approximate Steiner point $x_t$ of the current polytope and keeps only the
points $u$ with $\ip{g_t}{x_t-u}\ge\varepsilon$. Such deep cuts may remove every
minimizer; their role is to force the Steiner point to move. While the polytope
is nonempty,
the new Steiner point lies in it, hence at distance at least $\varepsilon/G$
from $x_t$, and \cref{lem:steiner-path} allows only $O(G\PN/\varepsilon)$ such
moves. Once the polytope is empty, linear-programming duality gives weights
$\lambda_t\ge0$, $\sum_t\lambda_t=1$, with
\begin{equation}\label{eq:main-mixture}
 \max_{u\in K}\sum_t\lambda_t\ip{g_t}{x_t-u}<\varepsilon ,
\end{equation}
and by convexity the average $\sum_t\lambda_tx_t$ is $\varepsilon$-optimal. The
certificate uses the stored subgradients only, and the error after $N$ queries is
$\tO(GR/N)$.

\paragraph{H\"older gradients and higher-order oracles.} If $\nabla f$ is
$\nu$-H\"older with constant $L_{1,\nu}$, the remainder $\frac L2\nrm{y^+-x}_2^2$ of
the descent lemma behind \eqref{eq:main-energy} becomes
$L_{1,\nu}\nrm{y^+-x}_2^{1+\nu}/(1+\nu)$, and the same counting gives the exponent
$1+2\nu$. With an oracle of order $k$, each trial computes an approximate
stationary point $v$ over $K$ of a regularized Taylor model of $f$
\citep{nesterov2006cubic}, queries $f$ at $v$, and uses the supporting
hyperplane of $f$ at $v$, not that of the model, as the level cut; the exponent
becomes $2(k+\nu)-1$. The model may be nonconvex, since the stationarity of $v$
is certified by one linear program. For a Hessian with Lipschitz constant $L_{2,1}$ this gives $\tO(L_{2,1}R^3/N^5)$,
compared with $N^{-7/2}$ for optimal Euclidean methods
\citep{arjevani2019,kovalev2022tensor}.

\paragraph{Strong convexity.} If $f$ is also $\mu_1$-strongly convex in $\ell_1$,
a point with error $\varepsilon$ lies within $\ell_1$ distance
$\sqrt{2\varepsilon/\mu_1}$ of the minimizer. Restarting the method on the
intersection of $K$ with shrinking $\ell_1$ balls, which are again polytopes
inside translates of $\ell_1$ balls, gives $\tO(\kappa_1^{1/3}\log(1/\varepsilon))$
queries for smooth $f$, $\kappa_1=L/\mu_1$, and the other counts of
\cref{tab:intro}. There is a subtlety: a function that is $L$-smooth in $\ell_2$
and $\mu_1$-strongly convex in $\ell_1$ always has $\kappa_1\ge n$. The complexity is
governed by the \emph{slack} $\Lambda=\kappa_1-n$, since one projected gradient
step localizes the minimizer in an $\ell_1$ ball whose radius depends on
$\Lambda$ rather than on $\kappa_1$. For every $\Lambda\ge8$ we determine the
deterministic complexity up to polylogarithmic factors: with
$\bar n=\min\{n,\Lambda^{1/3}\}$, it is $\min\{\bar n,(LR^2/\varepsilon)^{1/3}\}$
plus $\bar n\log_+(LR^2/(\bar n^3\varepsilon))$, where $\log_+=\max\{0,\log\}$
(\cref{app:strong}).

\paragraph{Variational inequalities and saddle-point problems.} For a monotone
operator $F$ on $K$, the accuracy of a point $\bar w$ is its weak gap
$\max_{u\in K}\ip{F(u)}{\bar w-u}$, and for a convex--concave function on a product
of polytopes it is the saddle gap. The certificate \eqref{eq:main-mixture} carries
over with $F(w_t)$ and $w_t$ in place of $g_t$ and $x_t$ and bounds both gaps.
For an $L$-Lipschitz $F$, the method \SPV{} makes an extragradient step
$w_t=\proj_K(z_t-F(z_t)/(2L))$, with the Euclidean projection $\proj_K$ onto
$K$, from the current approximate Steiner point $z_t$ and cuts deeply with
$F(w_t)$. A nonempty cut at depth $\varepsilon$ forces the
next center to move by at least $\frac23\sqrt{\varepsilon/L}$, so with
$\varepsilon$ of order $L\PN^2/N^2$ the method stops within $N$ trials. The weak
gap after $2N$ operator calls is therefore $\tO(LR^2/N^2)$, against $O(LR^2/N)$
for extragradient and Mirror-Prox \citep{korpelevich1976,nemirovski2004prox}. A
skew-symmetric affine operator whose columns are revealed two per query shows
that the exponent two is optimal for deterministic methods; the lower bound
applies also to bilinear games on products of $\ell_1$ balls and of simplices.
The same scheme handles H\"older operators and Taylor models of higher order,
and restarts give the strongly monotone version.

\paragraph{Quadratics.} For a convex quadratic, gradient differences give
Hessian--vector products, and \citet{ouyang2026cubic} accelerates by learning a
positive semidefinite lower model of the Hessian through symmetric rank-one (SR1) updates, combined
with a Huber regularizer. Replacing the Huber function by a power--Huber
function, quadratic near zero and equal to $|t|^p/p$ up to a constant elsewhere,
extends the method, which we call \CLA{} (curvature-learning acceleration), to
every $\ell_p$ ball with $1\le p<2$. It gives the error $O(LR^2/N^{1+2/p})$ with a
constant depending only on $p$ and without logarithmic factors, which matches a
lower bound (\cref{app:quadratics}). The mechanism needs the residual between the function
and its learned model to remain a positive semidefinite quadratic, and this
fails already in dimension one for nonquadratic functions.

\section{Experiments}
\label{sec:experiments}

The experiments check the two ingredients of the method, short Steiner paths and
the level method, and compare the methods with standard ones on instances that
are hard for Euclidean methods. All instances live on the unit $\ell_1$ ball
(games: on a product of two such balls), their Euclidean constants are at most
one, and errors are computed from known minimizers. Steiner points are
estimated by averaging the solutions of $m$ linear programs with random Gaussian
objectives, solved with HiGHS; the $m$ directions are shared by all centers of
a run, which works better than fresh directions at these sample sizes. \SPA{}
doubles its horizons adaptively, and \SPC{} and \SPV{} set the cut depth from
their certificate, because the theoretical parameters are far too conservative
at these budgets. The baselines are accelerated projected gradient
with the Euclidean and the entropy prox-function (\AGDE, \AGDEnt), projected
gradient (\PG), Frank--Wolfe variants, a Euclidean bundle-level method,
projected subgradient (\PSG), entropic mirror descent (\MDE) and extragradient
(\EG). \Cref{app:experiments} describes the instances, the implementations and
a larger controlled study.

\begin{figure}[htbp]
\centering
\includegraphics[width=\linewidth]{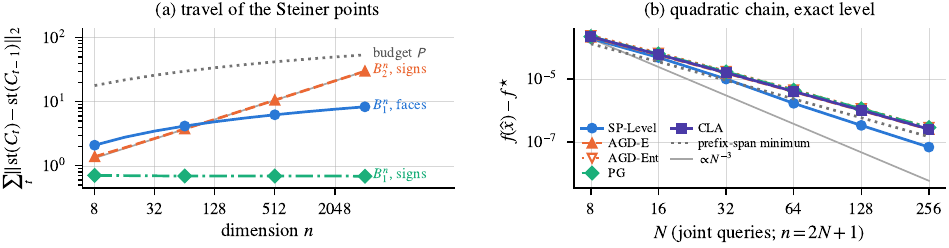}
\caption{(a) Total movement of Steiner points along nested chains in the unit
$\ell_1$ and $\ell_2$ balls. (b) Error on the rotated quadratic chain, $n=2N+1$.}
\label{fig:main-mechanism}
\end{figure}

\paragraph{Steiner paths.} \Cref{fig:main-mechanism}(a) shows the quantity
bounded in \cref{lem:steiner-path} for the chain
$C_t=K\cap\{x:x_1\le0,\dots,x_t\le0\}$, $t\le n$ (``signs''). In the Euclidean
ball the Steiner points travel about $0.47\sqrt n$. In the $\ell_1$ ball they
travel about $0.69$ for every $n$ between $8$ and $4096$, a factor $43$ less at
$n=4096$. The chain of simplex faces from \cref{sec:steiner-main} (``faces'')
travels $8.34$ at $n=4096$, in agreement with its $R\log n$ growth. The dotted
curve is the path budget $\PN$ at $N=n$.

\paragraph{The level method.} \Cref{fig:main-mechanism}(b) uses Nesterov's
worst-case quadratic chain, rotated so that the minimizer is $5$-sparse, in
dimension $n=2N+1$. On this family Euclidean and entropic methods remove only a
small part of the initial gap; the dotted line is the best error attainable in
the span of the first $N$ chain coordinates. \SPL, run at the exact level
$\ell=\fstar$ with $m=256$, has a fitted slope $-2.3$ over $N=8,\dots,256$, and at
$N=256$ its error is $0.27$ times that of \AGDE. These budgets are far below the regime of
\cref{thm:splevel}, whose constant keeps the guarantee above the initial gap up
to $N\approx3\cdot10^7$.

\paragraph{The complete method.} In a study with fixed dimension $n=65$,
$5$-sparse minimizers and $64$ queries, \SPA, which also searches for the level,
leaves $0.23$ of the initial gap on the quadratic chain and $0.11$ on a
nonquadratic one (medians over three instances). A feasible accelerated method
leaves $0.71$ and $0.74$, and the bundle-level method $0.39$ and $0.58$. The
advantage nearly disappears at $n=129$ ($0.85$ against $0.87$ on one instance),
and with $64$ directions on the budget-indexed chains of \cref{fig:main-mechanism}(b)
the complete method does not improve on its initial point
(\cref{app:experiments-full}). It is also paid for in time: a run of \SPA{}
takes 74--94 seconds, mostly in linear programs, against milliseconds for
accelerated gradient.

\paragraph{Curvature learning.} On a quadratic chain in dimension $n=2N+1$ with
the $1$-sparse minimizer $0.9e_1$, the error of \CLA{} drops by a factor $8.7$
between $N=400$ and $N=800$, a local slope of $-3.1$ that matches the predicted
$N^{-3}$, while the error of \AGDE{} drops by $4.0$; at $N=800$ \CLA{} is $3.6$
times more accurate. The gain depends on the sparsity: at $n=65$ and $N=128$ (so
$N>n$), \CLA{} improves on \AGDE{} by a factor $6$ for a $1$-sparse minimizer
but not for a $5$-sparse or a dense one.

\paragraph{Nonsmooth problems and games.} On $\ell_\infty$ regression with
$n=4N+1$, the error of \SPC{} decays like $N^{-1.8}$, against $N^{-0.7}$ for
\PSG{} and $N^{-0.3}$ for \MDE, and its certificate stays within a factor $1.25$
of the true error for $N\ge64$. On bilinear games with $n=2Q$, the saddle gap of
\SPV{} after $Q=256$ operator calls is $8\cdot10^{-7}$, against
$5.6\cdot10^{-3}$ for \EG. The bundle-level method, which, unlike \SPC, uses
function values, is more accurate than \SPC{} on the nonsmooth instances of the
controlled study.

Overall, the path bound is visible already in small dimension, the level
method and \CLA{} decay faster than the Euclidean rate on the chains, and the
deep-cut methods need far fewer calls than subgradient and extragradient
methods. Every Steiner-point method is slower in wall-clock time by several
orders of magnitude, because each center costs $m$ linear programs; a faster
estimator of Steiner points is the main practical open problem.

\section{Conclusion}
\label{sec:conclusion}

When the gradient is Lipschitz in the Euclidean norm and the feasible set is an
$\ell_1$ ball, the minimax error of first-order methods in dimension at least
proportional to $N$ is $LR^2/N^3$ up to logarithmic factors, not the rate $LR^2/N^2$ of methods whose prox-function
matches the smoothness norm. The reason is geometric: Steiner points of nested
subsets of the $\ell_1$ ball move little, and an accelerated level method with
Steiner centers turns this stability into acceleration. The same mechanism
doubles the exponent for nonsmooth objectives and for monotone variational
inequalities, and gives optimal exponents for H\"older and higher-order classes.
Four questions remain open: whether the logarithmic factors are necessary;
whether Steiner points can be approximated deterministically in polynomial time,
or fast enough to make the methods competitive in running time; whether the
exponent two for monotone operators is also optimal for randomized methods; and
which lower bounds hold in the five rows of \cref{tab:intro} that have none.
Concurrent work is discussed in \cref{app:concurrent}.

\label{main-text-end}
\clearpage
\subsection*{Reproducibility statement}
Each result states its assumptions, and complete proofs are in
\crefrange{app:steiner}{app:quadratics}. \Crefrange{alg:splevel}{alg:cla-certified}
specify our methods, and \cref{app:experiments} gives the instances, parameters,
seeds, software and hardware of all experiments.

\subsection*{AI use statement}
Generative AI tools were used for language editing and to improve the clarity of
the manuscript. We reviewed all resulting changes and take full responsibility
for the final content.

\bibliography{references}
\bibliographystyle{plainnat}
\clearpage
\appendix
\etocdepthtag.toc{mtappendix}
\etocsettagdepth{mtmain}{none}
\etocsettagdepth{mtappendix}{subsection}
\etocsettocstyle{\section*{Contents of the appendix}}{}
\tableofcontents
\clearpage
\section{Notation and conventions}
\label{app:notation}

\paragraph{Norms and sets.}
Throughout, $n\ge1$ is the dimension, $e_1,\dots,e_n$ are the coordinate
vectors, $\1$ is the all-ones vector, $\ip xy$ is the Euclidean inner product,
$1\le p\le2$, and $q$ is the conjugate exponent, $1/p+1/q=1$ ($q=\infty$ for
$p=1$). We write
$B_p^n(R)=\{x\in\R^n:\nrm{x}_p\le R\}$, $B_p(c,R)=c+B_p^n(R)$, and
$\Delta_n(R)=\{x\in\R^n:x\ge0,\ \sum_ix_i=R\}$ for the simplex of radius $R$,
with $\Delta_n=\Delta_n(1)$. The feasible set $K$ is nonempty, convex and
compact. For the Steiner-point methods it is a \emph{known polytope}: a polytope
given to the method by linear inequalities and contained in a translate
$c+B_1^n(R)$ of an $\ell_1$ ball; the main examples are $B_1^n(R)$ and
$\Delta_n(R)$. Only the objective is accessed through the oracle. If $x$ has at
most $d$ nonzero coordinates, then
\begin{equation}\label{eq:sparse-inclusion}
 \nrm{x}_2\le\nrm{x}_p\le d^{1/p-1/2}\nrm{x}_2 ,
\end{equation}
and we write $\beta_p=2/p-1\in[0,1]$, so that $\nrm{x}_p^2\le n^{\beta_p}\nrm{x}_2^2$
on $\R^n$. The support function of a nonempty compact convex set $C$ is
$\hs{C}(z)=\max_{x\in C}\ip zx$, and $\proj_C$ is the Euclidean projection onto
$C$.

\paragraph{Regularity constants.}
Unless stated otherwise, $L$ is a Euclidean smoothness constant,
$\nrm{\nabla f(x)-\nabla f(y)}_2\le L\nrm{x-y}_2$. We write $L_1$ for the
Lipschitz constant of $\nabla f$ from $(\R^n,\nrm\cdot_1)$ to
$(\R^n,\nrm\cdot_\infty)$. These are upper bounds known to the method. For an integer $k\ge1$ and $0\le\nu\le1$, $L_{k,\nu}$ is a
H\"older constant of the $k$th derivative in the Euclidean operator norm,
\begin{equation}\label{eq:holder-tensor}
 \nrm{D^kf(x)-D^kf(y)}\le L_{k,\nu}\nrm{x-y}_2^{\nu},\qquad
 \nrm{T}=\sup_{\nrm{h_1}_2,\dots,\nrm{h_k}_2\le1}|T[h_1,\dots,h_k]|
\end{equation}
(for $\nu=0$ the right-hand side is read as $L_{k,0}$ for $x\ne y$). Thus
$L_{1,1}=L$. A nonsmooth function is $G$-Lipschitz if
$|f(x)-f(y)|\le G\nrm{x-y}_2$. A function is $\mu_p$-strongly convex with
respect to $\nrm\cdot_p$ on $K$ if
$f(y)\ge f(x)+\ip{g}{y-x}+\frac{\mu_p}2\nrm{y-x}_p^2$ for all $x,y\in K$ and
$g\in\partial f(x)$, and $\kappa_p=L/\mu_p$. For a differentiable convex $h$,
$D_h(u,z)=h(u)-h(z)-\ip{\nabla h(z)}{u-z}$ is its Bregman divergence. We use
the integer logarithmic factors
\begin{equation}\label{eq:log-factors}
 \bN=1+\lceil\log_2N\rceil,\qquad
 \cn=1+\lceil\log_2(2n)\rceil,
\end{equation}
which are computed by comparing powers of two and satisfy $\bN\ge1+\log N$ and
$\cn\ge1+\log(2n)$. We also write $t_+=\max\{0,t\}$, $\log_+t=\max\{0,\log t\}$ and
$\ceilp{t}=\max\{0,\lceil t\rceil\}$.

\paragraph{Oracles.}
At a query point $x$, the \emph{first-order oracle} (also called the joint
oracle) returns $(f(x),\nabla f(x))$, the \emph{gradient oracle} returns
$\nabla f(x)$ only, and the \emph{oracle of order $k$} returns $f(x)$ and
$D^jf(x)$ for $j\le k$. A \emph{subgradient oracle} returns one element of
$\partial f(x)$, with or without $f(x)$ (each result says which), and the
\emph{operator oracle} of a variational inequality returns $F(x)$. Queries are
\emph{global} (anywhere in $\R^n$) unless they are called \emph{feasible} (in
$K$). All upper bounds use feasible queries, except the finite-difference
products for quadratics in \cref{app:quadratics}, and all lower bounds allow
global queries.

\paragraph{What is counted.}
A query is one call of the oracle at one point, and every query is counted,
including the queries of trials that a level method rejects, the second query of
each trial of \SPT{} and \SPV, finite-difference probes, and the queries that
compute an initial bracket; for \SPP{} every gradient of the inner loops is
counted. Computations with known functions and sets are not counted. These
include the minimization of an explicitly given model over $K$, Euclidean
projections, linear programs over known polytopes, centroids and approximate
Steiner points of known polytopes, matrix computations with stored vectors, and
comparisons of real numbers. This is the information-based complexity model of
\citet{nemirovski1983}. It does not measure arithmetic cost; \cref{app:level}
states which internal operations are finite or polynomial.

\paragraph{Minimax error.}
For a class $\cF$ the minimax error of deterministic methods is
$\cE_N^{\mathrm{det}}(\cF;K)=\inf_{\cA}\sup_{f\in\cF}[f(\widehat x_{\cA,f})-\min_Kf]$,
where $\cA$ ranges over deterministic methods with at most $N$ queries and
$\widehat x_{\cA,f}\in K$ is the output. In $\cE_N^{\mathrm{ran}}(\cF;K)$ the
methods may be randomized and the error is replaced by its expectation over the
internal randomness of the method; our randomized lower bounds exhibit, for
every method, a fixed function on which the expected error is large. We write
$\fstar=\min_Kf$.

\paragraph{Dimension and mismatch.}
The deterministic lower bounds of \cref{thm:first-order-lower,thm:coordinate-tensor-lower}
hold for $n\ge N+1$ (\cref{cor:equal-constants}: $n\ge N+2$; quadratics:
$n\ge3N+2$; \cref{thm:vi-lower}: $n\ge4N+6$), and the randomized ones for
$n\ge32N+1$ (smooth and higher order) or $n\ge2^{p+2}N$ (nonsmooth, $8N$ on the
$\ell_1$ ball); the upper bounds hold in every dimension. A convex quadratic on $\R^n$ is identified by $n+1$
gradient queries (\cref{app:quadratics}), so lower bounds for
quadratics require $N<n$. For every function the smallest valid $L_1$ is at most
the smallest valid $L$, which is at most $n$ times the former; both extremes
occur, for $f=\frac12\nrm x_2^2$ and $f=\frac12(\1^\top x)^2$. When $L\asymp nL_1$,
the entropic rate $L_1R^2\log n/N^2$ is better than $LR^2\log(2N)\log(2n)/N^3$ in
the regime $N\le n$; the new rate matters when $L$ and $L_1$ are comparable, and
by \cref{cor:equal-constants} the lower bound persists when the two smallest
constants coincide.

\begin{table}[htbp]
\centering
\caption{Main symbols.}
\label{tab:notation}
\small
\begin{tabularx}{\linewidth}{@{}p{.25\linewidth}Y@{}}
\toprule
Symbol & Meaning\\
\midrule
$n$, $N$ & dimension; number of queries, or horizon of one call of a level method\\
$B_p^n(R)$, $B_p(c,R)$, $\Delta_n(R)$ & $\ell_p$ ball of radius $R$ centered at $0$ or $c$; simplex of radius $R$\\
$p$, $q$, $\beta_p$ & $1\le p\le2$, $1/p+1/q=1$, $\beta_p=2/p-1$\\
$K$; $C$, $C_t$ & feasible set; localization polytopes\\
$\hs C$, $\st C$, $\gw C$ & support function, Steiner point, Gaussian width (\cref{app:steiner})\\
$L$, $L_1$ & Euclidean and $\ell_1\to\ell_\infty$ gradient Lipschitz constants\\
$L_{k,\nu}$, $G$ & H\"older constant of $D^kf$; Lipschitz constant of $f$\\
$s$ & $k+\nu$, the order of H\"older smoothness\\
$\mu_p$, $\kappa_p$, $\Lambda$ & strong convexity modulus in $\nrm\cdot_p$; $L/\mu_p$; slack $\kappa_1-n$ (\cref{app:strong})\\
$\bar\kappa$ & $L_{k,\nu}R^{s-2}/\mu_1$, condition number of order $s$ (\cref{app:restart})\\
$\bN$, $\cn$, $\PN$ & $1+\lceil\log_2N\rceil$, $1+\lceil\log_2(2n)\rceil$, path budget $4R\sqrt{\bN\cn}$\\
$\eta_N$, $\delta_N$ & center accuracy $\PN/(4N)$ and acceptance threshold $4\PN/N$ of \SPL\\
$\ell$; $\flo$, $\fhi$, $w$ & level; ends and width of the bracket for $\fstar$\\
$\varphi_t$, $\varphi$ & prefix maximum and final hard function of the resisting oracle (\cref{sec:lower-main})\\
$A$, $a$, $B$; $x,y,z$ & weights ($B=A+a$) and sequences of a level method\\
$F$, $T_z$, $r$, $M$ & operator; operator model at $z$, its error exponent and constant (\cref{app:vi})\\
$\psi_{\tau,p}$, $\Psi_I$ & power--Huber function and regularizer with promoted set $I$ (\cref{app:quadratics})\\
\bottomrule
\end{tabularx}
\end{table}

The algorithms are \SPL{} (\cref{alg:splevel}), \SPA{} (\cref{alg:spaccel}),
\SPC{} (\cref{alg:spcut}), \SPT{} (\cref{alg:sptaylor}), \SPP{}
(\cref{alg:spprox}), \CGC{} (\cref{alg:cgcut}), \SPR{} (restarts,
\cref{app:restart}), \SPV{} (\cref{alg:spvi}) and \CLA{} (\cref{alg:cla}).

\section{Steiner points and short paths}
\label{app:steiner}

This section proves the geometric facts used by all Steiner-point methods: the
basic properties of the Steiner point, the path bound of
\cref{lem:steiner-path} in a general form, and its tightness.

\paragraph{Steiner point and Gaussian width.}
Let $Z\sim\mathcal N(0,I_n)$. For a nonempty compact convex $C\subseteq\R^n$ the
Steiner point is
\begin{equation}\label{eq:steiner-def}
 \st{C}=\E\bigl[Z\,\hs{C}(Z)\bigr]=\E\bigl[\nabla\hs{C}(Z)\bigr]\in C,
\end{equation}
where the equality of the two expectations is Gaussian integration by parts
and $\nabla\hs{C}(Z)$ is the almost surely unique maximizer of $\ip Zx$ over
$x\in C$, so that \eqref{eq:steiner-def} agrees with
\eqref{eq:steiner-def-main} (\cref{lem:steiner-basic}). Since $\E\nrm Z_2^2=n$ and $\hs C$ is positively
homogeneous, $\E[Z\hs C(Z)]=n\,\E[\theta\,\hs C(\theta)]$ for $\theta$ uniform on
the unit sphere, so \eqref{eq:steiner-def} is the classical Steiner point
\citep{schneider2014,shephard1968}; \citet{bubeck2020chasing} use this spherical
form for nested convex body chasing. We measure the size of a set by its
root-mean-square Gaussian width
\begin{equation}\label{eq:gaussian-width}
 \gw{C}=\bigl(\E\,\hs{C}(Z)^2\bigr)^{1/2}.
\end{equation}
It depends on the position of $C$; the bounds below use translates $K-c$. For
instance, $\hs{B_1^n(R)}(Z)=R\nrm Z_\infty$ and $\hs{B_2^n(R)}(Z)=R\nrm Z_2$, so
\begin{equation}\label{eq:width-two-balls}
 \gw{B_1^n(R)}\le R\sqrt{2(1+\log(2n))},
 \qquad \gw{B_2^n(R)}=R\sqrt n ,
\end{equation}
by \cref{lem:gaussian-max} below and $\E\nrm Z_2^2=n$.

\begin{lemma}[Basic properties]\label{lem:steiner-basic}
Let $C\subseteq\R^n$ be nonempty, compact and convex. Then $\hs{C}$ is convex
and Lipschitz continuous with constant $\max_{x\in C}\nrm x_2$, differentiable
almost everywhere with $\nabla\hs{C}(z)\in C$ at every point of
differentiability, and
\[
 \st{C}=\E[Z\hs{C}(Z)]=\E[\nabla\hs{C}(Z)]\in C.
\]
If $C$ is symmetric about $c$, then $\st{C}=c$; in particular $\st{B_1^n(R)}=0$.
\end{lemma}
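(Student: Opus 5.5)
We prove the four claims in order: the properties of $\hs C$ first, then the two formulas for the Steiner point, then membership in $C$, and finally the symmetric case.

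\emph{Convexity, Lipschitz continuity and the gradient.} $\hs C$ is a pointwise maximum of the linear functions $z\mapsto\ip zx$, $x\in C$, so it is convex. For any $z,w$ and a maximizer $x$ of $\ip zx$ over $C$,
\[
 \hs C(z)-\hs C(w)\le\ip{z-w}{x}\le\nrm{z-w}_2\max_{x\in C}\nrm x_2 .
\]
Exchanging $z$ and $w$ gives the Lipschitz constant $\max_{x\in C}\nrm x_2$. By Rademacher's theorem (or Alexandrov's theorem for convex functions), $\hs C$ is differentiable almost everywhere. At a point $z$ of differentiability, the subdifferential $\partial\hs C(z)$ is the set of maximizers of $\ip zx$ over $C$ (Danskin's formula). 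Since this subdifferential is a singleton, the maximizer is unique and equals $\nabla\hs C(z)$, so $\nabla\hs C(z)\in C$. This also justifies the claim that $x_C(Z)$ is almost surely unique.

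\emph{The identity $\E[Z\hs C(Z)]=\E[\nabla\hs C(Z)]$.} This is Gaussian integration by parts (Stein's lemma) applied to the Lipschitz function $\hs C$. The plan is to apply it coordinatewise. For fixed values of the other coordinates, the map $z_i\mapsto\hs C(z)$ is Lipschitz, hence absolutely continuous. One-dimensional integration by parts against the density $\phi(z_i)$, with $\phi'(s)=-s\phi(s)$, then gives
\[
 \E[Z_i\hs C(Z)\mid Z_{-i}]=\E[\partial_i\hs C(Z)\mid Z_{-i}].
\]
The boundary terms vanish because $\hs C$ grows at most linearly while $\phi$ decays like a Gaussian. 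Integrability is immediate: $|\hs C(z)|\le\nrm z_2\max_{x\in C}\nrm x_2$ and $\nrm{\nabla\hs C}_2\le\max_{x\in C}\nrm x_2$. Taking expectations over $Z_{-i}$ and collecting the coordinates gives the identity. This is the only step with real content, and it is standard.

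\emph{Membership in $C$.} The vector $\E[\nabla\hs C(Z)]$ is the mean of a random vector that takes values in the compact convex set $C$ almost surely. Hence it lies in $C$. One way to see this is by separation: if the mean $m$ were outside $C$, some $u$ would satisfy $\ip um>\hs C(u)$. But $\ip u{\nabla\hs C(Z)}\le\hs C(u)$ almost surely, and taking expectations gives $\ip um\le\hs C(u)$, a contradiction.

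\emph{The symmetric case.} If $C=2c-C$, then $\hs C(-z)=\hs{2c-C}(-z)=\ip{-z}{2c}+\hs C(z)$. We use this together with the fact that $Z$ and $-Z$ have the same distribution:
\[
 \E[Z\hs C(Z)]=\E[(-Z)\hs C(-Z)]=\E\bigl[-Z\bigl(\hs C(Z)-2\ip Zc\bigr)\bigr].
\]
Rearranging gives $2\,\E[Z\hs C(Z)]=2\,\E[Z\ip Zc]=2c$, since $\E[ZZ^\top]=I$. Hence $\st C=c$. The ball $B_1^n(R)$ is symmetric about $0$, so $\st{B_1^n(R)}=0$.
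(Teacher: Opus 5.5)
Your proof is correct and follows essentially the same route as the paper. Both arguments take $\hs C$ as a maximum of linear forms and apply Rademacher's theorem; both use coordinatewise one-dimensional Gaussian integration by parts with vanishing boundary terms, obtain membership in $C$ through supporting half-spaces, and use the symmetry $Z\sim-Z$. The only differences are cosmetic. You identify $\nabla\hs C(z)$ with the unique maximizer through Danskin's formula, where the paper uses a direct one-sided derivative argument. You also treat the symmetric case through the reflection identity $\hs C(-z)=\hs C(z)-2\ip zc$, whereas the paper first proves translation equivariance $\st C=c+\st{C-c}$, which it reuses later, and then uses that $\hs{C-c}$ is even.
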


\begin{proof}
Let $\rho=\max_{x\in C}\nrm x_2$. As a maximum of linear functions, $\hs C$ is
convex, and $|\hs C(z)-\hs C(z')|\le\max_{x\in C}|\ip{z-z'}{x}|\le\rho\nrm{z-z'}_2$.
By Rademacher's theorem $\hs C$ is differentiable almost everywhere. Let $z$ be a
point of differentiability and let $x\in C$ be any maximizer of $\ip z\cdot$ over
$C$. For every $v$ and every real $t$,
$\hs C(z+tv)\ge\ip{z+tv}{x}=\hs C(z)+t\ip vx$; dividing by $t>0$ and by $t<0$ and
letting $t\to0$ gives $\ip{\nabla\hs C(z)}v=\ip vx$ for all $v$, so
$\nabla\hs C(z)=x\in C$ (in particular the maximizer is unique at such $z$).

\emph{Integration by parts.} Fix $i$ and all coordinates of $z$ except $z_i=u$,
and put $\varphi(u)=\hs C(z)$. The function $\varphi$ is $\rho$-Lipschitz, hence
absolutely continuous on bounded intervals with $|\varphi'|\le\rho$ almost
everywhere, and $|\varphi(u)|\le\rho\nrm z_2$. For $a>0$,
\[
 \int_{-a}^au\varphi(u)e^{-u^2/2}\,du=\bigl[-\varphi(u)e^{-u^2/2}\bigr]_{-a}^{a}
 +\int_{-a}^a\varphi'(u)e^{-u^2/2}\,du .
\]
The boundary term tends to $0$ as $a\to\infty$ because $\varphi$ grows at most
linearly, and both integrands are integrable on $\R$, so
$\int u\varphi(u)e^{-u^2/2}du=\int\varphi'(u)e^{-u^2/2}du$. Since
$\nrm{z\hs C(z)}_2\le\rho\nrm z_2^2$ is Gaussian-integrable, Fubini's theorem
gives $\E[Z_i\hs C(Z)]=\E[\partial_i\hs C(Z)]$, where $\partial_i\hs C$ is the
partial derivative, which exists almost everywhere and coincides with the $i$th
component of $\nabla\hs C$ at the points of differentiability. Hence
$\E[Z\hs C(Z)]=\E[\nabla\hs C(Z)]$.

\emph{Membership.} The random vector $V=\nabla\hs C(Z)$ is bounded and takes
values in the closed convex set $C$ almost surely. For every closed half-space
$\{x:\ip ax\le b\}\supseteq C$ we have $\ip a{\E V}=\E\ip aV\le b$, and $C$ is
the intersection of the closed half-spaces containing it, so $\E V\in C$.

\emph{Symmetry.} Translation gives $\hs C(z)=\ip cz+\hs{C-c}(z)$ and
$\E[Z\ip cZ]=c$, so $\st C=c+\st{C-c}$. If $C-c=-(C-c)$, then
$\hs{C-c}(-z)=\hs{C-c}(z)$, and since $-Z$ has the law of $Z$,
$\st{C-c}=\E[Z\hs{C-c}(Z)]=\E[(-Z)\hs{C-c}(-Z)]=-\st{C-c}$, so $\st{C-c}=0$.
\end{proof}

\begin{lemma}[Maxima of Gaussian variables]\label{lem:gaussian-max}
Let $X_1,\dots,X_T$ be centered Gaussian variables, not necessarily independent,
with variances at most one, and let $A=\max\{0,X_1,\dots,X_T\}$. Then
$\E A^2\le2(1+\log T)$. In particular $\E\nrm Z_\infty^2\le2(1+\log(2n))$.
\end{lemma}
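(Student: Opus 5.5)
The plan is to bound $\E A^2$ through its tail, using the union bound together with the Gaussian tail estimate. Since $A\ge0$,
\[
 \E A^2=\int_0^\infty 2s\,\Prob(A>s)\,ds .
\]
For $s>0$ the event $\{A>s\}$ is the union of the events $\{X_i>s\}$. Each $X_i$ is centered Gaussian with variance $\sigma_i^2\le1$, so $\Prob(X_i>s)\le\Prob(G>s)$ for a standard Gaussian $G$. If $\sigma_i=0$ this probability is zero. Combining this with the standard bound $\Prob(G>s)\le e^{-s^2/2}$ (for instance, the Chernoff bound) gives
\[
 \Prob(A>s)\le\min\{1,\,Te^{-s^2/2}\} .
\]
No independence is used anywhere, which is exactly what the statement allows.

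Next I will split the integral at $s_0=\sqrt{2\log T}$, where $Te^{-s_0^2/2}=1$. Below $s_0$ I bound the probability by one:
\[
 \int_0^{s_0}2s\,ds=s_0^2=2\log T .
\]
Above $s_0$ I use the tail bound:
\[
 \int_{s_0}^\infty 2sTe^{-s^2/2}\,ds=2Te^{-s_0^2/2}=2 .
\]
Adding the two pieces gives $\E A^2\le2\log T+2=2(1+\log T)$. The case $T=1$ is included, since then $s_0=0$.

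For the consequence, observe that $\nrm Z_\infty=\max\{Z_1,\dots,Z_n,-Z_1,\dots,-Z_n\}$. This is a maximum of $2n$ centered Gaussian variables of variance one. It is nonnegative, so it coincides with $\max\{0,\cdot\}$ of the same family. Applying the bound with $T=2n$ yields $\E\nrm Z_\infty^2\le2(1+\log(2n))$.

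There is no real obstacle here. The only point needing care is the split of the integral: choosing $s_0^2=2\log T$ makes the tail contribution exactly $2$, with no extra constant, and this is what produces the clean form $2(1+\log T)$.
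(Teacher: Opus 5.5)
Your proposal is correct and matches the paper's proof step for step. Both use the union bound with the Gaussian tail $e^{-s^2/2}$, the tail-integral formula for $\E A^2$ split at $\sqrt{2\log T}$ (contributing $2\log T$ and $2$), and then the same bound applied to the $2n$ variables $\pm Z_i$.
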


\begin{proof}
Each $X_t$ satisfies $\Prob(X_t>u)\le e^{-u^2/2}$ for $u\ge0$, so the union bound
gives $\Prob(A>u)\le\min\{1,Te^{-u^2/2}\}$. With $u_0=\sqrt{2\log T}$,
\[
 \E A^2=\int_0^\infty2u\,\Prob(A>u)\,du
 \le u_0^2+\int_{u_0}^\infty2uTe^{-u^2/2}\,du=2\log T+2 .
\]
For the second claim apply the first to the $2n$ variables $\pm Z_i$, whose
maximum is $\nrm Z_\infty$.
\end{proof}

The path bound holds for nested subsets of any compact convex set; the domain
enters only through the width of a centered copy.

\begin{lemma}[Short Steiner paths, general form]\label{lem:steiner-path-general}
Let $T\ge1$, let $K\subseteq\R^n$ be nonempty, compact and convex, and let
$C_0\supseteq C_1\supseteq\dots\supseteq C_T$ be nonempty compact convex subsets
of $K$. Then, for every $c\in\R^n$,
\begin{equation}\label{eq:steiner-path-general}
 \sum_{t=1}^T\nrm{\st{C_t}-\st{C_{t-1}}}_2\ \le\ 2\sqrt{2(1+\log T)}\;\gw{K-c}.
\end{equation}
\end{lemma}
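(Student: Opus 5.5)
We follow the proof sketch of \cref{lem:steiner-path}, replacing the specific $\ell_1$ estimate $2R\nrm Z_\infty$ by the Gaussian width of a centered copy of $K$.

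\emph{Step 1: write each displacement as an integral.} By \cref{lem:steiner-basic}, $\st{C}=\E[Z\hs{C}(Z)]$ for every $C_t$. Fix $t$ with $\st{C_{t-1}}\ne\st{C_t}$. Let $v_t$ be the unit vector in the direction of $\st{C_{t-1}}-\st{C_t}$; if the two points coincide, take any unit $v_t$, since that term is zero. Then
\[
 \nrm{\st{C_{t-1}}-\st{C_t}}_2=\E\bigl[\ip{v_t}{Z}\bigl(\hs{C_{t-1}}(Z)-\hs{C_t}(Z)\bigr)\bigr].
\]
Because the sets are nested, $D_t(Z)=\hs{C_{t-1}}(Z)-\hs{C_t}(Z)\ge0$ pointwise. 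Hence $\ip{v_t}Z D_t(Z)\le A(Z)D_t(Z)$ with $A(Z)=\max\{0,\ip{v_1}Z,\dots,\ip{v_T}Z\}$. Summing over $t$ telescopes the differences, so the total path is at most $\E[A(Z)(\hs{C_0}(Z)-\hs{C_T}(Z))]$.

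\emph{Step 2: the translation by $c$.} Write $\hs{C}(z)=\ip cz+\hs{C-c}(z)$. The linear terms cancel in the difference, so
\[
 \hs{C_0}-\hs{C_T}=\hs{C_0-c}-\hs{C_T-c}.
\]
Since $C_T\subseteq C_0\subseteq K$, we have $\hs{C_0-c}(Z)\le\hs{K-c}(Z)$ and $-\hs{C_T-c}(Z)\le\hs{C_T-c}(-Z)\le\hs{K-c}(-Z)$. The middle inequality holds because $\hs{C}(z)+\hs{C}(-z)\ge0$ for any nonempty $C$. Together,
\[
 0\le\hs{C_0}(Z)-\hs{C_T}(Z)\le\hs{K-c}(Z)+\hs{K-c}(-Z).
\]
Here lies the main subtlety. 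The right-hand side can be negative termwise, but $\hs{K-c}(z)+\hs{K-c}(-z)\ge0$ always. Also $-Z$ has the law of $Z$, so each summand has second moment $\gw{K-c}^2$.

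\emph{Step 3: Cauchy--Schwarz and Gaussian maxima.} By Cauchy--Schwarz together with Minkowski's inequality,
\[
 \E\bigl[A(Z)(\hs{K-c}(Z)+\hs{K-c}(-Z))\bigr]\le(\E A^2)^{1/2}\bigl(\nrm{\hs{K-c}(Z)}_{L^2}+\nrm{\hs{K-c}(-Z)}_{L^2}\bigr)=2(\E A^2)^{1/2}\gw{K-c}.
\]
Each $\ip{v_t}Z$ is a standard Gaussian, so \cref{lem:gaussian-max} gives $\E A^2\le2(1+\log T)$. This yields \eqref{eq:steiner-path-general}.

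\emph{The $\ell_1$ case.} For $K=c+B_1^n(R)$, we have $\gw{K-c}\le R\sqrt{2(1+\log(2n))}$ by \eqref{eq:width-two-balls}, which recovers \eqref{eq:steiner-path}.

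The only delicate point is Step 2: making the bound valid for an arbitrary translate $c$ and handling possibly negative support-function values. Using both $\hs{K-c}(Z)$ and $\hs{K-c}(-Z)$, rather than a single width term, resolves it and accounts for the factor $2$.
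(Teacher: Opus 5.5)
Your proof is correct and follows the paper's argument essentially step for step. It has the same telescoping bound with $A(Z)=\max\{0,\ip{v_1}Z,\dots,\ip{v_T}Z\}$, the same range bound $0\le\hs{C_0}(Z)-\hs{C_T}(Z)\le\hs{K-c}(Z)+\hs{K-c}(-Z)$, and the same Cauchy--Schwarz step with \cref{lem:gaussian-max}; the only differences are cosmetic (an arbitrary unit $v_t$ instead of $v_t=0$ for coinciding points, and $\hs{C}(z)+\hs{C}(-z)\ge0$ instead of fixing a point $y\in C_T$).
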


\begin{proof}
Let $v_t$ be the unit vector along $\st{C_{t-1}}-\st{C_t}$, or $v_t=0$ if the
two points coincide, and put $A(Z)=\max\{0,\ip{v_1}Z,\dots,\ip{v_T}Z\}$.

\emph{Step 1: telescoping.} By \cref{lem:steiner-basic} and linearity of
expectation,
\[
 \sum_{t=1}^T\nrm{\st{C_{t-1}}-\st{C_t}}_2
 =\sum_{t=1}^T\ip{v_t}{\st{C_{t-1}}-\st{C_t}}
 =\E\sum_{t=1}^T\ip{v_t}Z\bigl(\hs{C_{t-1}}(Z)-\hs{C_t}(Z)\bigr).
\]
The sets are nested, so every difference $\hs{C_{t-1}}(Z)-\hs{C_t}(Z)$ is
nonnegative, and $\ip{v_t}Z\le A(Z)$. Hence
\begin{equation}\label{eq:path-telescope}
 \sum_{t=1}^T\nrm{\st{C_{t-1}}-\st{C_t}}_2
 \le\E\Bigl[A(Z)\sum_{t=1}^T\bigl(\hs{C_{t-1}}(Z)-\hs{C_t}(Z)\bigr)\Bigr]
 =\E\bigl[A(Z)\bigl(\hs{C_0}(Z)-\hs{C_T}(Z)\bigr)\bigr].
\end{equation}

\emph{Step 2: the range of the support functions.} Fix $y\in C_T$. Since
$C_0\subseteq K$, we have $\hs{C_0}(Z)\le\ip cZ+\hs{K-c}(Z)$ and
$\hs{C_T}(Z)\ge\ip yZ\ge\ip cZ-\hs{K-c}(-Z)$. Therefore
\begin{equation}\label{eq:path-range}
 0\le\hs{C_0}(Z)-\hs{C_T}(Z)\le\hs{K-c}(Z)+\hs{K-c}(-Z),
\end{equation}
and since $-Z$ has the law of $Z$, the second moment of the right-hand side is
at most $4\gw{K-c}^2$.

\emph{Step 3: Cauchy--Schwarz.} By \eqref{eq:path-telescope},
\eqref{eq:path-range} and the Cauchy--Schwarz inequality,
\[
 \sum_{t=1}^T\nrm{\st{C_{t-1}}-\st{C_t}}_2
 \le\bigl(\E A(Z)^2\bigr)^{1/2}\cdot2\gw{K-c}.
\]
Each $\ip{v_t}Z$ is Gaussian with variance at most one, so
$\E A(Z)^2\le2(1+\log T)$ by \cref{lem:gaussian-max}.
\end{proof}

\begin{proof}[Proof of \cref{lem:steiner-path}]
Apply \cref{lem:steiner-path-general} with $K=c+B_1^n(R)$. Then $K-c=B_1^n(R)$, and
$\gw{B_1^n(R)}\le R\sqrt{2(1+\log(2n))}$ by \eqref{eq:width-two-balls}.
\end{proof}

The Gaussian vector appears only in the proof: \eqref{eq:steiner-path} is a
deterministic bound for every fixed chain, and it requires neither
full-dimensional sets nor central cuts. The lemma is a variant of the mean-width
bound of \citet[Lemma~3.2 and Theorem~2.1]{bubeck2020chasing}; the Gaussian form
gives explicit constants and a bound that is sharper for chains shorter than
$n$. For the Euclidean ball, \eqref{eq:steiner-path-general} gives
$2\sqrt{2(1+\log T)}\,R\sqrt n$. The path budget
$\PN=4R\sqrt{\bN\cn}$ of \eqref{eq:main-P} dominates the right-hand side of
\eqref{eq:steiner-path} for every $T\le N$, because $\bN\ge1+\log N$ and
$\cn\ge1+\log(2n)$. Its two logarithmic factors cannot be removed from a bound
that depends on the domain only through its width, even for the simplex.

\begin{proposition}[Tightness of the path bound]\label{prop:path-tight}
Let $n\ge2$, $R>0$ and $C_j=\conv\{Re_1,\dots,Re_j\}$, $1\le j\le n$. Then
$C_n\supseteq C_{n-1}\supseteq\dots\supseteq C_1$ are nonempty compact convex
subsets of $\Delta_n(R)\subseteq B_1^n(R)$, $\st{C_j}=\frac Rj\sum_{i\le j}e_i$, and
\[
 R(H_n-1)\le\sum_{j=2}^n\nrm{\st{C_j}-\st{C_{j-1}}}_2\le RH_{n-1},\qquad
 H_n=\sum_{j=1}^n\frac1j ,
\]
while $\gw{\Delta_n(R)}\le R\sqrt{2(1+\log(2n))}$. Moreover, for every rule that
selects a point of the current set (deterministic or randomized), there is a
nested chain of $n$ faces of $\Delta_n(R)$, revealed one at a time, along which
the selected points move by at least $R(H_n-1)$ in expectation.
\end{proposition}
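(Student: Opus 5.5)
The proof splits into four parts: the Steiner points of the faces, the two bounds on the path length, the width bound, and the adversarial statement.

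\emph{Steiner points and path length.} Nestedness and membership in $\Delta_n(R)\subseteq B_1^n(R)$ are immediate. For $\st{C_j}$ I would use the Gaussian form \eqref{eq:steiner-def-main}. The maximizer of $\ip Zx$ over $C_j$ is almost surely the vertex $Re_{i^\ast}$ with $i^\ast=\argmax_{i\le j}Z_i$. By exchangeability of $Z_1,\dots,Z_j$, the index $i^\ast$ is uniform on $\{1,\dots,j\}$, so $\st{C_j}=\frac Rj\sum_{i\le j}e_i$. This is also the centroid, which fits \cref{lem:steiner-basic}.

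A direct computation then gives, for $j\ge2$,
\[
\st{C_j}-\st{C_{j-1}}=R\Bigl(\tfrac1j-\tfrac1{j-1}\Bigr)\sum_{i<j}e_i+\tfrac Rj e_j ,
\]
whose squared norm is
\[
R^2\Bigl(\tfrac{1}{j^2(j-1)}+\tfrac1{j^2}\Bigr)=\tfrac{R^2}{j(j-1)} .
\]
Each step therefore has length exactly $R/\sqrt{j(j-1)}$. Since $\frac1j\le\frac1{\sqrt{j(j-1)}}\le\frac1{j-1}$, summing over $j=2,\dots,n$ gives $R(H_n-1)\le\sum\le RH_{n-1}$.

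\emph{Width bound.} Since $\Delta_n(R)\subseteq B_1^n(R)$, the support function is monotone in the set: $\hs{\Delta_n(R)}(Z)=R\max_iZ_i\le R\nrm Z_\infty$. Squaring needs care because $\max_i Z_i$ may be negative. Instead I would use $|\max_iZ_i|\le\nrm Z_\infty$ and then \cref{lem:gaussian-max}, which gives $\E\hs{\Delta_n(R)}(Z)^2\le2R^2(1+\log(2n))$.

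\emph{Adversarial part.} This is the main step. Against an arbitrary selection rule the adversary reveals faces adaptively. It starts from $F_n=\Delta_n(R)$, the face spanned by all $n$ vertices. When the current face is $F_j$, spanned by a vertex set $S_j$ with $|S_j|=j$, and the rule selects a point $p_j\in F_j$, the adversary removes the vertex $Re_i$, $i\in S_j$, with the largest coordinate $(p_j)_i$. Since the coordinates of $p_j$ on $S_j$ sum to $R$, this gives $(p_j)_i\ge R/j$. Every point $p_{j-1}$ of the next face has $(p_{j-1})_i=0$, so
\[
\nrm{p_{j-1}-p_j}_2\ge|(p_j)_i|\ge R/j .
\]
Summing over $j=n,\dots,2$ gives movement at least $R(H_n-1)$ deterministically.

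For a randomized rule the adversary must be fixed in advance, so I would use a uniformly random removal order instead, averaging over the rule's randomness. Given the past, the removed vertex is uniform on $S_j$, so $\E\bigl[(p_j)_i\bigr]=\frac1j\sum_{i\in S_j}(p_j)_i=R/j$, and the same bound holds in expectation. By Fubini, some fixed chain then achieves at least this expectation. If the rule's choices are independent of the future, it already suffices that a single random chain works in expectation, which still yields the stated bound.
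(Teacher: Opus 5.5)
Your proposal is correct and follows the paper's proof essentially step by step: the same exchangeability argument gives $\st{C_j}$, the same exact step length $R/\sqrt{j(j-1)}$ gives both path bounds, the width bound again uses $|\max_i Z_i|\le\nrm Z_\infty$ with \cref{lem:gaussian-max}, and the adversary is the same (remove an argmax vertex for deterministic rules; for randomized ones, remove a uniformly random vertex and average over removal orders). Your closing remark about rules independent of the future can be dropped, since an online rule only sees the faces revealed so far, but it does not affect the argument.
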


\begin{proof}
For $Z\sim\mathcal N(0,I_n)$ the maximizer of $\ip Zx$ over $C_j$ is $Re_i$ with
$i$ the index of the largest of $Z_1,\dots,Z_j$, almost surely unique, and by
symmetry each index has probability $1/j$; \cref{lem:steiner-basic} gives the
formula for $\st{C_j}$. Then
$\nrm{\st{C_j}-\st{C_{j-1}}}_2^2=R^2/j^2+(j-1)R^2(\frac1{j-1}-\frac1j)^2=R^2/(j(j-1))$,
and $1/j\le1/\sqrt{j(j-1)}\le1/(j-1)$ gives the two bounds. The width bound
follows from $\hs{\Delta_n(R)}(Z)=R\max_iZ_i$, $(\max_iZ_i)^2\le\nrm Z_\infty^2$
and $\E\nrm Z_\infty^2\le2(1+\log(2n))$ (\cref{lem:gaussian-max}). For the last claim, let the
current set be the face $C_S=\conv\{Re_i:i\in S\}$, $|S|=j\ge2$, and let $x$ be
the selected point, so $x_i\ge0$ and $\sum_{i\in S}x_i=R$. Remove an index $i\in S$
chosen uniformly at random (independently of the selector); the next selected
point $y$ lies in $C_{S\setminus\{i\}}$, so $y_i=0$ and
$\E[\nrm{x-y}_2\mid x]\ge\frac1j\sum_{i\in S}x_i=R/j$. Summing over
$j=n,\dots,2$ gives $R(H_n-1)$; averaging over the random order of removal shows
that some fixed order gives at least this expected movement (for a deterministic
rule the choice $i\in\argmax_{i\in S}x_i$ gives the bound directly).
\end{proof}

Thus at $T=n-1$ both factors of $4R\sqrt{(1+\log T)(1+\log(2n))}$ are of the
right order for the simplex, and no online rule does better on such chains.
Whether these logarithms are necessary in the oracle complexity of smooth
minimization is open.

\section{The accelerated level method}
\label{app:level}

This section proves \cref{thm:splevel}, analyzes the bracket search and proves
\cref{thm:overview}, and then turns to the computation of Steiner points: an
exact finite procedure, a randomized estimator with polynomially many linear
programs, and a version for inexact oracles in finite precision. The last
subsection extends the method to other domains and smoothness metrics.
Throughout, $K\subseteq c+B_1^n(R)$ is a known polytope, and $\PN$, $\bN$, $\cn$
are as in \eqref{eq:main-P}.

\subsection{Proof of the guarantee of the level method}
\label{app:level-proof}

We first isolate the one-step inequality, because \cref{app:holder,app:tensor}
reuse it with other remainders.

\begin{lemma}[One accepted step]\label{lem:level-step}
Let $f$ be convex with $L$-Lipschitz Euclidean gradient on $K$, let $A\ge0$,
$a>0$, $B=A+a$, let $y,z,z^+\in K$, and set $x=(Ay+az)/B$ and
$y^+=(Ay+az^+)/B$. If $f(x)+\ip{\nabla f(x)}{z^+-x}\le\ell$, then
\eqref{eq:main-energy} holds.
\end{lemma}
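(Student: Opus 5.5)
We prove \eqref{eq:main-energy} as a three-ingredient estimate: the descent lemma between $x$ and $y^+$, convexity at $x$ evaluated at $y$, and the level-cut hypothesis at $z^+$. We write $g=\nabla f(x)$ and use the identity
\[
 y^+-x=\frac{a}{B}(z^+-z),
\]
which holds because $x$ and $y^+$ share the term $Ay/B$. All points lie in $K$ by convexity of $K$, so the Lipschitz bound on $\nabla f$ applies along the segment $[x,y^+]$.

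\emph{Step 1 (descent lemma).} Smoothness gives
\[
 f(y^+)\le f(x)+\ip{g}{y^+-x}+\tfrac L2\nrm{y^+-x}_2^2
 =f(x)+\ip{g}{y^+-x}+\frac{La^2}{2B^2}\nrm{z^+-z}_2^2 .
\]
Multiplying by $B$ turns the remainder into exactly $\frac{La^2}{2B}\nrm{z^+-z}_2^2$, which is the last term of \eqref{eq:main-energy}.

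\emph{Step 2 (splitting the linear term).} Write
\[
 B\bigl(y^+-x\bigr)=A(y-x)+a(z^+-x),
\]
which holds since $By^+=Ay+az^+$ and $Bx=Ax+ax$. This gives
\[
 B\bigl[f(x)+\ip g{y^+-x}\bigr]=A\bigl[f(x)+\ip g{y-x}\bigr]+a\bigl[f(x)+\ip g{z^+-x}\bigr].
\]
By convexity, the first bracket is at most $f(y)$. By the level-cut hypothesis, the second bracket is at most $\ell$.

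\emph{Step 3 (combining).} Putting Steps 1 and 2 together,
\[
 Bf(y^+)\le Af(y)+a\ell+\frac{La^2}{2B}\nrm{z^+-z}_2^2 .
\]
Subtracting $B\ell=A\ell+a\ell$ from both sides gives \eqref{eq:main-energy}.

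The case $A=0$ needs no separate treatment: then $x=z$, $y^+=z^+$, and the convexity term carries weight zero. There is no real obstacle here. The only point requiring care is the identity in Step 2, namely that the weights $A$ and $a$ recombine $y-x$ and $z^+-x$ into $B(y^+-x)$.
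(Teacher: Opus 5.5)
Your proof is correct and follows the paper's argument exactly. The paper applies the descent inequality between $x$ and $y^+$ via $y^+-x=(a/B)(z^+-z)$, rewrites the first-order part as $A[f(x)+\ip g{y-x}]+a[f(x)+\ip g{z^+-x}]$, bounds the two brackets by convexity at $x$ and by the level cut, subtracts $B\ell=A\ell+a\ell$, and notes that $A=0$ is covered.
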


\begin{proof}
Since $y^+-x=(a/B)(z^+-z)$, the descent inequality between $x$ and $y^+$ gives
$Bf(y^+)\le Bf(x)+a\ip g{z^+-z}+\frac{La^2}{2B}\nrm{z^+-z}_2^2$ with
$g=\nabla f(x)$. By $Bx=Ay+az$ its first two terms equal
$A[f(x)+\ip g{y-x}]+a[f(x)+\ip g{z^+-x}]\le Af(y)+a\ell$ (convexity at $x$ and
the level cut), and subtracting $B\ell=A\ell+a\ell$ gives \eqref{eq:main-energy},
also when $A=0$.
\end{proof}

The proof uses only the descent inequality between $x$ and $y^+$, convexity at
$x$ and the level cut at $z^+$. Replacing $\frac L2\nrm{y^+-x}_2^2$ by any
remainder $\phi(\nrm{y^+-x}_2)$ therefore replaces
$\frac{La^2}{2B}\nrm{z^+-z}_2^2$ by $B\,\phi(\tfrac aB\nrm{z^+-z}_2)$.

\thmsplevel*

\begin{proof}
All iterates are convex combinations of points of $K$, so every query is
feasible. If $\min_Kf\le\ell$, a minimizer $u_*$ satisfies
$f(x_t)+\ip{g_t}{u_*-x_t}\le f(u_*)\le\ell$ for every $t$, hence $u_*\in C_t$ and
no localization set is empty; an empty $C_t$ therefore certifies $\min_Kf>\ell$.

Assume $C_1,\dots,C_N$ are nonempty. The selected centers satisfy $\nrm{z_t-\st{C_t}}_2\le\eta_N$ and the sets $C_t$
are nested in $K$, so \cref{lem:steiner-path} and the triangle inequality give
\eqref{eq:actual-path}, with $2N\eta_N=\frac12\PN$. Every rejected trial has
$d_t>\delta_N=4\PN/N$, so their number $r$ obeys $4r\PN/N<\frac32\PN$, that is
$r<3N/8$, and $k=N-r>5N/8>N/2$ trials are accepted. On an accepted trial the
membership $z_t\in C_t$ is the hypothesis of \cref{lem:level-step},
and the choice $La_t^2=B_t$ makes $La_t^2/(2B_t)=\frac12$, so
$A_t[f(y_t)-\ell]\le A_{t-1}[f(y_{t-1})-\ell]+\frac12d_t^2$; on a rejected trial
$A_t=A_{t-1}$ and $y_t=y_{t-1}$. Telescoping from $A_0=0$, bounding
$d_t\le\delta_N$ on the accepted trials and using \eqref{eq:actual-path} gives
\begin{equation}\label{eq:energy-sum}
 A_N[f(y_N)-\ell]\ \le\ \frac12\sum_{t\ \mathrm{accepted}}d_t^2
 \ \le\ \frac{\delta_N}{2}\sum_{t=1}^Nd_t\ \le\ \frac{3\PN^2}{N}.
\end{equation}

Number the accepted updates $j=1,\dots,k$. The recursion $La_j^2=A_j$ with
$A_j=A_{j-1}+a_j$ means $a_j=(1+\sqrt{1+4LA_{j-1}})/(2L)$, so
$\sqrt{LA_j}=\frac12(1+\sqrt{1+4LA_{j-1}})\ge\sqrt{LA_{j-1}}+\frac12$ and
$A_N=A_k\ge k^2/(4L)\ge N^2/(16L)$. Dividing \eqref{eq:energy-sum} by $A_N$
proves \eqref{eq:level-error}.
\end{proof}

\subsection{Bracket search and the proof of \cref{thm:overview}}
\label{app:bracket}

\Cref{alg:spaccel} needs no knowledge of $\fstar=\min_Kf$. One query at $x_0\in K$
gives the bracket $\flo\le\fstar\le f(v)\le\fhi$ with
\begin{equation}\label{eq:bracket}
 v\in\argmin_{u\in K}\ip{g_0}{u},\qquad
 \flo=f(x_0)+\ip{g_0}{v-x_0},\qquad
 \fhi=\flo+\tfrac L2\nrm{v-x_0}_2^2 ,
\end{equation}
where $g_0=\nabla f(x_0)$; the first inequality is the tangent lower bound, and
the last one follows from smoothness at $x_0$. Since
$\nrm{v-x_0}_2\le\nrm{v-x_0}_1\le2R$, the width is at most $2LR^2$ for a
general $K\subseteq c+B_1^n(R)$ and at most $LR^2/2$ for $K=B_1^n(R)$ and
$x_0=0$. No query at $v$ is needed. \Cref{alg:spaccel} bisects the bracket, calling
\cref{alg:splevel} at the midpoint with the smallest power-of-two horizon whose
guaranteed error \eqref{eq:level-error} is at most a quarter of the width:
\begin{equation}\label{eq:horizon-rule}
 N(w)=\min\Bigl\{N=2^j:\ \frac{48L\PN^2}{N^3}\le\frac w4\Bigr\}
 =\min\Bigl\{N=2^j:\ N^3\ge\frac{3072LR^2}{w}\,\bN\cn\Bigr\}.
\end{equation}
A negative certificate raises the lower end to $\ell$ (width $w/2$); a positive
answer stores the output and lowers the upper end to $\ell+w/4$ (width $3w/4$).

\begin{algorithm}[htbp]
\caption{\SPA: bracket search for the level}
\label{alg:spaccel}
\begin{algorithmic}[1]
\Require known polytope $K\subseteq c+B_1^n(R)$; first-order oracle of an
$L$-smooth convex $f$; accuracy $\varepsilon>0$
\State query $(f(x_0),g_0)$ at a point $x_0\in K$ ($x_0=0$ if $K=B_1^n(R)$);
compute $v,\flo,\fhi$ by \eqref{eq:bracket}
\While{$w:=\fhi-\flo>\varepsilon$}
 \State $\ell\gets(\flo+\fhi)/2$;\quad $N\gets N(w)$ by \eqref{eq:horizon-rule}
 \State run \cref{alg:splevel} with $(\ell,N)$, restarting from $C_0=K$, $A_0=0$
 \If{it certifies $\min_Kf>\ell$} $\flo\gets\ell$
 \Else\ $v\gets y_N$;\quad $\fhi\gets\ell+w/4$
 \EndIf
\EndWhile
\State\Return $v$ \Comment{$f(v)-\min_Kf\le\fhi-\flo\le\varepsilon$}
\end{algorithmic}
\end{algorithm}

\begin{restatable}[Bracket search]{proposition}{thmmain}
\label{thm:main}
Let $f$ be convex with $L$-Lipschitz Euclidean gradient on $K=B_1^n(R)$.
\begin{enumerate}[label=(\alph*)]
\item For every $0<\varepsilon<LR^2/2$, \cref{alg:spaccel} returns $v\in K$ with
$f(v)-\min_Kf\le\varepsilon$ after at most
\begin{equation}\label{eq:main-count}
 1+24\,N(\varepsilon)
 \ =\ O\Bigl(1+\Bigl[\frac{LR^2}{\varepsilon}\,\log(2n)\,
 \log\Bigl(2+\frac{LR^2}{\varepsilon}\log(2n)\Bigr)\Bigr]^{1/3}\Bigr)
\end{equation}
feasible first-order queries; explicitly, $N(\varepsilon)<[32\Theta(3+\log_2\Theta)]^{1/3}$
with $\Theta=3072LR^2\cn/\varepsilon$.
\item For every budget $T\ge25$ there is a choice of $\varepsilon$ (explicit in
the proof) for which the method makes at most $T$ queries and returns
$v$ with
\begin{equation}\label{eq:main-rate}
 f(v)-\min_Kf\ \le\ \min\Bigl\{\frac{LR^2}2,\ \frac{3072\cdot48^3\,LR^2\,\mathsf b_T\cn}{(T-1)^3}\Bigr\}
 =O\Bigl(\frac{LR^2\log(2T)\log(2n)}{T^3}\Bigr).
\end{equation}
For $T<25$ the point $v$ of \eqref{eq:bracket} has error at most $LR^2/2$.
\end{enumerate}
The same statements hold on every known polytope $K\subseteq c+B_1^n(R)$, in
particular on $\Delta_n(R)$, with $LR^2/2$ replaced by $2LR^2$, an upper bound on
the initial bracket width of \eqref{eq:bracket}; the horizon rule
\eqref{eq:horizon-rule} and the count $1+24N(\varepsilon)$ are unchanged.
\end{restatable}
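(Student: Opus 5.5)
The proof combines the level-method guarantee of \cref{thm:splevel} with a geometric bound on the bracket width. First we check correctness and invariants. Initially $\flo\le\fstar\le f(v)\le\fhi$ by \eqref{eq:bracket}, and the initial width is at most $LR^2/2$ on $B_1^n(R)$ with $x_0=0$; on a general $K\subseteq c+B_1^n(R)$ it is at most $2LR^2$.

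Consider one iteration with width $w>\varepsilon$ and level $\ell=(\flo+\fhi)/2$. There are two cases.
\begin{itemize}
\item If \cref{alg:splevel} certifies $\min_Kf>\ell$, then $\flo\gets\ell$ keeps $\flo\le\fstar$. The new width is $w/2$, and $\fhi\ge f(v)$ is unchanged.
\item Otherwise \cref{thm:splevel} and the horizon rule \eqref{eq:horizon-rule} give $f(y_N)\le\ell+w/4$. Setting $v\gets y_N$ and $\fhi\gets\ell+w/4$ keeps $f(v)\le\fhi$. Also $\fstar\le f(y_N)$ and $\flo$ is unchanged, so the new width is $\ell+w/4-\flo=3w/4$.
\end{itemize}
Hence the invariant $\flo\le\fstar\le f(v)\le\fhi$ persists, and at termination $f(v)-\fstar\le\varepsilon$. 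All queries are feasible by \cref{thm:splevel}.

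\emph{Counting.} The widths $w_0>w_1>\dots$ of the successive iterations satisfy $w_{i+1}\le\frac34w_i$, and every iteration is run with $w_i>\varepsilon$. Since $N(w)$ is nonincreasing in $w$, each horizon satisfies $N(w_i)\le N(\varepsilon)$. For a sharper geometric bound, note that $\bN\cn$ grows only logarithmically. If $N(w_i)=2^j$ with $j\ge1$, minimality gives $2^{3(j-1)}<3072LR^2\mathsf b_{2^{j-1}}\cn/w_i$. Plan: show that the total sum $\sum_iN(w_i)$ is at most a constant times $N(w_{\text{last}})\le N(\varepsilon)$.

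Reverse the order, indexing from the last iteration $m$. Then $w_{m-k}\ge(4/3)^k w_m$, so the required $N^3$ shrinks by the factor $(3/4)^k$, up to the slowly varying factor $\bN$. This means the horizon halves after every $O(1)$ steps, specifically once $(4/3)^k\ge8$, i.e.\ $k\ge 8$ steps (since $(4/3)^{8}\approx9.99$). Because horizons are powers of two, each value $2^j$ is used at most about $8$ times. Summing $8(N(\varepsilon)+N(\varepsilon)/2+\dots)\le16N(\varepsilon)$, and allowing slack for the monotone $\bN$ factor and the rounding to powers of two, yields the claimed $24N(\varepsilon)$. Adding $1$ for the bracket query gives \eqref{eq:main-count}.

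The explicit estimate comes from minimality of $N(\varepsilon)=2^j$. Either $j=0$, or $(N/2)^3<\Theta\,\mathsf b_{N/2}$ with $\Theta=3072LR^2\cn/\varepsilon$. Using $\mathsf b_{N/2}\le 1+\log_2N$ and bootstrapping $\log_2N\le\frac13\log_2(8\Theta\cdot\text{poly}\log)$ gives $N^3<32\Theta(3+\log_2\Theta)$.

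\emph{Main obstacle.} The hardest step is making the geometric-series argument rigorous with the $\bN$ factor present. I would handle it by comparing the thresholds $3072LR^2\mathsf b_{2^j}\cn/2^{3j}$, which are decreasing in $j$, with the widths directly. The horizon $2^j$ is chosen exactly when $w\in[\theta_j,\theta_{j-1})$, where $\theta_j=4\cdot48L\mathsf P_{2^j}^2/2^{3j}$. Since $\theta_{j-1}/\theta_j\le8$, and widths decrease by at least $3/4$ per step, at most $\lceil\log_{4/3}8\rceil+1\le 9$ iterations fall in each window. Summing $9\cdot 2^j$ over $j\le\log_2N(\varepsilon)$ gives at most $18N(\varepsilon)\le24N(\varepsilon)$.

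\emph{Part (b).} Given $T\ge25$, choose $\varepsilon$ minimal so that $24N(\varepsilon)\le T-1$. Concretely, take $N(\varepsilon)$ to be the largest power of two at most $(T-1)/24$, which exceeds $(T-1)/48$, and set $\varepsilon=4\cdot48L\PN^2/N^3$ for that $N$, which makes the horizon rule pick exactly $N$. Then $\varepsilon\le 3072LR^2\mathsf b_N\cn\,48^3/(T-1)^3$, with $\mathsf b_N\le\mathsf b_T$, giving \eqref{eq:main-rate}. If $\varepsilon\ge LR^2/2$, the initial $v$ already achieves the min.

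For a general $K$ the same argument goes through, with the initial width $2LR^2$ in place of $LR^2/2$.
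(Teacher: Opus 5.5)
Your proof is correct. The invariant argument and part (b) coincide with the paper's: you take the largest power of two $Q$ with $24Q\le T-1$ and the accuracy $3072LR^2\mathsf b_Q\cn/Q^3$. The difference is in how you count queries.

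\emph{The paper's count} compares each phase with the last one. Every horizon $N_j$ exceeds $1$, so the power of two $N_j/2$ violates the rule. Together with $\mathsf b_{N_j/2}\le\mathsf b_{N_{m-1}}$ this gives $N_j<2(3/4)^{(m-1-j)/3}N_{m-1}$. The result is a geometric series over the phase index, with sum below $\frac{2}{1-(3/4)^{1/3}}N(\varepsilon)<24N(\varepsilon)$.

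\emph{Your count} groups phases by their horizon instead.
\begin{itemize}
\item The horizon $2^j$ is used exactly on the window $[\theta_j,\theta_{j-1})$.
\item The ratio $\theta_{j-1}/\theta_j=8j/(j+1)$ is below $8$.
\item Each phase multiplies the width by at most $3/4$, so at most $1+\lfloor\log_{4/3}8\rfloor=8$ phases fall in one window. Your $9$ is valid but loose.
\end{itemize}
Summing over dyadic horizons gives $1+16N(\varepsilon)$ queries. This is a better constant than $24$, and you never need to compare with the final horizon. The paper's comparison uses only minimality and monotonicity of $\mathsf b$, and it is the template reused for the H\"older, higher-order and general-geometry horizon rules. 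Your window count would also work there, with window ratios bounded by constants depending on $\sigma$.

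You should add one point. The window of horizon $1$ is $[\theta_0,\infty)$, which is unbounded, so ``at most $9$ per window'' is not justified there as written. That window is in fact empty, because $w_0\le LR^2/2$ (resp.\ $2LR^2$ on a general polytope) is less than $\theta_0=3072LR^2\cn$. This is the counterpart of the paper's observation that every $N_j>1$.

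Your ``bootstrapping'' for the explicit horizon closes easily. With $N(\varepsilon)=2^j$ and $j\ge1$, minimality gives $2^{3(j-1)}<\Theta j$. Using $\log_2 j\le j-1$, this yields $j<1+\tfrac12\log_2\Theta$. Hence $N(\varepsilon)^3<8\Theta j<32\Theta(3+\log_2\Theta)$. The paper instead exhibits an explicit power of two that satisfies the rule.
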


\begin{proof}
We first treat $K=B_1^n(R)$, $x_0=0$. Write $w$ for the current width
$\fhi-\flo$ and $N(w)$ for the horizon \eqref{eq:horizon-rule}, which exists
because for $N=2^j$ one has $\bN=1+j$ and $2^{3j}/(1+j)\to\infty$.

\emph{Invariants.} By \eqref{eq:bracket}, $\flo\le\min_Kf\le f(v)\le\fhi$ holds
initially with $w=\frac L2\nrm v_2^2\le LR^2/2$. If $w\le\varepsilon$ the method
returns $v$ after one query. Suppose the invariant holds at the start of a phase with
width $w>\varepsilon$ and level $\ell=(\flo+\fhi)/2$. If the level method
returns the certificate, $\min_Kf>\ell$, so $\flo\gets\ell$ keeps the invariant
and the new width is $w/2$. Otherwise \cref{thm:splevel} and the choice of
$N(w)$ give $f(y_N)\le\ell+48L\PN^2/N^3\le\ell+w/4$, so $v\gets y_N$,
$\fhi\gets\ell+w/4$ keeps the invariant and the new width is $3w/4$. The loop
terminates because each phase multiplies the width by at most $3/4$, and at
termination $f(v)-\min_Kf\le\fhi-\flo\le\varepsilon$. Every query is feasible
by \cref{thm:splevel}.

\emph{Phase count.} Let $w_0>w_1>\dots>w_{m-1}>\varepsilon$ be the widths at
the beginnings of the $m$ phases and $N_j=N(w_j)$. Each phase multiplies the
width by at most $3/4$, so $w_j\ge(4/3)^{m-1-j}w_{m-1}$, and $N(\cdot)$
is nonincreasing, so $N_j\le N_{m-1}\le N(\varepsilon)$. Every $N_j$ exceeds
$1$: the condition in \eqref{eq:horizon-rule} at $N=1$ reads
$1\ge3072LR^2\cn/w_j$, which fails because $\cn\ge2$ and $w_j\le LR^2/2$.
Hence $N_j/2$ is a power of two that violates \eqref{eq:horizon-rule}, and
using $\mathsf b_{N_j/2}\le\mathsf b_{N_j}\le\mathsf b_{N_{m-1}}$,
\[
 \frac{N_j^3}{8}<\frac{3072LR^2}{w_j}\mathsf b_{N_{m-1}}\cn,
 \qquad
 N_{m-1}^3\ge\frac{3072LR^2}{w_{m-1}}\mathsf b_{N_{m-1}}\cn .
\]
Dividing, $N_j^3<8(w_{m-1}/w_j)N_{m-1}^3\le8(3/4)^{m-1-j}N_{m-1}^3$, i.e.\
$N_j<2(3/4)^{(m-1-j)/3}N_{m-1}$. Summing the geometric series,
\[
 \sum_{j=0}^{m-1}N_j<\frac{2}{1-(3/4)^{1/3}}N_{m-1}<24\,N(\varepsilon),
\]
because $(11/12)^3=1331/1728>3/4$ gives $(3/4)^{1/3}<11/12$. With the initial
query this proves the count $1+24N(\varepsilon)$ in (a).

\emph{Explicit horizon.} Let $\Theta=3072LR^2\cn/\varepsilon>1$,
$J=\lceil\log_2\Theta\rceil$ and $q=\lceil\log_2(2^{J+1}(J+2))/3\rceil$. Then
$q\le(J+1+\log_2(J+2))/3+1$ and $\log_2(J+2)\le J+1$ give $q+1\le2(J+2)$, hence
$(2^q)^3\ge2^{J+1}(J+2)\ge\Theta(q+1)$, which is \eqref{eq:horizon-rule} for
$N=2^q$ and $w=\varepsilon$ (note $\mathsf b_{2^q}=q+1$). Thus
$N(\varepsilon)\le2^q$ and, since $2^{3q}<8\cdot2^{J+1}(J+2)$ and $2^J<2\Theta$,
\[
 N(\varepsilon)^3<32\Theta(J+2)\le32\Theta(3+\log_2\Theta),
\]
which gives the explicit bound and the order in \eqref{eq:main-count}.

\emph{Fixed horizon.} Let $T\ge25$, let $Q$ be the largest power of two with
$24Q\le T-1$, and $\varepsilon_Q=3072LR^2\mathsf b_Q\cn/Q^3$. If
$\varepsilon_Q\ge LR^2/2$, return the point $v$ of \eqref{eq:bracket}, whose
error is at most $LR^2/2$ after one query. Otherwise run the method with
accuracy $\varepsilon_Q$: $Q$ satisfies \eqref{eq:horizon-rule} for
$w=\varepsilon_Q$ with equality, so $N(\varepsilon_Q)\le Q$, the count is at
most $1+24Q\le T$, and the error is at most $\varepsilon_Q$. By maximality of
$Q$, $48Q>T-1$, and $\mathsf b_Q\le\mathsf b_T$, so
$\varepsilon_Q\le3072\cdot48^3LR^2\mathsf b_T\cn/(T-1)^3$, which is \eqref{eq:main-rate}.

\emph{General polytopes.} For $K\subseteq c+B_1^n(R)$ the level method is
unchanged: \cref{lem:steiner-path} applies to chains in $c+B_1^n(R)$, and
$z_0$ is an $\eta_N$-approximate Steiner point of $K$ computed as in
\cref{app:steiner-computation}. The bracket \eqref{eq:bracket} has width
$\frac L2\nrm{v-x_0}_2^2\le\frac L2(2R)^2=2LR^2$, the condition $N_j>1$ still
holds because $w_j\le2LR^2<6144LR^2$, and the rest of the argument is the same
with $LR^2/2$ replaced by $2LR^2$.
\end{proof}

\begin{proof}[Proof of \cref{thm:overview}]
The lower bound for deterministic methods is \cref{thm:first-order-lower}(b)
with $p=1$ and $\nu=1$, and the lower bound for randomized methods is
\cref{thm:rand-smooth}. For the upper bound, run \cref{alg:spaccel} with the
deterministic selector of \cref{prop:finite-selector} and let $T=N$ in
\cref{thm:main}(b) if $N\ge25$: the method is deterministic, makes at most $N$
feasible queries, and its error is at
most
\[
 \frac{3072\cdot48^3LR^2\mathsf b_N\cn}{(N-1)^3}
 \ \le\ \frac{3072\cdot48^3\cdot8\cdot4}{\log^22}\cdot\frac{LR^2\log(2N)\log(2n)}{N^3}
 \ \le\ \frac{2^{35}LR^2\log(2N)\log(2n)}{N^3},
\]
since $\mathsf b_N\le2\log(2N)/\log2$, $\cn\le2\log(2n)/\log 2$ and $N-1\ge N/2$.
For $N\le24$ the point $v$ of \eqref{eq:bracket} has error at most $LR^2/2$
after one query, which is at most $2^{35}LR^2\log(2N)\log(2n)/N^3$ for every
$n\ge1$, because $\log(2N)\log(2n)\ge\log^22$ and $24^3/(2\log^22)<14387<2^{35}$.
Randomized methods include deterministic ones, so
$\cE_N^{\mathrm{ran}}\le\cE_N^{\mathrm{det}}$.
\end{proof}

The level cuts use function values; \cref{app:gradient-only} shows that the exponent is
unchanged when only $\nabla f$ is returned, with two additional logarithmic
factors.

\subsection{Computing approximate Steiner points}
\label{app:steiner-computation}

Every localization set is a known polytope $C=K\cap\{\text{stored half-spaces}\}$,
and the methods need, for a prescribed $\eta>0$, a point $x(C,\eta)\in C$ with
$\nrm{x(C,\eta)-\st{C}}_2\le\eta$. We give a deterministic and a randomized
implementation.

\begin{restatable}[Deterministic finite selector]{proposition}{propfiniteselector}
\label{prop:finite-selector}
For a known nonempty polytope $C\subseteq c+B_1^n(R)$ and $\eta>0$, a point
$x(C,\eta)$ can be computed by finitely many linear programs over $C$, one
Euclidean projection onto $C$, and rational arithmetic with certified
approximations of $e^{-r}$ and $\sqrt{2\pi}$. The Gaussian integral
\eqref{eq:steiner-def} is truncated to a cube $[-a,a]^n$ and approximated by a
midpoint rule whose nodes require $\hs C$ at rational points, and the result is
projected onto $C$. The number of nodes is finite but grows exponentially in $n$.
\end{restatable}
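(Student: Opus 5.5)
We will compute $\st C=\E[\nabla\hs C(Z)]$ through the integration-by-parts form $\st C=\E[Z\hs C(Z)]$ of \cref{lem:steiner-basic}. This form needs only values of the support function, and each value is one linear program over $C$. Translate first: $\st C=c+\st{C-c}$ with $C-c\subseteq B_1^n(R)$. We may therefore assume $c=0$, so that $|\hs C(z)|\le R\nrm z_\infty$ and $\hs C$ is $R$-Lipschitz in $\ell_1$, hence $R\sqrt n$-Lipschitz in $\ell_2$. Write $\gamma$ for the standard Gaussian density and $F(z)=z\,\hs C(z)\gamma(z)$. The plan splits the error of the computed point into three parts, each at most $\eta/3$ in Euclidean norm: truncation to $[-a,a]^n$, the midpoint-rule error on the cube, and the arithmetic error from certified rational approximations of $\gamma$. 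A final Euclidean projection onto $C$ restores membership. Since $\st C\in C$, projection is nonexpansive toward $\st C$, so the projected point is still within $\eta$ of $\st C$.

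\emph{Truncation.} Outside the cube some $|Z_i|>a$, and $\nrm{Z\hs C(Z)}_2\le R\nrm Z_2\nrm Z_\infty\le R\nrm Z_2^2$. Cauchy--Schwarz then gives
\[
\nrm{\E[Z\hs C(Z)\1\{Z\notin[-a,a]^n\}]}_2\le R(\E\nrm Z_2^4)^{1/2}\bigl(2n e^{-a^2/2}\bigr)^{1/2}\le 2Rn^{3/2}e^{-a^2/4},
\]
using $\E\nrm Z_2^4=n^2+2n$. A rational $a$ of order $\sqrt{\log(nR/\eta)}$ makes this at most $\eta/3$.

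\emph{Midpoint rule.} Split $[-a,a]^n$ into $M^n$ cubes of side $h=2a/M$ and evaluate $F$ at the centers, which are rational. On the cube, $F$ is Lipschitz with an explicit constant $\Lambda$ polynomial in $n,R,a$. The product $z\hs C(z)$ is Lipschitz with constant at most $R\sqrt n\cdot(\text{bounds of order }\sqrt n a)$, and $\gamma$ is bounded with gradient $-z\gamma$. The error per cell is at most $\Lambda h\sqrt n\cdot h^n$, so the total error is at most $\Lambda\sqrt n h(2a)^n$. We choose $M$ large enough that this is at most $\eta/3$. This is finite, but the number of nodes $M^n$ is exponential in $n$, as the statement says.

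\emph{Arithmetic.} At a rational node $z$, the value $\hs C(z)$ is the exact rational optimum of an LP with rational data. The only irrational ingredients are $e^{-\nrm z_2^2/2}$, whose argument is rational, and $(2\pi)^{-n/2}$. We replace them by certified rational approximations with relative error $\rho$. The resulting perturbation is at most $\rho$ times the midpoint sum of $\nrm{z}\,|\hs C(z)|\gamma(z)$, which is bounded by $(1+\text{small})\,R\,\E\nrm Z_2^2$ plus the rule error. So $\rho$ of order $\eta/(Rn)$ suffices.

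The main obstacle is the midpoint-rule step, where an explicit Lipschitz constant for $F$ on the truncated cube is needed. The support function is only Lipschitz, not smooth, so a higher-order quadrature estimate is unavailable; the first-order bound above is all we can get, and it is what forces exponentially many nodes. The remaining ingredients are routine: certified rational bounds for $e^{-r}$ (for instance, truncated Taylor series with an alternating-series remainder) and for $\sqrt{2\pi}$, and the finiteness of LPs and of the Euclidean projection onto a polytope, which is a convex QP solvable exactly in finitely many steps by an active-set method.
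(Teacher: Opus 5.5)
Your proposal follows the paper's proof in all essentials: you translate to $c=0$, truncate $\E[Z\hs C(Z)]$ to a cube, apply a first-order midpoint rule with $\hs C$ evaluated at rational nodes, replace the Gaussian density by certified rational approximations, and project onto $C$ using $\st C\in C$ and nonexpansiveness; the remaining differences are cosmetic (a Cauchy--Schwarz tail bound via $\E\nrm Z_2^4$ instead of tail integration of $\nrm Z_\infty^2$, relative instead of absolute density error, an active-set QP instead of face enumeration for the projection, and the looser Lipschitz constant $R\sqrt n$ where $R$ suffices because $C\subseteq B_2^n(R)$). The only slip is the constant in your tail bound, since $R\sqrt{(n^2+2n)\cdot 2n}\le 2Rn^{3/2}$ fails at $n=1$; replacing $2$ by $\sqrt6$ fixes it and changes nothing else.
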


\begin{proof}
Since $C\subseteq c+B_1^n(R)$, by translation we may assume $c=0$, so
$\nrm x_2\le\nrm x_1\le R$ on $C$ and $|\hs C(z)|\le R\nrm z_\infty$. The
polytope $C$ is given by finitely many known linear inequalities (the $2^n$
facets $\ip\sigma x\le R$, $\sigma\in\{-1,1\}^n$, of the ball, or the facets of
$K$, together with the stored cuts). Enumerating the nondegenerate systems of
$n$ active constraints, solving the linear systems and checking the remaining
constraints yields all vertices of $C$, decides whether $C$ is empty, and
evaluates $\hs C$ at any rational point; lower-dimensional $C$ are included,
since the constraints defining the affine hull are among those enumerated. The
Euclidean projection $\proj_C(w)$ is computed by projecting $w$ onto the affine
hull of every face (enumerating linearly independent subsets of active
constraints, the empty set included), keeping the feasible candidates and
choosing the closest one. This is correct: if $\proj_C(w)$ lies in the relative
interior of a face $C'$, then $w-\proj_C(w)$ lies in the normal cone of $C$ at
$\proj_C(w)$, which is spanned by the active normals, so $\proj_C(w)$ is the
projection of $w$ onto the affine hull of $C'$, a candidate in the list.

\emph{Truncation.} Let $B=\nrm Z_\infty$. For $a\ge1$, the tail bound
$\Prob(B>u)\le2ne^{-u^2/2}$ gives
$\E[B^2\1_{\{B>a\}}]=a^2\Prob(B>a)+\int_a^\infty2u\Prob(B>u)du\le2n(a^2+2)e^{-a^2/2}$,
and since $\nrm{z\hs C(z)}_2\le R\nrm z_2\nrm z_\infty\le R\sqrt n\,\nrm z_\infty^2$,
\[
 \bigl\|\E[Z\hs C(Z)\1_{\{B>a\}}]\bigr\|_2\le2Rn^{3/2}(a^2+2)e^{-a^2/2}.
\]
Choose the smallest even integer $a\ge2$ with $2Rn^{3/2}(a^2+2)2^{-a^2/2}\le\eta/4$
(a condition with integer powers only, which implies the bound with
$e^{-a^2/2}$ since $e>2$).

\emph{Quadrature.} On the cube $[-a,a]^n$ the integrand
$\mathcal I(z)=z\hs C(z)\varphi_n(z)$, $\varphi_n(z)=(2\pi)^{-n/2}e^{-\nrm z_2^2/2}$, is
Lipschitz with constant $J=R(2\sqrt n\,a+na^3)$, since $|\hs C|\le Ra$,
$\Lip(\hs C)\le R$, $\nrm z_2\le\sqrt n\,a$, $\varphi_n\le1$ and
$\nrm{\nabla\varphi_n}_2\le\sqrt n\,a$ on the cube. Split the cube into $m^n$
equal subcubes of side $h=2a/m$ with the smallest integer $m$ such that
$J\sqrt n\,h(2a)^n\le\eta/8$; the midpoint rule then approximates the truncated
integral with Euclidean error at most $\eta/8$, and at every node $\hs C$ is
evaluated by a linear program. The Gaussian density at the finitely many
rational nodes is needed only to absolute accuracy
$\theta=\eta/(8(2a)^nR\sqrt n\,a^2)$, which changes the quadrature by at most
$\eta/8$ because $\nrm{z\hs C(z)}_2\le R\sqrt n\,a^2$ on the cube. Certified
rational approximations of $e^{-r}$ for rational $r$ and of $\sqrt{2\pi}$ to any
positive accuracy are obtained from convergent series with explicit remainders.
The resulting finite sum $w$ satisfies $\nrm{w-\st C}_2\le\eta/4+\eta/8+\eta/8=\eta/2$.

\emph{Projection.} Return $x(C,\eta)=\proj_C(w)\in C$. Since $\st C\in C$
(\cref{lem:steiner-basic}) and the projection is nonexpansive,
$\nrm{\proj_C(w)-\st C}_2=\nrm{\proj_C(w)-\proj_C(\st C)}_2\le\eta/2\le\eta$.
\end{proof}

\begin{restatable}[Randomized selector]{proposition}{thmrandomselector}
\label{thm:random-selector}
Let $C\subseteq c+B_1^n(R)$ be a known nonempty polytope, let
$Z_1,\dots,Z_m$ be independent standard Gaussian vectors, let
$V_i\in\argmax_{u\in C}\ip{Z_i}u$ (a linear program) and
$\bar x=\frac1m\sum_iV_i$. Then $\bar x\in C$ on every outcome, $\E V_i=\st C$,
and for $m\ge4R^2/\eta^2$,
$\Prob\{\nrm{\bar x-\st C}_2>\eta\}\le\exp(-m\eta^2/(8R^2))$.
Consequently, let $\delta\in(0,1)$ and let every selector call in
\cref{alg:spaccel} use
$m_N=\lceil32N^2(1+\log(\bar S/\delta))/(\bN\cn)\rceil$ directions, where $N$ is
the horizon of the current level call and $\bar S=2+48N(\varepsilon)$ bounds the
number of selector calls. Then with probability at least $1-\delta$ all
guarantees of \cref{thm:main} hold, the number of oracle queries is bounded by
\eqref{eq:main-count} on every outcome, and the total number of linear programs
is $O\bigl(N(\varepsilon)+N(\varepsilon)^3\log(N(\varepsilon)/\delta)/(\mathsf b_{N(\varepsilon)}\cn)\bigr)$,
polynomial in $n$, $N(\varepsilon)$ and $\log(1/\delta)$.
\end{restatable}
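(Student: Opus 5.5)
The plan has two parts. The first part handles a single estimate by vector concentration. The second part takes a union bound over all selector calls and counts the linear programs.

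For the single estimate, membership and unbiasedness are immediate. Each $V_i$ is a maximizer over $C$, so $\bar x$, a convex combination, lies in $C$. Almost surely the maximizer is unique and equals $\nabla\hs C(Z_i)$, so $\E V_i=\st C$ by \cref{lem:steiner-basic}.

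For concentration, I translate so that $c=0$. Then $\nrm{V_i}_2\le\nrm{V_i}_1\le R$, so the centered vectors $W_i=V_i-\st C$ are independent, mean zero, and satisfy $\nrm{W_i}_2\le 2R$. I would apply McDiarmid's bounded-differences inequality to $\Phi=\nrm{\frac1m\sum_iW_i}_2$. Changing one $W_i$ changes $\Phi$ by at most $4R/m$, which gives
\[
\Prob\{\Phi>\E\Phi+s\}\le\exp\bigl(-2s^2/(m(4R/m)^2)\bigr)=\exp(-ms^2/(8R^2)).
\]
The mean is controlled by orthogonality: $\E\Phi\le(\E\Phi^2)^{1/2}=(\frac1{m^2}\sum_i\E\nrm{W_i}_2^2)^{1/2}\le R/\sqrt m$, using $\E\nrm{W_i}_2^2\le\E\nrm{V_i}_2^2\le R^2$.

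The step I expect to be delicate is matching the stated constant exactly. The condition $m\ge4R^2/\eta^2$ gives $R/\sqrt m\le\eta/2$, so McDiarmid with $s=\eta/2$ yields only $\exp(-m\eta^2/(32R^2))$. To reach $\exp(-m\eta^2/(8R^2))$ I would sharpen the bounded difference. Since $\nrm{V_i-V_i'}_2\le 2R$ for two points of $C$, the difference bound is really $2R/m$ rather than $4R/m$. This gives $\exp(-2s^2m/(4R^2))=\exp(-ms^2/(2R^2))$, and with $s=\eta/2$ that is exactly $\exp(-m\eta^2/(8R^2))$.

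For the consequence, I apply the single-estimate bound with $\eta=\eta_N=\PN/(4N)=R\sqrt{\bN\cn}/N$ at a call of horizon $N$. Then $m_N\eta_N^2/(8R^2)\ge4(1+\log(\bar S/\delta))$, and the requirement $m_N\ge4R^2/\eta_N^2=4N^2/(\bN\cn)$ clearly holds. Each call therefore fails with probability at most $\delta/\bar S$.

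Next I count selector calls. Each phase of \cref{alg:spaccel} with horizon $N_j$ makes at most $N_j+1$ calls, counting $z_0$. The proof of \cref{thm:main} gives $\sum_j N_j<24N(\varepsilon)$, and every $N_j\ge2$, so the number of phases is at most $12N(\varepsilon)$. The total number of calls is therefore at most $\bar S$, and a union bound over them gives failure probability at most $\delta$.

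On the success event every center is $\eta_N$-accurate. Then \cref{thm:splevel}, and with it \cref{thm:main}, holds verbatim. The query count does not depend on accuracy, since horizons are fixed by widths and at most $N$ queries are made per level call; it is bounded on every outcome. Whether each $C_t$ is empty is decided by one linear program per trial.

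The linear-program count is
\[
\sum_j(N_j+1)(m_{N_j}+1)=O\Bigl(\sum_jN_j+\sum_j N_j^3\log(\bar S/\delta)/(\mathsf b_{N_j}\cn)\Bigr).
\]
Because $\mathsf b_{N_j}\le\mathsf b_{N(\varepsilon)}$, I bound $N_j^3/\mathsf b_{N_j}$ using the geometric decay $N_j<2(3/4)^{(m-1-j)/3}N(\varepsilon)$ from the proof of \cref{thm:main}. The factor $\mathsf b_{N_j}$ is comparable to $\mathsf b_{N(\varepsilon)}$ up to the constant-factor shrinkage per phase, or alternatively one may simply use $N_j^2\le N(\varepsilon)^2$ and $\sum_jN_j=O(N(\varepsilon))$. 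Either way the sum is $O(N(\varepsilon)^3\log(N(\varepsilon)/\delta)/(\mathsf b_{N(\varepsilon)}\cn))$, as stated.
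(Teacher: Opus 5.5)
Your proof follows the paper's route. The paper uses the same bounded difference $2R/m$, which comes from $\nrm{V_i-V_i'}_2\le2R$ for two points of $C$. It uses the same mean bound $\E\nrm{\bar x-\st C}_2\le R/\sqrt m\le\eta/2$, and the same choice $\eta=\eta_N$, which gives per-call failure probability $e^{-4}(\delta/\bar S)^4\le\delta/\bar S$. The union bound over at most $\bar S$ selector calls, and the remark that the query count depends only on the widths and so holds on every outcome, also match.

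One point you leave implicit should be stated. The polytope at a selector call is random, because it depends on earlier directions through the centers and the queries. The single-call bound therefore applies only conditionally on the history, which it can because the fresh directions are independent of it, and this is what licenses the union bound.

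The last step of the linear-program count does not work as written. The inequality $\mathsf b_{N_j}\le\mathsf b_{N(\varepsilon)}$ goes the wrong way for bounding $1/\mathsf b_{N_j}$ from above. The claim that $\mathsf b_{N_j}$ is comparable to $\mathsf b_{N(\varepsilon)}$ fails for early phases, where $N_j$ may be $2$ while $N(\varepsilon)$ is large. Your fallback, using only $N_j^2\le N(\varepsilon)^2$ and $\sum_jN_j=O(N(\varepsilon))$, gives $O\bigl(N(\varepsilon)^3\log(N(\varepsilon)/\delta)/\cn\bigr)$ and loses the factor $1/\mathsf b_{N(\varepsilon)}$ in the statement.

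The missing fact is that $N\mapsto N^2/\mathsf b_N$ is increasing along powers of two: for $N=2^j$ the ratio of consecutive values is $4(j+1)/(j+2)\ge2$. All horizons are powers of two at most $N(\varepsilon)$, so every phase has $m_{N_j}\le m^*:=\lceil32N(\varepsilon)^2(1+\log(\bar S/\delta))/(\mathsf b_{N(\varepsilon)}\cn)\rceil$. Multiplying $m^*$ by the $O(N(\varepsilon))$ selector calls, and adding the emptiness tests and the single bracket LP, gives the stated bound with no geometric summation. A careful summation that combines the geometric decay of the $N_j$ with $\mathsf b_{N(\varepsilon)}/\mathsf b_{N(\varepsilon)2^{-k}}\le1+k$ would also recover the factor, but your sketch does not carry that out.
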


\begin{proof}
By translation assume $c=0$, so $\nrm{V_i}_2\le R$. Fix a measurable
tie-breaking rule for the maximizer; by \cref{lem:steiner-basic} the maximizer
is almost surely unique and equals $\nabla\hs C(Z_i)$, so $\E V_i=\st C$, and
$\bar x\in C$ on every outcome by convexity. The $V_i$ are independent with
$\E\nrm{V_i-\st C}_2^2\le R^2$, so $\E\nrm{\bar x-\st C}_2^2\le R^2/m$ and
$\E\nrm{\bar x-\st C}_2\le R/\sqrt m\le\eta/2$ when $m\ge4R^2/\eta^2$. Changing
one $V_i$ changes $\bar x$ by at most $2R/m$ in norm, hence changes
$X=\nrm{\bar x-\st C}_2$ by at most $2R/m$. The bounded-differences inequality
of \citet{mcdiarmid1989} gives
$\Prob\{X>\E X+t\}\le\exp(-2t^2/(m(2R/m)^2))=\exp(-mt^2/(2R^2))$; with
$t=\eta/2\le\eta-\E X$ this is the stated bound $\exp(-m\eta^2/(8R^2))$.

Now consider \cref{alg:spaccel} with every selector call replaced by this
average with $m_N=\lceil32N^2(1+\log(\bar S/\delta))/(\bN\cn)\rceil$ directions,
where $N$ is the horizon of the current level call and $\eta_N=\PN/(4N)=R\sqrt{\bN\cn}/N$.
Then $m_N\eta_N^2/(8R^2)\ge4(1+\log(\bar S/\delta))$ and $m_N\ge4R^2/\eta_N^2$, so
conditionally on the past (which fixes the current polytope, while the new
directions are independent of it) each selector call fails, i.e.\ produces a
point farther than $\eta_N$ from the Steiner point, with probability at most
$e^{-4}(\delta/\bar S)^4\le\delta/\bar S$. The algorithm is defined on every outcome.
After $N$ nonempty trials the level method returns the stored $y_N$ (also when
$A_N=0$), and the bracket is updated by the prescribed formulas, so the widths
shrink by the factors $1/2$ or $3/4$ whether or not the upper end is valid. The
phase-count argument of \cref{app:bracket} uses only the widths, so it bounds
the number of oracle queries by $1+24N(\varepsilon)$ on every outcome. Each
level call makes one selector call per trial and one for $z_0$,
so the number of selector calls is less than twice the query bound
$S=1+24N(\varepsilon)$; with $\bar S=2S$, the union bound over the
selector calls in their order of occurrence gives probability at least $1-\delta$
that all of them succeed. On this event every chain of selected points is an
$\eta_N$-accurate approximation of the Steiner points of a nested chain, so
\eqref{eq:actual-path} holds for it, and all statements of
\cref{thm:splevel,thm:main} apply without change.

\emph{Number of linear programs.} All horizons used are powers of two at most
$N(\varepsilon)$, and $N\mapsto N^2/\bN$ is increasing on powers of two (the
ratio of consecutive values is $4(j+1)/(j+2)\ge2$), so every $m_N$ is at most
$m^*=\lceil32N(\varepsilon)^2(1+\log(2S/\delta))/(\mathsf
b_{N(\varepsilon)}\cn)\rceil$. Counting one linear minimization for the bracket,
at most $S$ emptiness tests and, over the fewer than $2S$ selector calls, at
most $2Sm^*$ support linear programs, the total is at most
$1+S(2m^*+1)=O(N(\varepsilon)+N(\varepsilon)^3\log(N(\varepsilon)/\delta)/(\mathsf b_{N(\varepsilon)}\cn))$.
Each linear program maximizes a linear function over
$\{(x,u):-u\le x\le u,\ \sum_iu_i\le R\}$ intersected with at most
$N(\varepsilon)$ stored cuts.
\end{proof}

\subsection{Inexact oracles and finite precision}
\label{app:finite-bit}

\Cref{thm:random-selector} assumes exact oracle answers and linear programs
solved exactly over the reals, with real Gaussian directions.
On $K=B_1^n(R)$ these assumptions can be removed. Let $R,L,\varepsilon$ be
rational and let a query at a rational $x\in K$ return rational
$(\widetilde f_x,\widetilde g_x)$ with
\begin{equation}\label{eq:inexact-oracle}
 |\widetilde f_x-f(x)|\le\alpha:=\frac{\varepsilon}{256},\qquad
 \nrm{\widetilde g_x-\nabla f(x)}_2\le\beta:=\frac{\varepsilon}{512R},
\end{equation}
where the errors may be biased and may depend on the whole past, including the
random bits drawn so far but not later ones. The answers lie on
fixed grids $\widetilde f_x\in2^{-k_f}\mathbb Z$, $\widetilde g_x\in2^{-k_g}\mathbb Z^n$
whose exponents $k_f,k_g$ are part of the input; componentwise accuracy
$\varepsilon/(512Rn)$ suffices for the second condition.
The \emph{rounded level method} is \cref{alg:splevel} with three changes. The
weights of the accepted steps are $a_j=j/(2L)$, $A_j=j(j+1)/(4L)$, where $j$ counts
accepted steps, so no square root is needed. The level cut at $x$ is the
outward-rounded half-space $\{u:\ip{\widetilde g_x}{u}\le c_x\}$ with
$c_x=\rho\lceil(\ell+\tau-\widetilde f_x+\ip{\widetilde g_x}x)/\rho\rceil$,
$\tau=\alpha+2R\beta=\varepsilon/128$, $\rho=\varepsilon/64$; it contains the
true level set $\{f\le\ell\}\cap K$. The acceptance test is
$d_t^2\le\delta_N^2$, a comparison of rationals. The guarantee becomes
$f(y_N)\le\ell+48L\PN^2/N^3+\varepsilon/32$. The \emph{rounded bracket
search} queries $x_0=0$, takes a vertex $v\in\argmin_{u\in K}\ip{\widetilde g_0}u$,
uses the bracket $\flo=\widetilde f_0+\ip{\widetilde g_0}v-\tau_0$,
$\fhi=\flo+LR^2/2+2\tau_0$, $\tau_0=\alpha+R\beta=3\varepsilon/512$, and the
horizon rule with $w/8$ in place of $w/4$ in \eqref{eq:horizon-rule}, i.e.\
$N'(w)=\min\{N=2^j:N^3\ge6144LR^2\bN\cn/w\}$.

\begin{restatable}[Inexact oracle and finite-bit implementation]{proposition}{thmfinitebit}
\label{thm:finite-bit}
Let $K=B_1^n(R)$ and let the oracle satisfy \eqref{eq:inexact-oracle}.
\begin{enumerate}[label=(\alph*)]
\item With any selector that returns rational points $z_t\in C_t$ with
$\nrm{z_t-\st{C_t}}_2\le\eta_N$ (\cref{prop:finite-selector} is one), the rounded
bracket search returns $v\in K$ with $f(v)-\min_Kf\le\varepsilon$ after at most
$S=1+24N'(\varepsilon)$ queries,
$N'(\varepsilon)=O(1+[\frac{LR^2}{\varepsilon}\log(2n)\log(2+\frac{LR^2}{\varepsilon}\log(2n))]^{1/3})$;
the errors do not accumulate with the number of steps.
\item For a known nonempty rational polytope $C\subseteq c+B_1^n(R)$ with
rational $c$, rational $\eta\in(0,R]$ and rational $\gamma\in(0,1)$, a rational
point $\widehat s\in C$ with $\Prob\{\nrm{\widehat s-\st C}_2\le\eta\}\ge1-\gamma$
can be computed from finitely many uniformly random bits with
$m=\lceil32(R/\eta)^2(1+\lceil\log_2(1/\gamma)\rceil)\rceil$ rational linear
programs over $C$, in a number of bit operations polynomial in $n$, $R/\eta$,
$\log(1/\gamma)$ and the bit lengths of $R$, $\eta$ and of the description of
$C$; the vertices of $C$ are not enumerated.
\item Consequently, for rational $\delta\in(0,1)$, the rounded bracket search
with the selectors of (b) (accuracy $\eta=R/N$ and failure probability
$\delta/(2S)$ per selector call) returns a point with error at most
$\varepsilon$ with probability at least $1-\delta$, makes at most $S$ queries on
every outcome, solves at most $66S^3(1+\lceil\log_2(2S/\delta)\rceil)$ rational
linear programs plus at most $S$ emptiness tests, and its total number of bit
operations is polynomial in $n$, $S$, $\log(1/\delta)$, $k_f$, $k_g$, the bit
lengths of $R,L,\varepsilon,\delta$ and the bit length of the first oracle
answer.
\end{enumerate}
\end{restatable}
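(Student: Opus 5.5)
The proof splits into the three parts of the statement. Part (a) is a perturbation of \cref{thm:splevel} and \cref{thm:main}, part (b) is a finite-precision version of \cref{thm:random-selector}, and part (c) combines them by a union bound.

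\emph{Part (a).} Three facts about the rounded cut come first.
\begin{itemize}
\item \emph{Validity.} If $u\in K$ and $f(u)\le\ell$, convexity and \eqref{eq:inexact-oracle} give
\[
\ip{\widetilde g_x}{u-x}\le f(u)-f(x)+\beta\nrm{u-x}_2\le\ell-\widetilde f_x+\alpha+2R\beta .
\]
So $u$ lies in the cut, since $c_x$ rounds $\ell+\tau-\widetilde f_x+\ip{\widetilde g_x}x$ upward. An empty $C_t$ therefore still certifies $\min_Kf>\ell$.
\item \emph{Paths.} The sets $C_t$ are still nested polytopes in $B_1^n(R)$, so \cref{lem:steiner-path} and \eqref{eq:actual-path} hold verbatim. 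At most $3N/8$ trials are rejected.
\item \emph{One accepted step.} I redo \cref{lem:level-step} with $z^+\in C_t$. Membership gives $\ip{\widetilde g}{z^+}\le c_x\le\ell+\tau+\rho-\widetilde f_x+\ip{\widetilde g}x$. Converting back to $f(x)$ and $\nabla f(x)$ costs $\alpha+2R\beta$. Hence the level-cut term becomes $a[\ell+\tau'']$ with $\tau''=2\tau+\rho=\varepsilon/32$. The convexity term at $y$ uses the exact gradient and is untouched.
\end{itemize}
The energy inequality thus gains an additive $a_t\varepsilon/32$ on accepted steps. The weights $a_j=j/(2L)$, $A_j=j(j+1)/(4L)$ satisfy $La_j^2\le A_j$ (we have $j^2/4\le j(j+1)/4$), so the quadratic coefficient is at most $\frac12$. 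Telescoping gives
\[
A_N[f(y_N)-\ell]\le\tfrac{3\PN^2}{N}+\tfrac{\varepsilon}{32}A_N .
\]
The additive error is divided by $A_N=\sum a_j$ and does not accumulate, and $A_N\ge k^2/(4L)\ge N^2/(16L)$ with $k>N/2$ accepted steps. This gives $f(y_N)\le\ell+48L\PN^2/N^3+\varepsilon/32$.

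For the bracket search, the lower end $\flo=\widetilde f_0+\ip{\widetilde g_0}v-\tau_0$ is valid by the same perturbation bound with $x_0=0$ and $\nrm{u}_2\le R$. For the upper end, the true minimum is at most $f(v)\le\flo+\tau_0+\tau_0+\frac L2R^2$. This needs a short check of the linear-term error at $v$, which I expect to fit the stated $2\tau_0$.

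In each phase the horizon rule $N'(w)$ forces $48L\PN^2/N^3\le w/8$. While $w>\varepsilon$ we also have $\varepsilon/32<w/8$, so $f(y_N)\le\ell+w/4$ and the $3/4$ and $1/2$ shrinkage is preserved. The phase-count argument of \cref{thm:main} is then unchanged with $N'$ in place of $N$, giving $S=1+24N'(\varepsilon)$. The explicit bound on $N'(\varepsilon)$ is the one for $N(\varepsilon)$ with $\Theta$ doubled.

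\emph{Part (b) is the main obstacle: Gaussians from finitely many bits.} I would generate each coordinate of $Z_i$ to finite precision, for example by inverse CDF with certified rational approximations as in \cref{prop:finite-selector}. I would then solve the rational LP $\max_{u\in C}\ip{\widetilde Z_i}u$ exactly to get a vertex $V_i$, by ellipsoid or interior point plus rounding to a vertex, which is polynomial in bit length. The approximation step is delicate: the map from $Z$ to the maximizer is discontinuous. I would handle this by coupling $\widetilde Z$ with an exact $Z$. The two LP maximizers coincide unless $Z$ falls within a tiny angular neighbourhood of the normal fan's walls. That set has small Gaussian measure, at most about $N_{\mathrm{faces}}\cdot$precision, and this can be made at most $\gamma/2$ with polynomially many bits, because the facet normals have polynomial bit length and the number of pairs of facets is at most polynomially many in the description size. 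The coupled sample then has $\E V_i$ within $R\gamma$ of $\st C$, and McDiarmid as in \cref{thm:random-selector} with the chosen $m$ gives failure probability at most $2^{-\lceil\log_2(1/\gamma)\rceil}\le\gamma$. The average $\widehat s$ is rational and lies in $C$.

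\emph{Part (c).} I would take per-call accuracy $\eta=R/N\le\eta_N$ and failure probability $\delta/(2S)$. There are fewer than $2S$ selector calls, so a union bound gives success with probability at least $1-\delta$, and the query bound holds on every outcome as in \cref{thm:random-selector}. The number of LPs is at most $2S\cdot m$ with $m\le32N^2(2+\log_2(2S/\delta))$ and $N\le S$, which is within $66S^3(1+\lceil\log_2(2S/\delta)\rceil)$. For bit complexity, the cut coefficients are rational with bit length controlled by $k_f$, $k_g$, $\rho$ and the bit lengths of the queries. The queries are averages of LP vertices with polynomially bounded bit length, so every LP has polynomial size.
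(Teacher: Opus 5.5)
Your part (a) follows the paper's argument: the rounded cut acts as a level cut at $\ell+\varepsilon/32$ in \cref{lem:level-step}, the weights satisfy $La_j^2\le A_j$, the bracket has the extra $2\tau_0$ (which does fit, since $v=\pm Re_i$), and the rule $N'$ uses $w/8$. The unstated check that every horizon exceeds $1$ holds because $w<LR^2$ once $w>\varepsilon$. The gaps are in (b) and (c).

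\emph{Part (b).} You calibrate the coupling error to $\gamma$: the mismatch event gets probability $\gamma/2$, hence bias $R\gamma$. But $\gamma$ and $\eta$ are independent parameters. With the stated $m$, the bounded-differences step only gives $\nrm{\widehat s-\E V_1}_2\le\eta/2$, so you obtain $\eta/2+R\gamma$, which exceeds $\eta$ as soon as $\gamma>\eta/(2R)$. The mismatch probability must be $O(\eta/R)$. Couple so that $\nrm{Z-\widehat Z}_2\le h$ outside an event of probability $\zeta$. A mismatch then forces $0<\ip{Z}{(v-u)/\nrm{v-u}_2}\le h$ for some pair of distinct vertex images $u,v$, so the bias is at most $2R(\zeta+V^2h)$. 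The choice $\zeta=\eta/(8R)$, $h=\eta/(8RV^2)$ makes this at most $\eta/2$, and $\gamma$ is spent only on concentration.

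Your reason for polynomial precision is also wrong. The walls of the normal fan are indexed by edges (pairs of vertex images), not by pairs of facets, and there can be exponentially many of them: up to $V^2$ with $V\le p^d$ for the lifted description with $p$ inequalities in $d=2n$ variables. Already $B_1^n(R)$ has $2^n$ facets in $x$-space. Polynomiality comes from needing only $\log(1/h)=O(\log(R/\eta)+d\log p)$ bits. The bit lengths of the normals play no role, since each $\ip{Z}{(v-u)/\nrm{v-u}_2}$ is a standard Gaussian.

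\emph{Part (c).} Your bit-length argument is circular. You bound the cut constants through the bit lengths of the queries, the queries through the LP vertices, and the vertices through the cut constants. This recursion only bounds the length at one trial by a multiple of the length at the previous one: Cramer's rule combines $d$ constants, each containing $\ip{\widetilde g_x}{x}$, and the selector averages $m$ vertices. That gives growth exponential in $S$.

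The missing idea is the purpose of the outward rounding. We have $c_x\in\rho\mathbb Z$ and $|c_x|\le|\ell|+|\widetilde f_x|+|\ip{\widetilde g_x}{x}|+\tau+\rho$. These magnitudes are bounded through the first answer, using $\nrm{\nabla f(x)}_2\le\nrm{\nabla f(0)}_2+LR$, $|f(x)|\le|f(0)|+R\nrm{\nabla f(0)}_2+LR^2/2$, and the fact that every level lies in the initial bracket. So the bit length of $c_x$ is independent of that of $x_t$; this is also where the first oracle answer enters the statement.

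Together with the grid $2^{-k_g}\mathbb Z^n$ for the normals, every localization polytope then has a description of fixed polynomial size. Its vertices and the selector averages therefore have polynomial length. The iterates $x_t,y_t$ grow only additively because the accepted-step weights $a_j/A_j=2/(j+1)$ are rational.

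Lastly, your intermediate bound $m\le32N^2(2+\log_2(2S/\delta))$ is too lossy for the constant $66$; for example, it fails at $\log_2(2S/\delta)=8$. Count $m=32N^2(1+\lceil\log_2(2S/\delta)\rceil)$ exactly, which gives $2Sm\le64S^3(1+\lceil\log_2(2S/\delta)\rceil)$.
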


The dependence on the first answer is unavoidable, since the affine part of $f$
may be arbitrarily large. The stored cut constants are rounded to the grid
$\rho\mathbb Z$ and bounded in terms of the first answer, so the bit lengths of
the localization polytopes stay polynomial. The proposition concerns a randomized
implementation in the bit model, and its running time is polynomial in
the number of queries $S=\widetilde\Theta((LR^2/\varepsilon)^{1/3})$, not in
$\log(1/\varepsilon)$. A deterministic polynomial-time selector and a practically
fast one remain open (\cref{sec:conclusion}).

\begin{lemma}[Rounded level method]\label{lem:rounded-level}
Let $K=B_1^n(R)$, let the oracle satisfy \eqref{eq:inexact-oracle}, and let
$\tau=\alpha+2R\beta=\varepsilon/128$, $\rho=\varepsilon/64$. The rounded level
method with level $\ell$ and horizon $N$ makes at most $N$ queries in $K$, and
either all its cuts have empty intersection with $K$, in which case
$\min_Kf>\ell$, or it returns $y_N\in K$ with
$f(y_N)\le\ell+48L\PN^2/N^3+\varepsilon/32$, provided every selected point is
within $\eta_N$ of the Steiner point of its polytope.
\end{lemma}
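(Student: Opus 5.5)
The proof will follow that of \cref{thm:splevel}, with the exact cut and the exact descent inequality replaced by perturbed versions whose errors enter additively and do not grow with $N$. The plan has four steps.

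\emph{Step 1: the rounded cut is valid.} For $u\in K$ with $f(u)\le\ell$, convexity gives $f(x)+\ip{\nabla f(x)}{u-x}\le\ell$. Replacing $f(x)$ and $\nabla f(x)$ by the oracle values costs at most $\alpha+\beta\nrm{u-x}_2\le\alpha+2R\beta=\tau$, because $x,u\in K$ give $\nrm{u-x}_2\le\nrm{u-x}_1\le2R$. Hence
\[
\ip{\widetilde g_x}{u}\le\ell+\tau-\widetilde f_x+\ip{\widetilde g_x}{x}\le c_x .
\]
So every $u\in K$ with $f(u)\le\ell$ lies in all the rounded cuts. Consequently, if the cuts have empty intersection with $K$, then $\min_Kf>\ell$.

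\emph{Step 2: the one-step inequality.} Suppose $z^+$ lies in the rounded cut at $x$. Then
\[
\widetilde f_x+\ip{\widetilde g_x}{z^+-x}\le\ell+\tau+\rho .
\]
Passing back to the true values costs another $\tau$, so
$f(x)+\ip{\nabla f(x)}{z^+-x}\le\ell+2\tau+\rho=\ell+\varepsilon/32$, using $2\tau+\rho=\varepsilon/64+\varepsilon/64$. This is the hypothesis of \cref{lem:level-step} with the level $\ell'=\ell+\varepsilon/32$. The convexity inequality at $x$ evaluated at $y$ uses the true gradient, so no further error appears. Therefore
\[
A_t[f(y_t)-\ell']\le A_{t-1}[f(y_{t-1})-\ell']+\frac{La_t^2}{2A_t}\,d_t^2 .
\]
Since every step measures progress against the shifted level $\ell'$ rather than summing perturbations, the errors do not accumulate. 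With $a_j=j/(2L)$ and $A_j=j(j+1)/(4L)$ we have
\[
\frac{La_j^2}{2A_j}=\frac{j}{2(j+1)}\le\frac12 ,
\]
so the coefficient is again at most $\frac12$.

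\emph{Step 3: counting.} Because $\delta_N^2$ is compared with $d_t^2$, acceptance is unchanged. The rounded polytopes $C_t$ are still nested subsets of $K$, so \cref{lem:steiner-path} and the accuracy $\eta_N$ give \eqref{eq:actual-path}. As before, more than $N/2$ trials are accepted. The sum over accepted trials is then bounded by $3\PN^2/N$, and $A_N\ge k(k+1)/(4L)\ge N^2/(16L)$ with $k>N/2$. Together these give $f(y_N)\le\ell'+48L\PN^2/N^3$.

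\emph{Step 4: feasibility.} All queries are convex combinations of points of $K$, hence in $K$.

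The main obstacle is Step 2: checking that no error term is multiplied by $A_N$ or by the number of steps. If the shifted level is carried through the telescoping sum, as above, the error appears only once, in $\ell'$.
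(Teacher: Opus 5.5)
Your proposal is correct and follows the paper's proof essentially step by step. You use the two-sided tangent error $\tau$ for points of $K$, the validity of the outward-rounded cut, and the bound $f(x)+\ip{\nabla f(x)}{z^+-x}\le\ell+2\tau+\rho=\ell+\varepsilon/32$ for points in the cut. You then apply \cref{lem:level-step} at the shifted level with $La_j^2/(2A_j)=j/(2(j+1))\le\frac12$, followed by the unchanged path and counting argument. The only cosmetic difference is that you telescope directly against $\ell'=\ell+\varepsilon/32$, whereas the paper keeps $\ell$ and sums the extra terms $a_j\varepsilon/32$ to $A_j\varepsilon/32$; this is the same computation.
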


\begin{proof}
For $x,u\in K$, $\nrm{u-x}_2\le2R$ and the Cauchy--Schwarz inequality give
\begin{equation}\label{eq:rounded-tangent}
 \bigl|\widetilde f_x+\ip{\widetilde g_x}{u-x}-f(x)-\ip{\nabla f(x)}{u-x}\bigr|\le\tau .
\end{equation}
\emph{The cut contains the level set.} If $f(u)\le\ell$, then
$f(x)+\ip{\nabla f(x)}{u-x}\le f(u)\le\ell$, so
$\ip{\widetilde g_x}u\le\ell+\tau-\widetilde f_x+\ip{\widetilde g_x}x\le c_x$ by
\eqref{eq:rounded-tangent} and the upward rounding. Hence an empty intersection
certifies $\min_Kf>\ell$.

\emph{Effect of the rounding.} If $\ip{\widetilde g_x}{z'}\le c_x$ with $z'\in K$,
then, since $c_x<\ell+\tau-\widetilde f_x+\ip{\widetilde g_x}x+\rho$,
\eqref{eq:rounded-tangent} gives
$f(x)+\ip{\nabla f(x)}{z'-x}\le\ell+2\tau+\rho=\ell+\varepsilon/32$.

\emph{Accepted steps.} Number the accepted steps $j=1,2,\dots$ and let
$a_j=j/(2L)$, $A_j=j(j+1)/(4L)$, so that $A_j=A_{j-1}+a_j$ and
$La_j^2/(2A_j)=j/(2(j+1))\le\frac12$. \Cref{lem:level-step} with the level
$\ell+\varepsilon/32$ (which the selected point $z'$ satisfies by the previous
paragraph) gives, for the accepted step with displacement $d=\nrm{z'-z}_2$,
\[
 A_j[f(y_j)-\ell]\le A_{j-1}[f(y_{j-1})-\ell]+\tfrac12d^2+a_j\varepsilon/32 ;
\]
telescoping and $\sum_{i\le j}a_i=A_j$ give
$f(y_j)-\ell\le\frac1{2A_j}\sum_{\rm accepted}d^2+\varepsilon/32$, without any
factor depending on the number of steps. The path bound
\eqref{eq:actual-path}, the counting of rejections and $A_k\ge k^2/(4L)\ge N^2/(16L)$
are as in the proof of \cref{thm:splevel}, and give the claim.
\end{proof}

We first prove part (b) of \cref{thm:finite-bit}.

\begin{proof}[Proof of \cref{thm:finite-bit}\textup{(b)}]
Translate by the rational vector $c$ so that $C\subseteq B_2(0,R)$. Write
$C=\{\mathsf Ty+c':y\in\widetilde C\}$ with a bounded rational polytope
$\widetilde C=\{y\in\R^d:\mathsf My\le\mathsf q\}$ given by $p$ inequalities and
rational $\mathsf T,c'$ (for the localization polytopes of our methods, $d=2n$,
$\widetilde C$ is the lifted description with variables $(x,u)$, $\mathsf T$ the
projection to $x$, $c'=0$, and $p\le3n+1+S$). Every vertex of $\widetilde C$ is
determined by $d$ linearly independent active inequalities, so $\widetilde C$
has at most $p^d=:V$ vertices and $C$
is the convex hull of at most $V$ points, the vertex images. For
$Z\sim\mathcal N(0,I_n)$ let $X(Z)$ be the maximizer of $\ip Zx$ over $C$; it is
almost surely a unique vertex image and $\E X(Z)=\st C$.

\emph{Rounded directions.} Let $\widehat Z$ be a rational vector coupled with $Z$
so that $\Prob\{\nrm{Z-\widehat Z}_2>h\}\le\zeta$, and let $\widehat X\in C$ be
any rational optimizer of $\ip{\widehat Z}x$ over $C$ (an LP over $\widetilde C$
with objective $\mathsf T^\top\widehat Z$). We claim $\Prob\{\widehat X\ne X(Z)\}\le\zeta+V^2h$.
Suppose that $\nrm{Z-\widehat Z}_2\le h$ and $v:=X(Z)\ne\widehat X$. The optimal face
of $\ip{\widehat Z}\cdot$ contains $\widehat X$ and is the convex hull of the
vertex images it contains, all with the same value of $\ip{\widehat Z}\cdot$;
since $\widehat X\ne v$, this face contains a vertex image $u\ne v$ with
$\ip{\widehat Z}{u-v}\ge0$, while $\ip Z{v-u}>0$ by uniqueness. Thus
$0<\ip Z{(v-u)/\nrm{v-u}_2}\le\nrm{Z-\widehat Z}_2\le h$. For a fixed pair of
distinct vertex images the left-hand side is a standard Gaussian, which falls
in an interval of length $h$ with probability at most $h/\sqrt{2\pi}<h$; the
union bound over at most $V^2$ pairs proves the claim. Consequently
$\nrm{\E\widehat X-\st C}_2\le\E\nrm{\widehat X-X(Z)}_2\le2R(\zeta+V^2h)$, and
the choice $\zeta=\eta/(8R)$, $h=\eta/(8RV^2)$ makes the bias at most $\eta/2$;
note $\log(1/h)=O(\log(R/\eta)+d\log p)$.

\emph{Finite Gaussian generator.} Let $J=\lceil\log_2(2n/\zeta)\rceil$,
$t_0=\min\{t\in\N:t^2\ge2J\}$, $h'=h/n$, $\vartheta=2^{-(t_0^2+2)}$, and let
$k_b\ge1$ be an integer with $2^{-k_b}\le\vartheta h'$ (we may take $\zeta$
and $h$ to be the largest powers of two below the values prescribed above, so
that all these quantities have bit length polynomial in the bit lengths of $R$,
$\eta$ and $\log V$). Then
$\Prob\{\nrm Z_\infty>t_0\}\le2ne^{-t_0^2/2}\le2n2^{-J}\le\zeta$, and the standard
normal density is at least $\vartheta$ on $[-t_0,t_0]$ (because $\sqrt{2\pi}<4$ and
$e^{t_0^2/2}<2^{t_0^2}$), so the truncated quantile
$\Phi^{-1}_{t_0}(u)=\max\{-t_0,\min\{t_0,\Phi^{-1}(u)\}\}$ is $1/\vartheta$-Lipschitz on $[0,1]$.
For each coordinate draw $k_b$ uniform bits, forming an integer
$I_i\in\{0,\dots,2^{k_b}-1\}$, put $U_i=(I_i+\frac12)2^{-k_b}$ and compute a
rational $\widehat Z_i\in[-t_0,t_0]$ with $|\widehat Z_i-\Phi^{-1}_{t_0}(U_i)|\le h'/2$. For
the analysis couple $I_i$ with an independent uniform $\Theta_i\in[0,1]$, so
that $\widetilde U_i=(I_i+\Theta_i)2^{-k_b}$ is uniform and
$Z_i=\Phi^{-1}(\widetilde U_i)$ is standard Gaussian; then
$|\widehat Z_i-\Phi^{-1}_{t_0}(\widetilde U_i)|\le h'/2+2^{-k_b-1}/\vartheta\le h'$, and on
the event $\nrm Z_\infty\le t_0$ we have $\Phi^{-1}_{t_0}(\widetilde U_i)=Z_i$ and
$\nrm{\widehat Z-Z}_2\le\sqrt n\,h'\le h$. This is the coupling required
above. To compute $\widehat Z_i$ we bisect $[l_0,r_0]=[-t_0,t_0]$: at the midpoint
$t$ compute a rational $\widetilde\varphi$ with $|\widetilde\varphi-\Phi(t)|\le\theta:=\vartheta h'/16$
and set $r_0=t$ if $\widetilde\varphi-\theta>U_i$, $l_0=t$ if
$\widetilde\varphi+\theta<U_i$, and otherwise return $t$ (then
$|\Phi(t)-U_i|\le2\theta$ and $|t-\Phi^{-1}_{t_0}(U_i)|\le2\theta/\vartheta\le h'/8$); return
the midpoint as soon as $r_0-l_0\le h'$. Monotonicity keeps $\Phi^{-1}_{t_0}(U_i)\in[l_0,r_0]$,
so at most $\lceil\log_2(2t_0/h')\rceil+1$ evaluations of $\Phi$ are needed,
and each is a truncation of the everywhere convergent series
$\Phi(t)=\frac12+\frac1{\sqrt{2\pi}}\sum_{j\ge0}\frac{(-1)^jt^{2j+1}}{2^jj!(2j+1)}$
with an explicit remainder, together with a rational approximation of
$1/\sqrt{2\pi}$ of certified accuracy; all of this costs a number of bit
operations polynomial in $t_0^2$, $\log(1/\theta)$ and the bit length of $t$.
Since $t_0^2=O(1+\log(n/\zeta))$ and $k_b=O(1+\log(n/\zeta)+\log(n/h))$, the
generator uses $nk_b$ random bits and polynomial work per direction.

\emph{Averaging.} Draw $m$ independent directions, solve the $m$ linear
programs, and return $\widehat s=\frac1m\sum_{i\le m}\widehat X_i\in C$ (a convex
combination of points of $C$; no projection is needed). Let
$\bar s=\E\widehat X_1$. As $\E\nrm{\widehat X_1-\bar s}_2^2\le R^2$,
$\E\nrm{\widehat s-\bar s}_2\le R/\sqrt m\le\eta/4$ for $m\ge16R^2/\eta^2$, and
changing one sample changes $\nrm{\widehat s-\bar s}_2$ by at most $2R/m$, so the
bounded-differences inequality \citep{mcdiarmid1989} gives
$\Prob\{\nrm{\widehat s-\bar s}_2>\eta/2\}\le\exp(-m\eta^2/(32R^2))\le\gamma$ for the
stated $m$. With the bias bound, $\nrm{\widehat s-\st C}_2\le\eta$ with
probability at least $1-\gamma$. Each linear program has rational data of
polynomial bit length, is solved exactly in polynomial time
\citep{khachiyan1979,karmarkar1984}, and its basic optimal solutions have
polynomial bit length by Cramer's rule; the average of $m$ of them keeps
polynomial length. If the polytope depends on earlier oracle answers and
random choices, we condition on that history. The polytope and its vertex
images are then fixed, the new bits are independent of it (the oracle errors
may depend on the past but not on bits drawn later), and the bounds hold
conditionally with the same $p,d,R$.
\end{proof}

\begin{proof}[Proof of \cref{thm:finite-bit}\textup{(a)} and \textup{(c)}]
\emph{(a)} Let $\tau_0=\alpha+R\beta=3\varepsilon/512$ and let $v$ be a rational
vertex $\pm Re_i$ minimizing $\ip{\widetilde g_0}\cdot$ over $K$. For every
$u\in K$, convexity and \eqref{eq:inexact-oracle} with $\nrm u_2\le R$ give
$f(u)\ge f(0)+\ip{\nabla f(0)}u\ge\widetilde f_0+\ip{\widetilde g_0}u-\tau_0\ge\widetilde f_0+\ip{\widetilde g_0}v-\tau_0=\flo$,
so $\flo\le\min_Kf$; and smoothness at $0$ gives
$f(v)\le f(0)+\ip{\nabla f(0)}v+LR^2/2\le\flo+2\tau_0+LR^2/2=\fhi$. If
$w=\fhi-\flo\le\varepsilon$, return $v$. Otherwise $\varepsilon<w=LR^2/2+3\varepsilon/256$,
so $\varepsilon<128LR^2/253$ and $w<LR^2$. In a phase of width
$w>\varepsilon$ with level $\ell$ and horizon $N=N'(w)$,
\cref{lem:rounded-level} either certifies $\min_Kf>\ell$ or returns $y_N$ with
$f(y_N)\le\ell+w/8+\varepsilon/32\le\ell+w/4$. Hence the bracket updates of
\cref{alg:spaccel} keep the invariant $\flo\le\min_Kf\le f(v)\le\fhi$ and
multiply $w$ by $\frac12$ or $\frac34$. Every horizon exceeds $1$ because
$w<LR^2$, and the geometric summation of \cref{app:bracket} applies without
change to the rule $N'$, giving at most $1+24N'(\varepsilon)$ queries; the explicit
order of $N'(\varepsilon)$ is obtained as in \cref{app:bracket} with $6144$ in
place of $3072$. All comparisons are between rational numbers.

\emph{(b)} was proved above.

\emph{(c)} The width argument in the proof of \cref{thm:random-selector}, and
with it the bound $S$ on the number of queries, does not depend on the accuracy
of the selected points and applies here. As there, one selector call is made
per trial and one per level call for $z_0$, so fewer than $2S$ calls are made;
with failure probability $\delta/(2S)$ each, all are $\eta_N$-accurate with
probability at least $1-\delta$, and then (a) applies. A call in a level call
with horizon $N$ uses the rational accuracy $\eta=R/N$, which is at most
$\eta_N=R\sqrt{\bN\cn}/N$ and at most $R$, so its points are $\eta_N$-accurate.
Since $(R/\eta)^2=N^2\le S^2$, such a call solves at most
$32S^2(1+\lceil\log_2(2S/\delta)\rceil)+1$ linear programs; multiplying by $2S$
gives the stated total, and the emptiness tests, one per trial, add at most $S$.

\emph{Bit lengths.} On $K$, $\nrm{\nabla f(x)}_2\le\nrm{\nabla f(0)}_2+LR$ and
$|f(x)|\le|f(0)|+R\nrm{\nabla f(0)}_2+LR^2/2$, all levels lie in the initial
bracket, and $|\ip{\widetilde g_x}x|\le R(\nrm{\nabla f(0)}_2+LR+\beta)$. Since
the answers lie on the grids $2^{-k_f}\mathbb Z$, $2^{-k_g}\mathbb Z^n$, the
coefficients $\widetilde g_t$ of the stored cuts have bit length
$O(n(k_g+\log(2+\nrm{\nabla f(0)}_2+LR+\beta)))$. The constants
$c_t\in\rho\mathbb Z$ satisfy $|c_t|\le|\ell|+|\widetilde f_{x_t}|+|\ip{\widetilde g_{x_t}}{x_t}|+\tau+\rho$,
hence have bit length $O(\log(2+|f(0)|+R\nrm{\nabla f(0)}_2+LR^2)+\mathrm{bits}(\rho))$,
\emph{independently of the bit length of the query point} $x_t$; here
$\mathrm{bits}(\rho)$ is the bit length of the rational $\rho=\varepsilon/64$. The lifted
localization polytopes therefore have polynomial descriptions, their vertices
and the selected averages have polynomial bit length, the accepted-step
weights $a_j/A_j=2/(j+1)$ are rational, and the iterates $x,y$, being convex
combinations with these weights of selected points, grow in length only
additively over at most $S$ steps. Together with (b) this gives the polynomial
bound on the total number of bit operations.
\end{proof}

\subsection{Other domains and smoothness metrics}
\label{app:general-geometry}

For a known polytope $K$ with $V$ vertices contained in $B_2(c,R)$ with $c\in K$,
the union bound and the tail integration of \cref{lem:gaussian-max}
give $\gw{K-c}\le R\sqrt{2(1+\log V)}$, so \cref{thm:general-geometry} below, with
$H=I$ and $\bar\omega=R\sqrt{2(1+\log V)}$, gives the bounds of \cref{thm:main}
with $1+\log(2n)$ replaced by $1+\log V$. For the $\ell_p$ ball with $1<p<2$ and
$q=p/(p-1)$, $\gw{B_p^n(R)}=R(\E\nrm Z_q^2)^{1/2}$ is at most
$R(n\,\E|Z_1|^q)^{1/q}$ and at most $Rn^{1/q}(\E\nrm Z_\infty^2)^{1/2}$, and
$(\E|Z_1|^q)^{2/q}=\Theta(q)$; \cref{thm:general-geometry} then gives the error
$O(LR^2n^{2-2/p}\min\{q,\log(2n)\}\log(2N)/N^3)$, uniformly in $1<p<2$. This
bound matches the lower bound $LR^2/N^{1+2/p}$ of \cref{thm:first-order-lower}
up to logarithms when $N+1\le n=O(N)$, but not when $n\gg N$; in that regime
the general question for $1<p<2$ remains open (for quadratics it is settled in
\cref{app:quadratics}). For the Euclidean ball the bound is
$O(LR^2n\log(2N)/N^3)$, which does not improve on $LR^2/N^2$ when $N\le n$, as
the Euclidean lower bound requires.

More generally, neither \cref{lem:steiner-path} nor the level method is specific
to the $\ell_1$ ball or to the Euclidean metric. The domain enters
through the Gaussian width of its image under the square root of the
smoothness metric. Let $H\succ0$ be a known symmetric matrix,
$\nrm x_H=\sqrt{x^\top Hx}$, $\nrm g_{H^{-1}}=\sqrt{g^\top H^{-1}g}$, $S=H^{1/2}$,
and let $K$ be a known nonempty compact convex set. For a nonempty compact
convex $C\subseteq K$ define the \emph{$H$-Steiner point} $\operatorname{st}_H(C)=S^{-1}\st{SC}\in C$.
We assume that the stored sets $C=K\cap\{\text{half-spaces}\}$ admit an
emptiness test and $\eta$-accurate feasible approximations of $\operatorname{st}_H(C)$ in the
norm $\nrm\cdot_H$, and that linear functions can be minimized over $K$ and the
certificate program $\min_\lambda\{\sum_j\lambda_j\ip{g_j}{x_j}+\hs K(-\sum_j\lambda_jg_j)\}$
of \cref{alg:spcut} (a convex program, a linear program for polytopes) can be
solved. These are computations with known sets, which are not counted
(\cref{app:notation}); for polytopes, \cref{app:steiner-computation} provides
them after the change of variables $x\mapsto Sx$, with $R$ replaced by the
Euclidean radius of $S(K-c)$.

\begin{restatable}[General known geometry]{proposition}{thmgeneralgeometry}
\label{thm:general-geometry}
Let $\bar\omega\ge\gw{S(K-c)}$ be known for some $c\in\R^n$ (the best choice is
$c=\operatorname{st}_H(K)$), let $P^H_N=2\sqrt2\,\bar\omega\sqrt{\bN}$, and run \SPC, \SPL{}
and \SPA{} with $\nrm\cdot_H$ in place of $\nrm\cdot_2$, $\operatorname{st}_H$ in place of
$\st{\cdot}$, and $P^H_N$ in place of $\PN$ (cut depth $4GP^H_N/N$;
$\eta_N=P^H_N/(4N)$, $\delta_N=4P^H_N/N$; horizon rule
$N_H(w)=\min\{N=2^j:N^3\ge1536L\bar\omega^2\bN/w\}$; initial bracket
$[\flo,\flo+W_0]$ with $\flo$ from \eqref{eq:bracket} and $W_0=\pi L\bar\omega^2$).
\begin{enumerate}[label=(\alph*)]
\item If $f$ is convex on $K$ and the subgradient oracle returns $g_x$ with
$\nrm{g_x}_{H^{-1}}\le G$, then \SPC{} with horizon $N$ returns $\widehat x\in K$
with $f(\widehat x)-\min_Kf<4GP^H_N/N=8\sqrt2\,G\bar\omega\sqrt{\bN}/N$ after at
most $N$ queries.
\item If $f$ is convex and differentiable on a neighborhood of $K$ with
$f(v)\le f(u)+\ip{\nabla f(u)}{v-u}+\frac L2\nrm{v-u}_H^2$ for $u,v\in K$ (for
instance if $\nrm{\nabla f(v)-\nabla f(u)}_{H^{-1}}\le L\nrm{v-u}_H$), then the
level method either certifies $\min_Kf>\ell$ or returns $y_N$ with
$f(y_N)\le\ell+48L(P^H_N)^2/N^3=\ell+384L\bar\omega^2\bN/N^3$,
and the bracket search returns an $\varepsilon$-solution after at most
$1+24N_H(\varepsilon)=O(1+[\frac{L\bar\omega^2}{\varepsilon}\log(2+\frac{L\bar\omega^2}{\varepsilon})]^{1/3})$
first-order queries; with a budget of $T\ge25$ queries the error is
$O(L\bar\omega^2\log(2T)/T^3)$.
\end{enumerate}
\end{restatable}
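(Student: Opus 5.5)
The proof is a change of variables $x\mapsto Sx$ with $S=H^{1/2}$, which turns the $H$-geometry into the Euclidean geometry already analyzed. We work in the variables $\tilde x=Sx$ on the set $\tilde K=SK$. There $\nrm{x}_H=\nrm{Sx}_2$ and $\nrm{g}_{H^{-1}}=\nrm{S^{-1}g}_2$. The function $\tilde f(\tilde x)=f(S^{-1}\tilde x)$ has gradient $S^{-1}\nabla f$, so the subgradient bound $\nrm{g_x}_{H^{-1}}\le G$ becomes Euclidean $G$-boundedness of $\nabla\tilde f$. Likewise, the upper quadratic bound in $\nrm\cdot_H$ becomes the Euclidean descent inequality with constant $L$. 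Level cuts and half-spaces map to half-spaces, so nestedness is preserved, and by definition $\operatorname{st}_H(C)=S^{-1}\st{SC}$. Running the methods with $\nrm\cdot_H$ and $\operatorname{st}_H$ is therefore literally running the Euclidean methods on $(\tilde f,\tilde K)$. The only ingredient that changes is the path bound.

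For the path bound we apply \cref{lem:steiner-path-general} to the nested chain $SC_0\supseteq\dots\supseteq SC_T$ inside $\tilde K$, with center $Sc$. Since $\tilde K-Sc=S(K-c)$, this gives $\sum_t\nrm{\operatorname{st}_H(C_t)-\operatorname{st}_H(C_{t-1})}_H\le2\sqrt{2(1+\log T)}\,\bar\omega\le P^H_N$ for $T\le N$, using $\bN\ge1+\log N$. This replaces \eqref{eq:steiner-path} and hence \eqref{eq:actual-path}.

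For part (b) we repeat the proof of \cref{thm:splevel} verbatim. \Cref{lem:level-step} uses only the descent inequality, so it holds in the $H$-norm. Feasibility holds because the iterates are convex combinations. The rejection count, the energy sum and the bound $A_N\ge N^2/(16L)$ then give $48L(P^H_N)^2/N^3=384L\bar\omega^2\bN/N^3$. For the bracket search we follow \cref{thm:main}, and the main new point is the initial width $W_0$. Smoothness gives $\fhi-\flo\le\frac L2\nrm{v-x_0}_H^2$, so we need a bound on the $H$-diameter of $K$ in terms of $\bar\omega$. A Gaussian-width argument does this. For unit $u$, $\ip uZ$ is a standard Gaussian, and $\hs{S(K-c)}(Z)+\hs{S(K-c)}(-Z)\ge\ip{Z}{S(a-b)}$ for $a,b\in K$. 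Taking expectations of the positive part gives $\nrm{S(a-b)}_2\,\E|\ip uZ|/2\le\dots$, and hence $\nrm{a-b}_H\le\sqrt{2\pi}\,\bar\omega$. With this, $\frac L2\nrm{v-x_0}_H^2\le\pi L\bar\omega^2=W_0$, so the stated upper end is valid. The horizon rule then becomes $48\cdot8L\bar\omega^2\bN/N^3\le w/4$, which is exactly $N^3\ge1536L\bar\omega^2\bN/w$. We must recheck that every horizon exceeds $1$: since $w\le\pi L\bar\omega^2<1536L\bar\omega^2$, this holds. The geometric summation argument of \cref{thm:main} then goes through unchanged and gives the count $1+24N_H(\varepsilon)$ and the fixed-budget rate.

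Part (a) is the \SPC{} analysis of \cref{sec:extensions}, redone in the new variables. While the polytope stays nonempty, each deep cut at depth $\varepsilon=4GP^H_N/N$ forces the next selected point to lie at $H$-distance at least $\varepsilon/G$ from the current query, because $\ip{g_t}{x_t-u}\ge\varepsilon$ and $\nrm{g_t}_{H^{-1}}\le G$. Allowing for the selection accuracy $\eta_N$, the path budget $P^H_N$ permits fewer than $N$ such moves, so the polytope becomes empty within $N$ trials. The LP-duality certificate \eqref{eq:main-mixture} then shows that the weighted average of the queries is better than $\varepsilon$-optimal.

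The main obstacle is that the constants were tuned for the Euclidean $\ell_1$ case. The first point to settle is the diameter-to-width inequality that justifies $W_0=\pi L\bar\omega^2$, where the cleanest route is the one-dimensional projection bound $\E|\ip uZ|=\sqrt{2/\pi}$. The second is matching the \SPC{} cut depth $4GP^H_N/N$ to the accuracy $\eta_N$ so that the strict inequality in (a) survives.
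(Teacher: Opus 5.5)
Your proposal is essentially the paper's proof: the change of variables by $S$, the general path lemma applied to $S(K-c)$, the Gaussian-width bound on the $H$-diameter that justifies $W_0=\pi L\bar\omega^2$, and the unchanged level-method, horizon and bisection arguments. Translating by $c$ as the paper does, or passing $Sc$ as the center in \cref{lem:steiner-path-general} as you do, makes no difference.

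There is one small slip in the diameter step. To reach $\nrm{a-b}_H\le\sqrt{2\pi}\,\bar\omega$ you need $\hs{S(K-c)}(Z)+\hs{S(K-c)}(-Z)\ge|\ip{Z}{S(a-b)}|$, obtained by also swapping $a$ and $b$. The positive part alone yields only $2\sqrt{2\pi}\,\bar\omega$, which would not justify $W_0$.

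Your \SPC{} count via displacements of consecutive selected points is valid, but it relies on the fine accuracy $\eta_N=P^H_N/(4N)$. The paper instead compares exact Steiner points, $\nrm{\operatorname{st}_H(C_t)-\operatorname{st}_H(C_{t-1})}_H\ge\varepsilon/G-\eta$. That comparison also works with \SPC{}'s own coarser accuracy $\eta=\varepsilon/(2G)$.
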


For $K=x_0+B_1^n(R)$ one has $\gw{S(K-x_0)}=R(\E\nrm{H^{1/2}Z}_\infty^2)^{1/2}\le R\sqrt{2(1+\log(2n))}\max_i\sqrt{H_{ii}}$,
which recovers \cref{thm:main} at $H=I$ and allows anisotropic smoothness. For
an ellipsoid $K=x_0+\{u:u^\top Qu\le R^2\}$, $\gw{S(K-x_0)}^2=R^2\tr(HQ^{-1})$,
so the rate is $O(LR^2\tr(HQ^{-1})\log(2N)/N^3)$; for the Euclidean
ball it is $O(LR^2n\log(2N)/N^3)$. The proposition does not show that the width
is the right parameter for every $K$.

\begin{proof}
\emph{Change of variables.} Put $\widetilde K=S(K-c)$, $\widetilde f(v)=f(S^{-1}v+c)$,
so that $\nabla\widetilde f(v)=S^{-1}\nabla f(S^{-1}v+c)$, $\nrm{S(u-v)}_2=\nrm{u-v}_H$
and $\nrm{S^{-1}g}_2=\nrm g_{H^{-1}}$. The hypotheses of (a) and (b) become,
for $\widetilde f$ on $\widetilde K$ with the same $G$ and $L$, the properties
used in the proofs of \cref{thm:spcut,thm:splevel,thm:main}: the subgradient
inequality with $\nrm{g}_2\le G$, and the descent inequality on $\widetilde K$
(the proofs use smoothness only through \cref{lem:level-step} and the
bracket). A query of $\widetilde f$ at $v\in\widetilde K$ is one query
of $f$ at $S^{-1}v+c\in K$; the Steiner point is translation equivariant
(\cref{lem:steiner-basic}), so the image $S(\operatorname{st}_H(C)-c)$ of $\operatorname{st}_H(C)$ is the
Euclidean Steiner point of the image $S(C-c)$, and
$\nrm{z-\operatorname{st}_H(C)}_H=\nrm{S(z-c)-\st{S(C-c)}}_2$.
So it suffices to prove (a) and (b) for the Euclidean methods on a general
compact convex $\widetilde K$ with $\gw{\widetilde K}\le\bar\omega$; the
translation $c$ never enters the method.

\emph{Path budget.} For a nested chain $\widetilde C_0\supseteq\dots\supseteq\widetilde C_T$
of nonempty compact convex subsets of $\widetilde K$ with $T\le N$,
\cref{lem:steiner-path-general} (with $c=0$) gives
$\sum_t\nrm{\st{\widetilde C_t}-\st{\widetilde C_{t-1}}}_2\le2\sqrt{2(1+\log T)}\,\bar\omega\le2\sqrt2\,\bar\omega\sqrt{\bN}=P^H_N$,
and with selector accuracy $P^H_N/(4N)$ the selected path has length at most
$\frac32P^H_N$. This is the only property of $\PN$ used in the proofs of
\cref{thm:spcut} and \cref{thm:splevel}, so both apply without change to
$P^H_N$; this gives (a) and the level bound in (b), since $48L(P^H_N)^2/N^3=384L\bar\omega^2\bN/N^3$.

\emph{Diameter and bracket.} Let $a,b\in\widetilde K$ with $\nrm{a-b}_2=\diam_2(\widetilde K)=:d$.
Then $\hs{\widetilde K}(Z)\ge\max\{\ip Za,\ip Zb\}=\frac12\ip Z{a+b}+\frac12|\ip Z{a-b}|$,
so $\E\hs{\widetilde K}(Z)\ge\frac12\E|\ip Z{a-b}|=d/\sqrt{2\pi}$, and by Jensen's
inequality $\gw{\widetilde K}\ge\E\hs{\widetilde K}(Z)$ (the mean is nonnegative because
$\E\hs{\widetilde K}(Z)\ge\E\ip Zu=0$ for any $u\in\widetilde K$). Hence
$d^2\le2\pi\bar\omega^2$ (the diameter is translation invariant). One query
at $x_0\in K$ gives $\flo\le\min_Kf$ as in \eqref{eq:bracket} and, by the descent
inequality at $x_0$, $f(v)\le\flo+\frac L2d^2\le\flo+W_0$ for the linear
minimizer $v$. Thus $[\flo,\flo+W_0]$ is a valid bracket of width $W_0=\pi L\bar\omega^2$
that needs no maximization over $K$. If $\varepsilon\ge W_0$ one query suffices.

\emph{Bracket search.} The horizon rule $N_H(w)$ is the condition
$48L(P^H_N)^2/N^3\le w/4$, so each phase behaves as in \cref{app:bracket}.
Every horizon exceeds $1$: at $N=1$ the rule requires $w\ge1536L\bar\omega^2$,
while $w\le W_0=\pi L\bar\omega^2$. The geometric summation of
\cref{app:bracket} applies to any rule of the form
$N(w)=\min\{N=2^j:N^3\ge c_0\bN/w\}$ with a constant $c_0>0$ for which every
horizon exceeds $1$, and gives the count $1+24N_H(\varepsilon)$. The explicit inversion with the scalar $\Theta=1536L\bar\omega^2/\varepsilon$
gives $N_H(\varepsilon)^3<32\Theta(3+\log_2\Theta)$, hence the stated order. The fixed-budget
statement follows with $\varepsilon_Q=1536L\bar\omega^2\mathsf b_Q/Q^3$ for the
largest power of two $Q$ with $24Q\le T-1$, whose error is at most
$\min\{W_0,\varepsilon_Q\}=O(L\bar\omega^2\log(2T)/T^3)$.
\end{proof}

The identity $\gw{S(K-x_0)}^2=R^2\tr(HQ^{-1})$ for the ellipsoid
$K=x_0+\{u:u^\top Qu\le R^2\}$ follows from
$\hs{S(K-x_0)}(z)=R\sqrt{z^\top SQ^{-1}Sz}$ and $\E Z^\top AZ=\tr A$; for
$K=x_0+B_1^n(R)$, $\hs{S(K-x_0)}(z)=R\nrm{Sz}_\infty$, where each $(SZ)_i$ is
centered Gaussian with variance $H_{ii}$, so the union bound over the $2n$ tails
and the tail integration of \cref{lem:gaussian-max} give
$\E\nrm{SZ}_\infty^2\le2\max_iH_{ii}(1+\log(2n))$; and for $K=x_0+B_2^n(R)$, $\gw{S(K-x_0)}^2=R^2\tr H$. (The
root-mean-square width is not translation invariant, since
$\gw{C+c}^2=\gw C^2+2\ip{\st C}c+\nrm c_2^2$; for this reason the proposition
is stated with a center.)

The two width bounds at the beginning of this subsection are proved in the same
way. If $K$ is the
convex hull of $V$ points $v_1,\dots,v_V$ of $B_2(c,R)$ and $c\in K$, then
$\hs{K-c}(z)=\max_i\ip{v_i-c}z\ge0$ and each $\ip{v_i-c}Z$ is Gaussian with
variance at most $R^2$, so the union bound and the tail integration of
\cref{lem:gaussian-max} give $\E\hs{K-c}(Z)^2\le2R^2(1+\log V)$. For
$K=B_p^n(R)$, $\hs K(Z)=R\nrm Z_q$ with $q\ge2$, and Jensen's inequality for the
concave function $t\mapsto t^{2/q}$ gives
$\E\nrm Z_q^2\le(\E\nrm Z_q^q)^{2/q}=(n\E|Z_1|^q)^{2/q}$.

\section{Lower bounds}
\label{app:lower}

All lower bounds in this section allow queries anywhere in $\R^n$ and require
only that the output is feasible. For deterministic methods, each proof builds,
for a given method, one fixed function that reproduces the whole transcript
(\cref{app:lower-det,app:lower-higher,app:lower-simplex}); for randomized methods
the hard function is drawn at random before the interaction
(\cref{app:lower-rand}).

\subsection{Deterministic first-order and nonsmooth bounds}
\label{app:lower-det}

The deterministic bounds use the signed-coordinate resisting oracle of
\citet{guzman2015lower}. At query $x_t$ the adversary reveals the signed
coordinate $v_t=\sign(x_{t,i_t})e_{i_t}$, where $i_t$ maximizes $|x_{t,i}|$
over the coordinates not yet revealed, and answers, for an offset $\tau>0$ fixed in the proofs, with the prefix maximum
\[
 \varphi_t(x)=\max_{j\le t}\bigl\{\ip{v_j}{x}-(j-1)\tau\bigr\}.
\]
Every future form $\ip{v_j}{x}-(j-1)\tau$, $j>t$, is at least $\tau$
below the current one at $x_t$, and differences of forms have $\ell_1$-norm at
most two; hence all completions of $\varphi_t$ coincide with $\varphi_t$ on the
$\ell_\infty$ ball of radius $\tau/2$ around $x_t$. After $N$ queries one
coordinate is still unrevealed; aligning its sign with the output $\widehat x$
gives $\varphi(\widehat x)\ge-N\tau$, while the comparator $x^\circ=-d\sum_jv_j$ with
$d=R/(N+1)^{1/p}$ has $\nrm{x^\circ}_p=R$ and $\varphi(x^\circ)=-d$.
\citet{guzman2015lower} smooth this function by an infimal convolution; for
first-order oracles we use the Moreau envelope, which gives explicit constants.

\begin{restatable}[First-order and nonsmooth lower bounds]{proposition}{thmfirstorderlower}
\label{thm:first-order-lower}
Let $N\ge1$, $n\ge N+1$, $1\le p\le2$ and $R>0$. For every deterministic method
that makes at most $N$ global queries and returns $\widehat x\in B_p^n(R)$:
\begin{enumerate}[label=(\alph*)]
\item with the subgradient oracle (values and subgradients), there is a convex
$G$-Lipschitz function and a fixed subgradient selection such that
$f(\widehat x)-\min_{B_p^n(R)}f\ge GR/(N+1)^{1/p}$;
\item with the first-order oracle, for every $0<\nu\le1$ there is a convex
function with $\nu$-H\"older gradient, $\nrm{\nabla f(x)-\nabla f(y)}_2\le
L_{1,\nu}\nrm{x-y}_2^\nu$, such that
\begin{equation}\label{eq:holder-lower}
 f(\widehat x)-\min_{B_p^n(R)}f\ \ge\ \frac{2^{\nu-2}}{8^{\nu}}\cdot
 \frac{L_{1,\nu}R^{1+\nu}}{(N+1)^{\nu+(1+\nu)/p}} .
\end{equation}
\end{enumerate}
\end{restatable}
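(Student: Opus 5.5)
The plan is to run the signed-coordinate resisting oracle described just before the statement. In each part I fix one final function that is consistent with every answer already given, and I compare its value at the output $\widehat x$ with its value at the feasible comparator $x^\circ=-d\sum_{j\le N+1}v_j$, where $d=R/(N+1)^{1/p}$. Two facts drive everything.

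\emph{Value at the output.} The coordinate left unrevealed after $N$ queries receives the sign of the corresponding coordinate of $\widehat x$. Hence $\ip{v_{N+1}}{\widehat x}\ge0$ and $\varphi(\widehat x)\ge-N\tau$.

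\emph{Value at the comparator.} Every form gives $\ip{v_j}{x^\circ}-(j-1)\tau=-d-(j-1)\tau$, so $\varphi(x^\circ)=-d$.

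Also, $\varphi$ is a maximum of forms with Euclidean unit gradients, so it is $1$-Lipschitz in $\ell_2$.

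For (a) I would take $f=G\varphi$ with offset $\tau=0$, which removes the loss $N\tau$ and gives the exact bound $GR/(N+1)^{1/p}$. With $\tau=0$, the key inequality is at the query $x_t$: every later form satisfies $\ip{v_j}{x_t}\le|x_{t,i_j}|\le|x_{t,i_t}|=\ip{v_t}{x_t}$, because $i_t$ maximized $|x_{t,i}|$ over the unrevealed coordinates. So the value $f(x_t)=G\varphi_t(x_t)$ is the same for every completion. The subgradient selection is fixed as $G$ times the gradient of the \emph{earliest} active form. With this choice, forms revealed later never change the answer, even when they tie. This is exactly why the statement speaks of a fixed selection. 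The bound then follows from
\[
f(\widehat x)\ge G\ip{v_{N+1}}{\widehat x}\ge0,\qquad \min f\le f(x^\circ)=-Gd .
\]

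For (b) I keep $\tau>0$ and smooth $\varphi$ by the Euclidean Moreau envelope $\varphi_\lambda(x)=\min_y\{\varphi(y)+\nrm{x-y}_2^2/(2\lambda)\}$. This envelope is convex and has the following properties:
\begin{itemize}
\item its gradient is $1/\lambda$-Lipschitz and bounded by $1$ in norm;
\item it satisfies $\varphi-\lambda/2\le\varphi_\lambda\le\varphi$;
\item its minimizing $y$ lies within $\ell_2$-distance $\lambda$ of $x$.
\end{itemize}
The last property gives locality. On a neighborhood of $x_t$, $\varphi_\lambda$ depends only on the values of $\varphi$ in an $\ell_2$ ball of radius slightly above $\lambda$, and hence within the $\ell_\infty$ ball of radius $\tau/2$ once $\lambda<\tau/2$. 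On that ball all completions coincide with $\varphi_t$, so the oracle answers $(\varphi_\lambda(x_t),\nabla\varphi_\lambda(x_t))$ are the same for every completion.

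The first two properties give the H\"older bound
\[
\nrm{\nabla\varphi_\lambda(x)-\nabla\varphi_\lambda(y)}_2\le\min\{2,\nrm{x-y}_2/\lambda\}\le2^{1-\nu}\lambda^{-\nu}\nrm{x-y}_2^\nu .
\]
So $f=M\varphi_\lambda$ with $M=2^{\nu-1}\lambda^\nu L_{1,\nu}$ has $\nu$-H\"older gradient with constant $L_{1,\nu}$. The gap is then
\[
f(\widehat x)-\min f\ge M\bigl(\varphi(\widehat x)-\lambda/2-\varphi(x^\circ)\bigr)\ge M\bigl(d-N\tau-\lambda/2\bigr).
\]
I would choose $\tau$ of order $d/N$ and $\lambda$ a constant fraction of $\tau$, for instance $\tau=d/(2(N+1))$ and $\lambda=\tau/4$, which keeps $\lambda<\tau/2$. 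The gap is then of order
\[
L_{1,\nu}(d/N)^\nu d=\frac{L_{1,\nu}R^{1+\nu}}{(N+1)^{\nu+(1+\nu)/p}},
\]
and tuning the fractions should reproduce the constant $2^{\nu-2}/8^\nu$ of \eqref{eq:holder-lower}.

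The main obstacle is the locality step in (b). The consistency region is an $\ell_\infty$ ball of radius $\tau/2$, while the envelope's dependence region is an $\ell_2$ ball. I expect the $\ell_2$ ball of radius below $\tau/2$ to sit inside the $\ell_\infty$ ball. What needs care is showing that both the value and the gradient of the envelope at $x_t$ are determined by $\varphi$ on that region, including the limiting behaviour of nearby points. I would handle this by checking that $\varphi_\lambda$ and the envelope of $\varphi_t$ agree on a whole open neighborhood of $x_t$.
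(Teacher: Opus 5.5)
Your proposal is correct and follows the paper's proof. In (a) you use the offset-free maximum of signed coordinates with the earliest-active subgradient rule; the paper reveals a new sign only when an unrevealed coordinate could exceed the current maximum, but revealing one per query as you do works equally well. In (b) your choices $\tau=d/(2(N+1))$, $\lambda=\tau/4$, $M=2^{\nu-1}\lambda^\nu L_{1,\nu}$ and the locality argument via each envelope's proximal point are exactly the paper's, and the constant follows from $d-N\tau-\lambda/2=d\bigl(1-\tfrac{8N+1}{16(N+1)}\bigr)\ge d/2$, which gives the gap $Md/2=2^{\nu-2}L_{1,\nu}d^{1+\nu}/(8(N+1))^{\nu}$.
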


At $\nu=1$ the bound is $LR^2/[16(N+1)^{1+2/p}]$, that is,
$LR^2/[16(N+1)^3]$ on the $\ell_1$ ball.

\begin{proof}
Set $m=N+1$ and use only the first $m$ coordinates. For the nonsmooth
claim consider $f_s(x)=G\max_{i\le m}s_ix_i$, with signs revealed
adaptively. At a query $x$, let $a$ be the largest value at $x$ of the
previously revealed signed forms ($-\infty$ initially), and let $u$ be the largest $|x_i|$
among unused coordinates. If $a\ge u$, return the earliest-revealed
maximizer. Otherwise reveal a coordinate attaining $u$, with its sign
aligned with $x_i$, and return its form and slope. Each answer reveals
at most one sign. Every still-unknown signed form at that query is at
most $u$, so all future sign completions preserve the value and the
validity of the returned subgradient.

After the output, align one unrevealed sign with the corresponding
coordinate of $\widehat x$ and complete the other signs arbitrarily.
Then $f_s(\widehat x)\ge0$, whereas
$x_i^\circ=-s_iR/m^{1/p}$ for $i\le m$, zero otherwise, is feasible
and has value $-GR/m^{1/p}$. Every slope has Euclidean norm $G$.
To obtain one fixed final oracle, order the coordinates by time of revelation,
with unused ones last, and always return the earliest maximizing
form in this order. Earlier maximizing forms precede later ties, and a newly
revealed form strictly exceeds every earlier one, so this rule
reproduces the full transcript, including repeated queries and ties.
This proves part (a) without using the feasibility of the output. (An
oracle that returns the whole subdifferential is not covered.)

For the H\"older claim put
\[
 d=R/m^{1/p},\qquad \tau=d/(2m),\qquad \lambda=\tau/4,
 \qquad \varphi(x)=\max_{i\le m}\{\langle v_i,x\rangle-(i-1)\tau\}.
\]
Now a fresh coordinate is revealed at \emph{every} query $x_t$: $i_t$
maximizes $|(x_t)_i|$ among the unused coordinates, and
$v_t=\sign((x_t)_{i_t})e_{i_t}$, with sign $+1$ at zero.
The oracle uses the prefix maximum through $t$. Every later vector
$v_j$, $j>t$, satisfies $\ip{v_j}{x_t}\le\ip{v_t}{x_t}$, so
the offset puts its affine form at least $\tau$ below
the current one. After the output, align the remaining signed coordinate
$v_m$ with that output. The final $\varphi$ and all prefixes are convex
and Euclidean $1$-Lipschitz.

Let $e_\lambda \varphi(x)=\min_z\{\varphi(z)+\nrm{z-x}_2^2/(2\lambda)\}$.
The unique proximal point $P(x)$ satisfies
$(x-P(x))/\lambda\in\partial \varphi(P(x))$, so
$\nrm{x-P(x)}_2\le\lambda$.
Monotonicity of $\partial \varphi$ shows that $I-P$ is nonexpansive.
Comparing the defining minima at $x$ and at $x+h$ gives
$\nabla e_\lambda \varphi(x)=(x-P(x))/\lambda$.
Thus the envelope has gradient norm at most one and gradient Lipschitz
constant at most $1/\lambda$.  Moreover,
\begin{equation}\label{eq:envelope-bias-lower}
 \varphi(x)-\lambda/2\le e_\lambda \varphi(x)\le \varphi(x),
\end{equation}
by minimizing $\varphi(x)-\nrm{z-x}_2+\nrm{z-x}_2^2/(2\lambda)$
over the distance for the lower inequality, and using $z=x$ for the upper.

Take $b=2^{\nu-1}L_{1,\nu}\lambda^\nu$ and $f=b e_\lambda \varphi$.
Its gradient difference is bounded by
$\min\{2b,b\nrm{x-y}_2/\lambda\}$, and
the inequality $\min\{u,v\}\le u^{1-\nu}v^\nu$ gives the required
H\"older condition.  It remains to check that the oracle stays consistent
after smoothing.
At query $x_t$ each future affine form is at least $\tau$ below the
current one.  A difference of two forms is $2$-Lipschitz, so no future
form can exceed the current one within distance $\tau/2$ of $x_t$.
The proximal points of the full and of the prefix envelope are within
$\lambda=\tau/4$ of their arguments, so for arguments within
$\tau/8$ of $x_t$ they remain in the region where
full and prefix maxima coincide.  Evaluating both minima at each other's
proximal points shows that the envelopes are equal in that neighborhood;
in particular their values and gradients agree at the query.

At the final output the last affine form has value at least $-N\tau$,
while $x^\circ=-d\sum_i v_i$ has $p$-norm exactly $R$ and
$\varphi(x^\circ)=-d$. Hence \eqref{eq:envelope-bias-lower} yields
\[
 f(\widehat x)-\min_{B_p^n(R)} f
 \ge b(d-N\tau-\lambda/2)\ge bd/2
 =2^{\nu-2}L_{1,\nu}\frac{d^{1+\nu}}{(8m)^\nu}.
\]
Substituting $d=R(N+1)^{-1/p}$ gives \eqref{eq:holder-lower}. The comparator
need not minimize the envelope, and this hard function need not have an
unconstrained minimizer.
\end{proof}

\subsection{Equal smoothness constants}
\label{app:equal-constants}

The following corollary shows that the gap between $N^{-2}$ and $N^{-3}$ does not
come from functions whose Euclidean constant greatly overestimates their
$\ell_1$ constant.

\begin{restatable}[Equal smoothness constants]{corollary}{corequal}
\label{cor:equal-constants}
Let $N\ge1$, $n\ge N+2$, $1\le p\le2$ and $R,L>0$. For every deterministic
first-order method with at most $N$ global queries and output
$\widehat x\in B_p^n(R)$ there is a convex $C^1$ function $f$ with
\[
 f(\widehat x)-\min_{B_p^n(R)}f\ \ge\ \frac{LR^2}{16(N+1)^{1+2/p}},
\]
whose smallest gradient Lipschitz constants in $\ell_2$ and in $\ell_1$ are both
equal to $L$, also when restricted to pairs of points in the interior of
$B_p^n(R)$.
\end{restatable}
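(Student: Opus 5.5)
The plan is to reuse the hard function of \cref{thm:first-order-lower}(b) with $\nu=1$ and $L_{1,1}=L$, and to add a fixed, known quadratic on one spare coordinate. The quadratic forces both smallest smoothness constants up to exactly $L$ and leaves the gap unchanged. The assumption $n\ge N+2$ provides this spare coordinate. The resisting construction of \cref{thm:first-order-lower} uses only the first $m=N+1$ coordinates, so the coordinate $m+1$ is free. Put
\[
 F(x)=f(x)+\tfrac L2x_{m+1}^2,
\]
where $f=b\,e_\lambda\varphi$ is the Moreau-envelope function built by the resisting oracle. This $f$ is $C^1$, and its gradient is $L$-Lipschitz in $\ell_2$ and supported on the first $m$ coordinates.

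\emph{Reduction.} Given a deterministic method $\cA$ for $F$, I would define a method $\cA'$ for $f$. It forwards the same queries and adds the known value and gradient of $\frac L2x_{m+1}^2$ to each answer. Since $\cA'$ is deterministic with at most $N$ global queries and a feasible output, \cref{thm:first-order-lower}(b) gives a completion $f$ that reproduces the whole transcript and satisfies the following. The comparator $x^\circ=-d\sum_{i\le m}v_i\in B_p^n(R)$ has $x^\circ_{m+1}=0$, and
\[
 f(\widehat x)-f(x^\circ)\ge\frac{LR^2}{16(N+1)^{1+2/p}}.
\]
This is the chain of inequalities at the end of that proof, which compares with $x^\circ$ and not with the minimum. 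Now $F(\widehat x)\ge f(\widehat x)$ and $\min_{B_p^n(R)}F\le F(x^\circ)=f(x^\circ)$, so $F$ has at least the same gap. The transcript of $\cA$ on $F$ is the one that $\cA'$ simulated, so this $F$ is the required function.

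\emph{Constants.} The gradients of the two summands live on disjoint coordinates. For $\ell_2$,
\[
 \nrm{\nabla F(x)-\nabla F(y)}_2^2=\nrm{\nabla f(x)-\nabla f(y)}_2^2+L^2(x_{m+1}-y_{m+1})^2\le L^2\nrm{x-y}_2^2,
\]
because $\nabla f$ depends only on $x_{1..m}$ and is $L$-Lipschitz there. For $\ell_1\to\ell_\infty$, the norm inequalities $\nrm\cdot_\infty\le\nrm\cdot_2\le\nrm\cdot_1$ show that $\nabla f$ is also $L$-Lipschitz from $\ell_1$ to $\ell_\infty$. Hence
\[
 \nrm{\nabla F(x)-\nabla F(y)}_\infty=\max\bigl\{\nrm{\nabla f(x)-\nabla f(y)}_\infty,\,L|x_{m+1}-y_{m+1}|\bigr\}\le L\nrm{x-y}_1.
\]
Both smallest constants are exactly $L$. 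To see this, take two interior points of $B_p^n(R)$ that differ only in coordinate $m+1$, for example $\pm\frac R2e_{m+1}$. Then $\nabla f$ is unchanged, and both ratios equal $L$, so both bounds are attained by interior pairs.

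\emph{Main obstacle.} The only delicate point is the reduction. I need that adding a known term keeps the adversary argument valid and that the lower bound of \cref{thm:first-order-lower}(b) really compares with the fixed feasible point $x^\circ$. The proof there is stated that way, and the added term vanishes at $x^\circ$. The second point is that the extra coordinate lies outside the revealed set, and this is exactly why $n\ge N+2$ is needed instead of $N+1$.
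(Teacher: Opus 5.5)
Your proposal is correct and follows essentially the paper's proof: you pad the hard function of \cref{thm:first-order-lower}(b) at $\nu=1$ with the known quadratic $\frac L2 x_{m+1}^2$ on a spare coordinate, simulate the method, and attain both constants with an interior pair that differs only in coordinate $m+1$. The only difference is cosmetic. The paper applies the theorem as a black box in dimension $m$ and uses $\min_{B_p^n(R)}F=\min_{B_p^m(R)}f_0$, since the added quadratic is nonnegative. You instead take the comparator $x^\circ$ from inside that proof, and either route works.
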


\begin{proof}
Put $m=N+1$ and write each point of $\R^n$ as $x=(u,z,w)$, where
$u\in\R^m$, $z\in\R$, and $w\in\R^{n-m-1}$ (the last block is omitted
when $n=m+1$).  Given a function $f_0:\R^m\to\R$, define
\[
 f(u,z,w)=f_0(u)+\frac L2z^2.
\]
We simulate the prescribed algorithm with a first-order oracle for $f_0$.
A query $(u,z,w)$ uses one query of $f_0$ at $u$, and the algorithm receives
\[
 \left(f_0(u)+\frac L2z^2,\; (\nabla f_0(u),Lz,0)\right).
\]
The simulation reproduces all future query coordinates of the algorithm,
including $z$ and $w$; queries can be arbitrarily large and need not lie in
any previous linear span.  The simulated method returns the
first block $\widehat u$ of the final output, a feasible point of
$B_p^m(R)$, so \cref{thm:first-order-lower}, at $\nu=1$, supplies
one fixed convex function $f_0$ with Euclidean gradient Lipschitz constant at
most $L$ and
\[
 f_0(\widehat u)-\min_{B_p^m(R)}f_0
 \ge \frac{LR^2}{16(N+1)^{1+2/p}}.
\]
The function $f$ built from this $f_0$ reproduces all simulated answers.
Projection onto the first block maps $B_p^n(R)$ into $B_p^m(R)$, and every
point of the latter embeds by setting $z=w=0$.  Since the added quadratic
is nonnegative, $\min_{B_p^n(R)}f=\min_{B_p^m(R)}f_0$, and the same observation
at the output proves the displayed lower bound.

For any $x=(u,z,w)$ and $x'=(u',z',w')$, block orthogonality gives
\[
 \nrm{\nabla f(x)-\nabla f(x')}_2^2
 \le L^2\bigl(\nrm{u-u'}_2^2+|z-z'|^2\bigr)
 \le L^2\nrm{x-x'}_2^2.
\]
Moreover, Euclidean smoothness of $f_0$ implies
\[
 \nrm{\nabla f_0(u)-\nabla f_0(u')}_\infty
 \le L\nrm{u-u'}_1.
\]
Consequently
\[
 \nrm{\nabla f(x)-\nabla f(x')}_\infty
 \le L\max\{\nrm{u-u'}_1,|z-z'|\}
 \le L\nrm{x-x'}_1.
\]
Both constants are therefore at most $L$.  For $x=0$ and
$x'=t e_{m+1}$ with $0<t<R$, both Lipschitz ratios are equal to
$L$, and these points lie in the interior of the feasible ball.
\end{proof}

The extra coordinate fixes both smallest constants and carries no information
about $f_0$.

\subsection{Higher-order oracles}
\label{app:lower-higher}
\label{app:product-smoothing}

For oracles of order $k$ we smooth the same maximum of signed coordinates by
convolution with a compactly supported product kernel on the $m=N+1$ active
coordinates (\cref{lem:product-smoothing} below). Its support is an
$\ell_\infty$ ball, so smoothing preserves the local agreement exactly, and
since the score of the one-dimensional factor of the kernel has Fisher
information $10$, the directional derivatives of the product kernel are bounded
in $L^1$ uniformly in the dimension. Repeated smoothing and offset maxima are standard in lower-bound
constructions \citep{agarwal2018higher,contreras2025}; the point of the kernel
below is that its \emph{Euclidean} derivative bounds do not depend on the
dimension.

\begin{lemma}[Compact product smoothing]
\label{lem:product-smoothing}
Let $k\ge1$, $\rho>0$, and let $\varphi:\R^m\to\R$ be globally Euclidean
$G$-Lipschitz. Define
\[
 \chi(t)=\begin{cases}\frac{15}{16}(1-t^2)^2,&|t|\le1,\\0,&|t|>1,
 \end{cases}\quad
 \chi_h(z)=h^{-m}\prod_{i=1}^m\chi(z_i/h),\quad
 \mathcal S_{\rho,k}\varphi=\varphi*\chi_h^{*k},\qquad h=\rho/k.
\]
Here $\chi_h^{*k}$ denotes convolution of $k$ copies of $\chi_h$. Put
$\theta=k\sqrt{10}/\rho$. Then $\mathcal S_{\rho,k}\varphi\in C^{k+1}$ and
\begin{align}
 \nrm{D^j\mathcal S_{\rho,k}\varphi(x)}&\le G\theta^{j-1}
       &&(1\le j\le k+1),\label{eq:product-derivative-bound}\\
 \nrm{D^k\mathcal S_{\rho,k}\varphi(x)-D^k\mathcal S_{\rho,k}\varphi(y)}
 &\le2^{1-\nu}G\theta^{k-1+\nu}\nrm{x-y}_2^\nu
       &&(0<\nu\le1).\label{eq:product-holder-bound}
\end{align}
Convexity is preserved. If $\varphi$ is convex and $G_\infty$-Lipschitz in $\ell_\infty$, then
\begin{equation}\label{eq:product-bias}
 \varphi\le\mathcal S_{\rho,k}\varphi\le \varphi+G_\infty\rho.
\end{equation}
If two functions agree on $B_\infty(x,a)$ with $a>\rho$, their smoothings
agree on $B_\infty(x,a-\rho)$.
\end{lemma}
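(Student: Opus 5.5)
The plan is to prove everything first for $k$-fold convolution with a single one-dimensional analysis, putting all derivatives on the kernel. Write $\psi=\chi_h^{*k}$. It is a product kernel $\psi(z)=\prod_i\psi_1(z_i)$ with $\psi_1=(\chi(\cdot/h)/h)^{*k}$, supported in $[-kh,kh]=[-\rho,\rho]$, nonnegative, even, and of unit mass ($\int\chi=1$ is checked directly: $\frac{15}{16}\cdot\frac{16}{15}=1$). Since $\chi\in C^1$ with $\chi'$ Lipschitz, each factor $\chi_h$ is in $W^{2,1}$. Hence $\psi$ admits $k$ derivatives placed on distinct convolution factors.

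For the derivative bounds we use $D\mathcal S\varphi=\nabla\varphi*\psi$, with $\nabla\varphi$ existing a.e.\ and $\nrm{\nabla\varphi}_2\le G$. For $j\ge2$, we distribute the remaining $j-1$ directional derivatives $\partial_{h_2},\dots,\partial_{h_j}$ onto $j-1$ different factors $\chi_h$, which is possible since $j-1\le k$. This gives
\[
D^j\mathcal S\varphi(x)[h_1,\dots,h_j]=\bigl(\ip{\nabla\varphi}{h_1}*\partial_{h_2}\chi_h*\cdots*\partial_{h_j}\chi_h*\chi_h^{*(k-j+1)}\bigr)(x).
\]
By Young's inequality this is at most $G\prod_{i\ge2}\nrm{\partial_{h_i}\chi_h}_{L^1}$.

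The key estimate is $\nrm{\partial_u\chi_h}_{L^1}\le\sqrt{10}/h$ for every unit $u$, with no dependence on the dimension. We prove it by Cauchy--Schwarz against the density: $\int|\ip{\nabla\chi_h}{u}|\le(\int\ip{\nabla\log\chi_h}{u}^2\chi_h)^{1/2}$. The score components of a product density are independent and centered, so the Fisher information matrix is $I_1h^{-2}\,I_m$. A one-line computation then gives $I_1=\int\chi'^2/\chi=\int_{-1}^1\frac{15}{16}\cdot\frac{16t^2}{1}\,dt=10$. Since $1/h=k/\rho$, this yields $\nrm{D^j}\le G(\sqrt{10}k/\rho)^{j-1}=G\theta^{j-1}$, and $C^{k+1}$ follows by dominated convergence.

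For the Hölder bound \eqref{eq:product-holder-bound}, we interpolate between two estimates: $\nrm{D^k(x)-D^k(y)}\le 2G\theta^{k-1}$ from the case $j=k$, and $\le G\theta^k\nrm{x-y}_2$ from the case $j=k+1$ together with the mean value theorem. Then $\min\{u,v\}\le u^{1-\nu}v^\nu$ gives the constant $2^{1-\nu}G\theta^{k-1+\nu}$.

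Convexity is preserved because $\mathcal S\varphi$ is an average of translates of $\varphi$ with a nonnegative weight. The lower bound in \eqref{eq:product-bias} is Jensen's inequality, using that $\psi$ has mean zero by evenness. The upper bound uses $|\varphi(x-z)-\varphi(x)|\le G_\infty\nrm z_\infty\le G_\infty\rho$ on the support. Locality follows because $\mathcal S\varphi(y)$ depends only on $\varphi$ on $B_\infty(y,\rho)\subseteq B_\infty(x,a)$.

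The main obstacle is justifying the derivative transfer rigorously. We must check that $\chi$ is only $C^1$ with an absolutely continuous derivative, so each factor absorbs one derivative in $L^1$. We must also check that $k$ factors suffice for $k+1$ total derivatives, since the first derivative lands on the Lipschitz $\varphi$. The Fisher-information identity additionally needs $\chi'^2/\chi$ to be integrable near $\pm1$; this holds since $\chi'^2/\chi\propto t^2$.
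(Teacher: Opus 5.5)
Your proposal is correct and follows the paper's proof essentially step by step. It puts one derivative on the Lipschitz $\varphi$ and each remaining derivative on a distinct factor $\chi_h$, applies Young's inequality, and obtains the dimension-free bound $\nrm{D_u\chi_h}_{L^1}\le\sqrt{10}\,\nrm u_2/h$ by Cauchy--Schwarz against the density, using independent centered scores with Fisher information $10$. The H\"older interpolation via $\min\{a,b\}\le a^{1-\nu}b^\nu$ and the convexity, bias and locality arguments, which rest on the mean-zero noise supported in $[-\rho,\rho]^m$, are also the same as in the paper.
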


\begin{proof}
The density $\chi$ is even, nonnegative and $C^1$ on $\R$, with both
its value and first derivative zero at $\pm1$. Direct integration gives
\[
 \int\chi=1,\qquad \int t\chi(t)\,dt=0,\qquad
 \int\chi'=0,\qquad
 \int_{-1}^1\frac{\chi'(t)^2}{\chi(t)}\,dt
   =\int_{-1}^1 15t^2\,dt=10.
\]
The ratio is taken on $(-1,1)$, where $\chi>0$; the score need not be
bounded near the endpoints. For independent $Z_i$ with density
$\chi$, the scores $S_i=\chi'(Z_i)/\chi(Z_i)$ have mean zero and
second moment ten. Thus, for every $u\in\R^m$, independence and
Cauchy--Schwarz yield
\begin{equation}\label{eq:product-directional-variation}
 \nrm{D_u\chi_h}_{L^1}
 =h^{-1}\mathbb E\left|\sum_i u_iS_i\right|
 \le\frac{\sqrt{10}}h\nrm{u}_2.
\end{equation}
The total convolution noise has mean zero and infinity norm at most $kh=\rho$.
This gives locality, convexity preservation, and \eqref{eq:product-bias} by
Jensen's inequality and the infinity-norm Lipschitz bound. (The Euclidean norm
of the noise may be as large as $\rho\sqrt m$, hence the constant $G_\infty$ in
\eqref{eq:product-bias}.)

For the derivative bounds, put one weak derivative on $\varphi$ and each remaining
derivative on a different kernel factor. For $1\le j\le k+1$,
\[
 D_{u_1}\cdots D_{u_j}\mathcal S_{\rho,k}\varphi
 =(D_{u_1}\varphi)*(D_{u_2}\chi_h)*\cdots*(D_{u_j}\chi_h)*\chi_h^{*(k-j+1)}.
\]
At $j=1$ all $k$ factors are undifferentiated; at $j=k+1$ the last factor is
omitted. Since $\nrm{D_{u_1}\varphi}_{L^\infty}\le G\nrm{u_1}_2$, Young's
inequality and \eqref{eq:product-directional-variation} bound this expression
by $G\theta^{j-1}\prod_{i=1}^j\nrm{u_i}_2$.
These distributional derivatives have continuous representatives: if $J$ is
the combined $L^1$ kernel and $v$ tends to zero, their translation difference
is bounded by $G\nrm{u_1}_2\nrm{J(\cdot+v)-J}_{L^1}$, which tends to zero.
Integrating continuous weak partial derivatives along coordinate lines shows
that they are classical derivatives, so $\mathcal S_{\rho,k}\varphi\in C^{k+1}$.
Since the directions $u_1,\dots,u_j$ are arbitrary, this is the tensor-norm
bound \eqref{eq:product-derivative-bound}, without polarization or
coordinate-expansion loss. The case $j=k$ at $x$ and at $y$, and the case
$j=k+1$ integrated along the segment, give
\[
 \nrm{D^k\mathcal S_{\rho,k}\varphi(x)-D^k\mathcal S_{\rho,k}\varphi(y)}
 \le\min\{2G\theta^{k-1},G\theta^k\nrm{x-y}_2\}.
\]
Applying $\min\{a,b\}\le a^{1-\nu}b^\nu$ proves
\eqref{eq:product-holder-bound}, with $\nu=1$ read directly.
\end{proof}

\begin{restatable}[Lower bound for order-$k$ methods]{proposition}{thmcoordlower}
\label{thm:coordinate-tensor-lower}
Let $k,N\ge1$ be integers, $0<\nu\le1$, $s=k+\nu$, $1\le p\le2$,
$R,L_{k,\nu}>0$, and $n\ge N+1$. For every deterministic method that makes at most
$N$ global order-$k$ queries and returns $\widehat x\in B_p^n(R)$ there is a convex
function $f\in C^{k+1}(\R^n)$ satisfying \eqref{eq:holder-tensor} such that
\begin{equation}\label{eq:coordinate-tensor-lower}
 f(\widehat x)-\min_{B_p^n(R)}f\ \ge\
 \frac{2^{\nu-2}}{(8k\sqrt{10})^{s-1}}\cdot
 \frac{L_{k,\nu}R^{s}}{(N+1)^{s(1+1/p)-1}}.
\end{equation}
\end{restatable}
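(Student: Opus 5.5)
Our plan is to repeat the H\"older part of \cref{thm:first-order-lower}(b), replacing the Moreau envelope by the compact product smoothing of \cref{lem:product-smoothing}.

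\emph{The nonsmooth function and its oracle.} Put $m=N+1$, $d=R/m^{1/p}$ and $\tau=d/(2m)$, and let $\varphi(x)=\max_{i\le m}\{\ip{v_i}x-(i-1)\tau\}$ with signed coordinates revealed by the resisting oracle. At query $x_t$ a fresh coordinate maximizing $|(x_t)_i|$ among the unused ones is revealed, and the oracle answers with the prefix maximum $\varphi_t$. Every future form lies at least $\tau$ below the current one at $x_t$, and differences of forms are $2$-Lipschitz in $\ell_\infty$ (their $\ell_1$ norm is at most $2$). Hence $\varphi_t$ and every completion agree on $B_\infty(x_t,\tau/2)$. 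The function $\varphi$ is convex, Euclidean $1$-Lipschitz and $1$-Lipschitz in $\ell_\infty$.

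\emph{The smoothed function.} Take $\rho=\tau/4$. By the locality statement of \cref{lem:product-smoothing}, the smoothings $\mathcal S_{\rho,k}\varphi_t$ and $\mathcal S_{\rho,k}\varphi$ agree on $B_\infty(x_t,\tau/4)$. Since this is a neighborhood of $x_t$, all derivatives up to order $k$ coincide there. One fixed function therefore reproduces every order-$k$ answer, and the last sign is aligned with $\widehat x$ as before. Set $f=b\,\mathcal S_{\rho,k}\varphi$. By \eqref{eq:product-holder-bound} with $G=1$, the H\"older constant of $D^kf$ is $b\,2^{1-\nu}\theta^{k-1+\nu}$ with $\theta=k\sqrt{10}/\rho$. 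We choose $b=2^{\nu-1}L_{k,\nu}\theta^{-(s-1)}$ so that this equals $L_{k,\nu}$. Convexity is preserved, and $f\in C^{k+1}$.

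\emph{The gap.} By \eqref{eq:product-bias} with $G_\infty=1$, $f\ge b\varphi$, so $f(\widehat x)\ge -bN\tau$. At the comparator $x^\circ=-d\sum_iv_i$, which has $\nrm{x^\circ}_p=R$, we get $f(x^\circ)\le b(-d+\rho)$. The gap is therefore at least $b(d-N\tau-\rho)\ge b(d-d/2-d/(8m))$. Taking the constant bd/2 loosely, as in the first-order proof, we aim for $b\cdot d/2$ up to a small loss.

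\emph{Computing the constant.} Substituting $\rho=d/(8m)$ gives $\theta=8mk\sqrt{10}/d$. Then
\[
b\,d/2=2^{\nu-2}L_{k,\nu}\,d^{s}/(8mk\sqrt{10})^{s-1}.
\]
With $d^s=R^sm^{-s/p}$ the exponent of $m$ is $s-1+s/p=s(1+1/p)-1$, which matches \eqref{eq:coordinate-tensor-lower}.

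The main obstacle is the bookkeeping in the gap step: $d-N\tau-\rho\ge d/2$ requires $N\tau+\rho\le d/2$. This holds since $N\tau=Nd/(2m)<d/2$, but only with the margin $d/(2m)$, which is at least $\rho=d/(8m)$. We will verify this inequality, $\tfrac{N}{2m}+\tfrac1{8m}\le\tfrac12$, explicitly. We will also check that locality needs $a=\tau/2>\rho$, which holds.
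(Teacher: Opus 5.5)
Your proposal is correct and follows the paper's proof essentially step for step: the same $m=N+1$, $d=R/m^{1/p}$, $\tau=d/(2m)$, $\rho=\tau/4$ and $b=2^{\nu-1}L_{k,\nu}(\rho/(k\sqrt{10}))^{s-1}$; the same resisting oracle, with agreement of the smoothings on $B_\infty(x_t,\tau/4)$ coming from the locality part of \cref{lem:product-smoothing}; and the same comparator $x^\circ=-d\sum_iv_i$, using Jensen at the output and the upper bias at $x^\circ$. Drop the loose intermediate bound $N\tau<d/2$ (it gives only $d/2-d/(8m)$) and keep the exact identity $N\tau=d/2-d/(2m)$ from your last paragraph, which gives $d-N\tau-\rho=d\bigl(\tfrac12+\tfrac{3}{8m}\bigr)\ge d/2$ exactly as in the paper; padding an early-stopping method with repeated queries, as the paper does, also makes the final alignment of the one remaining coordinate well defined.
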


\begin{proof}
If the method stops early, pad its transcript with repeated queries. Set
$m=N+1$ and work on the first $m$ coordinates. Define
\[
 d=R/m^{1/p},\qquad \tau=d/(2m),\qquad \rho=\tau/4,\qquad
 b=2^{\nu-1}L_{k,\nu}\left(\frac{\rho}{k\sqrt{10}}\right)^{s-1}.
\]
At query $x_t$, select an unused coordinate $i_t\le m$ maximizing
$|(x_t)_{i_t}|$; use the smallest index to break ties. Choose
$v_t=\sign((x_t)_{i_t})e_{i_t}$, with positive sign at zero, and set
\[
 \varphi_t(x)=\max_{1\le j\le t}\{\ip{v_j}{x}-(j-1)\tau\}.
\]
The oracle returns the value and the derivatives through order $k$ of
$b\mathcal S_{\rho,k}\varphi_t$ at $x_t$, the smoothing
always acting on the first $m$ coordinates. A later vector $v_j$, $j>t$,
uses a coordinate unused at time $t$, so $\ip{v_j}{x_t}\le\ip{v_t}{x_t}$. Its shifted
form is therefore at least $(j-t)\tau\ge\tau$ below form $t$ there.
The difference of two signed coordinate slopes has $\ell_1$ norm at most two.
Consequently every future completion $\varphi$ of the maximum agrees with $\varphi_t$
throughout $B_\infty(x_t,\tau/2)$, and \cref{lem:product-smoothing}
gives the identity
\[
 \mathcal S_{\rho,k}\varphi=\mathcal S_{\rho,k}\varphi_t
       \quad\text{on }B_\infty(x_t,\tau/4).
\]
Hence the value and all requested derivatives at $x_t$ are those of every
completion, for queries of any size and position, repeated ones included.

After the output, align the remaining coordinate's sign with its coordinate
in $\widehat x$ and call the resulting vector $v_m$. Let
$\varphi(x)=\max_{j\le m}\{\ip{v_j}{x}-(j-1)\tau\}$ and
$f=b\mathcal S_{\rho,k}\varphi$. This one fixed function reproduces all answers.
Its slopes have Euclidean and $\ell_1$ norm one, so the smoothing lemma with
$G=G_\infty=1$ proves the claimed regularity and convexity.
The aligned final sign gives $\varphi(\widehat x)\ge-N\tau$, while
$x^\circ=-d\sum_{j=1}^m v_j$ has $p$-norm exactly $R$ and $\varphi(x^\circ)=-d$.
Jensen's inequality at the output and the upper bias bound at the comparator give
\[
 f(\widehat x)-\min_{B_p^n(R)}f
 \ge b(d-N\tau-\rho)
 =bd\left(\frac12+\frac{3}{8m}\right)\ge bd/2.
\]
Substitution proves \eqref{eq:coordinate-tensor-lower}.
\end{proof}

For $p=2$ the exponent $s(1+1/p)-1=(3s-2)/2$ is the optimal Euclidean exponent
of methods of order $k$ \citep{arjevani2019,grapiglia2020,kovalev2022tensor};
for $p<2$ it is larger by $s(1/p-1/2)$, and for $p=1$ it equals $2s-1$, the
exponent attained by \SPT{} (\cref{thm:tensor}). For $k=1$, smoothing by a
Moreau envelope (\cref{thm:first-order-lower}) gives better constants and also
covers the nonsmooth case.

\subsection{The simplex and prox steps}
\label{app:lower-simplex}

\begin{proposition}[Exact oracle reduction to a simplex]
\label{prop:simplex-lift}
The linear map $A:\Delta_{2d}\to B_1^d(1)$,
$A(w)=w_+-w_-$ for $w=(w_+,w_-)\in\R^d\times\R^d$, is onto and has Euclidean operator norm $\sqrt2$.
If $\bar f=f\circ A$, a query of $\bar f$ through order $k$ is simulated with one
query of $f$ through order $k$ and known linear transformations, and
\[
 L_{k,\nu}(\bar f)\le2^{(k+\nu)/2}L_{k,\nu}(f),
 \qquad G(\bar f)\le\sqrt2\, G(f).
\]
The constrained minima coincide, and the error of $w$ for $\bar f$ equals that of
$Aw$ for $f$.
\end{proposition}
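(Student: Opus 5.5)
The plan is to verify the map $A$ directly and then transfer derivatives through the chain rule. The proof has three parts.

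\emph{Ontoness and operator norm.} For surjectivity, take $x\in B_1^d(1)$. Put $w_+=x_+ + s\,e_1$ and $w_-=x_- + s\,e_1$ with $s=(1-\nrm x_1)/2\ge0$, where $x_\pm$ are the positive and negative parts of $x$. Then $w\ge0$, $\sum w=\nrm x_1+2s=1$, and $Aw=x$. For the norm, the matrix of $A$ is $[I,\,-I]$, so $AA^\top=2I$ and $\nrm A_{2\to2}=\sqrt2$. Since $A(\Delta_{2d})=B_1^d(1)$, we get $\min_{\Delta_{2d}}\bar f=\min_{B_1^d(1)}f$. The error of $w$ is then $\bar f(w)-\min\bar f=f(Aw)-\min f$, which is the last claim.

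\emph{Oracle simulation.} A query of $\bar f$ at $w$ is answered by querying $f$ at $Aw$. The chain rule for a linear map gives
\[
D^j\bar f(w)[h_1,\dots,h_j]=D^jf(Aw)[Ah_1,\dots,Ah_j],
\]
so $\nabla\bar f(w)=A^\top\nabla f(Aw)$. In general the tensor $D^j\bar f(w)$ is obtained from $D^jf(Aw)$ by applying $A^\top$ in each slot. This is a known linear transformation, and no further oracle calls are needed.

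\emph{Constants.} For the H\"older constant, use the operator-norm definition in \eqref{eq:holder-tensor}. For unit vectors $h_i$,
\[
\bigl|(D^k\bar f(w)-D^k\bar f(w'))[h_1,\dots,h_k]\bigr|\le\nrm{D^kf(Aw)-D^kf(Aw')}\prod_i\nrm{Ah_i}_2\le2^{k/2}L_{k,\nu}(f)\nrm{A(w-w')}_2^\nu ,
\]
and $\nrm{A(w-w')}_2^\nu\le2^{\nu/2}\nrm{w-w'}_2^\nu$. This gives the factor $2^{(k+\nu)/2}$. The Lipschitz bound follows the same way: $|f(Aw)-f(Aw')|\le G\sqrt2\nrm{w-w'}_2$.

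The only step needing care is surjectivity when $\nrm x_1<1$: the slack must be split evenly between the $+$ and $-$ copies of one coordinate so that it cancels in $Aw$. With that handled, everything else is routine chain-rule bookkeeping.
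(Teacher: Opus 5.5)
Your proof is correct and follows the paper's argument essentially step for step. The paper also builds the preimage by splitting the slack $1-\|x\|_1$ evenly over one signed pair, gets the norm from $AA^\top=2I$, and applies the chain rule with one factor $\|A\|_{2\to2}$ per direction and $\|A\|_{2\to2}^\nu$ for the base point. The one thing you leave implicit is the inclusion $A(\Delta_{2d})\subseteq B_1^d(1)$, which you use when you write $A(\Delta_{2d})=B_1^d(1)$; it follows from $\|w_+-w_-\|_1\le\sum_i(w_{+,i}+w_{-,i})=1$.
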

\begin{proof}
For $x\in B_1^d(1)$ take its positive and negative parts and add half
the remaining mass $1-\nrm{x}_1$ to both entries of one signed pair.
This gives a preimage in $\Delta_{2d}$.
Conversely $\nrm{w_+-w_-}_1\le\sum_i(w_{+,i}+w_{-,i})=1$.
Since $AA^\top=2I$, its operator norm is $\sqrt2$.
The chain rule gives
$D^k\bar f(w)[h_1,\ldots,h_k]=D^kf(Aw)[Ah_1,\ldots,Ah_k]$.
The change of the base point contributes $\nrm{A}^\nu$, and the
$k$ directions contribute $\nrm{A}^k$.
Surjectivity proves the remaining claims.
\end{proof}

Thus the deterministic lower bounds for $p=1$ transfer to the simplex, with
twice the dimension and the stated constant loss; a larger simplex is mapped
onto $B_1^d(1)$ by sending the additional vertices to zero.

The lower bounds count oracle queries and allow any computation with known
quantities between them. They therefore also apply to methods built on a
non-Euclidean prox-setup: a prox step with a known prox-function, for instance
the entropy on the simplex, whose inputs are chosen from the transcript (and, for
a randomized method, from its random bits) is such a computation. In
particular, \cref{thm:coordinate-tensor-lower,thm:rand-tensor} give the exponent
$2s-1$ on $B_1^n(R)$ also for these methods.

\subsection{Randomized methods}
\label{app:lower-rand}

The resisting oracle above adapts the function to the queries and therefore
gives no bound for randomized methods. The bounds below draw one random function
\emph{before} the interaction. All three proofs follow the same scheme. For a
\emph{deterministic} method we bound the error averaged over the random
function; to do so we pass to a more informative oracle, whose answers determine
the true ones, and we keep track of the conditional law of the unrevealed
parameters given the history. This law stays explicit: a uniformly random subset
of the unchecked indices for a hidden support (by exchangeability), and
independent exponential residuals above known thresholds for random shifts (by
the product structure and the memoryless property). One query reveals only a
few parameters, so with dimension linear in $N$ much of the function remains
unknown. For a \emph{randomized} method we fix its random seed, including the
randomization of the output, apply the deterministic bound and average over the
seed; exchanging the two expectations gives one fixed function whose expected
error over the seed is at least the bound. Queries are global and return values
together with (sub)gradients, and a method that stops early is padded with dummy
queries.

\begin{restatable}[Randomized nonsmooth lower bound]{proposition}{thmrandnonsmooth}
\label{thm:rand-nonsmooth}
Let $G,R>0$, $N\ge1$, $1\le p\le2$ and $n\ge2^{p+2}N$. For every randomized
method with at most $N$ global queries of values and subgradients and output in
$B_p^n(R)$ there is a set $S\subseteq\{1,\dots,n\}$ with $|S|=2N$ such that
$f_S(x)=G\max_{i\in S}x_i$, with the fixed subgradient selection
$Ge_{\min\argmax_{i\in S}x_i}$, satisfies
\[
 \E\bigl[f_S(\widehat x)-\min_{B_p^n(R)}f_S\bigr]\ \ge\ \frac{GR}{2(2N)^{1/p}} ;
\]
for $p=1$ the bound is $GR/(4N)$, and it also holds on $\Delta_n(R)$ for
$\widetilde f_S(z)=G\max_{i\in S}(-z_i)$ with the selection
$-Ge_{\min\argmax_{i\in S}(-z_i)}$.
\end{restatable}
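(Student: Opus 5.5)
We use Yao's principle: fix a deterministic method, draw $S$ uniformly among the $2N$-subsets of $\{1,\dots,n\}$, and bound the expected error of the deterministic method over $S$. For a randomized method we then fix its seed, including the randomization of the output, apply the deterministic bound, and average over the seed. Exchanging the two expectations gives one fixed $S$ whose expected error over the seed is at least the bound.

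\emph{Comparator.} For every $S$ the point $x^\circ=-(R/(2N)^{1/p})\sum_{i\in S}e_i$ has $\nrm{x^\circ}_p=R$. Hence $\min_{B_p^n(R)}f_S\le-GR/(2N)^{1/p}$, and it suffices to show $\E f_S(\widehat x)\ge-\frac12 GR/(2N)^{1/p}$.

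\emph{Informative oracle.} At a query $x$, we reveal the coordinates in order of decreasing $x_i$, ties broken by index, stopping at the first index that belongs to $S$. Call the revealed coordinates not in $S$ ``checked'' and the stopping index ``found''. This sequence determines the answer $f_S(x)=Gx_{i^*}$ and the subgradient $Ge_{i^*}$ with the stated tie-breaking, $i^*=\min\argmax_{i\in S}x_i$. By exchangeability, conditional on the history the unfound elements of $S$ form a uniformly random subset of the indices that are neither checked nor found, and each query finds at most one element of $S$. The difficulty is that a query may check many coordinates.

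\emph{Bounding the output value.} We avoid controlling the number of checked coordinates and instead bound the output value directly. After $N$ queries at most $N$ elements of $S$ are found, so at least $N$ remain hidden in the set $U$ of unchecked, unfound indices. Each checked coordinate $j$ was checked at a query $x_t$ whose found element $i_t$ satisfies $x_{t,j}\ge x_{t,i_t}$, but this says nothing about $\widehat x$. So we argue at the output: $f_S(\widehat x)\ge G\max_{i\in S\cap U}\widehat x_i$, where $S\cap U$ is a uniform random subset of size $r\ge N$ of $U$ with $|U|\le n$.

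Let $T$ be the set of the $|U|/2$ coordinates of $U$ with the smallest values of $\widehat x$. For the maximum over $S\cap U$ to be very negative, $S\cap U$ must lie inside $T$, which has probability at most $2^{-r}\le 2^{-N}$, tiny. Otherwise the maximum is at least the median value of $\widehat x$ on $U$. Since $\nrm{\widehat x}_p\le R$, at most $k$ coordinates of $\widehat x$ satisfy $|\widehat x_i|\ge R/k^{1/p}$. Taking $k=|U|/2\ge n/4\ge 2^pN$, using $|U|\ge n-2N\ge n/2$ (from $n\ge 2^{p+2}N$), the median is at least $-R/(2^pN)^{1/p}=-\frac12 R/N^{1/p}$. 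This is slightly weaker than needed, so we refine the counting with the condition $n\ge2^{p+2}N$ to reach $-\frac12R/(2N)^{1/p}$. On the bad event we bound $f_S(\widehat x)\ge -GR$, which contributes at most $2^{-N}GR$; this loss is absorbed only if the main term has slack, so we take a higher quantile of $\widehat x$ on $U$ than the median.

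\emph{Main obstacle.} The main obstacle is getting exactly the constant $1/2$. The plan is a Markov-type count: the number of indices in $U$ with $\widehat x_i<-\lambda$ is at most $(R/\lambda)^p$. With $\lambda=\frac12R/(2N)^{1/p}$ this count is at most $2^{p+1}N\le|U|/2$, and we must bound the probability that all hidden elements fall into that set, together with the contribution of that event, sharply enough.

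For the simplex version we take the comparator to be the uniform point on $\{1,\dots,n\}\setminus S$. The hidden elements of $S$ are then the coordinates where $z_i$ is small, and the same argument applies to $-z$.
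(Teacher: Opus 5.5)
There is a genuine gap, and it is at the point you tried to sidestep. The inequality $|U|\ge n-2N$ is false. $U$ excludes every checked coordinate, and, as you note yourself, the number of checked coordinates is random and not bounded by $2N$. For example, take $N=1$ and any query: both elements of $S$ fall in the last eighth of the scan order with probability roughly $1/64$ for large $n$, and then $|U|<n/8$.

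Controlling $|U|$ is not a technicality; it is the whole content of the bound. Given the history, the output knows that $S$ lies in the union of the found set and $U$. If that union were small, the output could place $-R/(\text{its size})^{1/p}$ on all of it and nearly attain the minimum. No deterministic or high-probability lower bound on $|U|$ of the right order is available. The paper instead controls the reciprocal $q=|U|$ in expectation. Under single membership tests, the density $r'/q'$ of hidden elements among the unchecked indices is a martingale (after padding all branches to equal length). This gives $\E[1/q]=m/(n(m-N))=2/n$ for $m=2N$.

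The quantile argument is also too lossy to produce the constant. The bad event $S\cap U\subseteq T$ has probability up to $2^{-r}$ with $r=N$ possible, so it is $1/2$ at $N=1$. A loss of that size cannot be absorbed into a target of $GR/(2(2N)^{1/p})$. Your refined count $2^{p+1}N\le|U|/2$ also needs $|U|\ge 2^{p+2}N$, which is more than even your (incorrect) bound $|U|\ge n/2$ supplies.

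The paper avoids tails entirely by bounding the maximum by the average over the uniform random subset:
\[
 \E\Bigl[\max_{i\in S\cap U}\widehat x_i\Bigm|\text{history}\Bigr]\ \ge\ \frac1q\sum_{i\in U}\widehat x_i\ \ge\ -Rq^{-1/p},
\]
where the last step is H\"older's inequality. Jensen's inequality for the concave map $t\mapsto t^{1/p}$, together with $\E[1/q]=2/n$ and $n\ge2^{p+2}N$, then gives exactly $\E f_S(\widehat x)\ge-GR/(2(2N)^{1/p})$.

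Finally, your simplex comparator is wrong. The uniform point on the complement of $S$ gives $\widetilde f_S=0$, which is the worst value, not the best. The minimizer is $z_i=R/(2N)$ on $S$ and $0$ elsewhere, with value $-GR/(2N)$. Outputs transfer through $z\mapsto -z\in B_1^n(R)$.
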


\begin{restatable}[Randomized smooth lower bound]{proposition}{thmrandsmooth}
\label{thm:rand-smooth}
Let $L,R>0$, $N\ge1$ and $n\ge32N+1$. For every randomized method with at most
$N$ global first-order queries and output in $K$, where $K=B_1^n(R)$ or
$K=\Delta_n(R)$, there is a convex $f\in C^1(\R^n)$ with $L$-Lipschitz gradient
such that $\E[f(\widehat x)-\min_Kf]\ge LR^2/(2^{50}N^3)$. If in addition
$\mu_1>0$ and the slack $\Lambda=L/\mu_1-n$ satisfies $\Lambda\ge2^{51}N^3$, the
function can be taken $\mu_1$-strongly convex in $\ell_1$ on $\R^n$ with the
bound $LR^2/(2^{52}N^3)$.
\end{restatable}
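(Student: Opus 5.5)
We use Yao's principle in the form described at the start of \cref{app:lower-rand}. Fix the random seed of the method, so that it becomes a deterministic method with at most $N$ global queries. We then draw one random function independently of the method, bound the expected error of every deterministic method, and average over the seed. Exchanging the two expectations gives one fixed $f$ whose expected error over the seed is at least the bound.

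\emph{The random function.} Take $m=32N+1\le n$ active coordinates, a shift scale $s=R/m$, and i.i.d.\ shifts $a_1,\dots,a_m\sim\mathrm{Exp}$ with mean $s$. Put $\varphi(x)=\max_{i\le m}(x_i-a_i)$. We smooth it at a scale $\lambda=c_0 s$ with a small absolute constant $c_0$. A Moreau envelope is the easiest choice, since it keeps the identity $\nabla e_\lambda\varphi(x)=(x-P(x))/\lambda$ and the bias bound \eqref{eq:envelope-bias-lower} used in \cref{thm:first-order-lower}. Then set $f=L\lambda\, e_\lambda\varphi$. This $f$ is convex with Euclidean gradient Lipschitz constant at most $L$, because $e_\lambda\varphi$ has constant $1/\lambda$.

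By the locality argument of \cref{thm:first-order-lower}, the answer at $x$ depends only on the forms $x_i-a_i$ that come within $O(\lambda)$ of $\varphi(x)$ in an $\ell_\infty$ neighbourhood of radius $O(\lambda)$.

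\emph{The more informative oracle.} At query $x_t$ the oracle reveals every shift $a_i$ with $x_{t,i}-a_i\ge M_t-\theta$, where $M_t$ is the current maximum and $\theta=C\lambda$ is a fixed window. For every unrevealed $i$ it reveals only the threshold fact $a_i>x_{t,i}-M_t+\theta$. These answers determine the true value and gradient at $x_t$.

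By the product structure and the memoryless property, conditionally on the history the unrevealed shifts are independent, and each equals its largest known threshold plus a fresh $\mathrm{Exp}$ with mean $s$.

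\emph{Counting revealed coordinates.} The main step is to show that the expected number of newly revealed coordinates per query is at most $2$. Condition on the history. Order the unrevealed forms at $x_t$: their excesses over the previous thresholds are independent exponentials. The event that a second form lands in the window of width $\theta$ below the top has probability controlled by $\theta/s=Cc_0$. So the number revealed beyond the top one is dominated by a geometric variable with small parameter. Summing over the $N$ queries, at most $2N$ coordinates are revealed in expectation. By Markov's inequality, with probability at least $1/2$, at least $m-4N\ge m/2$ coordinates stay hidden after $N$ queries, and their residual shifts are fresh exponentials independent of the output $\widehat x$.

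\emph{Final gap.} This is the step I expect to be the main obstacle, because it fixes the constant $2^{50}$. The comparator is the feasible point $x^\circ=-(R/m)\mathbf 1_{[m]}$, extended by zero. Its value is $\varphi(x^\circ)=-R/m-\min_i a_i$. At the output, $\varphi(\widehat x)\ge\max_{i\ \mathrm{hidden}}(\widehat x_i-a_i)$. Since $\nrm{\widehat x}_1\le R$, at most a small fraction of the $\ge m/2$ hidden coordinates can have $\widehat x_i\le-4R/m$. The remaining hidden coordinates have independent exponential residuals, so their minimal shift is at most $\min_i a_i+O(s/m)$-comparable with constant probability, and the gap $\varphi(\widehat x)-\varphi(x^\circ)$ is $\Omega(R/m)$ with constant probability.

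Subtracting the smoothing bias $\lambda/2=O(c_0R/m)$, we get $\E[f(\widehat x)-\min_K f]\ge L\lambda\cdot\Omega(R/m)=\Omega(LR^2/m^2)$. That is only $N^{-2}$, so this choice of scales cannot be right. The remedy I would try first is to take the shift scale $s\asymp R/m^2$ rather than $R/m$, i.e.\ of order $1/N$ smaller than $R/m$. Then the relevant separations are of order $s$, the window is $\theta\asymp\lambda\asymp s$, and the gap becomes $L\lambda\, s\asymp LR^2/m^3$, which is the target $LR^2/N^3$. The condition $\lambda\ll s$ still keeps at most two reveals per query, and the output still cannot locate the hidden coordinate with the smallest shift. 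I would recheck this counting and the $\ell_1$-budget argument carefully at the scale $s\asymp R/m^2$.

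For $\Delta_n(R)$ the same argument applies with the comparator $x^\circ=(R/m)\mathbf 1_{[m]}$ and the forms $-x_i-a_i$.

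\emph{Strong convexity.} We add $\frac{\mu_1}{2}\nrm x_1^2$-compatible curvature by using $\frac{\mu}{2}\nrm x_2^2$ with $\mu=\mu_1 n$, which is $\mu_1$-strongly convex in $\ell_1$ since $\nrm x_2^2\ge\nrm x_1^2/n$. The Lipschitz constant of the gradient of the smoothed part is then reduced to $L-\mu=\mu_1\Lambda$. The quadratic is known, so it adds no information for the method. The condition $\Lambda\ge2^{51}N^3$ ensures that the perturbation changes the gap by less than half, which costs the factor $4$ in the bound.
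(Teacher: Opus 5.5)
The decisive gap is the relation between the smoothing scale and the shift scale. With $\lambda=c_0s$ and $c_0$ an absolute constant, your counting step is false. Given the top form, each other unrevealed coordinate falls in the window of width $\theta$ with probability up to $1-e^{-\theta/s}\approx Cc_0$. So a query can reveal $1+(m-1)(1-e^{-\theta/s})=\Theta(m)$ coordinates in expectation. This already happens at $x=0$, where by memorylessness every $a_i$ exceeds $\min_ja_j$ by a fresh exponential. The parameter that must be small is $m\theta/s$, not $\theta/s$.

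This is also where your $\Omega(LR^2/N^2)$ comes from. Such a bound would be \emph{stronger} than the statement and would contradict the upper bound of \cref{thm:overview}, so it signals a false step, not scales that are too generous. Your remedy does not repair it, for three reasons:
\begin{itemize}
\item With $s\asymp R/m^2$ and $\lambda\asymp s$, the ratio $m\lambda/s$ is still of order $m$.
\item The nonsmooth gap is then only $O(s)$. From $t_*\le R/m+\bar a$, the query-free output $-(R/m)\mathbf 1_{[m]}$ has $\varphi$-gap at most $\bar a-\min_ia_i$, whose mean is below $s$.
\item $L\lambda s\asymp LR^2/m^4$, not $LR^2/m^3$.
\end{itemize}
The paper instead keeps the mean shift $\delta=R/(32N+1)$ of order $R/m$, so that the nonsmooth gap stays $\Omega(\delta)$ (the $GR/N$ rate). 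It shrinks only the smoothing, $\lambda=\delta/(2^{17}\cdot32N)$. Then a query reveals at most $1+32N(1-e^{-\lambda/\delta})\le2$ coordinates in expectation, and $L\lambda\cdot\Omega(\delta)\asymp LR^2/N^3$.

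Two further steps fail as written.

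\emph{Final gap.} $\varphi(\widehat x)-\varphi(x^\circ)=0$ for the admissible output $\widehat x=x^\circ$, so comparing with $x^\circ$ cannot work. You must compare with $\min_K\varphi=-t_*$, where $\sum_i(t_*-a_i)_+=R$, and show that an output ignorant of the hidden shifts is far from the minimizer $x^\star_i=-(t_*-a_i)_+$. The paper does this with three ingredients:
\begin{itemize}
\item the bound $\nrm{x-x^\star}_1\le2d(\varphi_a(x)+t_*)$, with $d$ the number of coordinates;
\item the sensitivity $x^\star_i(v)-x^\star_i(u)\ge(v-u)/2$, which uses the unshifted coordinate $a_0=0$;
\item anti-concentration of $x^\star_i$ over hidden coordinates with threshold $b_i\le\delta/4$.
\end{itemize}
A fresh residual alone gives no uncertainty about $x^\star_i$ once the known threshold exceeds $t_*$, since then $x^\star_i=0$.

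\emph{Strong convexity.} The uniform term $\frac{\mu_1 n}{2}\nrm x_2^2$ leaves only the constant $L-\mu_1n=\mu_1\Lambda$ on the hard part. It costs $\frac{\mu_1 n}{2}\nrm{x^\star}_2^2\asymp\mu_1nR^2/N$, against a main term of order $\mu_1\Lambda R^2/N^3$. This needs $\Lambda\gtrsim nN^2$, which $\Lambda\ge2^{51}N^3$ does not give when $n\gg N$. Moreover, the main term falls short of $LR^2/N^3$ by the factor $\Lambda/(n+\Lambda)$. The paper gives the $n-d$ padded coordinates curvature $L$ (the comparator vanishes there) and the $d$ active ones curvature $\theta=Ld/(d+\Lambda)$. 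This is $\mu_1$-strongly convex in $\ell_1$ by \cref{lem:weighted-strong-norm}, since $d/\theta+(n-d)/L=1/\mu_1$, while $L-\theta\ge L/2$.
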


\begin{restatable}[Randomized higher-order lower bound]{proposition}{thmrandtensor}
\label{thm:rand-tensor}
Let $k\ge1$, $0<\nu\le1$, $s=k+\nu$, $L_{k,\nu},R>0$, $N\ge1$ and $n\ge32N+1$.
For every randomized method with at most $N$ global order-$k$ queries and
output in $K$, where $K=B_1^n(R)$ or $K=\Delta_n(R)$, there is a convex
$f\in C^{k+1}(\R^n)$
satisfying \eqref{eq:holder-tensor} such that
\[
 \E\bigl[f(\widehat x)-\min_Kf\bigr]\ \ge\ c_{k,\nu}'\,\frac{L_{k,\nu}R^{s}}{N^{2s-1}},
\]
where $c'_{k,\nu}>0$ depends only on $k$ and $\nu$ and is explicit in
\cref{app:rand-tensor}.
\end{restatable}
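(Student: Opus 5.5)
The plan is to follow the randomized scheme described at the start of \cref{app:lower-rand}, with the Moreau envelope replaced by the compact product smoothing of \cref{lem:product-smoothing}. I would draw independent shifts $a_1,\dots,a_n$ with $a_i=\lambda E_i$, where the $E_i$ are standard exponential variables and $\lambda=c_1R/N$. I would set
\[
 \varphi_a(x)=\max_{i\le n}\{x_i-a_i\},\qquad f_a=b\,\mathcal S_{\rho,k}\varphi_a,\qquad \rho=c_2R/N^2 .
\]
The amplitude is $b=2^{\nu-1}L_{k,\nu}(\rho/(k\sqrt{10}))^{s-1}$, exactly as in the proof of \cref{thm:coordinate-tensor-lower}. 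The slopes $e_i$ have Euclidean and $\ell_1$ norm one. So \cref{lem:product-smoothing} with $G=G_\infty=1$ gives convexity, $f_a\in C^{k+1}$, the H\"older condition \eqref{eq:holder-tensor}, and the bias bound $\varphi_a\le\mathcal S_{\rho,k}\varphi_a\le\varphi_a+\rho$.

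For a randomized method I fix the seed, prove the bound for deterministic methods averaged over $a$, and exchange expectations to obtain one fixed $f_a$. This is the scheme stated before \cref{thm:rand-nonsmooth}. The target is an expected error of order $b\lambda$. Indeed,
\[
 b\lambda\asymp L_{k,\nu}(R/N^2)^{s-1}R/N=L_{k,\nu}R^s/N^{2s-1},
\]
which is the claimed rate.

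\emph{More informative oracle.} At query $x_t$ I let $m_t=\max_{i\in\text{revealed}}(x_{t,i}-a_i)$, with $m_t=-\infty$ before any revelation. The oracle reveals every index $i$ whose form $x_{t,i}-a_i$ exceeds $\max\{m_t, x_{t,i^*}-a_{i^*}\}-4\rho$, where $i^*$ is the true maximizer. Equivalently, it reveals all indices with $a_i<x_{t,i}-\theta_t$ for a history-measurable threshold $\theta_t$. Any unrevealed form then lies more than $4\rho$ below the revealed maximum at $x_t$. Since the smoothing kernel is supported on an $\ell_\infty$ ball of radius $\rho$, the locality clause of \cref{lem:product-smoothing} shows that $f_a$ agrees near $x_t$ with the smoothing of the maximum over revealed indices. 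Hence the answers are functions of the revealed shifts, as required.

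\emph{Conditional law.} By independence and the memoryless property, each unrevealed $a_i$, given the history, is exponential with scale $\lambda$ conditioned to exceed a known threshold $u_i$. The probability that a given unrevealed index crosses the new threshold at the next query therefore has the form $1-e^{-(\cdot)/\lambda}$. The key estimate is that the expected number of newly revealed indices per query is at most $2$. Heuristically, one index is the new maximizer, and any other index must land in a window of width $4\rho\ll\lambda$ above it, which happens with probability $O(\rho/\lambda)=O(1/N)$ per coordinate. Summed over $n=O(N)$ coordinates this gives $O(1)$. I expect this window calculation, made uniform over adaptively chosen thresholds, to be the main obstacle. I would handle it by conditioning on the order statistics of the exponential residuals $(a_i-u_i)/\lambda$ and choosing $c_2/c_1$ small. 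Given this estimate, after $N$ queries at most $2N$ indices are revealed in expectation, so at least $n-16N\ge n/2$ indices remain hidden with probability at least $7/8$.

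\emph{Final gap.} Conditionally on the history, the hidden $a_i$ are independent exponential residuals above known thresholds, and the output $\widehat x$ is measurable with respect to the history. I would compare $\varphi_a(\widehat x)$ with $\min_K\varphi_a$. The comparator places mass of total size $R$ negatively on the coordinates with the smallest shifts, lowering the maximum by order $\lambda$. Because $\widehat x$ is chosen before the fresh exponentials are seen, some hidden coordinate with $\widehat x_i\ge-R/(n/2)\cdot O(1)$ attains a small $a_i$ with constant probability. This forces $\varphi_a(\widehat x)-\min_K\varphi_a\ge c_3\lambda$ in expectation. The bias bound then loses only $\rho\ll\lambda$, giving an error of at least $b(c_3\lambda-\rho)\gtrsim b\lambda$. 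For $\Delta_n(R)$ I would use $\max_i(-x_i-a_i)$ or the map of \cref{prop:simplex-lift}, tracking the constants to obtain an explicit $c'_{k,\nu}$.
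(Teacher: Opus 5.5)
Up to its last step your plan is the paper's proof. It uses the same exponentially shifted maximum smoothed by $\mathcal S_{\rho,k}$ with the same amplitude, a stronger oracle that reveals a window below the maximum, the memoryless law of the unrevealed shifts, at most two revelations per query in expectation, and the exchange of expectations over the seed. The paper's $\delta=R/(n+1)$ plays the role of your $\lambda$, with $\rho=\delta/(c_\star n)$.

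The step you call the main obstacle is simple. Condition on the maximizing index and on its value $m_x$. Each other unrevealed score is then only known to lie below $m_x$, and by memorylessness it falls in the window with probability at most $1-e^{-2\rho/\delta}$.

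Two omissions do matter.
\begin{itemize}
\item The function must use only $32N+1$ coordinates: $32N$ shifted ones plus an anchor $a_0=0$. With all $n$ coordinates your revelation count is about $1+4c_2n/(c_1N)$, which is unbounded for $n\gg N$. Moreover $t_*\ll\lambda$ then, so the output $0$ already has gap at most $t_*$.
\item Hidden indices are not enough. The queries may have raised their thresholds $b_i$. The only general control is $b_i\le a_i$, which gives $\E\#\{i\text{ unrevealed}:b_i\le\delta/4\}\ge n(1-e^{-1/4})-2N\ge n/8$.
\end{itemize}

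The genuine gap is the final step. The minimizer $x^\star_i=-(t_*-a_i)_+$ itself spends about $t_*-a_i$ on every active coordinate, so $t_*$ is comparable to the budget per coordinate. A bound $\widehat x_i\ge-O(R/n)$ on one hidden coordinate with small $a_i$ therefore does not beat $-t_*$ by a margin of order $\lambda$.

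Here is the heuristic computation (law of large numbers). Put $\beta=R/(n\lambda)$; then $t_*\approx y\lambda$ with $\int_0^y(1-e^{-u})\,du=\beta$. Only about $n(1-e^{-\epsilon})$ coordinates can be certified to have threshold at most $\epsilon\lambda$. On them the norm constraint gives at best $\widehat x_i\ge-\beta\lambda/(1-e^{-\epsilon})$. Your argument then certifies a gap of at most $\lambda\bigl(y-\epsilon-\beta/(1-e^{-\epsilon})\bigr)$. This is never positive, because $\beta\ge\int_{\min\{\epsilon,y\}}^y(1-e^{-u})\,du\ge(1-e^{-\epsilon})(y-\epsilon)$.

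The missing idea is a sharpness inequality that uses the whole budget. Take $x\in B_1^d(R)$ and $r=\varphi_a(x)+t_*$, and set $w_i=(t_*-a_i)_+$, so that $\sum_iw_i=R$. Every active coordinate satisfies $x_i+w_i\le r$. Hence $\nrm{x-x^\star}_1\le2\sum_i(w_i-(-x_i)_+)_+\le2dr$, which is \eqref{eq:l1-distance}. A small gap therefore forces $\widehat x$ to be $\ell_1$-close to $x^\star$.

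But $x^\star$ cannot be located. Take an unrevealed $i$ with $b_i\le\delta/4$. On $[0,\delta)$, $x^\star_i$ increases with $a_i$ at rate at least $\frac12$, because the anchor $a_0=0$ keeps a second coordinate active. Two well-separated events for the residual then give $\E[\,|\widehat x_i-X^\star_i|\mid\text{history}]\ge\delta/2048$.

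Summing over the $n/8$ such indices and dividing by $2d$ yields $\E[\varphi_a(\widehat x)-\min_K\varphi_a]\ge\delta/65536$. This is the estimate your last step needs before subtracting the bias $\rho$ and multiplying by the amplitude.
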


The hard functions of \cref{thm:rand-smooth,thm:rand-tensor} are smoothed
coordinate maxima $\max_i(x_i-a_i)$ with independent exponential shifts $a_i$.
The smoothing is a Moreau envelope for $k=1$ and a compact product kernel for
general $k$. The proofs use that a full order-$k$ answer at $x$ is determined by
the shifts of the coordinates near the maximum (the active coordinates of the
Moreau envelope, or those within $2\rho$ of the maximum for the kernel with
smoothing parameter $\rho$), and that on average at most two of these are
revealed for the first time at each query. The constants are not optimized.
Randomized lower bounds of this type are classical
\citep{diakonikolas2020lower,agarwal2018higher,garg2021lower}; the statements
above are for Euclidean smoothness on an $\ell_1$ domain, with global queries
and dimension linear in $N$, matching the upper bounds of
\cref{app:level,app:classes}.

\subsubsection{Hidden supports: proof of \cref{thm:rand-nonsmooth}}
\label{app:rand-nonsmooth}

\begin{proof}[Proof of \cref{thm:rand-nonsmooth}]
\emph{The family.} Let $0\le N<m\le n$, let $S$ be uniform among the
$m$-element subsets of $\{1,\dots,n\}$, and $f_S(x)=G\max_{i\in S}x_i$ on
$\R^n$. Then $|f_S(x)-f_S(y)|\le G\nrm{x-y}_\infty\le G\nrm{x-y}_2$, every
subgradient lies in $G\conv\{e_i:i\in S\}$ and has norm at most $G$. The oracle
returns $(f_S(x),Ge_{i(x,S)})$ with $i(x,S)=\min\argmax_{i\in S}x_i$, a fixed
rule. The minimum over $K=B_p^n(R)$ is $-GRm^{-1/p}$: if $t=\max_{i\in S}x_i<0$
then $R^p\ge\sum_{i\in S}|x_i|^p\ge m|t|^p$, and the value is attained at
$x_i=-Rm^{-1/p}$ on $S$ and $0$ elsewhere.

\emph{A more informative oracle.} Fix a deterministic method. The extended
history consists of the elements of $S$ found so far, the indices excluded so
far, and the set $U$ of unchecked indices. At a query $x$, order $U$ by
decreasing $x_i$ (ties by increasing index) and test membership in $S$ in this
order up to and including the first positive answer; the tested indices leave
$U$ and their status is revealed. Each query thus reveals exactly one new
element of $S$, even when a previously found element already determines the
true answer. The true answer $(f_S(x),Ge_{i(x,S)})$ is determined by these
data: the new positive index is the best among the previously unknown elements
of $S$ (with the same tie rule), and comparing it with the known elements gives
$i(x,S)$. Hence a lower bound for this oracle also holds for the original one.

\emph{Conditional distribution.} After any extended history, $S\cap U$ is
uniform among the subsets of $U$ of its size. Each membership test is chosen
by the revealed data, and its answer only includes or excludes the tested
index, so all remaining admissible subsets stay equally likely; the query
points are functions of the history and add no constraint. After $N$ queries
put $r=m-N>0$ and $q=|U|\ge r$. We claim
\begin{equation}\label{eq:reciprocal}
 \E\frac1q=\frac{m}{n(m-N)} .
\end{equation}
Let $r'/q'$ be the density of positive indices among the $q'$ unchecked ones
before an individual test. Before the $N$th positive index is found,
$r'\ge m-N+1\ge2$, so $q'\ge2$; a test succeeds with probability $r'/q'$, and the
conditional expectation of the density after the test is
$\frac{r'}{q'}\frac{r'-1}{q'-1}+(1-\frac{r'}{q'})\frac{r'}{q'-1}=\frac{r'}{q'}$.
After the $N$th positive index is found, we keep the density constant. The
number of tests is at most $n-(m-N)$, so every branch can be padded to the same
finite length, and repeated application of the identity shows that the expected
final density equals the initial one, $m/n$. The final density is $(m-N)/q$,
which proves \eqref{eq:reciprocal}.

\emph{Conditional risk.} Let $\widehat x\in K$ be the output, a fixed point given
the history. Since $S\cap U$ is nonempty and uniform,
\begin{equation}\label{eq:cond-risk}
 \E[f_S(\widehat x)\mid\mathrm{history}]
 \ge G\,\E\Bigl[\frac1r\sum_{i\in S\cap U}\widehat x_i\Bigm|\mathrm{history}\Bigr]
 =\frac Gq\sum_{i\in U}\widehat x_i\ \ge\ -GRq^{-1/p},
\end{equation}
where the last step is H\"older's inequality $\sum_{i\in U}|\widehat x_i|\le q^{1-1/p}R$.
For $1\le p\le2$ the function $t\mapsto t^{1/p}$ is concave, so Jensen's
inequality and \eqref{eq:reciprocal} give
$\E[q^{-1/p}]\le(m/(n(m-N)))^{1/p}$. Hence, for every deterministic method,
\begin{equation}\label{eq:nonsmooth-det}
 \E_S\bigl[f_S(\widehat x)-\min_Kf_S\bigr]\ge GR\Bigl[m^{-1/p}-\Bigl(\frac{m}{n(m-N)}\Bigr)^{1/p}\Bigr].
\end{equation}
For a randomized method, \eqref{eq:nonsmooth-det} holds for every fixed seed,
and exchanging the two expectations ($S$ takes finitely many values and the
errors are bounded on $K$) gives a fixed $S$
whose expected error over the seed is at least the right-hand side.

\emph{Parameters.} With $m=2N$ and $n\ge2^{p+2}N$,
$(m/(n(m-N)))^{1/p}=(2/n)^{1/p}\le\frac12(2N)^{-1/p}$, so the bound is
$GR/(2(2N)^{1/p})$; for $p=1$, $n\ge8N$ it is $GR/(4N)$.

\emph{Simplex.} For $p=1$ use $\widetilde f_S(z)=G\max_{i\in S}(-z_i)$ on
$\Delta_n(R)$ with the oracle obtained from the one above at $-z$ (sign of the
subgradient reversed). An output $z\in\Delta_n(R)$ gives $-z\in B_1^n(R)$, and the
point $z_i=R/m$ on $S$, $0$ elsewhere, lies in $\Delta_n(R)$ with value $-GR/m$,
the minimum over the ball, so the argument applies without change.
\end{proof}

\subsubsection{Exponential shifts and the Moreau envelope: proof of \cref{thm:rand-smooth}}
\label{app:rand-smooth}

\begin{proof}[Proof of \cref{thm:rand-smooth}]
In this proof $n$ denotes the number of \emph{shifted} coordinates and $d=n+1$
the ambient dimension, which is the $n$ of the statement; the hypothesis
$n\ge32N+1$ of \cref{thm:rand-smooth} thus reads $n\ge32N$. Let
$c_\star=2^{17}=131072$. We prove the bound $LR^2/(c_\star^2n(n+1)^2)$ for
$K=B_1^d(R)$; for $n=32N$ it is at least $LR^2/(2^{50}N^3)$ because
$32\cdot33^2<2^{16}$. Larger dimensions and the
simplex are treated at the end.

\emph{The family.} Let $\delta=R/d$, $a_0=0$, and let $a_1,\dots,a_n$ be
independent exponential with mean $\delta$ (density $\delta^{-1}e^{-a/\delta}$
on $[0,\infty)$). Put
\[
\begin{aligned}
 \varphi_a(x)&=\max_{0\le i\le n}(x_i-a_i),&\quad\lambda&=\frac{\delta}{c_\star n},\\
 h_a(x)&=\inf_z\Bigl\{\varphi_a(z)+\frac{\nrm{z-x}_2^2}{2\lambda}\Bigr\},& f_a&=L\lambda h_a .
\end{aligned}
\]
The distribution is known to the method; only the realization $a$ is unknown.

\emph{Smoothness.} The function $\varphi_a$ is convex and $1$-Lipschitz in $\ell_2$. Its conjugate
equals $\ip ap$ on $\Delta_d$ and $+\infty$ elsewhere, so
$h_a(x)=\max_{p\in\Delta_d}\{\ip p{x-a}-\frac\lambda2\nrm p_2^2\}$ has a unique
maximizer $p(x)$, and the variational inequalities of the maximizers at $x$ and
$y$ give $\lambda\nrm{p(x)-p(y)}_2^2\le\ip{p(x)-p(y)}{x-y}$. Hence
$\nabla h_a=p$ is $1/\lambda$-Lipschitz, $f_a$ is convex with $L$-Lipschitz
gradient, and, since $\nrm p_2\le1$ on $\Delta_d$, $0\le \varphi_a-h_a\le\lambda/2$.

\emph{Information.} The maximizer is $p_i=(x_i-a_i-t)_+/\lambda$ with the unique
threshold $t$ such that $\sum_i(x_i-a_i-t)_+=\lambda$, and
$h_a(x)=t+\frac\lambda2\nrm p_2^2$. The value and gradient of $f_a$ determine
$(p,t)$; on a coordinate with $p_i>0$ they reveal $a_i=x_i-t-\lambda p_i$
exactly, and on a coordinate with $p_i=0$ they reveal only $a_i\ge x_i-t$.
Conversely, these data determine the answer, so the answer contains no further
information on the unrevealed parameters. By induction on the queries,
conditionally on any history the unrevealed parameters are independent with
$a_i=b_i+E_i$, where $b_i$ is the maximum of $0$ and all past thresholds
$x_i^{(j)}-t_j$, and $E_i$ is exponential with mean $\delta$. For the induction
step, note that given the past the query point is fixed. On the event that the
active set is $J$, the answer $(t,p)$ is an affine injective function of the
unrevealed active parameters (the revealed ones being fixed), and each inactive
unrevealed parameter enters only through the constraint $a_i\ge x_i-t$. Hence, given
the answer, the density of the inactive unrevealed parameters is proportional to
$\prod_i\pi_i(a_i)\1\{a_i\ge x_i-t\}$, where $\pi_i$ is the current conditional
density of $a_i$; it keeps its product form, and the memoryless property gives the
exponential law of the excess. (Equality of an unknown parameter with the
threshold has probability zero.)

\emph{Number of revelations.} Let $s_i=x_i-a_i$ and $M=\max_is_i$; since
$t\ge M-\lambda$, every active coordinate satisfies $s_i>M-\lambda$. Condition on
the history, on the index attaining $M$ and on the value of $M$. For any other
unrevealed index the condition of not exceeding $M$ truncates $E_i$ from below;
after this truncation, $\Prob\{s_i>M-\lambda\mid\cdot\}\le1-e^{-\lambda/\delta}$
by the memoryless property (if the window below $M$ is shorter than $\lambda$
the probability is smaller). Counting the maximizer as one revelation,
$\E[\text{new revelations}\mid\mathrm{history}]\le1+n(1-e^{-\lambda/\delta})\le1+n\lambda/\delta\le2$.

\emph{Uncertain coordinates.} After $N$ queries let $B$ be the set of unrevealed
indices $i\ge1$ with $b_i\le\delta/4$. Since $b_i\le a_i$,
$|B|\ge\#\{i\ge1:a_i\le\delta/4\}-\#\{\text{revealed}\}$, and with
$1-e^{-1/4}\ge1/5$ and $n\ge32N$,
\begin{equation}\label{eq:uncertain}
 \E|B|\ge n(1-e^{-1/4})-2N\ge n/8 .
\end{equation}

\emph{The nonsmooth optimum.} The equation $\sum_{i=0}^n(t_*-a_i)_+=R$ has a
unique positive root (as $a_0=0$), $\min_K\varphi_a=-t_*$, and the unique minimizer is
$x_i^\star=-(t_*-a_i)_+$: the smallest $\ell_1$-norm of a point with $\varphi_a\le-t$ is
$\sum_i(t-a_i)_+$ (each coordinate with $a_i<t$ must be at most $a_i-t$, the
others are set to $0$), and at $t=t_*$ this norm equals $R$. Since $a_i\ge0$,
$t_*\ge\delta$. For
$x\in K$ let $r=\varphi_a(x)+t_*\ge0$ and $w_i=(t_*-a_i)_+$; if $w_i>0$ then
$x_i+w_i\le r$. Using $\sum_iw_i=R$ and $\nrm x_1\le R$,
\begin{equation}\label{eq:l1-distance}
 \nrm{x-x^\star}_1=\nrm x_1+R-2\sum_i\min\{(-x_i)_+,w_i\}\le2\sum_i(w_i-(-x_i)_+)_+\le2dr ,
\end{equation}
because each term is $0$ when $w_i=0$, is $(w_i+x_i)_+\le r$ when $x_i<0$, and is
$w_i\le w_i+x_i\le r$ when $x_i\ge0$. Next, change one $a_i$, $i\ge1$, from $u$
to $v$ with $0\le u<v<\delta$; the roots satisfy $t_v\ge t_u\ge\delta$, so the
coordinates $0$ and $i$ are active for both, and subtracting the two equations,
$0=2(t_v-t_u)-(v-u)+\sum_{j\ne0,i}[(t_v-a_j)_+-(t_u-a_j)_+]$ with a nonnegative
sum; hence $0\le t_v-t_u\le(v-u)/2$ and $x_i^\star(v)-x_i^\star(u)\ge(v-u)/2$.

\emph{Conditional risk.} Fix the history, an index $i\in B$, and then all other
parameters; $a_i=b_i+E_i$ keeps its conditional law. The events
$A_1=\{E_i\le\delta/8\}$ and $A_2=\{3\delta/8\le E_i\le\delta/2\}$
have probabilities at least $1/9$ and $e^{-3/8}(1-e^{-1/8})\ge\frac58\cdot\frac19>\frac1{16}$;
on both, $a_i\le3\delta/4<\delta$, and any two values from the two events differ
by at least $\delta/4$, so the corresponding $x_i^\star$ differ by at least
$\delta/8$. For two independent conditional copies $X_i^\star,\widetilde X_i^\star$ and
any number $z$ (in particular the output coordinate, fixed given the history),
$\E|z-X_i^\star|\ge\frac12\E|X_i^\star-\widetilde X_i^\star|\ge\Prob(A_1)\Prob(A_2)\delta/8\ge\delta/2048$.
Averaging over the other parameters and summing over the history-dependent set
$B$, \eqref{eq:uncertain} gives $\E\nrm{\widehat x-x^\star}_1\ge n\delta/16384$, and
\eqref{eq:l1-distance} with $n/d\ge1/2$ gives
$\E[\varphi_a(\widehat x)-\min_K\varphi_a]\ge n\delta/(32768d)\ge\delta/65536$. By
$0\le \varphi_a-h_a\le\lambda/2$ and $\min_Kh_a\le\min_K\varphi_a$,
$\E[h_a(\widehat x)-\min_Kh_a]\ge\delta/65536-\lambda/2\ge\delta/c_\star$, and
multiplying by $L\lambda$ gives $LR^2/(c_\star^2nd^2)$.

\emph{Randomized methods, larger dimension, simplex.} Randomized methods are
handled as described at the beginning of \cref{app:lower-rand}; the errors are uniformly bounded
by $2L\lambda R$ on $K$ (as $\nrm{\nabla h_a}_2\le1$), so the expectations can
be exchanged. In dimension $d'>d$ let the function depend on the first $d$
coordinates only. The projection of a feasible output has $\ell_1$-norm at most
$R$, the minimum is unchanged, and the extra coordinates of a global query
reveal nothing. For the simplex use $\widetilde f_a(z)=f_a(-z)$: $z\in\Delta_d(R)$
gives $-z\in B_1^d(R)$, the minimizer $x^\star$ corresponds to the feasible point
$z_i=(t_*-a_i)_+$, the reflection preserves smoothness and the oracle
information, and \eqref{eq:l1-distance} and the risk argument apply without
change with the minimum of $\varphi_a$ over $-\Delta_d(R)$, which equals $-t_*$.

\emph{Strong convexity.} From $(t-a_i)_+\ge t-a_i$ and the root equation,
$t_*\le R/d+\frac1d\sum_{i\ge1}a_i$, so $\E\nrm{x^\star}_2^2\le R\,\E t_*\le2R^2/d$.
Let $d'\ge d$ be the ambient dimension (the $n$ of the statement), while the number
of shifted coordinates is $n=32N$; let
$\Lambda=L/\mu_1-d'\ge0$ be the slack, and write $x=(u,v)\in\R^d\times\R^{d'-d}$. Put
$\theta=Ld/(d+\Lambda)\in(0,L]$ and
\[
 \bar f_a(u,v)=(L-\theta)\lambda h_a(u)+\frac\theta2\nrm u_2^2+\frac L2\nrm v_2^2
\]
(no last term if $d'=d$). It is convex and $L$-smooth on $\R^{d'}$, and its
curvature is at least $\theta$ in the first $d$ coordinates and $L$ in the
others, so by \cref{lem:weighted-strong-norm} in the $\ell_1$ case it is
$\mu_1$-strongly convex in $\ell_1$, since $d/\theta+(d'-d)/L=(d+\Lambda+d'-d)/L=1/\mu_1$.
The known quadratic part is subtracted from the oracle answers and the extra
coordinates of a global query reveal nothing, so the information argument is
unchanged, and the first block
$\widehat u$ of a feasible output satisfies $\nrm{\widehat u}_1\le R$. Comparing
with the feasible point $(x^\star,0)$, where $x^\star$ is the minimizer of $\varphi_a$ on
$B_1^d(R)$ (not necessarily a minimizer of $\bar f_a$), and dropping the nonnegative
quadratic at the output,
\begin{align*}
 \E[\bar f_a(\widehat x)-\min_K\bar f_a]
 &\ge(L-\theta)\lambda\,\E[h_a(\widehat u)-h_a(x^\star)]-\frac\theta2\E\nrm{x^\star}_2^2\\
 &\ge\frac{(L-\theta)R^2}{c_\star^2nd^2}-\frac{\theta R^2}{d}
 =\frac{LR^2}{d+\Lambda}\Bigl[\frac{\Lambda}{c_\star^2nd^2}-1\Bigr],
\end{align*}
using $h_a(\widehat u)-h_a(x^\star)\ge \varphi_a(\widehat u)-\varphi_a(x^\star)-\lambda/2$ and
$L-\theta=L\Lambda/(d+\Lambda)$. If $\Lambda\ge2c_\star^2nd^2$, then $\Lambda\ge d$, so
$d+\Lambda\le2\Lambda$ and the bracket is at least $\Lambda/(2c_\star^2nd^2)$, whence
the expected error is at least $LR^2/(4c_\star^2nd^2)$. With $n=32N$, $d=32N+1\le33N$
and $32\cdot33^2<2^{16}$, the condition holds when $\Lambda\ge2^{51}N^3$ and the
bound is at least $LR^2/(2^{52}N^3)$. The reflection to the simplex preserves
the quadratic part. (Padding with curvature $\theta$ instead of $L$ would not
give $\mu_1$-strong convexity in $\ell_1$.)
\end{proof}

\subsubsection{Compact product kernels: proof of \cref{thm:rand-tensor}}
\label{app:rand-tensor}

\begin{proof}[Proof of \cref{thm:rand-tensor}]
As in \cref{app:rand-smooth}, $n$ denotes the number of shifted coordinates and
$d=n+1$ the ambient dimension, and $c_\star=2^{17}$.

\emph{Smoothing.} Let $\rho>0$, let $\mathcal S_{\rho,k}$ be the compact product
smoothing of \cref{lem:product-smoothing} on $\R^d$, and put
$H_{k,\nu}=2^{1-\nu}(k\sqrt{10})^{s-1}$. For a convex $\varphi:\R^d\to\R$ that is
$1$-Lipschitz in $\ell_2$ and in $\ell_\infty$, the function $\mathcal S_{\rho,k}\varphi$
is convex and $C^{k+1}$, and by \eqref{eq:product-holder-bound} and
\eqref{eq:product-bias},
\begin{equation}\label{eq:holder-smoothed}
 \nrm{D^k\mathcal S_{\rho,k}\varphi(x)-D^k\mathcal S_{\rho,k}\varphi(y)}\le
 H_{k,\nu}\rho^{-(s-1)}\nrm{x-y}_2^\nu,\qquad 0\le\mathcal S_{\rho,k}\varphi-\varphi\le\rho .
\end{equation}
Moreover $\mathcal S_{\rho,k}\varphi=\varphi*\chi_h^{*k}$, where the density $\chi_h^{*k}$ is
supported in $[-\rho,\rho]^d$ and its derivatives up to order $k$ are integrable
(each derivative can be put on a different factor). Hence the value and the
derivatives up to order $k$ of $\mathcal S_{\rho,k}\varphi$ at $x$ are integrals of
$\varphi(x-w)$, $w\in[-\rho,\rho]^d$, against fixed kernels.

\emph{Locality.} Let $\varphi_a(x)=\max_{0\le i\le n}(x_i-a_i)$, which is
$1$-Lipschitz in $\ell_2$ and in $\ell_\infty$. For a query $x$ let
$m_x=\max_i(x_i-a_i)$ and $I_x=\{i:x_i-a_i\ge m_x-2\rho\}$. For
$w\in[-\rho,\rho]^d$, $\varphi_a(x-w)=\max_{i\in I_x}(x_i-w_i-a_i)$: an excluded
coordinate is below $m_x-\rho$ after the perturbation, while a coordinate
attaining $m_x$ stays at least $m_x-\rho$. Hence the order-$k$ answer for
$\mathcal S_{\rho,k}\varphi_a$ at $x$ is determined by $x$, $I_x$ and the values $a_i$ for
$i\in I_x$. We pass to the stronger oracle that returns $I_x$, $m_x$ and the exact
$a_i$ for $i\in I_x$; the other parameters then satisfy $a_i>x_i-m_x+2\rho$.

\emph{Exponential family.} Let $a_0=0$ and $a_1,\dots,a_n$ be independent
exponential with mean $\delta=R/(n+1)$, and set $\rho=\delta/(c_\star n)$, $n\ge32N$.
By the argument of \cref{app:rand-smooth}, conditionally on any extended history
the unrevealed parameters are independent, with $a_i=b_i+E_i$, where
$b_i$ is the maximum of $0$ and the past thresholds $x_i-m_x+2\rho$ and $E_i$ is
exponential with mean $\delta$. The expected number of newly revealed
parameters at a query is at most $1+n(1-e^{-2\rho/\delta})\le1+2n\rho/\delta\le2$:
conditionally on the maximizer and on $m_x$, and after the truncation from
below, any other unrevealed score $x_i-b_i-E_i$ falls in the window of length
$2\rho$ below the maximum with probability at most $1-e^{-2\rho/\delta}$. Hence after $N$ queries
$\E\#\{i\text{ unrevealed}:b_i\le\delta/4\}\ge n(1-e^{-1/4})-2N\ge n/8$.

\emph{Residual uncertainty.} The minimizer of $\varphi_a$ over $B_1^d(R)$, the
$\ell_1$-distance bound \eqref{eq:l1-distance}, and the sensitivity of $x_i^\star$ to
$a_i$ are as in \cref{app:rand-smooth} (the root $t_*(a_i)$ is
continuous, piecewise linear with slope $1/\sigma\le1/2$, where $\sigma\ge2$ is
the number of active coordinates); with the events $A_1,A_2$ of that proof,
$\E|\widehat x_i-X_i^\star|\ge\delta/2048$ for every unrevealed $i$ with $b_i\le\delta/4$,
so $\E\nrm{\widehat x-x^\star}_1\ge n\delta/16384$ and
$\E[\varphi_a(\widehat x)-\min_K\varphi_a]\ge\delta/65536$.

\emph{The bound.} Let $f_a=\frac{L_{k,\nu}\rho^{s-1}}{H_{k,\nu}}\mathcal S_{\rho,k}\varphi_a$;
by \eqref{eq:holder-smoothed} it is convex, $C^{k+1}$ and satisfies
\eqref{eq:holder-tensor} on $\R^d$. By the second inequality in
\eqref{eq:holder-smoothed},
$\E[\mathcal S_{\rho,k}\varphi_a(\widehat x)-\min_K\mathcal S_{\rho,k}\varphi_a]\ge\E[\varphi_a(\widehat x)-\min_K\varphi_a]-\rho\ge\delta/65536-\delta/(c_\star n)\ge\delta/c_\star$,
and multiplying by $L_{k,\nu}\rho^{s-1}/H_{k,\nu}$ with $\delta=R/(n+1)$ and
$\rho=\delta/(c_\star n)$ gives
\[
 \E[f_a(\widehat x)-\min_Kf_a]\ \ge\ \frac{L_{k,\nu}R^s}{c_\star^sH_{k,\nu}\,n^{s-1}(n+1)^s}
 \ \ge\ \frac{L_{k,\nu}R^s}{c_\star^sH_{k,\nu}32^{s-1}33^s}\,N^{-(2s-1)}\qquad(n=32N),
\]
that is, $c'_{k,\nu}=(c_\star^sH_{k,\nu}32^{s-1}33^s)^{-1}$. The passage from the
average over $a$ (for each fixed seed) to a fixed function and the extension to
larger dimensions (the function depends on the first $32N+1$ coordinates) are
as in \cref{app:rand-smooth}. So is the simplex, through
$\widetilde f_a(u)=f_a(-u)$: the jet at $u$ is the jet at $-u$ with the factor
$(-1)^j$ on the order-$j$ derivative, $-x^\star\in\Delta_d(R)$, and
$\mathcal S_{\rho,k}\varphi_a(-u)-\min_{v\in\Delta_d(R)}\mathcal S_{\rho,k}\varphi_a(-v)\ge \varphi_a(-u)-\varphi_a(x^\star)-\rho$.
\end{proof}

\section{Nonsmooth, H\"older and higher-order objectives}
\label{app:classes}

This section treats the minimization classes of \cref{tab:intro} other than the
smooth one with a first-order oracle: nonsmooth objectives with deep cuts (\cref{app:nonsmooth}),
H\"older continuous gradients (\cref{app:holder}), oracles of higher order
(\cref{app:tensor}), and gradient oracles without function values
(\cref{app:gradient-only,app:cog}). As before, $K\subseteq c+B_1^n(R)$ is a known
polytope, all queries are feasible, and $\bN$, $\cn$, $\PN$, $\eta_N=\PN/(4N)$ and
$\delta_N=4\PN/N$ are as in \cref{sec:method}.

\subsection{Nonsmooth objectives: deep cuts and a certificate}
\label{app:nonsmooth}

Let $f$ be convex and $G$-Lipschitz on $K$ and let the oracle return, at a
feasible $x$, one subgradient $g\in\partial f(x)$ with $\nrm g_2\le G$; function
values are not used. \Cref{alg:spcut} queries an approximate Steiner point of the
current set and cuts \emph{deep}: it keeps only the points at which the
supporting hyperplane predicts an improvement of at least $\varepsilon$. Such cuts
may remove every minimizer, but the first \emph{empty} intersection
yields, by linear-programming duality, a convex combination of the queried
points whose error is below $\varepsilon$, certified from the stored oracle
answers alone.

\begin{algorithm}[t]
\caption{\SPC: deep cuts at approximate Steiner points}
\label{alg:spcut}
\begin{algorithmic}[1]
\Require known polytope $K\subseteq c+B_1^n(R)$; subgradient oracle with
$\nrm g_2\le G$; cut depth $\varepsilon>0$; selector accuracy $\eta=\varepsilon/(2G)$
\State $C_0\gets K$
\For{$t=1,2,\dots$}
 \State $x_t\gets$ a point of $C_{t-1}$ with $\nrm{x_t-\st{C_{t-1}}}_2\le\eta$
 \Comment{\cref{app:steiner-computation}; $x_1=0$ if $K=B_1^n(R)$}
 \State query $g_t\in\partial f(x_t)$
 \State $C_t\gets C_{t-1}\cap\{y:\ip{g_t}{y-x_t}\le-\varepsilon\}$
 \If{$C_t=\varnothing$}
  \State solve the linear program
  $\widehat\varepsilon=\min_{\lambda\in\Delta_t}\bigl\{\sum_{j\le t}\lambda_j\ip{g_j}{x_j}
  +\hs{K}\bigl(-\sum_{j\le t}\lambda_jg_j\bigr)\bigr\}$ with minimizer $\lambda$
  \State\Return $\widehat x=\sum_{j\le t}\lambda_jx_j$ and the certificate $\widehat\varepsilon$
 \EndIf
\EndFor
\end{algorithmic}
\end{algorithm}

\begin{restatable}[\SPC]{proposition}{thmspcut}
\label{thm:spcut}
Let $f$ be convex on $K\subseteq c+B_1^n(R)$ with all returned subgradients of
Euclidean norm at most $G$, let $N\ge1$, and run \cref{alg:spcut} with
$\varepsilon=\varepsilon_N=4G\PN/N=16GR\sqrt{\bN\cn}/N$. The method stops after at
most $N$ queries and returns $\widehat x\in K$ with
\[
 f(\widehat x)-\min_Kf\ \le\ \widehat\varepsilon\ <\ \varepsilon_N,
 \qquad\text{and moreover }\widehat\varepsilon\le2GR
 \ (\widehat\varepsilon\le GR\text{ if }K=B_1^n(R)).
\]
Consequently, for every $0<\varepsilon\le GR$ an $\varepsilon$-solution of a
$G$-Lipschitz convex function on $B_1^n(R)$ is found with
$O\bigl(\frac{GR}{\varepsilon}\sqrt{\log(2n)\log(2+\frac{GR}{\varepsilon}\sqrt{\log(2n)})}\bigr)$
subgradient queries.
\end{restatable}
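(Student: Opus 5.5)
The argument has three parts: bound the number of nonempty trials through the Steiner path, read off the certificate by linear-programming duality once the set is empty, and then bound the value of the certificate.

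\emph{Counting trials.} Suppose $C_1,\dots,C_{T}$ are all nonempty. Each $x_{t+1}$ is within $\eta=\varepsilon/(2G)$ of $\st{C_t}$, and $\st{C_t}\in C_t$, so $\ip{g_t}{\st{C_t}-x_t}\le-\varepsilon$. With $\nrm{g_t}_2\le G$ this gives $\nrm{\st{C_t}-x_t}_2\ge\varepsilon/G$. Since also $\nrm{x_t-\st{C_{t-1}}}_2\le\eta=\varepsilon/(2G)$, the triangle inequality yields
\[
 \nrm{\st{C_t}-\st{C_{t-1}}}_2\ \ge\ \varepsilon/(2G)
\]
for every nonempty step. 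The chain $C_0\supseteq\dots\supseteq C_T$ is nested in $c+B_1^n(R)$, so \cref{lem:steiner-path} together with the bound $\PN$ on the right-hand side of \eqref{eq:steiner-path} gives $T\varepsilon/(2G)\le\PN$ for $T\le N$. With $\varepsilon=\varepsilon_N=4G\PN/N$ this forces $T\le N/2$. Hence some $C_t$ with $t\le N/2+1\le N$ is empty (for $N=1$, check directly), and the method stops within $N$ queries. I expect the bookkeeping between $T\le N$ and the path budget to be routine.

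\emph{Certificate.} At the stopping time $t$, the set $K\cap\{y:\ip{g_j}{y}\le\ip{g_j}{x_j}-\varepsilon,\ j\le t\}$ is empty. Farkas/LP duality on the compact polytope $K$ (equivalently, a minimax argument) then gives $\lambda\in\Delta_t$ with
\[
 \min_{y\in K}\sum_j\lambda_j\bigl(\ip{g_j}{y-x_j}+\varepsilon\bigr)>0 .
\]
This says exactly $\widehat\varepsilon<\varepsilon$, with $\widehat\varepsilon=\sum_j\lambda_j\ip{g_j}{x_j}+\hs K(-\sum_j\lambda_jg_j)=\max_{u\in K}\sum_j\lambda_j\ip{g_j}{x_j-u}$. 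Convexity then gives, for every $u\in K$,
\[
 f(\widehat x)-f(u)\le\sum_j\lambda_j\bigl(f(x_j)-f(u)\bigr)\le\sum_j\lambda_j\ip{g_j}{x_j-u}\le\widehat\varepsilon .
\]
Moreover $\widehat x\in K$, since every $x_j\in C_{j-1}\subseteq K$.

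\emph{Size of the certificate.} For any $\lambda$, $\widehat\varepsilon\le\max_{u\in K}\sum_j\lambda_j\nrm{g_j}_2\nrm{x_j-u}_2\le G\,\diam_2(K)\le2GR$. For $K=B_1^n(R)$, take $\lambda=e_1$ and $x_1=0$: this gives $\max_u\ip{g_1}{-u}\le R\nrm{g_1}_\infty\le GR$, so the LP minimum is at most $GR$.

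\emph{Query count.} For the final claim, given $\varepsilon\le GR$, choose the smallest $N$ with $16GR\sqrt{\bN\cn}/N\le\varepsilon$; inverting as in the explicit horizon step of \cref{thm:main} yields the stated order. The one delicate point is tie-breaking in the duality step. Strict infeasibility of the system holds because $C_t$ is empty, and compactness of $K$ makes the minimax theorem applicable, so I expect no real obstacle.
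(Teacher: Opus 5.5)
Your proof is correct and follows the paper's argument. It uses the same displacement bound $\nrm{\st{C_t}-\st{C_{t-1}}}_2\ge\varepsilon/(2G)$ summed against \cref{lem:steiner-path}, the same minimax/LP-duality certificate obtained from emptiness of $C_t$ on compact $K$, and the same choice $\lambda=e_1$ to bound $\widehat\varepsilon$. Your counting in fact gives the slightly sharper stopping time $\lfloor N/2\rfloor+1$. The paper inverts $16GR\sqrt{\bN\cn}/N\le\varepsilon$ with the short \cref{lem:spcut-count} rather than the cubic horizon computation of \cref{thm:main}, but either inversion is routine.
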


\begin{proof}
\emph{Certificate.} Suppose $C_t=\varnothing$. Then
$\max_{y\in K}\min_{j\le t}\ip{g_j}{x_j-y}<\varepsilon$: the maximum is attained
on the compact set $K$, and a point $y$ with all $\ip{g_j}{x_j-y}\ge\varepsilon$
would belong to $C_t$. The function $(y,\lambda)\mapsto\sum_j\lambda_j\ip{g_j}{x_j-y}$
is bilinear on the compact convex sets $K\times\Delta_t$, so the minimax theorem
gives
\begin{align*}
 \max_{y\in K}\min_{j\le t}\ip{g_j}{x_j-y}
 &=\min_{\lambda\in\Delta_t}\max_{y\in K}\sum_j\lambda_j\ip{g_j}{x_j-y}\\
 &=\min_{\lambda\in\Delta_t}\Bigl\{\sum_j\lambda_j\ip{g_j}{x_j}
 +\hs K\Bigl(-\sum_j\lambda_jg_j\Bigr)\Bigr\}=\widehat\varepsilon ,
\end{align*}
a linear program over the known polytope $K$ (for $K=B_1^n(R)$ its last term
is $R\nrm{\sum_j\lambda_jg_j}_\infty$). For every $y\in K$, convexity and the
subgradient inequalities give
$f(\widehat x)-f(y)\le\sum_j\lambda_j(f(x_j)-f(y))\le\sum_j\lambda_j\ip{g_j}{x_j-y}\le\widehat\varepsilon<\varepsilon$.
Concentrating $\lambda$ on $j=1$ gives $\widehat\varepsilon\le\max_{y\in K}\ip{g_1}{x_1-y}\le G\cdot\diam_2(K)\le2GR$,
and $\widehat\varepsilon\le R\nrm{g_1}_\infty\le GR$ when $x_1=0$ and $K=B_1^n(R)$.

\emph{Budget.} Suppose that $C_0,\dots,C_N$ are all nonempty. Since
$\st{C_t}\in C_t$, the cut gives $\ip{g_t}{\st{C_t}-x_t}\le-\varepsilon$, so
$\nrm{\st{C_t}-x_t}_2\ge\varepsilon/G$ and, by the choice of $\eta$,
$\nrm{\st{C_t}-\st{C_{t-1}}}_2\ge\varepsilon/G-\eta=\varepsilon/(2G)$ for
$t=1,\dots,N$. Summing and applying \cref{lem:steiner-path} yields
$N\varepsilon/(2G)\le\PN$, which contradicts $\varepsilon=4G\PN/N$. Hence some
$C_t$ with $t\le N$ is empty, and the method stops after at most $N$ queries;
the point $\st{C_N}$ is used only in this argument and is never computed.

\emph{Query count.} Given $\varepsilon$, the smallest $N$ with
$4G\PN/N\le\varepsilon$ has the stated order by \cref{lem:spcut-count} below.
\end{proof}

The bound $\Omega(GR/N)$ of \cref{thm:first-order-lower}(a) (and its randomized
version, \cref{thm:rand-nonsmooth}) shows that the exponent is optimal; we do
not know whether the logarithmic factor is necessary. The proof uses only
$\nrm{g_t}_2\le G$ and $f(y)\ge f(x_t)+\ip{g_t}{y-x_t}$ on $K$, so the oracle may
return any subgradient satisfying this norm bound, and $f$ need be Lipschitz only on $K$; the same
certificate therefore also covers bounded monotone operators and
convex--concave saddle problems (\cref{app:vi}).

\begin{lemma}\label{lem:spcut-count}
Let $0<\varepsilon\le GR$, $Q=GR/\varepsilon\ge1$ and $A=1+\log(2n)$. If
$N=\lceil32Q\sqrt{A[1+\log(16Q\sqrt A)]}\rceil$, then
$16GR\sqrt{(1+\log N)(1+\log(2n))}/N\le\varepsilon$; in particular \cref{alg:spcut}
with depth $\varepsilon$ stops after at most $N=O\bigl(Q\sqrt{\log(2n)\log(2+Q\sqrt{\log(2n)})}\bigr)$
queries; the same holds, with $N$ doubled, for $\bN\cn$ in place of
$(1+\log N)(1+\log(2n))$.
\end{lemma}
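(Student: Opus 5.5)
The inequality $16GR\sqrt{(1+\log N)A}/N\le\varepsilon$ is equivalent to
\[
 N\ \ge\ 16Q\sqrt{A(1+\log N)} .
\]
The plan is to verify this directly for the prescribed $N$, absorbing the self-referential $\log N$ by a crude bound on $N$ itself. Write $M=16Q\sqrt A\ge16$, so that the chosen horizon is $N=\lceil 2M\sqrt{1+\log M}\rceil$. The target inequality then reads $N\ge M\sqrt{1+\log N}$.

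\emph{Step 1: an upper bound on $N$.} Since $M\ge16$, we have $\sqrt{1+\log M}\le M$. Together with the ceiling this gives $N\le 2M^2+1\le 3M^2$. Hence
\[
 1+\log N\le 1+\log3+2\log M\le 4(1+\log M),
\]
where the last step uses $\log3\le3$. Therefore $M\sqrt{1+\log N}\le 2M\sqrt{1+\log M}\le N$, which is the claimed inequality. The factor $2$ in the definition of $N$ was chosen precisely to absorb the $\sqrt4$ appearing here. This step is the only real work. The point to watch is that the constant $32=2\cdot16$ in the statement leaves exactly this room, so the argument must use the coarse bound $N\le3M^2$ and not a finer one.

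\emph{Step 2: stopping time.} By \cref{thm:spcut}, whose budget argument uses only $\varepsilon\ge4G\PN/N$, \cref{alg:spcut} with depth $\varepsilon$ stops within $N$ queries whenever $16GR\sqrt{\bN\cn}/N\le\varepsilon$. More precisely, the budget argument of \cref{thm:spcut} works with any path bound valid for chains of length at most $N$. \cref{lem:steiner-path} supplies the bound $4R\sqrt{(1+\log N)(1+\log(2n))}$. With this path bound, the contradiction $N\varepsilon/(2G)>$ path budget follows from Step 1 in the same way, so \cref{alg:spcut} stops after at most $N$ queries. The order estimate follows because $\log M=O(\log(2+Q\sqrt{\log(2n)}))$ and $A=O(\log(2n))$.

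\emph{Step 3: the $\bN\cn$ variant.} Use $\bN\le 2(1+\log N)$ (since $\lceil\log_2N\rceil\le 1+\log N/\log2$) and $\cn\le2(1+\log(2n))$. These give $\bN\cn\le4(1+\log N)A$, so the right-hand side grows by a factor of at most $2$. Doubling $N$ compensates: with $N'=2N$ we have $1+\log N'\le2(1+\log N)$, and Step 1's slack still suffices. If the constants turn out tight there, I would simply redo Step 1 with $4M$ in place of $2M$.
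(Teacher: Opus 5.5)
Your proof is correct. It differs from the paper's in how it handles the self-referential $\log N$.

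\textbf{Comparison with the paper.} The paper sets $u=2c\sqrt{1+\log c}$ with $c=16Q\sqrt A$ (your $M$). It shows $1+\log u<4(1+\log c)$ via $\log(1+\log c)\le\log c$, and then passes from $u$ to $N=\lceil u\rceil$ using the monotonicity of $t\mapsto t/\sqrt{1+\log t}$ on $[1,\infty)$. You instead bound the integer $N$ from above ($N\le 3M^2$). This handles the ceiling directly and needs no monotonicity; the cruder intermediate bound still fits inside the factor $4$. Your Step 2 also makes explicit what the paper leaves implicit. The budget argument of \cref{thm:spcut} only needs a path bound for chains of length at most $N$, and \cref{lem:steiner-path} together with Step 1 gives $N\varepsilon/(2G)\ge 8R\sqrt{(1+\log N)A}$, twice that bound.

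\textbf{Step 3 as written does not close.} The chain you describe for $N'=2N$ first uses $1+\log N'\le 2(1+\log N)$ and then Step 1's bound $1+\log N\le 1+\log 3+2\log M$. This requires $2(1+\log 3+2\log M)\le 4(1+\log M)$, i.e.\ $\log 3\le 1$, which is false.

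\textbf{The fix.} Rerun Step 1 for $N'$ itself. From $N'=2N\le 6M^2$ you get $1+\log N'\le 1+\log 6+2\log M\le 4(1+\log M)$. Hence
\[
2M\sqrt{1+\log N'}\le 4M\sqrt{1+\log M}\le N'.
\]
Combined with $\mathsf b_{N'}\cn\le 4(1+\log N')A$, this is the required inequality. Your fallback of replacing $2M$ by $4M$ also works, and it gives exactly the paper's choice $\lceil 2u\rceil$.

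\textbf{Your remark about the coarse bound is backwards.} You say the argument must use $N\le 3M^2$ rather than a finer bound. In fact a finer bound only adds slack. For example, $N\le 3M\sqrt{1+\log M}$ gives $1+\log N\le 1+\log 3+\tfrac32\log M$, and with it even your $2(1+\log N)$ chain goes through, since $M\ge16$.
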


\begin{proof}
Put $c=16Q\sqrt A\ge16$ and $u=2c\sqrt{1+\log c}=32Q\sqrt{A[1+\log(16Q\sqrt A)]}$.
Since $\log(1+\log c)\le\log c$,
\[
 1+\log u=1+\log2+\log c+\tfrac12\log(1+\log c)<4(1+\log c),
\]
so $u/\sqrt{1+\log u}>c$. The function $t\mapsto t/\sqrt{1+\log t}$ is
increasing for $t\ge1$, so $N=\lceil u\rceil$ satisfies
$N/\sqrt{1+\log N}\ge c=16GR\sqrt A/\varepsilon$, which is equivalent to
$16GR\sqrt{(1+\log N)A}/N\le\varepsilon$. For the
condition with $\bN\cn$, note that $\bN\le2(1+\log N)$ and $\cn\le2(1+\log(2n))$,
since $1+\lceil\log_2 t\rceil\le2(1+\log t)$ for $t\ge2$, so it suffices that
$N/\sqrt{1+\log N}\ge2c$. This holds for $N=\lceil2u\rceil$, because
$1+\log(2u)=1+2\log2+\log c+\frac12\log(1+\log c)\le2.4+1.5\log c<4(1+\log c)$
and hence $2u/\sqrt{1+\log(2u)}>2c$.
\end{proof}

\subsection{H\"older continuous gradients}
\label{app:holder}

We use the following consequence of \cref{lem:steiner-path}. If in a run of a level method the sets
$C_0\supseteq\dots\supseteq C_N$ are nonempty and every $z_t$ is an
$\eta_N$-approximate Steiner point of $C_t$, then, with $d_t=\nrm{z_t-z_{t-1}}_2$,
\begin{equation}\label{eq:app-path}
 \sum_{t=1}^Nd_t\le\tfrac32\PN,\qquad
 \#\{t:d_t>\delta_N\}<\tfrac{3N}{8},\qquad
 \#\{t:d_t\le\delta_N\}>\tfrac N2 .
\end{equation}
The first bound is \eqref{eq:actual-path}; the other two follow as in the
proof of \cref{thm:splevel}.

Let $0\le\nu\le1$ and let $f$ be convex on $K$ such that the oracle returns, at
each feasible $x$, a subgradient $g_x\in\partial f(x)$ with
\begin{equation}\label{eq:holder-remainder}
 0\le f(y)-f(x)-\ip{g_x}{y-x}\le\frac{L_{1,\nu}}{1+\nu}\nrm{y-x}_2^{1+\nu}
 \qquad(x,y\in K).
\end{equation}
For $0<\nu\le1$ this holds with $g_x=\nabla f(x)$ whenever $\nabla f$ is
$\nu$-H\"older with constant $L_{1,\nu}$; for $\nu=0$ it holds with
$L_{1,0}=2G$ for every $G$-Lipschitz convex $f$ and any subgradient selection with $\nrm{g_x}_2\le G$.
We run \cref{alg:splevel} with $L$ replaced by $1$ in the weight recursion,
$a_t=\frac12(1+\sqrt{1+4A_{t-1}})$, and with the certificate
\eqref{eq:holder-level} below in place of \eqref{eq:level-error}; the cuts, the
selector accuracy and the acceptance rule are unchanged.

\begin{restatable}[H\"older gradients]{proposition}{thmholder}
\label{thm:holder}
Under \eqref{eq:holder-remainder}, the level method with horizon $N$ either
certifies $\min_Kf>\ell$ or returns $y_N\in K$ with
\begin{equation}\label{eq:holder-level}
 f(y_N)\le\ell+\frac{24\cdot4^\nu}{1+\nu}\cdot\frac{L_{1,\nu}\PN^{1+\nu}}{N^{1+2\nu}}
 \le\ell+\frac{48\,L_{1,\nu}\PN^{1+\nu}}{N^{1+2\nu}} .
\end{equation}
With the bracket search of \cref{alg:spaccel}, its horizon rule
\eqref{eq:horizon-rule} replaced by $N(w)=\min\{2^j:e_{2^j}\le w/4\}$, where
$e_N$ is the computable bound \eqref{eq:holder-eN} below, which dominates the
right-hand side of \eqref{eq:holder-level} and avoids real powers, and with a
computable initial width of at most $4L_{1,\nu}(2R)^{1+\nu}$, an
$\varepsilon$-solution is obtained with
\[
 O\Bigl(1+\Bigl[\frac{L_{1,\nu}R^{1+\nu}}{\varepsilon}\Bigl\{\log(2n)\log\Bigl(2+\frac{L_{1,\nu}R^{1+\nu}}{\varepsilon}\log(2n)\Bigr)\Bigr\}^{\frac{1+\nu}2}\Bigr]^{\frac1{1+2\nu}}\Bigr)
\]
queries, uniformly in $\nu\in[0,1]$, and with a budget of $T\ge385$ queries the
error is $O\bigl(L_{1,\nu}R^{1+\nu}[\log(2T)\log(2n)]^{(1+\nu)/2}/T^{1+2\nu}\bigr)$.
\end{restatable}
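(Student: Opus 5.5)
The plan follows the proof of \cref{thm:splevel}, with the Hölder remainder substituted in \cref{lem:level-step}. By the remark after that lemma, replacing $\frac L2\nrm{y^+-x}_2^2$ by $\phi(r)=\frac{L_{1,\nu}}{1+\nu}r^{1+\nu}$ turns the one-step inequality, for an accepted trial, into
\[
 B_t[f(y_t)-\ell]\le A_{t-1}[f(y_{t-1})-\ell]+B_t\,\phi\bigl(\tfrac{a_t}{B_t}d_t\bigr).
\]
The descent inequality between $x_t$ and $y_t$ is exactly \eqref{eq:holder-remainder}, and both convexity at $x_t$ and the level cut use only the returned $g_{x_t}$. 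This works also for $\nu=0$ with subgradients. Rejected trials leave $(A,y)$ unchanged, and all queries are feasible as before. With the weights $a_j^2=A_j$ (the recursion with $L$ replaced by $1$), telescoping over the $k>N/2$ accepted trials gives
\[
 A_N[f(y_N)-\ell]\le\frac{L_{1,\nu}}{1+\nu}\sum_{\rm acc}a_t^{1+\nu}A_t^{-\nu}d_t^{1+\nu}
 =\frac{L_{1,\nu}}{1+\nu}\sum_{\rm acc}a_t^{1-\nu}d_t^{1+\nu},
\]
using $A_t=a_t^2$.

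Next comes the counting. On accepted trials we have $d_t\le\delta_N$, so $d_t^{1+\nu}\le\delta_N^{\nu}d_t$. The weights are increasing, so $a_t^{1-\nu}\le a_k^{1-\nu}$. By \eqref{eq:app-path}, $\sum d_t\le\frac32\PN$. The recursion gives $\frac j2\le a_j\le j$ (from $\sqrt{A_j}\ge\sqrt{A_{j-1}}+\frac12$ and $a_j\le 1+\sqrt{A_{j-1}}$), hence $A_N\ge k^2/4\ge N^2/16$ and $a_k\le N$. Combining,
\[
 f(y_N)-\ell\le\frac{16}{N^2}\cdot\frac{L_{1,\nu}}{1+\nu}N^{1-\nu}\Bigl(\frac{4\PN}{N}\Bigr)^{\nu}\frac32\PN
 =\frac{24\cdot4^\nu}{1+\nu}\frac{L_{1,\nu}\PN^{1+\nu}}{N^{1+2\nu}},
\]
which is \eqref{eq:holder-level}. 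The second bound in \eqref{eq:holder-level} follows from $4^\nu/(1+\nu)\le2$ on $[0,1]$. The certificate case, where an empty $C_t$ shows $\min_Kf>\ell$, is as before.

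For the bracket search I would reuse the proof of \cref{thm:main}, and the main care is needed here.

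\begin{itemize}
\item \emph{Initial bracket.} One query at $x_0$ and a linear minimizer $v$ give $\flo$ as in \eqref{eq:bracket}. By \eqref{eq:holder-remainder}, $f(v)\le\flo+\frac{L_{1,\nu}}{1+\nu}(2R)^{1+\nu}$. I would replace this by a computable bound at most $4L_{1,\nu}(2R)^{1+\nu}$, rounding powers of $2R$ upward to avoid real powers.
\item \emph{Horizon rule.} Define $e_N$ as a rational upper bound on the right-hand side of \eqref{eq:holder-level}. For example, take $\PN^{1+\nu}\le\PN\max\{1,\PN\}$-type rational majorants, or bound $(\PN/N)^{\nu}$ by the nearer of $1$ and $\PN/N$ after rounding. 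The one property needed is that $e_N$ exceed the true bound by at most a constant factor, uniformly in $\nu$.
\item \emph{Geometric summation.} This step needs $N_j/N_{m-1}\lesssim(w_{m-1}/w_j)^{1/(1+2\nu)}$ with a uniform ratio. Since $1+2\nu\le3$, the widths shrinking by $3/4$ make the horizons at least shrink geometrically with ratio $(3/4)^{1/3}$, so the constant $24$ (or a comparable one) survives.
\item \emph{Inversion.} Solving $e_N\le\varepsilon/4$ gives $N(\varepsilon)$ of order $[\frac{L_{1,\nu}R^{1+\nu}}{\varepsilon}(\bN\cn)^{(1+\nu)/2}]^{1/(1+2\nu)}$ with $\bN$ logarithmic in the argument. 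This is the stated count. The fixed-budget version follows as in \cref{thm:main}(b), with the threshold $385$ absorbing the larger constant.
\end{itemize}

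The main obstacle I expect is making $e_N$ simultaneously computable without real powers and uniformly within a constant of the true bound. That uniformity is what keeps the phase-count constant independent of $\nu$.
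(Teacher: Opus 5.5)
Your analysis of the level method is correct and is the paper's argument: the remainder form of \cref{lem:level-step}, the identity $B_t(a_t/B_t)^{1+\nu}=a_t^{1-\nu}$ coming from $B_t=a_t^2$, the bounds $a_j\le j$ and $A_k\ge N^2/16$, and $d_t^{1+\nu}\le\delta_N^\nu d_t$ together with \eqref{eq:app-path} give exactly \eqref{eq:holder-level}.

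The gap is in the bracket search, at the step you yourself flag as the main obstacle. You never construct $e_N$ or the initial width, and the majorants you suggest do not work.
\begin{itemize}
\item In the level bound the real powers combine into $\frac{\PN}{N}\bigl(\frac{4\PN}{N^2}\bigr)^{\nu}$. So the quantity to round is $u^\nu$ with $u=4\PN/N^2$, which is below $1$ for large $N$.
\item Of the candidates $1$ and $u$, only $\max\{1,u\}$ always dominates $u^\nu$; picking the nearer one can fall below $u^\nu$. For $u<1$, $\max\{1,u\}=1$ overestimates by $u^{-\nu}$, a factor of order $(N^2/\PN)^\nu$. This turns $e_N$ into the nonsmooth quantity $L_{1,\nu}\PN/N$ and destroys the exponent $1+2\nu$.
\item The variant $\PN^{1+\nu}\le\PN\max\{1,\PN\}$ is off by the unbounded factor $\max\{\PN^{1-\nu},\PN^{-\nu}\}$, which spoils the $R^{1+\nu}$ dependence. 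It also leaves $N^{1+2\nu}$ as a real power.
\item A crude upward rounding such as $(2R)^\nu\le\max\{1,2R\}$ does not give an initial width of at most $4L_{1,\nu}(2R)^{1+\nu}$.
\end{itemize}

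The missing device is a double dyadic rounding of the exponent, $\Dnu(u)=2^{\lceil\nu\lceil\log_2u\rceil\rceil}$. It is computed by comparing $u$ with powers of two and $\nu$ with rationals. Since $\nu\lceil\log_2u\rceil\in[\nu\log_2u,\nu\log_2u+\nu]$, it satisfies $u^\nu\le\Dnu(u)<2^{\nu+1}u^\nu\le4u^\nu$.

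The paper then takes $e_N=\frac{48L_{1,\nu}\PN}{N}\Dnu(\PN/N^2)$. This gives $48L_{1,\nu}\PN^{1+\nu}/N^{1+2\nu}\le e_N\le192L_{1,\nu}\PN^{1+\nu}/N^{1+2\nu}$. It also takes $W_0=L_{1,\nu}(2R)\Dnu(2R)\le4L_{1,\nu}(2R)^{1+\nu}$.

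With these choices the remaining steps go through:
\begin{itemize}
\item The bound $e_1\ge48L_{1,\nu}(4R)^{1+\nu}\ge12W_0$ guarantees that every horizon exceeds $1$. You need this check before using that $N_j/2$ violates the rule.
\item The slack factor $4$ times $2^{1+2\nu}\le8$ gives $N_j<32^{1/\sigma}(3/4)^{(m-1-j)/\sigma}N_{m-1}$ with $\sigma=1+2\nu$.
\item Summing gives at most $1+384N(\varepsilon)$ queries, uniformly in $\nu$. This is where the threshold $T\ge385$ comes from.
\end{itemize}
Once this construction is in place, your inversion and fixed-budget steps go through as in \cref{thm:main}.
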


The exponent $1+2\nu$ interpolates between the nonsmooth rate $N^{-1}$ of
\cref{thm:spcut} and the smooth rate $N^{-3}$ of \cref{thm:main}; it matches the
lower bound $\nu+(1+\nu)/p=1+2\nu$ of \cref{thm:first-order-lower}(b) at $p=1$,
and the randomized bound of \cref{thm:rand-tensor} with $k=1$. In the matched
Euclidean setting the exponent is $(1+3\nu)/2$ \citep{nesterov2015}. The level
solver does not use $\nu$ or $L_{1,\nu}$; only the horizon rule does.

\begin{proof}[Proof of \cref{thm:holder}]
We write $M=L_{1,\nu}$, $s=1+\nu$ and $\sigma=1+2\nu$.

\emph{Level method.} The weights satisfy $a_t=\frac12(1+\sqrt{1+4A_{t-1}})$, so
$a_t^2=A_{t-1}+a_t=B_t$. Consider an accepted trial with $A=A_{t-1}$, $a=a_t$,
$B=B_t$, $x=x_t$, $z=z_{t-1}$, $z^+=z_t$, $y=y_{t-1}$, $y^+=y_t$, and
$d=\nrm{z^+-z}_2$. Since $y^+-x=(a/B)(z^+-z)$, the remainder bound
\eqref{eq:holder-remainder} at $x$ gives
$Bf(y^+)\le Bf(x)+a\ip{g_x}{z^+-z}+B\frac Ms(a/B)^sd^s$. As in
\cref{lem:level-step}, convexity and $z^+\in C_t$ (the level cut) bound the
first two terms by $Af(y)+a\ell$. Hence, using $B=a^2$,
\begin{equation}\label{eq:holder-step}
 B[f(y^+)-\ell]\le A[f(y)-\ell]+\frac Ms\frac{a^s}{B^{\nu}}d^s
 =A[f(y)-\ell]+\frac Msa^{1-\nu}d^s .
\end{equation}
Number the accepted updates $j=1,\dots,k$. Then $a_1=1$, $A_j=a_j^2$, and
$a_{j+1}=\frac12(1+\sqrt{1+4a_j^2})\le a_j+1$ because $1+4a_j^2\le(2a_j+1)^2$;
hence $a_j\le j\le N$. Also $\sqrt{A_j}=a_j\ge\sqrt{A_{j-1}}+\frac12$, so
$A_k\ge k^2/4\ge N^2/16$ by \eqref{eq:app-path}. Rejected trials do not change
$(A,y)$. Telescoping \eqref{eq:holder-step} from $A_0=0$ and using $d_t\le\delta_N$
on accepted trials and \eqref{eq:app-path},
\[
 A_k[f(y_N)-\ell]\le\frac MsN^{1-\nu}\sum_{t\ \mathrm{acc.}}d_t^{1+\nu}
 \le\frac MsN^{1-\nu}\delta_N^{\nu}\sum_{t=1}^Nd_t
 \le\frac{3\cdot4^\nu M}{2s}\PN^{s}N^{1-2\nu}
\]
(for $\nu=0$ read $\delta_N^0=1$). Dividing by $A_k\ge N^2/16$ gives the first
bound in \eqref{eq:holder-level}; the second follows because $\nu\mapsto4^\nu/(1+\nu)$
is increasing on $[0,1]$ with value $2$ at $\nu=1$. The certificate branch is as in
\cref{thm:splevel}.

\emph{Finite parameters.} For $u>0$ let $\Dnu(u)=2^{\lceil\nu\lceil\log_2u\rceil\rceil}$,
computed by binary scaling and integer comparisons. Since
$\nu\lceil\log_2u\rceil\in[\nu\log_2u,\nu\log_2u+\nu]$, we have
$u^\nu\le\Dnu(u)<2^{\nu+1}u^\nu\le4u^\nu$, and $\Dnu$ is nondecreasing. Put
\begin{equation}\label{eq:holder-eN}
 e_N=\frac{48M\PN}{N}\Dnu\Bigl(\frac{\PN}{N^2}\Bigr),\qquad\text{so that}\qquad
 \frac{48M\PN^s}{N^\sigma}\le e_N\le\frac{192M\PN^s}{N^\sigma},
\end{equation}
because $(\PN/N)(\PN/N^2)^\nu=\PN^s/N^\sigma$. Thus a nonempty run returns
$f(y_N)\le\ell+e_N$, and the test $e_N\le w/4$ needs only finite arithmetic.

\emph{Bracket.} Query $(f(x_0),g_0)$ at $x_0\in K$ and let
$v\in\argmin_{u\in K}\ip{g_0}u$. Then $\flo=f(x_0)+\ip{g_0}{v-x_0}\le\min_Kf$ by
the lower bound in \eqref{eq:holder-remainder}, and by its upper bound
$f(v)\le\flo+\frac Ms\nrm{v-x_0}_2^s\le\flo+\frac Ms(2R)^s\le\flo+W_0$ with the
computable width $W_0=M(2R)\Dnu(2R)\le4M(2R)^s$ (for $K=B_1^n(R)$, $x_0=0$, one may
take $W_0=MR\Dnu(R)\le4MR^s$). If $\varepsilon\ge W_0$, the method returns $v$.

\emph{Bracket search.} With width $w>\varepsilon$ take $\ell=(\flo+\fhi)/2$ and
$N(w)=\min\{2^j:e_{2^j}\le w/4\}$, which exists since $e_N\to0$ along powers
of two. An empty run raises $\flo$ to $\ell$ (width $w/2$); a nonempty run
stores $y_N$ and lowers $\fhi$ to $\ell+w/4$ (width $3w/4$). Let
$w_0>\dots>w_{m-1}>\varepsilon$ be the widths, $N_j=N(w_j)\le N_{m-1}\le N(\varepsilon)$.
All $N_j>1$, since $P_1\ge4R$ gives
$e_1\ge48M(4R)^s\ge12\cdot4M(2R)^s\ge12W_0>w_j/4$.
Since $N_j/2$ violates the rule and $P_{N_j/2}\le P_{N_j}$,
\[
 \frac{w_j}4<e_{N_j/2}\le\frac{192\cdot2^\sigma MP_{N_j}^s}{N_j^\sigma}\le\frac{1536\,MP_{N_j}^s}{N_j^\sigma},
 \qquad
 N_{m-1}^\sigma\ge\frac{192\,MP_{N_{m-1}}^s}{w_{m-1}},
\]
where $P_N$ is the path budget at horizon $N$ and we used $2^\sigma\le8$.
As $P_{N_j}\le P_{N_{m-1}}$ and $w_j\ge(4/3)^{m-1-j}w_{m-1}$, dividing gives
$N_j<32^{1/\sigma}(3/4)^{(m-1-j)/\sigma}N_{m-1}$, hence, including the initial query,
\[
 N_{\rm total}\le1+\frac{32^{1/\sigma}}{1-(3/4)^{1/\sigma}}N(\varepsilon)<1+384N(\varepsilon),
\]
uniformly for $1\le\sigma\le3$, since $32^{1/\sigma}\le32$ and $(3/4)^{1/\sigma}\le(3/4)^{1/3}<11/12$.

\emph{Order of $N(\varepsilon)$.} Let $Q=MR^s/\varepsilon$; in the active branch
$\varepsilon<W_0\le4M(2R)^s$, so $Q$ is bounded below by a constant. Put
$H=768\cdot4^sQ\cn^{s/2}\ge2$ and let $N$
be the smallest power of two with $N\ge N_0:=[16H(1+\log_2H)^{s/2}]^{1/\sigma}$.
Then $N<2N_0$, so $\log_2N<5+\log_2H+\log_2(1+\log_2H)\le5+2\log_2H$ and
$\bN\le7(1+\log_2H)$; hence $N^\sigma\ge16H(1+\log_2H)^{s/2}\ge H\bN^{s/2}$ because
$7^{s/2}\le7<16$; this is equivalent to $192M(4R)^s(\bN\cn)^{s/2}/N^\sigma\le\varepsilon/4$
and gives $e_N\le\varepsilon/4$. Therefore $N(\varepsilon)\le N<2N_0$, and since
$\cn=O(\log(2n))$ and $\log H=O(\log(2+Q\log(2n)))$, this is the stated order,
with a constant that can be chosen uniformly in $\nu\in[0,1]$.

\emph{Fixed horizon.} For $T\ge385$ let $Q_T$ be the largest power of two with
$384Q_T\le T-1$ and $\varepsilon_T=4e_{Q_T}$. If $\varepsilon_T\ge W_0$ the method
returns the initial $v$. Otherwise $N(\varepsilon_T)\le Q_T$, the count is at most
$1+384Q_T\le T$, and the error is at most
$\min\{W_0,4e_{Q_T}\}\le\min\{4M(2R)^s,768MP_{Q_T}^s/Q_T^\sigma\}$. Since $Q_T>(T-1)/768$
and $\mathsf b_{Q_T}\le\mathsf b_T$, this is $O(MR^s[\log(2T)\log(2n)]^{s/2}/T^\sigma)$.
\end{proof}

\subsection{Oracles of higher order}
\label{app:tensor}

Let $k\ge2$, $0\le\nu\le1$, $s=k+\nu$, and let $f$ be convex and $C^k$ on a
neighborhood of $K$ with $\nu$-H\"older $k$th derivative in the sense of
\eqref{eq:holder-tensor} with constant $L_{k,\nu}$. The oracle returns the
Taylor polynomial $T_k(x;u)=\sum_{j\le k}\frac1{j!}D^jf(x)[(u-x)^j]$ at a
feasible $x$. Put
\begin{equation}\label{eq:tensor-constants}
 c_{k,\nu}=\prod_{j=1}^{k-1}\frac{1}{\nu+j}=\frac{\Gamma(\nu+1)}{\Gamma(k+\nu)},
 \qquad
 m_x(u)=T_k(x;u)+\frac{2c_{k,\nu}L_{k,\nu}}{s}\nrm{u-x}_2^s ,
\end{equation}
so that $|f(u)-T_k(x;u)|\le\frac{c_{k,\nu}L_{k,\nu}}{s}\nrm{u-x}_2^s$ and
$\nrm{\nabla f(u)-\nabla_uT_k(x;u)}_2\le c_{k,\nu}L_{k,\nu}\nrm{u-x}_2^{s-1}$ on $K$.
The model $m_x$ is known and $C^1$ but need not be convex; it is never used to
form a cut. A trial of \SPT{} (\cref{alg:sptaylor}) makes
two queries. The first is the jet at the mixed point $x=(Ay+az)/B$; from it a
finite search over the known model computes a point $v\in K$ with the
\emph{checkable} approximate stationarity
$\max_{u\in K}\ip{\nabla m_x(v)}{v-u}\le\eta_{\mathrm{mod}}$. The second is the
jet at $v$, of which only $f(v)$ and $\nabla f(v)$ are used; they give the level
cut $\{u:f(v)+\ip{\nabla f(v)}{u-v}\le\ell\}$, a supporting half-space of $f$.
If the approximate Steiner point $z^+$ of the cut set
is within $\delta_N$ of $z$, the state becomes $(y,A)\leftarrow(v,B)$ with the integer
weights $a_j=j^k$; otherwise the cut is kept and the state is not updated.

\begin{restatable}[Higher-order oracles]{proposition}{thmtensor}
\label{thm:tensor}
Let $\Xi_k=3^{k+2}8^k(k+1)^{k+1}$. With the parameters above, the
level method of \SPT{} makes at most $2N$ order-$k$ queries and either certifies
$\min_Kf>\ell$ or returns $y\in K$ with
\begin{equation}\label{eq:tensor-level}
 f(y)\le\ell+e_N,\qquad e_N\text{ as in }\eqref{eq:tensor-eN},\qquad
 2\Xi_kc_{k,\nu}L_{k,\nu}\frac{\PN^s}{N^{2s-1}}\le e_N\le8\Xi_kc_{k,\nu}L_{k,\nu}\frac{\PN^s}{N^{2s-1}} .
\end{equation}
The bracket search, with the initial width obtained from one jet at $x_0$
as in the proof below, returns an $\varepsilon$-solution after at most
$1+32(2k+1)N(\varepsilon)$ queries, where
$N(\varepsilon)=\min\{2^j:e_{2^j}\le\varepsilon/4\}
=O_k\bigl(1+[\frac{L_{k,\nu}R^s}{\varepsilon}\{\log(2n)\log(2+\frac{L_{k,\nu}R^s}{\varepsilon}\log(2n))\}^{s/2}]^{1/(2s-1)}\bigr)$
uniformly in $\nu\in[0,1]$, and with a budget of $T\ge32(2k+1)+1$ queries the
error is
\begin{equation}\label{eq:tensor-rate}
 O_k\Bigl(\frac{L_{k,\nu}R^{k+\nu}\,[\log(2T)\log(2n)]^{(k+\nu)/2}}{T^{2(k+\nu)-1}}\Bigr).
\end{equation}
\end{restatable}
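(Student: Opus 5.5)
I follow the template of \cref{thm:splevel,thm:holder}: one-step energy inequality, path bound \eqref{eq:app-path} for the acceptance count, then the bracket search of \cref{app:bracket}. The only genuinely new ingredient is the one-step inequality for a trial of \SPT.

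\emph{One accepted step.} Fix an accepted trial with state $(y,A)$, center $z$, weight $a$, $B=A+a$, mixed point $x=(Ay+az)/B$, new center $z^+$ in the level cut at $v$, and $d=\nrm{z^+-z}_2$. Put $y'=(Ay+az^+)/B$, so $y'\in K$ and $\nrm{y'-x}_2=(a/B)d$.

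I will compare $f(v)$ with the model value $m_x(y')$ in three steps.
\begin{enumerate}[label=(\roman*)]
\item Since $m_x\ge f$ on $K$, the model at $y'$ satisfies $m_x(y')\le f(y')+\frac{3c_{k,\nu}L_{k,\nu}}{s}\nrm{y'-x}_2^s$.
\item Approximate stationarity of $v$, together with the gradient error bound $\nrm{\nabla f(v)-\nabla_uT_k(x;v)}_2\le c_{k,\nu}L_{k,\nu}\nrm{v-x}_2^{s-1}$ and the regularizer gradient $2c_{k,\nu}L_{k,\nu}\nrm{v-x}_2^{s-1}$, should give
\[
 \ip{\nabla f(v)}{v-u}\le\eta_{\mathrm{mod}}+3c_{k,\nu}L_{k,\nu}\nrm{v-x}_2^{s-1}\nrm{v-u}_2 .
\]
This is applied with $u=y'$.
\item Convexity gives $f(y')\ge f(v)+\ip{\nabla f(v)}{y'-v}$. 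Writing $y'=(Ay+az^+)/B$, the term $a\ip{\nabla f(v)}{z^+-v}$ is bounded through the level cut by $a(\ell-f(v))$, and the term $A\ip{\nabla f(v)}{y-v}$ by $A(f(y)-f(v))$.
\end{enumerate}
Combining these should yield
\[
 B[f(v)-\ell]\le A[f(y)-\ell]+B\bigl(\eta_{\mathrm{mod}}+C c_{k,\nu}L_{k,\nu}(a d/B)^s\bigr),
\]
plus a cross term. The cross term involves $\nrm{v-x}_2^{s-1}\nrm{v-y'}_2$, and I bound it by Young's inequality against a term $-\frac{c_{k,\nu}L_{k,\nu}}{s}\nrm{v-x}_2^s$. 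That negative term is available because $f(v)\le m_x(v)-\frac{c_{k,\nu}L_{k,\nu}}{s}\nrm{v-x}_2^s$: the regularizer carries twice the Taylor error constant. This balancing is the step I expect to be the main obstacle. It is the standard Nesterov tensor-step argument, made harder by approximate constrained stationarity and a level cut taken at $v$ rather than at $x$. If the direct Young bound fails, I would first try choosing $v$ to be an approximate minimizer of $m_x$ over $K$ that also satisfies the linear-program stationarity certificate.

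\emph{Summation.} With integer weights $a_j=j^k$ we get $A_j\asymp j^{k+1}/(k+1)$, so $a_j^s/B_j^{s-1}\lesssim (k+1)^{s-1}j^{ks-(k+1)(s-1)}=(k+1)^{s-1}j^{k+1-s}$. Telescoping over the accepted trials, and using $d_t\le\delta_N$ together with $\sum_t d_t\le\frac32\PN$ from \eqref{eq:app-path}, the error term is bounded by $N^{k+1-s}\delta_N^{s-1}\PN\sim N^{k+1-s}\PN^s N^{1-s}$. Since more than $N/2$ trials are accepted, $A_k\gtrsim N^{k+1}/((k+1)2^{k+1})$. Dividing gives order $\PN^sN^{k+2-2s-(k+1)}\cdot N\cdot N^{-1}$, that is $\PN^s/N^{2s-1}$. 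I choose $\eta_{\mathrm{mod}}$ of that order so it is absorbed. Tracking constants gives $\Xi_k$. As in \cref{thm:holder}, $e_N$ is defined with the dyadic surrogate $\Dnu$, which avoids real powers; this gives the two-sided comparison in \eqref{eq:tensor-level}.

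\emph{Certificate and bracket search.} The certificate branch is unchanged: the cut is a genuine supporting half-space of $f$ at $v$, so a minimizer of $f$ survives every cut. Each trial makes two queries. The initial width comes from the jet at $x_0$, using $|f-T_k|\le\frac{c_{k,\nu}L_{k,\nu}}{s}(2R)^s$ together with the minimization of the model. The geometric summation of \cref{app:bracket} with exponent $\sigma=2s-1\le2k+1$ then yields the bound $1+32(2k+1)N(\varepsilon)$ on the number of queries; the factor comes from $1/(1-(3/4)^{1/\sigma})\lesssim\sigma$. The explicit inversion of the horizon rule and the fixed-budget bound \eqref{eq:tensor-rate} follow exactly as in \cref{thm:holder}.
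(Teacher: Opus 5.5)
Your summation, weight bookkeeping and bracket search match the paper. Your step (iii) also yields the right starting inequality $B[f(v)-\ell]-A[f(y)-\ell]\le\ip{\nabla f(v)}{W}$ with $W=B(v-y')$. But the one-step bound, which you correctly flag as the crux, fails as proposed.

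Write $M=L_{k,\nu}$, $c=c_{k,\nu}$, $r=v-x$, $d=z^+-z$ (as a vector), and
\[
\nabla f(v)=\nabla m_x(v)+e-2cM\nrm r_2^{s-2}r,\qquad \nrm e_2\le cM\nrm r_2^{s-1}.
\]
In step (ii) you bound the regularizer contribution $-2cM\nrm r_2^{s-2}\ip r{v-y'}$ by its absolute value. Since $v-y'=r-\frac aB\,d$, this turns a term $-2cMB\nrm r_2^s$ (after multiplying by $B$) into $+2cMB\nrm r_2^s$. Your right-hand side then contains $+3cMB\nrm r_2^s$, and no Young inequality against $-\frac{cM}{s}B\nrm r_2^s$ can absorb that: same power, larger coefficient.

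That negative term is not available anyway. Step (iii) reduces everything to an upper bound on $\ip{\nabla f(v)}W$, and value information about the model gives no handle on it. To use $f(v)\le m_x(v)-\frac{cM}{s}\nrm r_2^s$ you would need two things. First, $m_x(v)\le m_x(y')+\eta'$, which approximate stationarity of a possibly nonconvex model does not give. Second, $f(y')\le(Af(y)+a\ell)/B$, which fails because the level cut controls only $f(v)+\ip{\nabla f(v)}{z^+-v}$, not $f(z^+)$. So your fallback (taking $v$ to be an approximate minimizer of $m_x$) does not close the argument, and step (i) is not needed at all.

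The missing idea is to keep the sign of the regularizer gradient and pair it with $W=Br-ad$. Since $\ip rW\ge B\nrm r_2^2-a\nrm r_2\nrm d_2$ and $\ip{\nabla m_x(v)}W\le B\eta_{\mathrm{mod}}$ (because $y'\in K$),
\begin{align*}
 \ip{\nabla f(v)}W
 &\le B\eta_{\mathrm{mod}}-2cM\nrm r_2^{s-2}\bigl(B\nrm r_2^2-a\nrm r_2\nrm d_2\bigr)
 +cM\nrm r_2^{s-1}\bigl(B\nrm r_2+a\nrm d_2\bigr)\\
 &=B\eta_{\mathrm{mod}}-cMB\nrm r_2^s+3cMa\nrm r_2^{s-1}\nrm d_2 .
\end{align*}
Maximizing $-B\tau^s+3a\nrm d_2\tau^{s-1}$ over $\tau\ge0$ bounds the right side by $B\eta_{\mathrm{mod}}+3^{k+1}cMa^s\nrm d_2^s/B^{s-1}$. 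This is why the regularizer carries the constant $2c$: paired with $W$, it dominates the Taylor gradient error $c$ in the $B\nrm r_2^s$ term. Only the stationarity certificate is used.

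A smaller point: after telescoping, the model terms sum to $\sum_tB_t\eta_{\mathrm{mod}}\le NA_m\eta_{\mathrm{mod}}$. So $\eta_{\mathrm{mod}}$ must be of order $e_N/N$, as in the algorithm's choice $e_N/(2N)$, not of order $\PN^s/N^{2s-1}$.
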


For $0<\nu\le1$ the exponent $2(k+\nu)-1$ equals the lower bound
$s(1+1/p)-1$ of \cref{thm:coordinate-tensor-lower} at $p=1$ and the randomized
bound of \cref{thm:rand-tensor}. For a Hessian with Lipschitz constant $L_{2,1}$
the rate is $\tO(L_{2,1}R^3/N^5)$, compared with the Euclidean $N^{-7/2}$
\citep{arjevani2019,gasnikov2019,kovalev2022tensor}. Regularized Taylor models
and the mixing of two states are also used in non-Euclidean higher-order
acceleration \citep{contreras2025}; here no inner problem is solved to
optimality against an unknown function, the model may be nonconvex, and the
geometry enters only through \cref{lem:steiner-path}.

In this subsection $M=L_{k,\nu}$, $s=k+\nu$ and $\sigma=2s-1$.

\begin{lemma}[Taylor remainders]\label{lem:taylor-remainder}
Let $k\ge2$, $0\le\nu\le1$, and let $f$ be $C^k$ on a convex set containing
$x,u$ with \eqref{eq:holder-tensor} on the segment $[x,u]$. Then
$|f(u)-T_k(x;u)|\le\frac{c_{k,\nu} M}s\nrm{u-x}_2^s$ and
$\nrm{\nabla f(u)-\nabla_uT_k(x;u)}_2\le c_{k,\nu} M\nrm{u-x}_2^{s-1}$.
\end{lemma}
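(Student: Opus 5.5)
The plan is to use the integral form of Taylor's theorem along the segment $[x,u]$ and to compare $D^kf$ at intermediate points with $D^kf(x)$ through \eqref{eq:holder-tensor}. Write $h=u-x$ and $\phi(\tau)=f(x+\tau h)$, which is $C^k$ on $[0,1]$ with $\phi^{(j)}(\tau)=D^jf(x+\tau h)[h^j]$. The first step is the exact remainder
\[
 f(u)-T_k(x;u)=\int_0^1\frac{(1-\tau)^{k-1}}{(k-1)!}\bigl(D^kf(x+\tau h)-D^kf(x)\bigr)[h^k]\,d\tau ,
\]
obtained from Taylor's formula with integral remainder of order $k-1$, after subtracting the $k$th Taylor term written as $\int_0^1\frac{(1-\tau)^{k-1}}{(k-1)!}D^kf(x)[h^k]\,d\tau$. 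The H\"older bound \eqref{eq:holder-tensor} with the Euclidean operator norm gives $|(D^kf(x+\tau h)-D^kf(x))[h^k]|\le M\tau^\nu\nrm h_2^{s}$, so the remainder is at most $\frac{M\nrm h_2^s}{(k-1)!}\int_0^1(1-\tau)^{k-1}\tau^\nu d\tau=\frac{M\nrm h_2^s}{(k-1)!}B(\nu+1,k)$. Since $B(\nu+1,k)=\Gamma(\nu+1)\Gamma(k)/\Gamma(k+\nu+1)$, the bound becomes $M\nrm h_2^s\Gamma(\nu+1)/\Gamma(k+\nu+1)=\frac{c_{k,\nu}M}{s}\nrm h_2^s$, because $\Gamma(k+\nu+1)=s\,\Gamma(k+\nu)$.

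For the gradient, fix a unit vector $e$ and apply the same idea to $\psi(\tau)=Df(x+\tau h)[e]$, which is $C^{k-1}$. One checks that $\ip{\nabla_uT_k(x;u)}{e}=\sum_{j=0}^{k-1}\frac1{j!}D^{j+1}f(x)[h^j,e]$, which is the Taylor polynomial of order $k-1$ of $\psi$ at $0$ evaluated at $1$. The integral remainder of order $k-2$, after subtracting the top term as before, is then
\[
 \int_0^1\frac{(1-\tau)^{k-2}}{(k-2)!}\bigl(D^kf(x+\tau h)-D^kf(x)\bigr)[h^{k-1},e]\,d\tau ,
\]
and it is bounded by $\frac{M\nrm h_2^{s-1}}{(k-2)!}B(\nu+1,k-1)=M\nrm h_2^{s-1}\Gamma(\nu+1)/\Gamma(k+\nu)=c_{k,\nu}M\nrm h_2^{s-1}$. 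Taking the supremum over $e$ gives the gradient claim; the case $k=2$ works with the conventions $0!=1$ and $(1-\tau)^0=1$.

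The only delicate points are bookkeeping. First, $D^kf$ is only H\"older and not differentiable, so I will use the order-$(k-1)$ integral remainder, which needs only $C^k$, and avoid any order-$k$ remainder. Second, I need the identification of the Beta integrals with $c_{k,\nu}$ as defined in \eqref{eq:tensor-constants}. For $\nu=0$ the H\"older bound is read as $L_{k,0}$, which is consistent with $\tau^0=1$. I also need the symmetry of $D^jf$ to compute $\nabla_uT_k$ correctly.
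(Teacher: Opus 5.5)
Your proposal is correct and matches the paper's proof: the order-$(k-1)$ (resp.\ $(k-2)$ for the gradient) integral remainder with the top Taylor term subtracted, the H\"older bound $M\tau^\nu\|h\|_2^\nu$, and the Beta integrals giving $\Gamma(\nu+1)/\Gamma(k+\nu+1)=c_{k,\nu}/s$ and $\Gamma(\nu+1)/\Gamma(k+\nu)=c_{k,\nu}$. The only difference is cosmetic: you derive the gradient remainder by testing against a unit vector $e$ through $\psi(\tau)=Df(x+\tau h)[e]$, whereas the paper writes the vector-valued integral formula directly.
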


\begin{proof}
With $h=u-x$, the integral form of the remainder gives
$f(u)-T_k(x;u)=\int_0^1\frac{(1-t)^{k-1}}{(k-1)!}\bigl(D^kf(x+th)-D^kf(x)\bigr)[h^k]\,dt$,
whose absolute value is at most
$M\nrm h_2^{s}\int_0^1\frac{(1-t)^{k-1}t^\nu}{(k-1)!}dt=M\nrm h_2^s\frac{\Gamma(\nu+1)}{\Gamma(k+\nu+1)}=\frac{c_{k,\nu} M}s\nrm h_2^s$.
Similarly, $\nabla f(u)-\nabla_uT_k(x;u)=\int_0^1\frac{(1-t)^{k-2}}{(k-2)!}\bigl(D^kf(x+th)-D^kf(x)\bigr)[h^{k-1},\cdot]\,dt$,
of norm at most $M\nrm h_2^{s-1}\frac{\Gamma(\nu+1)}{\Gamma(k+\nu)}=c_{k,\nu} M\nrm h_2^{s-1}$.
Here we used the Beta integral $\int_0^1(1-t)^{a-1}t^\nu dt=\Gamma(\nu+1)\Gamma(a)/\Gamma(a+\nu+1)$
and the recursion $\Gamma(\nu+1)/\Gamma(k+\nu)=\prod_{j=1}^{k-1}(\nu+j)^{-1}$;
the argument also covers $\nu=0$.
\end{proof}

\begin{algorithm}[t]
\caption{\SPT: level method with a regularized Taylor model (level $\ell$, horizon $N$)}
\label{alg:sptaylor}
\begin{algorithmic}[1]
\Require known polytope $K\subseteq c+B_1^n(R)$; order-$k$ oracle; $M=L_{k,\nu}$;
level $\ell$; horizon $N$; geometric accuracy $\eta_N$, threshold $\delta_N$;
model accuracy $\eta_{\mathrm{mod}}=e_N/(2N)$, where the certificate
$e_N$ is defined in \eqref{eq:tensor-eN} below
\State $C\gets K$; $z\gets$ an $\eta_N$-approximate Steiner point of $K$; $y\gets$ any point of $K$; $A\gets0$; $j\gets1$
\For{$t=1,\dots,N$}
 \State $a\gets j^k$; $B\gets A+a$; $x\gets(Ay+az)/B$
 \State query the jet at $x$; form $m_x$ from \eqref{eq:tensor-constants}
 \State find $v\in K$ with $\max_{u\in K}\ip{\nabla m_x(v)}{v-u}\le\eta_{\mathrm{mod}}$ (finite search, \cref{lem:model-search})
 \State query the jet at $v$; use $f(v)$ and $g=\nabla f(v)$
 \State $C\gets C\cap\{u:f(v)+\ip g{u-v}\le\ell\}$; \textbf{if} $C=\varnothing$ \textbf{return} ``$\min_Kf>\ell$''
 \State $z^+\gets$ an $\eta_N$-approximate Steiner point of $C$
 \If{$\nrm{z^+-z}_2\le\delta_N$} $y\gets v$; $A\gets B$; $j\gets j+1$ \Comment{accept}
 \EndIf
 \State $z\gets z^+$
\EndFor
\State\Return $y$ with the certificate $f(y)\le\ell+e_N$
\end{algorithmic}
\end{algorithm}

\begin{lemma}[Finite model search]\label{lem:model-search}
For every $\eta>0$ a point $v\in K$ with
$\cG(v):=\max_{u\in K}\ip{\nabla m_x(v)}{v-u}\le\eta$ can be found by a finite
enumeration of rational convex combinations of the vertices of $K$, using only
certified approximations of known scalar powers.
\end{lemma}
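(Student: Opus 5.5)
We prove \cref{lem:model-search} by a Frank--Wolfe search run in exact arithmetic on the known model. Its output is automatically a convex combination of vertices of $K$. Afterwards we make the iteration finite and rational.

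\emph{Step 1: smoothness of the model on $K$.} First we show that $\nabla m_x$ is Lipschitz on $K$ with an explicit constant $\Lambda$. The polynomial part $T_k(x;\cdot)$ has Hessian $\sum_{j=2}^k\frac1{(j-2)!}D^jf(x)[(u-x)^{j-2},\cdot,\cdot]$. Its norms are bounded by the known jet and by $\nrm{u-x}_2\le2R$. The regularizer $\frac{2c_{k,\nu}M}{s}\nrm{u-x}_2^s$ with $s\ge2$ is $C^{1,1}$ on bounded sets, with Lipschitz constant $2c_{k,\nu}M(s-1)(2R)^{s-2}$ (for $s=2$, i.e. $k=2,\nu=0$, it is the quadratic $c_{2,0}M\nrm{u-x}_2^2$). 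Hence $\Lambda$ is a known finite number, computable from the jet entries. Upper bounds on powers such as $(2R)^{s-2}$ can be replaced by powers of two, as in the $\Dnu$ trick of \cref{thm:holder}.

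\emph{Step 2: Frank--Wolfe with the gap as stopping rule.} Start from a vertex $v_0$ of $K$. At $v_i$ pick a vertex $u_i\in\argmin_{u\in K}\ip{\nabla m_x(v_i)}u$; this is a linear program over the known polytope, and for $B_1^n(R)$ it is a signed coordinate vertex. This gives $\cG(v_i)=\ip{\nabla m_x(v_i)}{v_i-u_i}$. Stop if $\cG(v_i)\le\eta$. Otherwise set $v_{i+1}=v_i+\gamma_i(u_i-v_i)$ with the rational step $\gamma_i=\min\{1,\cG(v_i)/(\Lambda D^2)\}$, rounded down to a power of two, where $D=2R\ge\diam_2K$. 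The descent lemma gives the decrease
\[
 m_x(v_i)-m_x(v_{i+1})\ \ge\ \min\Bigl\{\frac{\eta}{2},\ \frac{\eta^2}{4\Lambda D^2}\Bigr\}.
\]
This holds even though $m_x$ is nonconvex, because only the smoothness of Step 1 is used. The model is bounded below on the compact set $K$, for instance by $m_x(v_0)-\max_K\nrm{\nabla m_x}_2D-\frac\Lambda2D^2$, which is computable. So the number of iterations is at most the range divided by the per-step decrease, a finite and explicit number. Every iterate is a rational convex combination of vertices of $K$, since the steps are rational and the vertices rational.

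\emph{Step 3: certified arithmetic.} The only non-rational quantities are $\nrm{v-x}_2^{s}$ and $\nrm{v-x}_2^{s-2}(v-x)$, which appear in $\nabla m_x$ and $m_x$. We evaluate them to certified rational accuracy $\theta$, using series with explicit remainders as in \cref{prop:finite-selector}. We then run the test with threshold $\eta/2$ on the approximate gap and absorb an error $2D\theta\le\eta/2$. This keeps the true gap $\le\eta$ at stopping. The decrease estimate survives with $\eta$ replaced by $\eta/2$ once $\theta$ is small relative to $\eta^2/(\Lambda D^2)$.

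The main obstacle is the robustness in Step 3. An inexact gradient can make Frank--Wolfe steps fail to decrease $m_x$. The plan is to compute the decrease with a fixed tolerance budget and to choose $\theta$ as a fixed function of $\eta,\Lambda,D$ before the run, which makes the descent inequality hold uniformly along the finite run.
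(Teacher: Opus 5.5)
Your proof is correct in substance, but it takes a different route from the paper. The paper argues by existence and density. Since $s\ge2$, $m_x$ is $C^1$, so it has a minimizer $v^\star$ on the compact set $K$, and $\cG(v^\star)=0$. Moreover $\cG$ is continuous and rational convex combinations of the vertices are dense in $K$. The paper enumerates these combinations; at each candidate it approximates $\nabla m_x(v)$ to accuracy $\eta/(8R')$, evaluates the approximate gap by one linear program, and accepts when that gap is at most $\eta/2$. This enumeration must stop, and the slack between $\eta/4$ and $\eta/2$ absorbs the certified approximations of the powers. That argument needs only continuity of $\nabla m_x$, but it gives no bound on how long the enumeration runs.

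You use $s\ge2$ more strongly, to get an explicit Lipschitz constant $\Lambda$ of $\nabla m_x$ on $K$. Frank--Wolfe with dyadic steps then produces the approximately stationary point constructively. The number of linear programs and gradient evaluations is bounded explicitly by the range of $m_x$ over $K$ divided by the per-step decrease. This strengthens ``finite'' quantitatively, and the paper's enumeration comes with no such bound. The price is your Step~1 and a robustness analysis for inexact gradients.

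That robustness analysis needs one correction. Step~3 uses the tolerance $\theta D\le\eta/4$ and a step $\gamma\le\widetilde\cG/(\Lambda D^2)$. With these, the descent lemma only gives a decrease of at least $\gamma(\widetilde\cG/2-\theta D)$, which can be arbitrarily small when the approximate gap $\widetilde\cG$ is just above $\eta/2$. What you need is that $\theta D$ is a fixed fraction of $\eta$ below $1/4$. This is a condition on $\theta D$ versus $\eta$, not on $\theta$ versus $\eta^2/(\Lambda D^2)$. Take $\theta D\le\eta/8$:
\begin{itemize}
\item the true slope $\ip{\nabla m_x(v_i)}{v_i-u_i}$ is at least $\frac34\widetilde\cG$;
\item each non-stopping step decreases $m_x$ by at least $\min\{\eta/8,\eta^2/(32\Lambda D^2)\}$ (for both $\gamma=1$ and a dyadic $\gamma\in(\gamma^*/2,\gamma^*]$);
\item at stopping, $\cG\le\eta/2+\theta D<\eta$.
\end{itemize}
With this change the argument closes.
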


\begin{proof}
The model $m_x$ is $C^1$ because $s\ge2$, so it attains its minimum over the
compact set $K$ at some $v^\star$. Optimality along every segment
$[v^\star,u]\subseteq K$ gives $\cG(v^\star)=0$, and $\cG$ is continuous. Rational
convex combinations of the vertices are dense in $K$, and we enumerate them. At
each candidate $v$ we compute a vector $h$ with $\nrm{h-\nabla m_x(v)}_2\le\eta/(8R')$,
where $R'$ bounds the Euclidean diameter of $K$ (here $R'=2R$), compute
$\widehat\cG=\max_{u\in K}\ip h{v-u}$ by a linear program, and accept $v$ if
$\widehat\cG\le\eta/2$. Since $\max_{u\in K}\nrm{v-u}_2\le R'$, the two objectives
differ by at most $\eta/8$, so $\cG(v)\le\widehat\cG+\eta/8\le\eta$. Conversely,
for all candidates close enough to $v^\star$, $\cG(v)<\eta/8$, hence
$\widehat\cG<\eta/4<\eta/2$, so the enumeration stops. For rational $\nu$ the
powers in $m_x$ are algebraic; for a general known $\nu\in[0,1]$, certified
rational enclosures of $\nrm{v-x}_2^{s-2}$ suffice because the test has
positive slack.
\end{proof}

\begin{lemma}[One trial]\label{lem:tensor-step}
Let $A\ge0$, $a>0$, $B=A+a$, $y,z\in K$, $x=(Ay+az)/B$, let $v\in K$ satisfy
$\cG(v)\le\eta$, let $g=\nabla f(v)$, and let $z^+\in K$ satisfy
$f(v)+\ip g{z^+-v}\le\ell$. Then, with $r=v-x$ and $d=z^+-z$,
\begin{equation}\label{eq:tensor-step}
 B[f(v)-\ell]\le A[f(y)-\ell]+B\eta+3^{k+1}c_{k,\nu} M\frac{a^s}{B^{s-1}}\nrm d_2^s .
\end{equation}
\end{lemma}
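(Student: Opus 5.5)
The plan is to linearize $f$ at the new point $v$ rather than at $x$, and to compare with the auxiliary point $w=(Ay+az^+)/B\in K$. This mirrors \cref{lem:level-step}, except that the descent inequality is replaced by the approximate stationarity of $v$ for the regularized model $m_x$.

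\emph{Step 1 (linearization at $v$).} With $g=\nabla f(v)$, convexity at $v$ gives $Af(y)\ge A[f(v)+\ip g{y-v}]$. The level cut at $z^+$ gives $a\ell\ge a[f(v)+\ip g{z^+-v}]$. Adding the two and using $Ay+az^+=Bw$, we get $Af(y)+a\ell\ge Bf(v)-B\ip g{v-w}$. Subtracting $B\ell=A\ell+a\ell$ then yields
\[
 B[f(v)-\ell]\le A[f(y)-\ell]+B\ip g{v-w}.
\]
It therefore suffices to bound $\ip g{v-w}$ by $\eta+3^{k+1}c_{k,\nu}M\nrm{w-x}_2^s$. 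This is enough because $w-x=(a/B)d$, so $B\nrm{w-x}_2^s=a^s\nrm d_2^s/B^{s-1}$.

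\emph{Step 2 (gradient of $f$ versus gradient of the model).} Write $r=v-x$ and $\rho=\nrm{w-x}_2$. From \eqref{eq:tensor-constants},
\[
 \nabla m_x(v)=\nabla_uT_k(x;v)+2c_{k,\nu}M\nrm r_2^{s-2}r .
\]
By \cref{lem:taylor-remainder}, $g=\nabla m_x(v)-2c_{k,\nu}M\nrm r_2^{s-2}r+e$ with $\nrm e_2\le c_{k,\nu}M\nrm r_2^{s-1}$. Since $w\in K$, the hypothesis $\cG(v)\le\eta$ gives $\ip{\nabla m_x(v)}{v-w}\le\eta$.

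For the regularizer term, split $v-w=r-(w-x)$:
\[
 -2c_{k,\nu}M\nrm r_2^{s-2}\ip r{v-w}\le-2c_{k,\nu}M\nrm r_2^{s}+2c_{k,\nu}M\nrm r_2^{s-1}\rho .
\]
For the error term, $\nrm{v-w}_2\le\nrm r_2+\rho$, so
\[
 \ip e{v-w}\le c_{k,\nu}M\nrm r_2^{s}+c_{k,\nu}M\nrm r_2^{s-1}\rho .
\]
Adding the three pieces,
\[
 \ip g{v-w}\le\eta-c_{k,\nu}M\nrm r_2^s+3c_{k,\nu}M\nrm r_2^{s-1}\rho .
\]
This is the key point: the factor $2$ in the regularizer produces a negative $\nrm r_2^s$ term that absorbs the Taylor error.

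\emph{Step 3 (Young's inequality).} Apply Young's inequality with exponents $s/(s-1)$ and $s$ in the form $3\nrm r_2^{s-1}\rho\le\nrm r_2^s+C_s\rho^s$. The constant is $C_s=3^s(s-1)^{s-1}/s^s\le3^s\le3^{k+1}$. The $\nrm r_2^s$ terms then cancel, leaving $\ip g{v-w}\le\eta+3^{k+1}c_{k,\nu}M\rho^s$. Substituting into Step 1 gives \eqref{eq:tensor-step}.

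The main obstacle is Step 2: the signs must line up so that the regularizer dominates the Taylor gradient error. This is exactly why the model uses the coefficient $2c_{k,\nu}M/s$ instead of $c_{k,\nu}M/s$. The rest is bookkeeping, including the degenerate case $r=0$, where the bounds hold trivially since $s\ge2$.
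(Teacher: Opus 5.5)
Your proposal is correct and follows essentially the same route as the paper. The paper works with the unnormalized vector $W=Bv-Ay-az^+=B(v-w)$ and reaches the same intermediate bound $\ip gW\le B\eta-c_{k,\nu}MB\nrm r_2^s+3c_{k,\nu}Ma\nrm r_2^{s-1}\nrm d_2$. It then maximizes over $\nrm r_2$ instead of invoking Young's inequality, which is the same step and gives the same constant $3^s(s-1)^{s-1}s^{-s}\le3^{k+1}$.
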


\begin{proof}
Put $W=Bv-Ay-az^+=Br-ad$. Convexity at $v$, $Af(y)\ge Af(v)+A\ip g{y-v}$, and
the cut, $a\ell\ge af(v)+a\ip g{z^+-v}$, give
$B[f(v)-\ell]-A[f(y)-\ell]=Bf(v)-Af(y)-a\ell\le\ip g{W}$. Since
$(Ay+az^+)/B\in K$ and $W=B(v-(Ay+az^+)/B)$, the model condition gives
$\ip{\nabla m_x(v)}W\le B\eta$. By \cref{lem:taylor-remainder} and
\eqref{eq:tensor-constants}, $g-\nabla m_x(v)=e-2c_{k,\nu} M\nrm r_2^{s-2}r$ with
$\nrm e_2\le c_{k,\nu} M\nrm r_2^{s-1}$ (for $r=0$ the second term is $0$). Hence
\begin{align*}
 \ip gW&\le B\eta-2c_{k,\nu} M\nrm r_2^{s-2}\ip rW+\ip eW\\
 &\le B\eta-2c_{k,\nu} M\nrm r_2^{s-2}(B\nrm r_2^2-a\nrm r_2\nrm d_2)+c_{k,\nu} M\nrm r_2^{s-1}(B\nrm r_2+a\nrm d_2),
\end{align*}
that is, $\ip gW\le B\eta-c_{k,\nu} MB\nrm r_2^s+3c_{k,\nu} Ma\nrm r_2^{s-1}\nrm d_2$. For
$\tau\ge0$ the maximum of $-B\tau^s+3a\nrm d_2\tau^{s-1}$ is attained at
$\tau=3(s-1)a\nrm d_2/(sB)$ (it is $0$ if $d=0$) and equals
$3^s(s-1)^{s-1}s^{-s}a^s\nrm d_2^s/B^{s-1}\le3^{k+1}a^s\nrm d_2^s/B^{s-1}$.
\end{proof}

\begin{proof}[Proof of \cref{thm:tensor}]
\emph{Level method.} Feasibility and the certificate branch are as in
\cref{thm:splevel}, since every cut is a supporting half-space of $f$ and
therefore contains every $u$ with $f(u)\le\ell$. Suppose all $N$ sets are nonempty. By
\eqref{eq:app-path} at least $m>N/2$ trials are accepted. The accepted weights
are $a_j=j^k$ and $A_j=\sum_{i\le j}i^k\ge j^{k+1}/(k+1)$, so
\[
\begin{aligned}
 \frac{a_j^s}{A_j^{s-1}}&\le(k+1)^{s-1}j^{ks-(k+1)(s-1)}
 =(k+1)^{s-1}j^{k+1-s}\\
 &\le(k+1)^kN^{1-\nu},\qquad
 A_m\ge\frac{N^{k+1}}{2^{k+1}(k+1)} .
\end{aligned}
\]
Telescoping \eqref{eq:tensor-step} over accepted trials (rejected ones do not
change $(A,y)$), using $B_t\le A_m$ for the model terms, $\nrm{d_t}_2\le\delta_N$
on accepted trials and \eqref{eq:app-path},
\[
 A_m[f(y)-\ell]\le NA_m\eta_{\mathrm{mod}}+3^{k+1}c_{k,\nu} M(k+1)^kN^{1-\nu}\Bigl(\frac{4\PN}N\Bigr)^{s-1}\frac{3\PN}2 ,
\]
and dividing by $A_m$ gives, with $4^{s-1}\le4^k$ and $2^{k+1}\cdot\frac32\cdot3^{k+1}\le3^{k+2}2^k$,
\begin{equation}\label{eq:tensor-sum}
 f(y)-\ell\le \Xi_kc_{k,\nu} M\frac{\PN^s}{N^{2s-1}}+N\eta_{\mathrm{mod}}.
\end{equation}
Define, with $\Dnu$ from \cref{app:holder},
\begin{equation}\label{eq:tensor-eN}
 e_N=2\Xi_kc_{k,\nu} M\frac{\PN^k}{N^{2k-1}}\Dnu\Bigl(\frac{\PN}{N^2}\Bigr),\qquad
 \eta_{\mathrm{mod}}=\frac{e_N}{2N},
\end{equation}
so that $2\Xi_kc_{k,\nu} M\PN^s/N^\sigma\le e_N\le8\Xi_kc_{k,\nu} M\PN^s/N^\sigma$; both terms of
\eqref{eq:tensor-sum} are at most $e_N/2$, which proves \eqref{eq:tensor-level}.
Each trial makes two queries, so the level method makes at most $2N$.

\emph{Bracket from one jet.} Query the jet at $x_0\in K$ and put
$q_-(u)=T_k(x_0;u)-\frac{c_{k,\nu} M}s\nrm{u-x_0}_2^s\le f(u)$ on $K$
(\cref{lem:taylor-remainder}). Let $E=\frac{c_{k,\nu} M}s(2R)^k\Dnu(2R)$, so
$\frac{c_{k,\nu} M}s(2R)^s\le E\le\frac{4c_{k,\nu} M}s(2R)^s$ (for $K=B_1^n(R)$, $x_0=0$, use $R$
in place of $2R$). By a finite grid search over rational convex combinations
of the vertices (with the known Lipschitz constant of $q_-$ on $K$, obtained
from the coefficients of $T_k(x_0;\cdot)$) one finds $\underline q\le\min_Kq_-$ and
$v\in K$ with $q_-(v)\le\underline q+E$. Then $\underline q\le\min_Kf\le f(v)\le q_-(v)+\frac{2c_{k,\nu} M}s\nrm{v-x_0}_2^s\le\underline q+3E$,
so the initial width is $W_0=3E\le\frac{12c_{k,\nu} M}s(2R)^s$, without any query at $v$.

\emph{Bracket search.} We proceed as in \cref{app:holder} with $N(w)=\min\{2^j:e_{2^j}\le w/4\}$;
every phase uses at most $2N(w)$ queries. All $N_j>1$ because
$e_1\ge2\Xi_kc_{k,\nu} M(4R)^s>W_0/4$. Since $N_j/2$ violates the rule,
$w_j/4<e_{N_j/2}\le8\Xi_kc_{k,\nu} M\,2^\sigma P_{N_j}^s/N_j^\sigma$, while
$N_{m-1}^\sigma\ge8\Xi_kc_{k,\nu} MP_{N_{m-1}}^s/w_{m-1}$; hence
$N_j<2\cdot4^{1/\sigma}(3/4)^{(m-1-j)/\sigma}N_{m-1}$ and
\[
 N_{\rm total}\le1+2\sum_jN_j\le1+\frac{4\cdot4^{1/\sigma}}{1-(3/4)^{1/\sigma}}N(\varepsilon)\le1+32(2k+1)N(\varepsilon),
\]
using $\sigma\ge3$, $4^{1/\sigma}<2$ and $(3/4)^{1/\sigma}\le1-1/(4\sigma)$ (Bernoulli's
inequality), so that the fraction is at most $8\cdot4\sigma=32(2s-1)\le32(2k+1)$. The
order of $N(\varepsilon)$ follows from $e_N\le8\Xi_kc_{k,\nu} M\PN^s/N^\sigma$ as in
\cref{app:holder}, with $Q=MR^s/\varepsilon$ bounded below by a constant
depending on $k$ in the active branch; the numerical constants $7$ and $16$ used
there for $s\le2$ are replaced by constants depending on $k$.

\emph{Fixed horizon.} For $T\ge32(2k+1)+1$ let $Q_T$ be the largest power of two
with $32(2k+1)Q_T\le T-1$ and $\varepsilon_T=4e_{Q_T}$. If $\varepsilon_T\ge W_0$
the method returns the initial $v$, and otherwise it runs the bracket search
with accuracy $\varepsilon_T$. The count is at most $T$ and the error at most
$\min\{W_0,4e_{Q_T}\}=O_k(MR^s[\log(2T)\log(2n)]^{s/2}/T^{2s-1})$ since
$Q_T>(T-1)/(64(2k+1))$ and $\mathsf b_{Q_T}\le\mathsf b_T$.
\end{proof}

\subsection{Gradient oracles: proximal deep cuts}
\label{app:gradient-only}

Suppose now that a query at $x\in K$ returns $\nabla f(x)$ only, for a convex
$f$ with $L$-Lipschitz gradient on $\R^n$. Level cuts are unavailable, and deep
cuts $\{u:\ip{\nabla f(x)}{x-u}\ge\delta\}$ alone give the nonsmooth rate. The
method \SPP{} (\cref{alg:spprox}) cuts deep at approximate \emph{proximal
points} $w$ of $f$, taken with respect to points $x$ of the segment
$[y,z_t]\subseteq K$ between the incumbent $y$ and the current approximate
Steiner point $z_t$. The analysis uses the Moreau envelope
\begin{equation}\label{eq:envelope}
\begin{aligned}
 E(x)&=\min_{u\in K}f_x(u),&\qquad p(x)&=\argmin_{u\in K}f_x(u),\\
 f_x(u)&=f(u)+\frac L2\nrm{u-x}_2^2,& \nabla E(x)&=L\bigl(x-p(x)\bigr).
\end{aligned}
\end{equation}
The envelope is never evaluated. The method calls two gradient-only subroutines
(\cref{lem:prox,lem:search} below). \sub{Prox}$(x,k)$ runs $k$ projected-gradient steps
with step $1/(2L)$ on the strongly convex function $f(u)+\frac L2\nrm{u-x}_2^2$
starting from $u_0=x$, queries the gradient at the last point $w=u_k$, and
returns $w$ together with $g=\nabla f(w)$, a vector $v$ of the normal cone
$N_K(w)=\{u':\ip{u'}{u-w}\le0\ \forall u\in K\}$ read off the last projection,
and the residual $r=g+L(w-x)+v$. The projected-gradient
map is a contraction with factor $\frac12$, so $\nrm r_2\le6LR\,2^{-k}$ after $k+1$
queries. \sub{Search}$(y,z,\eta)$ bisects the segment $[y,z]\subseteq K$ on the
sign of the approximate directional derivative $\ip{L(x-w)}{z-y}$ of $E$ and
returns $x\in[y,z]$ with the output of \sub{Prox} at $x$, such that
\begin{equation}\label{eq:search-guarantee}
 E(x)\le E(y)+\eta,\qquad \ip{\nabla E(x)}{x-z}\le\eta,\qquad
 \nrm{L(x-w)-\nabla E(x)}_2\le\rho:=\frac{\eta}{4R},
\end{equation}
using at most $(\bar h+1)(k+1)$ gradients, $\bar h=\min\{h\ge0:4LR^2\cdot2^{-h}\le\eta\}$,
$k=\min\{j\ge1:6LR\cdot2^{-j}\le\rho\}$. Function values appear only in the
analysis; neither routine queries them.

\begin{algorithm}[t]
\caption{\SPP: proximal deep cuts at approximate Steiner points (gradient oracle)}
\label{alg:spprox}
\begin{algorithmic}[1]
\Require known polytope $K\subseteq c+B_1^n(R)$; gradient oracle of an
$L$-smooth convex $f$; accuracy $\varepsilon>0$
\State query $g_a=\nabla f(a)$ at a point $a\in K$; $y\gets\proj_K(a-g_a/L)$;
$U_0\gets2LR^2$ \Comment{$a=0$, $U_0=LR^2/2$ if $K=B_1^n(R)$}
\For{$j=1,\dots,J=\lceil\log_2(U_0/\varepsilon)\rceil$}\Comment{stage $j$: $f(y)-\min_Kf\le2\delta$ is known}
 \State $\delta\gets U_02^{-j}$;\ $m\gets\min\{2^i:\ 2^{3i}\ge64LR^2\mathsf b_{10\cdot2^i}\cn/\delta\}$;\ $T\gets10m$;\ $\eta\gets\delta/(16m)$
 \State $C_0\gets K$;\quad $z_0\gets$ a point of $K$ within $\PT{T}/(4T)$ of $\st K$
 \For{$t=0,1,\dots,T-1$}
  \State $(x_t,w_t,g_t)\gets$\sub{Search}$(y,z_t,\eta)$
  \Comment{$x_t\in[y,z_t]$, $w_t\in K$, $g_t=\nabla f(w_t)$}
  \State $C_{t+1}\gets C_t\cap\{u:\ip{g_t}{w_t-u}\ge\delta\}$
  \If{$C_{t+1}=\varnothing$}
   \State $\lambda\gets$ minimizer of the certificate program of \cref{alg:spcut}
   for $(w_i,g_i)_{i\le t}$;\ $y\gets\sum_{i\le t}\lambda_iw_i$;\ \textbf{break}
   \Comment{$f(y)-\min_Kf<\delta$}
  \EndIf
  \State $z_{t+1}\gets$ a point of $C_{t+1}$ within $\PT{T}/(4T)$ of $\st{C_{t+1}}$
  \If{$\nrm{x_t-w_t}_2^2\ge\delta/(Lm)$} $y\gets w_t$ \Comment{accept: $f$ decreased by $\delta/(4m)$} \EndIf
 \EndFor
\EndFor
\State\Return $y$ \Comment{$f(y)-\min_Kf\le\varepsilon$}
\end{algorithmic}
\end{algorithm}

\begin{restatable}[Gradient-only oracle]{proposition}{thmgradonly}
\label{thm:gradient-only}
Let $f$ be convex with $L$-Lipschitz gradient on $\R^n$, $L>0$, and let
$K\subseteq c+B_1^n(R)$ be a known polytope. For every $\varepsilon>0$,
\cref{alg:spprox} makes feasible gradient queries only, never uses a function
value, and returns $y\in K$ with $f(y)-\min_Kf\le\varepsilon$. Every stage ends
with an empty cut within its $T=10m$ trials, and the total number of gradient
queries is at most
\begin{equation}\label{eq:gradient-only-count}
 1+60\,m_JB_J(k_J+1)
 =O\Bigl(1+\Bigl[\frac{LR^2\cn}{\varepsilon}\Bigr]^{1/3}
 \Bigl[\log\Bigl(2+\frac{LR^2\cn}{\varepsilon}\Bigr)\Bigr]^{7/3}\Bigr),
\end{equation}
where $\delta_J=U_02^{-J}$ and $m_J$ are the parameters of the last stage,
$B_J=1+\lceil\log_2(64LR^2m_J/\delta_J)\rceil$ bounds the number of \sub{Prox}
calls of one trial and $k_J+1$, $k_J=\max\{1,\lceil\log_2(384LR^2m_J/\delta_J)\rceil\}$,
the number of gradients of one \sub{Prox} call, so that $B_J(k_J+1)$ bounds
the gradients of one trial. For $\varepsilon\ge U_0$ the point $y$ of the first
line suffices and the count is $1$.
\end{restatable}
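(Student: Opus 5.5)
The plan is to prove the stage invariant ``$f(y)-\min_Kf\le2\delta$ at the start of stage $j$'' by induction on $j$, and to show that every stage ends with an empty cut within $T=10m$ trials. The accuracy claim then follows at $j=J$, since $\delta_J\le\varepsilon/2\cdot2$ gives $f(y)-\min_Kf<\delta_J\le\varepsilon$, and the query count follows by summing over stages.

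\emph{Base case.} The initial point $y=\proj_K(a-g_a/L)$ is one projected-gradient step. The standard estimate gives $f(y)-\min_Kf\le\frac L2\nrm{a-x^\star}_2^2\le\frac L2(2R)^2=U_0=2\delta_1$ (respectively $LR^2/2$ when $a=0$ and $K=B_1^n(R)$). This already handles the case $\varepsilon\ge U_0$.

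\emph{Preparation.} Fix a stage. I will first record what \sub{Prox} and \sub{Search} give, that is, \eqref{eq:search-guarantee} together with the residual bound $\nrm r_2\le\rho$:
\begin{itemize}
\item $f(w_t)+\frac L2\nrm{w_t-x_t}_2^2\le E(x_t)+O(\eta)\le f(y)+O(\eta)$, using near-optimality of $w_t$ for the strongly convex $f_{x_t}$ and $E(y)\le f(y)$;
\item for every $u\in K$, $\ip{g_t}{w_t-u}\le L\ip{x_t-w_t}{w_t-u}+2R\rho$. This holds because $g_t=r-L(w_t-x_t)-v$ and $\ip{-v}{w_t-u}\le0$ for $v\in N_K(w_t)$.
\end{itemize}
With the choice $\eta=\delta/(16m)$, both error terms are small multiples of $\delta/m$.

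\emph{Accepted trials.} If $\nrm{x_t-w_t}_2^2\ge\delta/(Lm)$, the first bullet gives $f(w_t)\le f(y)-\frac{\delta}{2m}+O(\eta)\le f(y)-\frac\delta{4m}$. Since the gap of $y$ is at most $2\delta$ and never becomes negative, at most $8m$ trials are accepted.

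\emph{Rejected trials.} Here $\nrm{x_t-w_t}_2<\sqrt{\delta/(Lm)}$. As long as $C_{t+1}\neq\varnothing$, its Steiner point $s=\st{C_{t+1}}$ satisfies $\ip{g_t}{w_t-s}\ge\delta$. I split $w_t-s=(w_t-x_t)+(x_t-z_t)+(z_t-s)$ in the second bullet and use three facts:
\begin{itemize}
\item $\ip{\nabla E(x_t)}{x_t-z_t}\le\eta$, from \eqref{eq:search-guarantee};
\item $\ip{x_t-w_t}{w_t-x_t}\le0$;
\item $\nrm{L(x_t-w_t)}_2\le\sqrt{L\delta/m}$.
\end{itemize}
Together they give $\delta\le O(\eta+R\rho)+\sqrt{L\delta/m}\,\nrm{z_t-s}_2$, hence $\nrm{z_t-\st{C_{t+1}}}_2\ge\frac12\sqrt{\delta m/L}$. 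Subtracting the selector accuracy $\PT T/(4T)$, which is smaller, each rejected trial moves the approximate Steiner points by at least $\frac14\sqrt{\delta m/L}$. By \cref{lem:steiner-path} (path length at most $\frac32\PT T$), the number of rejected trials is at most $6\PT T\sqrt{L/(\delta m)}$.

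\emph{Counting within a stage.} The rule $m^3\ge64LR^2\mathsf b_{T}\cn/\delta$ is exactly what makes the rejection bound less than $2m$. Hence fewer than $10m=T$ trials can have a nonempty cut. Some $C_{t+1}$ is therefore empty, and the certificate argument of \cref{thm:spcut}, applied to the subgradient pairs $(w_i,g_i)$, gives $f(y)-\min_Kf<\delta=2\delta_{j+1}$. This closes the induction.

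\emph{Query count.} Each trial uses at most $B(k+1)$ gradients. Since $m_j$ grows like $\delta_j^{-1/3}$, the stage costs $10m_jB_j(k_j+1)$ form a geometric series dominated by the last stage, up to the constant $60$. The logarithmic exponent $7/3$ collects $\mathsf b^{1/3}$ from $m$ and the two factors $B_J$ and $k_J$.

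The main obstacle is the error bookkeeping in the rejected-trial inequality: the inexactness of \sub{Prox} (through $\rho$) and of \sub{Search} (through $\eta$) must be absorbed into $\delta/2$ while the constants stay compatible with the threshold $\delta/(Lm)$ and the factor $64$ in the rule for $m$. I would first fix the constants in \eqref{eq:search-guarantee}, and only then verify the two counting inequalities $8m$ and $2m$.
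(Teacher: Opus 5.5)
Your plan follows the paper's argument for \cref{lem:stage}: the invariant $f(y)-\min_Kf\le2\delta$; accepted large proximal steps, each lowering $f$ by more than $\delta/(4m)$, so at most $8m$ of them; small steps that force the center to move; the path bound; and the minimax certificate once a cut is empty. The only structural difference is that you test the cut at the exact Steiner point $\st{C_{t+1}}$ and then subtract the selector error. The paper tests it directly at the selected point $z_{t+1}$, which already lies in $C_{t+1}$, so no subtraction is needed.

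As written, however, your counting does not close. A displacement of $\frac14\sqrt{\delta m/L}$ per small trial, against a path of length $\frac32P_T$, allows up to $6P_T\sqrt{L/(\delta m)}$ small trials. The rule for $m$ is equivalent to $4P_T^2L\le m^3\delta$, and it turns this bound into $3m$, not ``less than $2m$''. Together with up to $8m$ accepted trials that is $11m>T=10m$, so you cannot conclude that some cut is empty.

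The losses are self-inflicted and easy to remove. The error terms total $\eta+4R\rho=2\eta=\delta/(8m)$, not $\delta/2$. By the same rule, the selector error $P_T/(4T)=P_T/(40m)$ is at most $\frac1{80}\sqrt{m\delta/L}$, so halving is unnecessary. Cleaner still is the paper's route: keep the term $-L\nrm q_2^2$ with $q=x_t-w_t$ and work with $d=z_{t+1}-z_t$. This gives $\frac34\delta+L\nrm q_2^2\le L\nrm q_2\nrm d_2$, hence $\nrm d_2>\frac34\sqrt{m\delta/L}$ whenever $\nrm q_2^2<\delta/(Lm)$. The number of small trials is then below $2P_T\sqrt{L/(m\delta)}\le m$, so at most $9m-1<T$ cuts are nonempty.

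There are two smaller points. First, the stage costs do not form a plain geometric series, because the $m_j$ are powers of two that can repeat. The paper shows $\phi(2m)/\phi(m)\in(4,8)$ for $\phi(m)=m^3/(64LR^2\mathsf b_{10m}\cn)$. Hence each value of $m$ serves at most three consecutive stages, which gives $\sum_jm_j\le6m_J$ and the constant $60$; you also need that $B_j$ and $k_j$ are nondecreasing in $j$. Second, the guarantees of \sub{Prox} and \sub{Search} that you take as recorded still need proofs. These are the residual bound from the contraction of the projected-gradient map, which relies on co-coercivity, and the bisection on the sign of $\ip{L(x-w)}{z-y}$; the paper proves them in \cref{lem:prox,lem:search}.
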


Unlike in \SPL, a large approximate proximal step guarantees a decrease of $f$
that the method does not observe, and a small one is compatible with a nonempty
deep cut only if the Steiner point moves far. Compared with \eqref{eq:main-count},
the count \eqref{eq:gradient-only-count} has an extra factor $\log^2$ from the
inner searches. Since \cref{thm:first-order-lower,thm:rand-smooth} apply a
fortiori to gradient oracles, $N^{-3}$ is also the minimax exponent for the
gradient oracle on $B_1^n(R)$ and on $\Delta_n(R)$, and for $f$ that is
$L$-smooth on $\R^n$ the strongly convex reductions of \cref{app:restart} apply to
this oracle as well, with the counts of \cref{thm:gradient-only}.

In the proofs below, $\diam_2(K)\le2R$ because $K\subseteq c+B_1^n(R)$, and we
use the co-coercivity of the gradient,
\begin{equation}\label{eq:cocoercive}
 \nrm{\nabla f(a)-\nabla f(b)}_2^2\le L\ip{\nabla f(a)-\nabla f(b)}{a-b},
 \ \text{ hence }\ 
 \Bigl\|a-b-\frac{\nabla f(a)-\nabla f(b)}L\Bigr\|_2\le\nrm{a-b}_2 ,
\end{equation}
which follows by adding the descent lemma for $f-\ip{\nabla f(b)}{\cdot}$
(minimized at $b$) to the same inequality with $a$ and $b$ exchanged; expanding
the square gives the second inequality.

\paragraph{The envelope.}
Let $E$ and $p$ be as in \eqref{eq:envelope}. The function
$f_x(u)=f(u)+\frac L2\nrm{u-x}_2^2$ is $L$-strongly convex, so $p(x)$ is unique;
$E$ is the Moreau envelope of $f+\iota_K$ ($\iota_K$ the indicator of $K$) with parameter $1/L$, hence convex and
differentiable with $\nabla E(x)=L(x-p(x))$ and $L$-Lipschitz gradient
(for two points $x,x'$ the monotonicity of $\partial(f+\iota_K)$ at $p(x),p(x')$
gives $\ip{q_x-q_{x'}}{x-x'}\ge\nrm{q_x-q_{x'}}_2^2/L$ for $q_x=L(x-p(x))$).
For $x\in K$, $\nrm{\nabla E(x)}_2=L\nrm{x-p(x)}_2\le2LR$.

\begin{lemma}[Approximate proximal point]\label{lem:prox}
Fix $x\in K$ and $k\ge1$, let $u_0=x$ and
$u_{j+1}=\proj_K\bigl(u_j-[\nabla f(u_j)+L(u_j-x)]/(2L)\bigr)$ for $j<k$, query
$\nabla f(u_k)$, and put $w=u_k$, $g=\nabla f(w)$,
\begin{align*}
 v&=2L(u_{k-1}-u_k)-\nabla f(u_{k-1})-L(u_{k-1}-x),\\
 r&=g+L(w-x)+v=\nabla f(u_k)-\nabla f(u_{k-1})-L(u_k-u_{k-1}).
\end{align*}
This uses $k+1$ gradient queries, all in $K$; $v\in N_K(w)$,
$\nrm r_2\le6LR\cdot2^{-k}$, and if $\nrm r_2\le\rho$ then
\begin{equation}\label{eq:prox-accuracy}
 \nrm{w-p(x)}_2\le\rho/L,\qquad
 \nrm{L(x-w)-\nabla E(x)}_2\le\rho,\qquad
 0\le f(w)+\tfrac L2\nrm{w-x}_2^2-E(x)\le\frac{\rho^2}{2L}.
\end{equation}
\end{lemma}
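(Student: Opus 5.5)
The plan is to treat the inner loop as projected gradient on the $3L$-smooth, $L$-strongly convex function $f_x$ over $K$, and to derive each claim from the last projection step. The counting, feasibility and the normal-cone membership are immediate. Every $u_j$ is a Euclidean projection onto $K$, hence feasible, and gradients are queried at $u_0,\dots,u_k$, which gives $k+1$ queries. Write $u_k=\proj_K(q)$ with $q=u_{k-1}-[\nabla f(u_{k-1})+L(u_{k-1}-x)]/(2L)$. The projection optimality condition says $2L(q-u_k)\in N_K(u_k)$. Expanding, $2L(q-u_k)$ is exactly the $v$ defined in the statement, so $v\in N_K(w)$. Substituting $v$ into $r=g+L(w-x)+v$ and cancelling the $Lx$ terms gives the stated second formula for $r$.

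For the residual bound I use the co-coercivity consequence \eqref{eq:cocoercive}. From it, $\nrm{\nabla f(u_k)-\nabla f(u_{k-1})-L(u_k-u_{k-1})}_2\le L\nrm{u_k-u_{k-1}}_2$. To see this, note that $a-b-(\nabla f(a)-\nabla f(b))/L$ has norm at most $\nrm{a-b}_2$; multiplying by $L$ gives the bound on $r$. It remains to show $\nrm{u_k-u_{k-1}}_2\le6R\,2^{-k}$. The map $T(u)=\proj_K(u-\nabla f_x(u)/(2L))$ is a contraction with factor $\tfrac12$, as asserted in the text; I would verify this from the standard estimate $\nrm{u-u'-\gamma(\nabla h(u)-\nabla h(u'))}^2\le(1-\gamma\mu)\ldots$ with $\mu=L$, $\ell=3L$ and $\gamma=1/(2L)$, or simply assume it. Then $\nrm{u_k-u_{k-1}}_2\le2^{-(k-1)}\nrm{u_1-u_0}_2\le2^{1-k}\cdot2R$, because both points lie in $K$ and $\diam_2K\le2R$. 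This yields $\nrm r_2\le4LR\,2^{-k}\le6LR\,2^{-k}$. This contraction step is the only place where some care is needed, and the constant $6$ leaves room for slack.

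For \eqref{eq:prox-accuracy}, the key observation is that $r-v=\nabla f_x(w)$ and $v\in N_K(w)$. Hence $r\in\nabla f_x(w)+N_K(w)=\partial(f_x+\iota_K)(w)$. Because $f_x+\iota_K$ is $L$-strongly convex and $0\in\partial(f_x+\iota_K)(p(x))$, strong monotonicity gives $L\nrm{w-p(x)}_2^2\le\ip{r}{w-p(x)}$, so $\nrm{w-p(x)}_2\le\rho/L$. The second inequality then follows from $\nabla E(x)=L(x-p(x))$, since $L(x-w)-\nabla E(x)=L(p(x)-w)$. For the third, $E(x)=\min_Kf_x$ gives the lower bound $0$ because $w\in K$. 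For the upper bound, strong convexity with subgradient $r$ at $w$ gives $f_x(p)\ge f_x(w)+\ip r{p-w}+\tfrac L2\nrm{p-w}_2^2$. Minimizing the right-hand side over the displacement gives $f_x(w)-E(x)\le\nrm r_2^2/(2L)\le\rho^2/(2L)$.
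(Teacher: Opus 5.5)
Your proposal is correct and follows essentially the paper's proof. It uses the $\tfrac12$-contraction of the projected-gradient map together with \eqref{eq:cocoercive} for $\nrm r_2$. Your direct bound on consecutive iterates even gives $4LR\,2^{-k}$, slightly better than the paper's $6LR\,2^{-k}$, which goes through the fixed point $p(x)$. The remaining steps also match: the optimality condition of the last projection for $v\in N_K(w)$, and $L$-strong convexity of $f_x+\iota_K$ with the subgradient $r$ at $w$ for \eqref{eq:prox-accuracy}.

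The one step you leave loose is the factor $\tfrac12$. The paper gets it in one line by writing the unprojected map as $\tfrac12(u-\nabla f(u)/L)+\tfrac x2$ and applying \eqref{eq:cocoercive}. If you go instead through strong convexity and smoothness of $f_x$ (which is actually $2L$-smooth), be careful: the $(1-\gamma\mu)$ form of the standard estimate only yields $1/\sqrt2$. You need the sharper factor $(\ell-\mu)/(\ell+\mu)=\tfrac12$, valid at $\gamma=2/(\mu+\ell)$ with $\ell=3L$.
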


\begin{proof}
The map $u\mapsto\frac12(u-\nabla f(u)/L)+\frac x2$ is a contraction with
factor $\frac12$ by \eqref{eq:cocoercive}, and the projection is nonexpansive,
so the iteration is a contraction with factor $\frac12$; its fixed point is the
unique minimizer $p(x)$ of $f_x$ over $K$ (the fixed-point equation is the
optimality condition of a projected-gradient step with step $1/(2L)$ on $f_x$).
Hence $\nrm{u_j-p(x)}_2\le2^{-j}\nrm{x-p(x)}_2\le2R\,2^{-j}$ and
$\nrm{u_k-u_{k-1}}_2\le6R\,2^{-k}$; \eqref{eq:cocoercive} applied to the pair
$u_k,u_{k-1}$ gives $\nrm r_2\le L\nrm{u_k-u_{k-1}}_2\le6LR\cdot2^{-k}$. The
optimality condition of the last projection says that
$u_{k-1}-[\nabla f(u_{k-1})+L(u_{k-1}-x)]/(2L)-u_k\in N_K(u_k)$, which is
$v/(2L)\in N_K(w)$; the two expressions for $r$ agree by substitution.

For \eqref{eq:prox-accuracy}, note that $\nabla f_x(w)=g+L(w-x)=r-v$, so $L$-strong
convexity of $f_x$ and $\ip v{u-w}\le0$ give, for all $u\in K$,
$f_x(u)\ge f_x(w)+\ip r{u-w}+\frac L2\nrm{u-w}_2^2$. Taking $u=p(x)$ and using
$f_x(w)\ge f_x(p(x))+\frac L2\nrm{w-p(x)}_2^2$ (minimality of $p(x)$ and strong
convexity) yields $L\nrm{w-p(x)}_2^2\le\rho\nrm{w-p(x)}_2$, i.e.\ the first
bound, and then $f_x(w)-E(x)\le\rho\nrm{w-p(x)}_2-\frac L2\nrm{w-p(x)}_2^2\le\rho^2/(2L)$;
the second bound is $L\nrm{w-p(x)}_2\le\rho$.
\end{proof}

\begin{lemma}[Bisection without values]\label{lem:search}
Let $y,z\in K$, $\eta>0$, $\rho=\eta/(4R)$, $k=\min\{j\ge1:6LR\cdot2^{-j}\le\rho\}$
and $\bar h=\min\{h\ge0:4LR^2\cdot2^{-h}\le\eta\}$. Put $d=z-y$ and
$\varphi(t)=E(y+td)$. Starting from $[a,b]=[0,1]$, while $b-a>2^{-\bar h}$: let
$t=(a+b)/2$, run \cref{lem:prox} at $x_t=y+td$ with $k$ steps, and compute
$\widetilde s=\ip{L(x_t-w)}{d}$; if $\widetilde s>\eta/2$ set $b=t$, if
$\widetilde s<-\eta/2$ set $a=t$, otherwise stop and return $t$ with the output
$(w,g,v,r)$ of \cref{lem:prox} at $x_t$. If the loop ends by length, return $t=b$
with the output stored at $b$ (running \cref{lem:prox} once at $z$ if $b=1$). The
returned $x=x_t\in[y,z]$ and $(w,g,v,r)$ satisfy $\nrm r_2\le\rho$,
\eqref{eq:prox-accuracy}, and
\begin{equation}\label{eq:search-proved}
 E(x)\le E(y)+\eta,\qquad\ip{\nabla E(x)}{x-z}\le\eta,\qquad
 \ip{L(x-w)}{x-z}\le\eta/2,
\end{equation}
at a cost of at most $(\bar h+1)(k+1)$ gradient queries.
\end{lemma}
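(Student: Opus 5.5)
The plan rests on the fact that $\varphi(t)=E(y+td)$ is convex and differentiable on $[0,1]$, with $\varphi'(t)=\ip{\nabla E(x_t)}{d}$. By \cref{lem:prox}, the choice of $k$ gives $\nrm r_2\le\rho$, and hence $\nrm{L(x_t-w)-\nabla E(x_t)}_2\le\rho$ at every evaluated point. Since $\nrm d_2\le2R$, the computed slope satisfies $|\widetilde s-\varphi'(t)|\le2R\rho=\eta/2$. So the sign tests are reliable: $\widetilde s>\eta/2$ forces $\varphi'(t)>0$, and $\widetilde s<-\eta/2$ forces $\varphi'(t)<0$. By convexity the bisection therefore maintains the invariant $\varphi'(a)\le0$ whenever $a>0$, and $\varphi'(b)\ge0$ whenever $b<1$. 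Each bisection step costs one \sub{Prox} call of $k+1$ gradients, and there are at most $\bar h$ halvings plus possibly one extra call at $z$. This gives the count $(\bar h+1)(k+1)$, and all queries lie in $K$ because $x_t\in[y,z]\subseteq K$.

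The three inequalities in \eqref{eq:search-proved} follow by cases on how the loop stops. In every case I use $x-z=-(1-t)d$.

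\emph{Stop by the middle test}, $|\widetilde s|\le\eta/2$. Then $\ip{L(x-w)}{x-z}=-(1-t)\widetilde s\le\eta/2$. Also $|\varphi'(t)|\le\eta$, so $\ip{\nabla E(x)}{x-z}=-(1-t)\varphi'(t)\le\eta$. For the value, convexity gives $\varphi(0)\ge\varphi(t)-t\varphi'(t)$, hence $\varphi(t)\le\varphi(0)+t\varphi'(t)\le E(y)+\eta$.

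\emph{Stop by length, returning $b$.} If $b=1$ then $x=z$, so both inner products vanish. For the value, the left end satisfies $a\ge1-2^{-\bar h}$, and either $a=0$ (then $\bar h=0$) or $\varphi'(a)\le0$. If $b<1$, then $b$ was set by a test $\widetilde s>\eta/2$, so $\widetilde s$ at $b$ is positive and $\varphi'(b)>0$. Hence both inner products, which equal $-(1-b)$ times these slopes, are negative.

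It remains to bound $E(x)$ in the length-stop case, which is the delicate step. By monotonicity, $\varphi'(a)\le0$ gives $\varphi(a)\le\varphi(0)$; if $a=0$ this holds trivially. Since $\varphi'$ is $L\nrm d_2^2\le4LR^2$-Lipschitz and $\varphi'(a)\le0$, the descent lemma gives
\[
 \varphi(b)\le\varphi(a)+\tfrac12\cdot4LR^2(b-a)^2\le E(y)+4LR^2\,2^{-\bar h}\le E(y)+\eta .
\]
In the subcase $a=0$ with $\bar h=0$, the same bound holds with $\varphi'(0)$ unsigned but estimated by $\widetilde s$ at $0$. To make this cleaner, I would instead use the cruder estimate $\varphi(b)-\varphi(a)\le\int_a^b\varphi'\le(b-a)\bigl(\varphi'(a)+4LR^2(b-a)\bigr)$ together with the definition of $\bar h$.

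The properties $\nrm r_2\le\rho$ and \eqref{eq:prox-accuracy} at the returned point hold because that point's \sub{Prox} output was computed with the same $k$. I expect the main obstacle to be this length-stop value bound, since it requires checking that the returned endpoint really has a nonpositive slope certified at the other end, including the initial endpoint $a=0$.
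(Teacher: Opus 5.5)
Your case analysis matches the paper's for the middle-stop case and for both inner-product bounds. The length-stop value bound, however, has a genuine gap at the left endpoint $a=0$.

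\emph{Where it fails.} Your descent-lemma step needs $\varphi'(a)\le0$. This is certified only when $a>0$, because then $a$ was set by a negative test. The loop can also end by length with $a=0$ and $b=2^{-\bar h}<1$, namely whenever every test is positive, and you never treat this case. Your side remark covers only $a=0$, $\bar h=0$, $b=1$.

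\emph{Why the patches do not work.} No test is ever performed at $t=0$. Even if one were, $|\widetilde s-\varphi'(0)|\le\eta/2$ does not determine the sign of $\varphi'(0)$. The ``cruder estimate'' needs an upper bound on $\varphi'(0)$ that you never derive. With the natural bound $\varphi'(0)\le4LR^2$, it only gives $\varphi(b)-\varphi(0)\le\eta(1+2^{-\bar h})$, not $\eta$. Nothing in your argument bounds $\varphi'$ from above on $[0,b]$: the test at $b$ yields only $\varphi'(b)>0$. So smoothness of $\varphi$ alone cannot close this case.

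\emph{The missing ingredient.} You need the a priori bound
$|\varphi'(t)|\le\nrm{\nabla E(x_t)}_2\nrm d_2\le2LR\cdot2R=4LR^2$.
It follows from $\nabla E(x)=L(x-p(x))$ with $x,p(x)\in K$ and $\diam_2(K)\le2R$. With it the case $a=0$ is immediate: $\varphi(b)-\varphi(0)\le4LR^2b\le4LR^2\cdot2^{-\bar h}\le\eta$.

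\emph{How the paper handles it.} The paper uses this bound to treat all length stops at once. The certified signs keep a minimizer $t_*$ of $\varphi$ over $[0,1]$ inside $[a,b]$. Hence $\varphi(b)-\varphi(0)\le\varphi(b)-\varphi(t_*)\le4LR^2(b-a)\le\eta$, with no need to know the sign of $\varphi'(a)$.

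Your descent-lemma bound for $a>0$ is correct and even gives the sharper $2LR^2(b-a)^2$. Only the $a=0$ case needs this repair.
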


\begin{proof}
The function $\varphi$ is convex and $C^1$ with $\varphi'(t)=\ip{\nabla E(x_t)}d$ and
$|\varphi'|\le2LR\cdot2R=4LR^2$. By \eqref{eq:prox-accuracy},
$|\widetilde s-\varphi'(t)|\le\rho\nrm d_2\le2R\rho=\eta/2$. Hence a positive
test certifies $\varphi'(t)>0$ and a negative one $\varphi'(t)<0$, so
$[a,b]$ always contains a minimizer $t_*$ of $\varphi$ on $[0,1]$, and
$\varphi(a)\le\varphi(0)$ because $\varphi$ is nonincreasing on $[0,t_*]$.
If the loop stops at $t$, then $|\widetilde s|\le\eta/2$, so $|\varphi'(t)|\le\eta$,
and convexity gives $\varphi(t)-\varphi(0)\le t\varphi'(t)\le\eta$ and
$\ip{\nabla E(x_t)}{x_t-z}=(t-1)\varphi'(t)\le\eta$; likewise
$\ip{L(x_t-w)}{x_t-z}=(t-1)\widetilde s\le\eta/2$. If it ends by length,
$\varphi(b)-\varphi(0)\le\varphi(b)-\varphi(t_*)\le4LR^2(b-a)\le4LR^2\cdot2^{-\bar h}\le\eta$;
if $b<1$ the last test at $b$ was positive, so $\varphi'(b)>0$, $\widetilde s(b)>0$
and both inner products with $x_b-z=(b-1)d$ are negative, while for $b=1$
they vanish. The loop performs at most $\bar h$ iterations and at most one further
call is made.
\end{proof}

\begin{lemma}[One stage]\label{lem:stage}
Let $0<\delta\le LR^2$ and $y\in K$ with $f(y)-\min_Kf\le2\delta$. Let $m$ be the
smallest power of two with $m^3\ge64LR^2\mathsf b_{10m}\cn/\delta$, $T=10m$,
$\eta=\delta/(16m)$, $\rho=\eta/(4R)$, and run the inner loop of
\cref{alg:spprox} with these parameters. Then some cut $C_{t+1}$ with
$t\le9m-1$ is empty, the returned mixture $y$ satisfies $f(y)-\min_Kf<\delta$,
and the stage uses at most $T(\bar h+1)(k+1)$ gradient queries, where
$\bar h+1\le B=1+\lceil\log_2(64LR^2m/\delta)\rceil$ and
$k=\max\{1,\lceil\log_2(384LR^2m/\delta)\rceil\}$.
\end{lemma}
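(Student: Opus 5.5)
The plan is to argue by contradiction: suppose $C_1,\dots,C_{9m}$ are all nonempty. Every trial then falls into one of two classes, \emph{accepted} trials ($\nrm{x_t-w_t}_2^2\ge\delta/(Lm)$) and \emph{rejected} ones. I will show that the first class has at most $8m$ members because the incumbent decreases, and that each member of the second class moves the Steiner point by $\Omega(\sqrt{\delta m/L})$, which \cref{lem:steiner-path} forbids for $m$ trials. Once an empty cut is reached, the certificate and the mixture $y=\sum_i\lambda_iw_i$ are handled verbatim as in the proof of \cref{thm:spcut}, with the stored pairs $(w_i,g_i)$. That gives $f(y)-\min_Kf\le\widehat\varepsilon<\delta$. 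The query count is $T$ calls of \sub{Search}, each costing at most $(\bar h+1)(k+1)$ gradients by \cref{lem:search}, and the stated bounds on $\bar h+1$ and $k$ follow by substituting $\eta=\delta/(16m)$ and $\rho=\eta/(4R)$.

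\emph{Accepted trials.} Let $x=x_t$ and $w=w_t$, and use \eqref{eq:prox-accuracy} together with \eqref{eq:search-proved}. These give
\[
 f(w)\le E(x)-\tfrac L2\nrm{w-x}_2^2+\tfrac{\rho^2}{2L}\le E(y)+\eta-\tfrac{\delta}{2m}+\tfrac{\rho^2}{2L}.
\]
Since $E(y)\le f(y)$, and $\eta=\delta/(16m)$ with $\rho^2/(2L)$ negligible (using $\delta\le LR^2$), the right-hand side is at most $f(y)-\delta/(4m)$. The incumbent never increases, because rejected trials leave it unchanged. The initial gap is at most $2\delta$ and $f(y)\ge\min_Kf$ throughout, so at most $8m$ trials are accepted. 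Hence among the first $9m$ trials at least $m$ are rejected.

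\emph{Rejected trials.} Fix a rejected trial $t$ with $C_{t+1}\ne\varnothing$ and put $u=\st{C_{t+1}}$, so that $\ip{g}{w-u}\ge\delta$. From $g=r-L(w-x)-v$ and $\ip v{u-w}\le0$ I get $\ip g{w-u}\le\ip r{w-u}+L\ip{x-w}{w-u}$. The first term is at most $2R\rho=\eta/2$. For the second, I split $w-u=(w-x)+(x-z_t)+(z_t-u)$: the first piece contributes a nonpositive term, the second at most $\eta/2$ by \eqref{eq:search-proved}, and the third at most $L\nrm{x-w}_2\nrm{z_t-u}_2<\sqrt{L\delta/m}\,\nrm{z_t-u}_2$. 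Combining these gives $\nrm{z_t-\st{C_{t+1}}}_2\ge(\delta-\eta)\sqrt{m/(L\delta)}\ge\tfrac{15}{16}\sqrt{\delta m/L}$. The selector accuracy $P_T/(4T)$ then yields the same bound for $\nrm{\st{C_t}-\st{C_{t+1}}}_2$, up to a loss of $P_T/(2T)$.

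\emph{Main obstacle.} The remaining step is summing over the $\ge m$ rejected trials and comparing with \cref{lem:steiner-path}, which bounds the total Steiner movement by $P_T=4R\sqrt{\mathsf b_T\cn}$ since $T\le10m$ and the chain is nested. The choice $m^3\ge64LR^2\mathsf b_{10m}\cn/\delta$ gives $m\sqrt{\delta m/L}\ge8R\sqrt{\mathsf b_T\cn}=2P_T$. So the rejected movement is at least about $\tfrac{15}{16}\cdot2P_T-\tfrac12P_T>P_T$, a contradiction. The delicate point is the constant bookkeeping: the selector error, the $\eta$ terms and the $\rho^2$ term must all fit inside the margin between $\tfrac{15}{8}P_T$ and $P_T$. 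I expect the margin suffices. If it does not, I would shrink $\eta$ or use the sharper accept threshold directly, since the counts $8m$ and $9m$ leave room.
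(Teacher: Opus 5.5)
Your proof is correct and follows the paper's argument: trials are split by $\nrm{x_t-w_t}_2^2\ge\delta/(Lm)$, the decrease bound limits accepted trials to $8m$, the path bound \cref{lem:steiner-path} limits the remaining ones, and the minimax certificate finishes the proof. The differences are cosmetic. You test the cut at the exact point $\st{C_{t+1}}$ rather than at the selected center $z_{t+1}$, and you use the direct bound $\ip{L(x-w)}{x-z_t}\le\eta/2$, which gives $\tfrac{15}{16}$ where the paper gets $\tfrac34$. Your displayed arithmetic $\tfrac{15}{8}P_T-\tfrac12P_T>P_T$ already closes the margin, so the hedge in your last paragraph is unnecessary.
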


\begin{proof}
The values of $\bar h$ and $k$ follow from \cref{lem:search} with
$\eta=\delta/(16m)$ and $\rho=\delta/(64Rm)$ ($4LR^2/\eta=64LR^2m/\delta$ and
$6LR/\rho=384LR^2m/\delta$). Consider a trial $t$ with a nonempty cut, write
$x=x_t$, $w=w_t$, $g=g_t$, $q=x-w$, $z=z_t$, $z^+=z_{t+1}$, $d=z^+-z$.

\emph{The dichotomy.} Since $z^+\in C_{t+1}$,
$\delta\le\ip g{w-z^+}$. Substituting $g=Lq+r-v$ and using
$\ip v{z^+-w}\le0$ ($v\in N_K(w)$, $z^+\in K$) and $\ip r{w-z^+}\le2R\rho$,
\[
 \delta\le L\ip q{w-z^+}+2R\rho
 =L\ip q{x-z}-L\nrm q_2^2-L\ip qd+2R\rho .
\]
By \eqref{eq:search-proved} and \eqref{eq:prox-accuracy},
$L\ip q{x-z}\le\ip{\nabla E(x)}{x-z}+\rho\nrm{x-z}_2\le\eta+2R\rho$. Hence
$\delta\le L\nrm q_2\nrm d_2-L\nrm q_2^2+\eta+4R\rho$, and $\eta+4R\rho=2\eta=\delta/(8m)\le\delta/8$
gives
\begin{equation}\label{eq:dichotomy}
 \tfrac34\delta+L\nrm q_2^2\ \le\ L\nrm q_2\nrm d_2 .
\end{equation}
In particular $q\ne0$.

\emph{Large steps.} By \eqref{eq:prox-accuracy}, \eqref{eq:search-proved} and
$E(y)\le f(y)$ (take $u=y$ in \eqref{eq:envelope}),
$f(w)\le E(x)+\rho^2/(2L)-\frac L2\nrm q_2^2\le f(y)-\frac L2\nrm q_2^2+\eta+\rho^2/(2L)$.
If $\nrm q_2^2\ge\delta/(Lm)$, then, since $\rho^2/(2L)=\delta^2/(8192LR^2m^2)\le\delta/(8192m)$
by $\delta\le LR^2$,
\[
 f(w)\le f(y)-\frac{\delta}{2m}+\frac{\delta}{16m}+\frac{\delta}{8192m}<f(y)-\frac{\delta}{4m}.
\]
The algorithm replaces $y$ by $w$ if and only if $\nrm q_2^2\ge\delta/(Lm)$, so
each accepted trial decreases $f(y)$ by more than $\delta/(4m)$. Since
$f(y)\ge\min_Kf$ and $f(y)-\min_Kf\le2\delta$ initially, there are at most $8m$
accepted trials.

\emph{Small steps.} If $\nrm q_2^2<\delta/(Lm)$, \eqref{eq:dichotomy} gives
$\nrm d_2\ge\frac{3\delta}{4L\nrm q_2}+\nrm q_2>\frac34\sqrt{m\delta/L}$. If
$s$ trials have a nonempty cut, the sets $C_0\supseteq\dots\supseteq C_s$ are
nonempty and nested in $c+B_1^n(R)$ with $s\le T$, so \cref{lem:steiner-path}
and the selector accuracy $\PT T/(4T)$ give
$\sum_{t<s}\nrm{z_{t+1}-z_t}_2\le\PT T+2T\cdot\PT T/(4T)=\frac32\PT T$.
The number of small nonempty trials is therefore less than
$\frac32\PT T\big/\bigl(\frac34\sqrt{m\delta/L}\bigr)=2\PT T\sqrt{L/(m\delta)}\le m$,
where the last inequality is $4\PT T^2L\le m^3\delta$, i.e.\
$64LR^2\mathsf b_{T}\cn\le m^3\delta$ with $\mathsf b_T=\mathsf b_{10m}$,
the defining property of $m$.

\emph{Termination and certificate.} At most $8m+(m-1)$ trials have a nonempty
cut, so the cut of some trial $t\le9m-1<T$ is empty. Then
$\max_{u\in K}\min_{i\le t}\ip{g_i}{w_i-u}<\delta$, and the minimax argument
of \cref{thm:spcut} gives $\lambda\in\Delta_{t+1}$ with
$\max_{u\in K}\sum_i\lambda_i\ip{g_i}{w_i-u}<\delta$, computed by the linear
program of \cref{alg:spprox}; for $\bar w=\sum_i\lambda_iw_i\in K$ and any $u\in K$,
$f(\bar w)-f(u)\le\sum_i\lambda_i[f(w_i)-f(u)]\le\sum_i\lambda_i\ip{g_i}{w_i-u}<\delta$.
Each trial makes at most $(\bar h+1)(k+1)$ queries by \cref{lem:search}, and the
mixture needs no query.
\end{proof}

\begin{proof}[Proof of \cref{thm:gradient-only}]
\emph{Initialization.} The point $y_0=\proj_K(a-g_a/L)$ minimizes the model
$\psi(u)=f(a)+\ip{g_a}{u-a}+\frac L2\nrm{u-a}_2^2$ over $K$, and for a minimizer $\opt$,
$f(y_0)\le\psi(y_0)\le\psi(\opt)\le f(\opt)+\frac L2\nrm{\opt-a}_2^2$ by smoothness at
$a$ and convexity; hence $f(y_0)-\min_Kf\le\frac L2\diam_2(K)^2\le2LR^2=U_0$,
and for $K=B_1^n(R)$, $a=0$, $f(y_0)-\min_Kf\le\frac L2\nrm\opt_2^2\le LR^2/2$.
No value is computed.

\emph{Stages.} Let $\delta_j=U_02^{-j}$ and $J=\lceil\log_2(U_0/\varepsilon)\rceil$,
so that $\delta_J\le\varepsilon<2\delta_J$ and $\delta_j\le U_0/2\le LR^2$ for
$j\ge1$. Stage $1$ receives $y_0$ with $f(y_0)-\min_Kf\le U_0=2\delta_1$; by
\cref{lem:stage} stage $j$ returns a point with error below $\delta_j=2\delta_{j+1}$,
which is the hypothesis of stage $j+1$. The output of stage $J$ has error below
$\delta_J\le\varepsilon$. All queries are at points of $K$.

\emph{Count.} Stage $j$ uses at most $10m_jB_j(k_j+1)$ queries, and $B_j$,
$k_j$ are nondecreasing in $j$ because $m_j/\delta_j$ is. Let
$\phi(m)=m^3/(64LR^2\mathsf b_{10m}\cn)$ for powers of two $m$; $m_j$ is the
smallest $m$ with $\phi(m)\ge1/\delta_j$, and since
$\phi(2m)/\phi(m)=8\mathsf b_{10m}/\mathsf b_{20m}=8\mathsf b_{10m}/(\mathsf b_{10m}+1)\in(4,8)$,
$\phi$ is increasing and $m_j\ge2$ (as $\phi(1)<1/(LR^2)\le1/\delta_j$). A
given $m$ is used if and only if $1/\phi(m)\le\delta_j<1/\phi(m/2)$; the
endpoints of this interval have ratio less than $8$, so it contains at most
three of the values $\delta_j$, which halve from stage to stage. Consequently
$\sum_jm_j\le3(m_J+m_J/2+\dots)\le6m_J$ and the total count is at most
$1+10B_J(k_J+1)\sum_jm_j\le1+60m_JB_J(k_J+1)$.

\emph{Order.} Put $Q=LR^2\cn/\varepsilon$. From the minimality of $m_J$ and
$\delta_J>\varepsilon/2$, $(m_J/2)^3<64LR^2\mathsf b_{5m_J}\cn/\delta_J<128Q\,\mathsf b_{5m_J}$,
and $\mathsf b_{5m_J}\le5+\log_2m_J$; as $m_J^3<1024Q(5+\log_2m_J)$ forces
$m_J\le C(1+Q)$, we get $m_J=O(1+[Q\log(2+Q)]^{1/3})$. Also
$LR^2m_J/\delta_J\le2LR^2m_J/\varepsilon\le2Qm_J$ gives
$B_J,k_J=O(\log(2+Q))$. Multiplying proves \eqref{eq:gradient-only-count}.
\end{proof}

\subsection{A center-of-gravity method for small dimension}
\label{app:cog}

When $n$ is small compared with the count \eqref{eq:gradient-only-count}, the
classical center-of-gravity method with deep cuts and the same mixture
certificate needs fewer queries, and it also uses gradients only. We state it
for an operator $F$, so that it also applies to variational inequalities
(\cref{app:vi}); for $F=\nabla f$ the certificate bounds $f(\bar w)-\min_Kf$.

\begin{restatable}[Center-of-gravity branch]{proposition}{propcog}
\label{prop:cog}
Let $F:B_1^n(R)\to\R^n$ be $L$-Lipschitz in $\ell_2$ and $\varepsilon>0$.
\CGC{} (\cref{alg:cgcut}) queries $F(0)$, then $F(y)$ at a
vertex $y$ minimizing $\ip{F(0)}{u}$ over the ball, and then $F$ at the
centroid of a known polytope that it cuts by the half-space
$\{u:\ip{F(w)}{w-u}\ge\varepsilon/2\}$, until the deeper cuts
$\{u:\ip{F(w_i)}{w_i-u}\ge\varepsilon\}$ have empty intersection with the
ball. It stops after at most
\begin{equation}\label{eq:cog-count}
 3+\Bigl\lfloor\frac{n\log(2+8LR^2/\varepsilon)}{-\log(1-1/e)}\Bigr\rfloor
 =O\bigl(1+n\log(2+LR^2/\varepsilon)\bigr)
\end{equation}
calls and returns weights $\lambda\in\Delta_{t+1}$ on the queried points
$w_0=y,w_1,\dots,w_t$ (the centroids) and $\bar w=\sum_i\lambda_iw_i\in B_1^n(R)$
with $\max_{u\in B_1^n(R)}\sum_i\lambda_i\ip{F(w_i)}{w_i-u}\le\varepsilon$. If
$F=\nabla f$ for a convex $f$, then $f(\bar w)-\min_{B_1^n(R)}f\le\varepsilon$; if
$F$ is monotone, the weak gap of $\bar w$ is at most $\varepsilon$. No bound on
$\nrm{F(0)}$ is needed, and $L>0$ is assumed (for $L=0$ the operator is
constant and the point $y$ above is exact).
\end{restatable}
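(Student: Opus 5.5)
The plan is the classical center-of-gravity argument, combined with the minimax certificate already used for \SPC{} in \cref{thm:spcut}. I would organize it in four steps.

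\textbf{Step 1: initial bracket from the first two calls.} The first call gives $F(0)$. Let $y$ be a vertex minimizing $\ip{F(0)}u$, so $y=\pm Re_i$. The second call gives $F(y)$. Lipschitz continuity yields $\nrm{F(u)-F(0)}_2\le LR$ for every $u$ in the ball, and this is why no bound on $\nrm{F(0)}$ is needed. Consider the single-point certificate $\lambda=e_0$ on $w_0=y$, and write $F(y)=F(0)+\Delta$ with $\nrm\Delta_2\le LR$. Then
\[
 \ip{F(y)}{y-u}=\ip{F(0)}{y-u}+\ip{\Delta}{y-u}\le0+\ip{\Delta}{y-u}.
\]
Since $\nrm{y-u}_2\le2R$, the last term is at most $2LR^2$. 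So if $2LR^2\le\varepsilon$ we can stop immediately. In general this step says that the deep cut at $y$ only removes points at depth of order $LR^2$ relative to $F(0)$; it sets the scale $8LR^2/\varepsilon$ that appears in the log factor of \eqref{eq:cog-count}.

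\textbf{Step 2: volume reduction.} Let $P_t$ be the polytope cut by the shallow half-spaces $\{u:\ip{F(w_i)}{w_i-u}\ge\varepsilon/2\}$. Each new query $w_{t+1}$ is the centroid of $P_t$. The cut through it, shifted by $\varepsilon/2$, lies strictly on the far side of the centroid, so Grünbaum's inequality gives $\vol P_{t+1}\le(1-1/e)\vol P_t$. After $t$ cuts, $\vol P_t\le(1-1/e)^t\vol B_1^n(R)$.

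\textbf{Step 3: termination via a thickened set.} Suppose the deep set $D_t=B_1^n(R)\cap\{u:\ip{F(w_i)}{w_i-u}\ge\varepsilon\ \forall i\}$ is still nonempty, and pick $u^\circ\in D_t$. Consider the homothetic copy
\[
 S=u^\circ+r\bigl(B_1^n(R)-u^\circ\bigr)\subseteq B_1^n(R).
\]
For $u\in S$ we have $\nrm{u-u^\circ}_2\le2rR$. The plan is to show that for a suitable $r$ of order $\varepsilon/(LR^2)$ every $u\in S$ satisfies all the shallow cuts, so $S\subseteq P_t$. The decrease of each linear form by $\varepsilon/2$ must be controlled by $\nrm{F(w_i)}_2\cdot2rR$.

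This is the main obstacle: $\nrm{F(w_i)}_2$ can be as large as $\nrm{F(0)}_2+LR$, which is unbounded. I would first try to use the special role of $y$. Since $u^\circ$ satisfies the deep cut at $y$, it lies on the side where $\ip{F(0)}{\cdot}$ is nearly minimal. Shrinking toward $u^\circ$ along directions in the ball keeps $\ip{F(0)}{u}$ within $\ip{F(0)}{u^\circ}+r\cdot(\text{range})$. I expect the remedy to be to shrink toward $y$ rather than toward $u^\circ$: take
\[
 S=(1-r)u^\circ+r\,B_1^n(R).
\]
Then for each $i$,
\[
 \ip{F(w_i)}{w_i-u}=\ip{F(0)}{w_i-u}+\ip{F(w_i)-F(0)}{w_i-u},
\]
and the $F(0)$ term changes by $r\bigl(\ip{F(0)}{u^\circ}-\ip{F(0)}{b}\bigr)$ for $b$ in the ball. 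That change is unbounded only in the favorable direction after comparison with the cut at $y$, while the Lipschitz term is bounded by $2LR\cdot2rR$. With $r=\varepsilon/(8LR^2)$ this gives $S\subseteq P_t$, and so
\[
 \vol P_t\ge r^n\vol B_1^n(R).
\]
If this comparison step does not close, I would replace it by intersecting with the explicit sublevel structure from the cut at $y$.

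\textbf{Step 4: count and certificate.} Comparing the two volume bounds forces
\[
 t\le\frac{n\log(1/r)}{-\log(1-1/e)}.
\]
Adding the two initial calls and the final call gives \eqref{eq:cog-count}. Once $D_t=\varnothing$, the minimax/LP-duality argument from the proof of \cref{thm:spcut}, applied to the bilinear form on $B_1^n(R)\times\Delta_{t+1}$, yields $\lambda$ with
\[
 \max_u\sum_i\lambda_i\ip{F(w_i)}{w_i-u}\le\varepsilon.
\]
Then convexity (for $F=\nabla f$) or monotonicity, via $\ip{F(u)}{w_i-u}\le\ip{F(w_i)}{w_i-u}$, converts this bound into the claims about the objective gap and the weak gap.
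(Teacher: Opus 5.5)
Your Step~3 does not close, and it fails exactly at the obstacle you identified. For $u=(1-r)u^\circ+rb$ with $b\in B_1^n(R)$, the $F(0)$-part of each cut changes by $r\ip{F(0)}{u^\circ-b}$. Because $u^\circ$ satisfies the deep cut at $y$, your Step~1 inequality gives $\ip{F(0)}{u^\circ-y}\le2LR^2-\varepsilon$, provided you keep the term $\ip{F(0)}{y-u}\le0$ instead of discarding it. Hence $\ip{F(0)}{u^\circ-b}\le2LR^2$ for every $b$ in the ball. So this change is bounded \emph{above} and unbounded \emph{below}: at $b=-y$ it is about $-2rR\nrm{F(0)}_\infty$. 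The cuts have the form $\ip{F(w_i)}{w_i-u}\ge\varepsilon/2$, so a decrease is the unfavourable direction, and your sign claim is reversed. Note also that $(1-r)u^\circ+rB_1^n(R)$ is the same set as $u^\circ+r(B_1^n(R)-u^\circ)$, so the ``remedy'' changes nothing.

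The inclusion $S\subseteq P_t$ is in fact impossible in general. The shallow cut at $w_0=y$, together with your Step~1 inequality, confines $P_t$ to
\[
 \widehat C=\{u\in B_1^n(R):\ip{F(0)}{u-y}\le2LR^2\}.
\]
This is a slab along the face of the ball that minimizes $\ip{F(0)}{\cdot}$, and its volume tends to $0$ as $\nrm{F(0)}_\infty\to\infty$ with $n,R,L$ fixed. By contrast, $\vol S=r^n\vol B_1^n(R)$ does not depend on $F(0)$. The deep set can stay nonempty in this limit, for instance when two coordinates of $F(0)$ attain $\nrm{F(0)}_\infty$ in absolute value. Any volume comparison against $B_1^n(R)$ therefore yields a count that depends on $\nrm{F(0)}$, contrary to the statement.

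Your fallback, using the sublevel structure of the cut at $y$, points in the right direction. But that structure has to \emph{replace} the ball as the reference body, both for the homothety and for the volume. This is what the paper's \CGC{} does by cutting from $\widehat C$ instead of the ball; in your formulation $P_0\subseteq\widehat C$ already holds, because the cut at $y$ is among the shallow cuts. On $\widehat C$ every cut form has a lower bound independent of $F(0)$: for any query point $w\in B_1^n(R)$ and any $v\in\widehat C$,
\[
 \ip{F(w)}{w-v}=\ip{F(0)}{w-y}-\ip{F(0)}{v-y}+\ip{F(w)-F(0)}{w-v}\ge0-2LR^2-LR\cdot2R=-4LR^2,
\]
because $y$ minimizes $\ip{F(0)}{\cdot}$ over the ball and $\nrm{F(w)-F(0)}_2\le LR$.

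Now take $\theta=\varepsilon/(2(4LR^2+\varepsilon))$. The set $(1-\theta)u^\circ+\theta\widehat C$ satisfies every shallow cut, so $\vol P_t\ge\theta^n\vol\widehat C$, and $\vol\widehat C>0$ because $2LR^2>0$ makes $\widehat C$ full-dimensional. Gr\"unbaum gives $\vol P_t\le(1-1/e)^t\vol\widehat C$, and $1/\theta=2+8LR^2/\varepsilon$ is exactly the argument of the logarithm in \eqref{eq:cog-count}. Your $r=\varepsilon/(8LR^2)$ would be slightly too large in any case, since the convex combination needs $(1-r)\varepsilon-4rLR^2\ge\varepsilon/2$.

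With this change, Steps~2 and~4 go through. You should also note that a centroid with $F(w_{t+1})=0$ produces an empty cut and stops the method.
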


\begin{algorithm}[t]
\caption{\CGC: centroid cuts with a global certificate (operator or gradient oracle)}
\label{alg:cgcut}
\begin{algorithmic}[1]
\Require $K=B_1^n(R)$; oracle $F$ that is $L$-Lipschitz on $K$; accuracy $\varepsilon>0$
\State query $b=F(0)$; \textbf{if} $b=0$ \textbf{return} $\bar w=0$, $\lambda=(1)$
\State $y\gets-R\,\sign(b_i)e_i$ for an index $i$ with $|b_i|=\nrm b_\infty$;\quad query $h=F(y)$;\quad $w_0\gets y$
\State $\widehat C\gets\{u\in K:\ip b{u-y}\le2LR^2\}$;\quad
$a_w(u):=\ip{F(w)}{w-u}$ for stored pairs
\For{$t=0,1,2,\dots$}
 \State $C'_t\gets K\cap\bigcap_{i\le t}\{u:a_{w_i}(u)\ge\varepsilon\}$;\quad
 $C_t\gets \widehat C\cap\bigcap_{i\le t}\{u:a_{w_i}(u)\ge\varepsilon/2\}$
 \If{$C'_t=\varnothing$}
  \State solve $\min_{\lambda\in\Delta_{t+1}}\{\sum_{i\le t}\lambda_i\ip{F(w_i)}{w_i}+R\nrm{\sum_{i\le t}\lambda_iF(w_i)}_\infty\}$;
  \Return $\bar w=\sum_{i\le t}\lambda_iw_i$, $\lambda$
 \EndIf
 \State $w_{t+1}\gets$ centroid of $C_t$;\quad query $F(w_{t+1})$
\EndFor
\end{algorithmic}
\end{algorithm}

\begin{proof}[Proof of \cref{prop:cog}]
\emph{Two initial queries.} If $b=0$ the certificate of the single point $0$ is
$\max_u\ip{0}{0-u}=0$. Otherwise $y$ minimizes $\ip bu$ over $K$, so
$\ip b{u-y}\ge0$ on $K$, and $\nrm{h-b}_2\le L\nrm y_2\le LR$. For $u\in K$,
\begin{equation}\label{eq:cog-initial}
 a_y(u)=\ip h{y-u}=-\ip b{u-y}+\ip{h-b}{y-u}\le-\ip b{u-y}+2LR^2 ,
\end{equation}
so $a_y(u)\ge\varepsilon$ implies $u\in \widehat C$, and hence $C'_t\subseteq \widehat C$ for every $t$.
Since $2LR^2>0$, the known polytope $\widehat C$ is full-dimensional: it contains a
neighborhood of the interior point $(1-s)y$ of $K$ for small $s>0$, where
$\ip b{(1-s)y-y}=sR\nrm b_\infty<2LR^2$. For $w,v\in \widehat C$,
\begin{equation}\label{eq:cog-lower}
 a_w(v)=\ip b{w-v}+\ip{F(w)-b}{w-v}\ge-2LR^2-LR\cdot2R=-4LR^2=:-U ,
\end{equation}
because $\ip b{w-y}\ge0$, $\ip b{v-y}\le2LR^2$ and $\nrm{F(w)-b}_2\le L\nrm w_2\le LR$.

\emph{Volume.} Let $\theta=\varepsilon/(2(U+\varepsilon))$. If $C'_t\ne\varnothing$,
pick $u\in C'_t\subseteq \widehat C$; for $v\in \widehat C$ and $i\le t$ the affine function
$a_{w_i}$ satisfies $a_{w_i}(u)\ge\varepsilon$ and $a_{w_i}(v)\ge-U$, so
$a_{w_i}((1-\theta)u+\theta v)\ge(1-\theta)\varepsilon-\theta U=\varepsilon/2$. Hence
$(1-\theta)u+\theta \widehat C\subseteq C_t$ and $\vol(C_t)\ge\theta^n\vol(\widehat C)>0$;
in particular the centroid $w_{t+1}$ is defined. If $F(w_{t+1})=0$ the new
constraint $a_{w_{t+1}}\ge\varepsilon/2$ cannot be satisfied, so
$C_{t+1}=C'_{t+1}=\varnothing$ and the method stops. Otherwise the half-space $\{u:a_{w_{t+1}}(u)\ge\varepsilon/2\}$
is contained in the half-space $\{u:\ip{F(w_{t+1})}{w_{t+1}-u}\ge0\}$ through the
centroid of $C_t$, and Gr\"unbaum's inequality \citep{grunbaum1960} gives
$\vol(C_{t+1})\le(1-1/e)\vol(C_t)$; also $C_0\subseteq \widehat C$. Let
$T=1+\lfloor n\log(2+8LR^2/\varepsilon)/(-\log(1-1/e))\rfloor$. If $C'_T\ne\varnothing$,
then $\theta^n\vol(\widehat C)\le\vol(C_T)\le(1-1/e)^T\vol(\widehat C)$, i.e.\
$T\le n\log(1/\theta)/(-\log(1-1/e))$ with $1/\theta=2+8LR^2/\varepsilon$, which
contradicts the choice of $T$. So the method stops with $C'_t=\varnothing$ for
some $t\le T$, after the two initial queries and at most $T$ centroid queries;
this is \eqref{eq:cog-count}.

\emph{Certificate.} If $C'_t=\varnothing$ then $\max_{u\in K}\min_{i\le t}a_{w_i}(u)<\varepsilon$
by compactness, and the minimax theorem, as in \cref{thm:spcut}, gives
$\lambda\in\Delta_{t+1}$ with $\max_{u\in K}\sum_i\lambda_ia_{w_i}(u)<\varepsilon$;
the maximum equals $\sum_i\lambda_i\ip{F(w_i)}{w_i}+R\nrm{\sum_i\lambda_iF(w_i)}_\infty$
on the ball, a linear program. Every $w_i$ is a point of $K$ at which $F$ was
queried ($w_0=y$ with $h=F(y)$; $F(0)$ is not used in the mixture), so
$\bar w\in K$. For $F=\nabla f$, convexity gives
$f(\bar w)-f(u)\le\sum_i\lambda_i\ip{\nabla f(w_i)}{w_i-u}\le\varepsilon$ for all
$u\in K$; for monotone $F$, $\ip{F(u)}{\bar w-u}=\sum_i\lambda_i\ip{F(u)}{w_i-u}\le\sum_i\lambda_i\ip{F(w_i)}{w_i-u}\le\varepsilon$
(\cref{lem:mixture-certificate}).
\end{proof}

The proof uses only the diameter, the linear minimization over $K$, the
emptiness tests and the centroids of known polytopes. On a general known
polytope $K$ of dimension $d$ the same argument therefore gives
$O(1+d\log(2+LR^2/\varepsilon))$ calls, with the centroid taken in the affine
hull of $K$, an arbitrary initial point $a\in K$ in place of $0$, and the
absolute constants in the diameter bounds doubled.

Choosing between \cref{thm:gradient-only} and \cref{prop:cog} by their
guaranteed query counts gives, for the gradient oracle on $B_1^n(R)$,
$O\bigl(1+\min\{[\frac{LR^2\cn}\varepsilon]^{1/3}\log^{7/3}(2+\frac{LR^2\cn}{\varepsilon}),\,
n\log(2+\frac{LR^2}\varepsilon)\}\bigr)$ queries.

\section{Strong convexity}
\label{app:strong}

This section gives the strongly convex versions of the upper bounds, first by
restarts with the condition number $\kappa_1=L/\mu_1$
(\cref{app:restart}), and then, for smooth functions on the ball, a sharper
analysis in terms of the slack $\Lambda=\kappa_1-n$ that determines the
deterministic complexity (\cref{app:slack,app:strong-map}). We begin with the
constraint that relates the two constants.

\subsection{Norm compatibility}
\label{app:compatibility}

Euclidean smoothness and $\ell_p$ strong convexity cannot be combined
arbitrarily: comparing the two quadratic bounds at $x$ and $x+tv$ gives the
following inequality.

\begin{restatable}[Compatibility inequality]{proposition}{propcompatibility}
\label{prop:compatibility}
If $f$ is $L$-smooth in $\ell_2$ and $\mu_p$-strongly convex in $\ell_p$ on a
nonempty open convex subset of $\R^n$, then $L\ge\mu_pn^{\beta_p}$, with
equality for $f(x)=\frac L2\nrm x_2^2$.
\end{restatable}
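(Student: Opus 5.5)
The plan is to compare the two quadratic bounds along a single direction and then to choose the direction that maximizes the ratio of norms. Fix a point $x$ in the open convex set $U$, a direction $v\ne0$ and a small $t>0$ with $x+tv\in U$. Smoothness in $\ell_2$ gives the upper bound
\[
 f(x+tv)\le f(x)+t\ip{\nabla f(x)}{v}+\tfrac L2t^2\nrm v_2^2 ,
\]
and $\mu_p$-strong convexity in $\ell_p$ gives the lower bound
\[
 f(x+tv)\ge f(x)+t\ip{\nabla f(x)}{v}+\tfrac{\mu_p}2t^2\nrm v_p^2 .
\]
Here I use that $f$ is differentiable, which is implied by $L$-smoothness, so the subgradient in the definition is $\nabla f(x)$. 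Subtracting the two bounds and dividing by $t^2/2$ gives $L\nrm v_2^2\ge\mu_p\nrm v_p^2$ for every direction $v$. Only one displacement is needed, so no limit in $t$ is required.

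Next I choose $v=\1$. By the sparse inclusion \eqref{eq:sparse-inclusion} with $d=n$, this vector attains $\nrm v_p^2=n^{\beta_p}\nrm v_2^2$, since $\nrm{\1}_p=n^{1/p}$ and $\nrm{\1}_2=n^{1/2}$. Substituting into the inequality above yields $L\ge\mu_pn^{\beta_p}$. Because $U$ is open, a small multiple $t\1$ keeps $x+t\1$ in $U$, so the choice of direction is legitimate.

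For the equality case I take $f(x)=\frac L2\nrm x_2^2$, which is $L$-smooth. For this function the strong convexity gap is $f(y)-f(x)-\ip{\nabla f(x)}{y-x}=\frac L2\nrm{y-x}_2^2$. By \eqref{eq:sparse-inclusion}, $\nrm u_2^2\ge n^{-\beta_p}\nrm u_p^2$, so the gap is at least $\frac{Ln^{-\beta_p}}2\nrm{y-x}_p^2$. Hence $f$ is $\mu_p$-strongly convex with $\mu_p=Ln^{-\beta_p}$, and the inequality holds with equality.

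No step is a real obstacle. The only point to check is that the strong convexity definition, stated on $K$ in the notation section, applies here on the open set $U$ with the gradient as the subgradient.
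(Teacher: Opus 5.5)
Your proof is correct and follows the paper's argument. You evaluate the $\ell_2$ upper model and the $\ell_p$ lower model at $x+tv$ to get $\mu_p\nrm v_p^2\le L\nrm v_2^2$, choose a vector with equal absolute coordinates such as $\1$ to attain the ratio $n^{\beta_p}$, and use $\nrm u_2^2\ge n^{-\beta_p}\nrm u_p^2$ for the equality case with $f=\frac L2\nrm x_2^2$.
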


\begin{proof}
For $x$ in the set, $v\in\R^n$ and small $t>0$, the upper and the lower
quadratic model at $x$, evaluated at $x+tv$, give
$\mu_p\nrm{v}_p^2\le L\nrm{v}_2^2$ after the affine terms cancel. The largest
value of $\nrm v_p^2/\nrm v_2^2$ is $n^{2/p-1}=n^{\beta_p}$, attained at vectors
with equal absolute coordinates. For $f=\frac L2\nrm x_2^2$ the inequality
$\nrm v_2^2\ge n^{-\beta_p}\nrm v_p^2$ shows that $\mu_p=Ln^{-\beta_p}$ is
admissible.
\end{proof}

In particular $\kappa_1=L/\mu_1\ge n$, and we call $\Lambda=\kappa_1-n\ge0$ the
\emph{slack}.

\subsection{Restarts}
\label{app:restart}

Suppose that $f$ is in addition $\mu_1$-strongly convex with respect to
$\nrm\cdot_1$ on $K$ (a boundary minimizer is allowed). Then
$f(x)-f(\opt)\ge\frac{\mu_1}2\nrm{x-\opt}_1^2$ for all $x\in K$, so an
$\varepsilon$-solution $x_j$ localizes $\opt$ in the known polytope
$K_{j+1}=K\cap(x_j+B_1^n(r_{j+1}))$ with $r_{j+1}=\sqrt{2\varepsilon/\mu_1}$.
\SPR{} solves the problem on $K_0=K$ to accuracy $\mu_1R^2/8$ and then, with
$r_j=R2^{-j}$, on $K_j$ to accuracy $\mu_1r_j^2/8$, each time by
\cref{alg:spaccel} restricted to $K_j$. The radius cancels in the required
horizon, which is therefore the same in every phase.

\begin{restatable}[Restarts]{proposition}{thmrestart}
\label{thm:restart}
Let $f$ be convex, $L$-smooth and $\mu_1$-strongly convex in $\ell_1$ on
$K\subseteq c+B_1^n(R)$, $\kappa_1=L/\mu_1$, and let $N_\star$ be the smallest
power of two with $N_\star^3\ge24576\,\kappa_1\cn(1+\log_2N_\star)$. For every
$\varepsilon>0$, \SPR{} returns an $\varepsilon$-solution after at most
$(J+1)(1+24N_\star)$ first-order queries, where $J=\ceilp{\log_4(\mu_1R^2/(8\varepsilon))}$; that is,
\begin{equation}\label{eq:restart-count}
 O\Bigl(\bigl[1+\{\kappa_1\log(2n)\log(2+\kappa_1\log(2n))\}^{1/3}\bigr]
 \bigl[1+\log_+\tfrac{\mu_1R^2}{\varepsilon}\bigr]\Bigr).
\end{equation}
If $f$ is $G$-Lipschitz and $\mu_1$-strongly convex in $\ell_1$ on $K$ (subgradient
oracle without values), the same restarts with \cref{alg:spcut} return an
$\varepsilon$-solution after
$O\bigl([1+\log_+\frac{\mu_1R^2}{\varepsilon}]+\frac{G\sqrt{\log(2n)}}{\sqrt{\mu_1\varepsilon}}
\sqrt{\log(2+\frac{G\sqrt{\log(2n)}}{\sqrt{\mu_1\varepsilon}})}\bigr)$ queries.
If $\nabla f$ is $\nu$-H\"older with constant $M=L_{1,\nu}$, $0<\nu<1$, the
restarts with the level method of \cref{thm:holder} need
$\tO_\nu\bigl(1+(M/(\mu_1R^{1-\nu}))^{1/(1+2\nu)}+\log_+\frac{\mu_1R^2}{\varepsilon}
+M^{1/(1+2\nu)}\mu_1^{-(1+\nu)/(2+4\nu)}\varepsilon^{-(1-\nu)/(2+4\nu)}\bigr)$
queries.
\end{restatable}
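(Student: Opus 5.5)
\SPR{} runs in phases $j=0,1,\dots,J$, and the argument rests on two invariants: the minimizer $\opt$ stays in the current polytope, and each phase costs a fixed number of queries. Set $r_0=R$, $K_0=K$, $r_j=R2^{-j}$ and $\varepsilon_j=\mu_1r_j^2/8$. In phase $j$ we solve the problem on $K_j$ to accuracy $\varepsilon_j$ and obtain $x_j$, then set $K_{j+1}=K\cap(x_j+B_1^n(r_{j+1}))$.

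\emph{Localization invariant.} I will show by induction that $\opt\in K_j$ and $K_j\subseteq c_j+B_1^n(r_j)$ with $c_0=c$ and $c_j=x_{j-1}$. Each $K_j$ is a known polytope: it is $K$ intersected with the $2^n$ facets of a translated $\ell_1$ ball. Since $\opt\in K_j$, we have $\min_{K_j}f=\min_Kf$, so the $\varepsilon_j$-solution $x_j$ satisfies $f(x_j)-f(\opt)\le\varepsilon_j$. Strong convexity at the minimizer of a convex set gives $f(x)-f(\opt)\ge\frac{\mu_1}2\nrm{x-\opt}_1^2$ for $x\in K$. Combining the two, $\nrm{x_j-\opt}_1\le\sqrt{2\varepsilon_j/\mu_1}=r_j/2=r_{j+1}$, so $\opt\in K_{j+1}$. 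After phase $J$ we have $\varepsilon_J\le\varepsilon$, because $J=\ceilp{\log_4(\mu_1R^2/(8\varepsilon))}$. If $J=0$, phase $0$ alone already gives accuracy $\mu_1R^2/8\le\varepsilon$ with the same count.

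\emph{Cost of one phase (smooth case).} I will apply \cref{thm:main} in its general-polytope form on $K_j\subseteq c_j+B_1^n(r_j)$. The horizon rule \eqref{eq:horizon-rule} with $R$ replaced by $r_j$ and $w=\varepsilon_j$ reads $N^3\ge3072Lr_j^2\bN\cn/\varepsilon_j=24576\,\kappa_1\cn\bN$. The radius cancels, so $N(\varepsilon_j)=N_\star$ for every $j$, and each phase makes at most $1+24N_\star$ queries. The case in which the initial bracket width $2Lr_j^2$ is already below $\varepsilon_j$ costs one query and fits the same bound. Summing over the $J+1$ phases gives $(J+1)(1+24N_\star)$. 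For the order \eqref{eq:restart-count}, I will bound $N_\star$ by the explicit inversion in the proof of \cref{thm:main} with $\Theta=24576\kappa_1\cn$, which gives $N_\star=O(1+[\kappa_1\log(2n)\log(2+\kappa_1\log(2n))]^{1/3})$.

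\emph{Nonsmooth and H\"older cases.} Here the radius no longer cancels, and this is where I expect the main work. In the nonsmooth case, \cref{alg:spcut} on $K_j$ with depth $\varepsilon_j$ needs, by \cref{lem:spcut-count} with $Q_j=Gr_j/\varepsilon_j=8G/(\mu_1r_j)$, at most $O(1+Q_j\sqrt{\log(2n)\log(2+Q_j\sqrt{\log(2n)})})$ queries. If $\varepsilon_j>Gr_j$, the first query together with the bound $\widehat\varepsilon\le2Gr_j$ suffices, so $O(1)$ queries. Since $Q_j$ doubles from phase to phase, the sum is a geometric series dominated by its last term plus $O(1)$ per phase. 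Using $r_J\gtrsim\sqrt{\varepsilon/\mu_1}$, the last term is of order $G\sqrt{\log(2n)}/\sqrt{\mu_1\varepsilon}$ times the square-root logarithm, which gives the stated count. The H\"older case is the same: by \cref{thm:holder} the phase horizon satisfies $N_j^{1+2\nu}\approx Mr_j^{1+\nu}/\varepsilon_j\asymp M r_j^{\nu-1}/\mu_1$ up to logarithms. This grows geometrically with ratio $2^{(1-\nu)/(1+2\nu)}>1$ when $\nu<1$, so the sum is dominated by the first phase term $(M/(\mu_1R^{1-\nu}))^{1/(1+2\nu)}$ and the last term $M^{1/(1+2\nu)}\mu_1^{-(1+\nu)/(2+4\nu)}\varepsilon^{-(1-\nu)/(2+4\nu)}$, plus $O(1)$ per phase. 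The hidden constant blows up as $\nu\to1$, which is why the statement has the subscript $\nu$.

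The step I expect to need the most care is this geometric summation with the logarithmic factors. They depend on the phase through $\log Q_j$, and I will bound them monotonically by their value at the last phase. I will also check that the explicit initial-width and $N_j>1$ conditions of the bracket searches hold uniformly in $j$ on the shrunken polytopes.
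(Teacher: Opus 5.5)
Your smooth and H\"older arguments match the paper's proof. You use the same localization invariant, the same cancellation of the radius in \eqref{eq:horizon-rule}, so that every smooth phase costs $1+24N_\star$, and the same geometric summation of the phase costs in the other cases.

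\textbf{The gap: the nonsmooth case with $J=0$.} Your step ``using $r_J\gtrsim\sqrt{\varepsilon/\mu_1}$'' is valid only for $J\ge1$, where $\varepsilon_{J-1}>\varepsilon$ gives $r_J>\sqrt{2\varepsilon/\mu_1}$. If $\varepsilon\ge\mu_1R^2/8$, then $J=0$ and $r_J=R\le\sqrt{8\varepsilon/\mu_1}$, yet your scheme still runs phase $0$ to accuracy $\mu_1R^2/8$.
\begin{itemize}
\item The only guarantee you have for that phase is of order $\frac{G}{\mu_1R}\sqrt{\log(2n)\log(\cdots)}$ queries, and it does not decrease as $\varepsilon$ grows.
\item It exceeds the claimed $\frac{G\sqrt{\log(2n)}}{\sqrt{\mu_1\varepsilon}}\sqrt{\log(\cdots)}$ by a factor of order $\sqrt{\varepsilon/(\mu_1R^2)}$, which is unbounded.
\item $G/(\mu_1R)$ is not controlled by the other data: adding a large linear term to $f$ makes it arbitrarily large. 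So the stated nonsmooth bound does not follow from your argument.
\end{itemize}
The paper treats this case separately. When $\varepsilon\ge\mu_1R^2/8$, it runs \SPC{} once on $K$ with depth $\varepsilon$. This costs $O\bigl(1+\frac{GR}{\varepsilon}\sqrt{\log(2n)\log(\cdots)}\bigr)$ queries, and $GR/\varepsilon\le\sqrt8\,G/\sqrt{\mu_1\varepsilon}$ because $R\le\sqrt{8\varepsilon/\mu_1}$. In the H\"older case the same regime is harmless only because the statement contains the first-phase term $(M/(\mu_1R^{1-\nu}))^{1/(1+2\nu)}$. The nonsmooth bound has no analogous $G/(\mu_1R)$ term, so the special case is needed there.

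\textbf{A smaller slip.} One query suffices only if the first cut set is empty. That is guaranteed for $\varepsilon_j>2Gr_j\ge G\diam_2(K_j)$, not for $\varepsilon_j>Gr_j$. The bound $\widehat\varepsilon\le2Gr_j$ describes the certificate after the method has stopped; it does not force a stop. Since $\varepsilon_j/(Gr_j)$ halves from phase to phase, at most one phase has $Gr_j<\varepsilon_j\le2Gr_j$. That phase costs $O\bigl(\sqrt{\log(2n)\log(2+\log(2n))}\bigr)$ queries by the path-budget argument, which the final bound still absorbs.
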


The Euclidean bound is $O(\sqrt{\kappa_2}\log(1/\varepsilon))$ with
$\kappa_2=L/\mu_2$, where $\mu_2\ge\mu_1$ is the Euclidean modulus, so the new
bound is smaller when $\mu_1$ is comparable to $\mu_2$ and $\kappa_1$ is large;
it requires no regularity of the Hessian. For deterministic methods and
$n\ge\Lambda^{1/3}$, \cref{thm:strong-map} below shows that the power
$\kappa_1^{1/3}$ cannot be improved when $\Lambda$ is comparable to $\kappa_1$.
The same restarts apply to the method of higher order of \cref{thm:tensor}.

\begin{proof}[Proof of \cref{thm:restart}]
\emph{Localization.} Let $\opt\in\argmin_Kf$. Strong convexity along the
segment from $\opt$ to $x\in K$ and minimality of $\opt$ give, for $0<t<1$,
$f(\opt+t(x-\opt))\le(1-t)f(\opt)+tf(x)-\frac{\mu_1}2t(1-t)\nrm{x-\opt}_1^2$ and
$f(\opt+t(x-\opt))\ge f(\opt)$, hence
$f(x)-f(\opt)\ge\frac{\mu_1}2(1-t)\nrm{x-\opt}_1^2$; letting $t\downarrow0$ gives
$f(x)-f(\opt)\ge\frac{\mu_1}2\nrm{x-\opt}_1^2$, so the minimizer is unique (possibly
on the boundary of $K$). Consequently, if $f(x_j)-f(\opt)\le\mu_1r_j^2/8$ then
$\nrm{x_j-\opt}_1\le r_j/2=r_{j+1}$, and $K_{j+1}=K\cap(x_j+B_1^n(r_{j+1}))$ is a
known polytope with $\opt\in K_{j+1}\subseteq x_j+B_1^n(r_{j+1})$.

\emph{Smooth case.} Apply \cref{alg:spaccel} on $K_j\subseteq x_j'+B_1^n(r_j)$
($x_j'=x_{j-1}$ for $j\ge1$, $x_0'=c$, $r_0=R$) to accuracy $\delta_j=\mu_1r_j^2/8$.
By \cref{thm:main} for general polytopes (\cref{app:bracket}), this uses at
most $1+24N_j$ queries, where $N_j$ is the smallest power of two with
$768Lr_j^2\mathsf b_{N}\cn/N^3\le\delta_j/4=\mu_1r_j^2/32$, i.e.\
$N^3\ge24576\kappa_1\mathsf b_N\cn$; the radius cancels, so $N_j=N_\star$ for all
$j$ (for a power of two, $\mathsf b_N=1+\log_2N$). After $J+1$ phases, where $J$
is the smallest index with $\mu_1R^24^{-J}/8\le\varepsilon$, the output $x_J$
satisfies $f(x_J)-\min_Kf\le\delta_J\le\varepsilon$; the total is at most
$(J+1)(1+24N_\star)$. Finally $N_\star=O(1+[\kappa_1\cn\log(2+\kappa_1\cn)]^{1/3})$,
because for a large absolute constant $C$ the value
$N=C\max\{1,[\kappa_1\cn\log(2+\kappa_1\cn)]^{1/3}\}$ satisfies the defining
inequality (as $1+\log_2N=O(\log(2+\kappa_1\cn))$ for such $N$), and rounding up
to a power of two at most doubles it. This gives \eqref{eq:restart-count}.

\emph{Nonsmooth case.} Apply \cref{alg:spcut} on $K_j$ with depth $\delta_j$;
by \cref{thm:spcut} it stops after at most $Q(r_j,\delta_j)$ queries, where
$Q(r,\delta)$ is the smallest $m\ge1$ with $16Gr\sqrt{\mathsf b_m\cn}/m\le\delta$,
and returns a point with error at most $\delta_j$ on $K_j$. Since
$\mathsf b_m\cn\le4(1+\log m)(1+\log(2n))$, it suffices that
$m/\sqrt{1+\log m}\ge A_j/2$, where
$A_j=64Gr_j\sqrt{1+\log(2n)}/\delta_j=2^jA_0$ and
$A_0=512G\sqrt{1+\log(2n)}/(\mu_1R)$; the argument in the proof of
\cref{lem:spcut-count} with $c=\max\{16,A_j/2\}$ gives
$Q(r_j,\delta_j)=O(\max\{1,A_j\}\sqrt{1+\log(2+A_j)})$. These bounds grow
geometrically in $j$, so
$\sum_{j\le J}Q(r_j,\delta_j)=O(J+1+A_J\sqrt{1+\log(2+A_J)})$, and since
$\varepsilon/4<\delta_J\le\varepsilon$ we have $r_J\asymp\sqrt{\varepsilon/\mu_1}$
and $A_J\asymp G\sqrt{1+\log(2n)}/\sqrt{\mu_1\varepsilon}$, which gives the
stated count. If $\varepsilon\ge\mu_1R^2/8$, a single run of \cref{alg:spcut} with depth
$\varepsilon$ suffices.

\emph{H\"older case.} With the level method of \cref{thm:holder}, $M=L_{1,\nu}$,
$s=1+\nu$ and $\sigma=1+2\nu$, phase $j$ uses
$\tO(1+(Mr_j^s/\delta_j)^{1/\sigma})=\tO(1+(M/(\mu_1r_j^{2-s}))^{1/\sigma})$
queries. For $\nu<1$ this grows geometrically with the factor $2^{(2-s)/\sigma}>1$, so the
sum is dominated by the last phase, where $r_J\asymp\sqrt{\varepsilon/\mu_1}$, and the
total is $\tO_\nu\bigl(1+(M/(\mu_1R^{2-s}))^{1/\sigma}+\log_+(\mu_1R^2/\varepsilon)+M^{1/\sigma}\mu_1^{-s/(2\sigma)}\varepsilon^{-(2-s)/(2\sigma)}\bigr)$.
For $\nu=1$ all phases have the same bound and one recovers the smooth case.
\end{proof}

\begin{restatable}[Restarts with higher-order oracles]{corollary}{cortensorobjectiverestart}
\label{cor:tensor-objective-restart}
Let $k\ge2$, $0\le\nu\le1$, $s=k+\nu$ and $\sigma=2s-1$. Let $f$ satisfy the
assumptions of \cref{thm:tensor} and be $\mu_1$-strongly convex in $\ell_1$ on
$K\subseteq c+B_1^n(R)$, and put $\bar\kappa=L_{k,\nu}R^{s-2}/\mu_1$ and
$\Pi_0=[\cn\log(2+\bar\kappa\cn^{s/2})]^{s/(2\sigma)}$. Restarting the
bracket search of \cref{thm:tensor} on $K_0=K$ and
$K_{j+1}=K\cap(x_j+B_1^n(R2^{-j-1}))$, with accuracy $\mu_1R^24^{-j}/8$ in
phase $j=0,\dots,J$, $J=\ceilp{\log_4(\mu_1R^2/(8\varepsilon))}$, returns
$x_J\in K$ with $f(x_J)-\min_Kf\le\varepsilon$ after
\[
\begin{aligned}
 &O_k\Bigl(\Bigl[1+\log_+\frac{\mu_1R^2}{\varepsilon}\Bigr]\bigl[1+\bar\kappa^{1/\sigma}\Pi_0\bigr]\Bigr)
 &&\text{if }s=2,\\
 &O_k\Bigl(1+\log_+\frac{\mu_1R^2}{\varepsilon}+\frac{\bar\kappa^{1/\sigma}\Pi_0}{1-2^{-(s-2)/\sigma}}\Bigr)
 &&\text{if }s>2
\end{aligned}
\]
order-$k$ queries, where $O_k$ hides a factor that depends only on $k$.
\end{restatable}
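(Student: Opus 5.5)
The argument follows the proof of \cref{thm:restart} closely, with the level method of \cref{thm:tensor} in place of that of \cref{thm:splevel}.

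\emph{Localization.} First I verify the invariant that the minimizer $\opt$ lies in every $K_j$. By $\mu_1$-strong convexity, $f(x)-f(\opt)\ge\frac{\mu_1}2\nrm{x-\opt}_1^2$ on $K$, exactly as shown in \cref{thm:restart}. An output of phase $j$ with accuracy $\delta_j=\mu_1R^24^{-j}/8=\mu_1r_j^2/8$, where $r_j=R2^{-j}$, therefore satisfies $\nrm{x_j-\opt}_1\le r_j/2=r_{j+1}$. Hence $K_{j+1}$ is a known polytope containing $\opt$ and contained in $x_j+B_1^n(r_{j+1})$. After phase $J$ the error is at most $\delta_J\le\varepsilon$, by the definition of $J$.

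\emph{Cost of one phase.} I apply \cref{thm:tensor} on $K_j$, a polytope inside a translate of $B_1^n(r_j)$, with accuracy $\delta_j$. Phase $j$ then costs at most $1+32(2k+1)N_j$ queries, where $N_j$ is the smallest power of two with $e_{N_j}\le\delta_j/4$. Here $e_N\le 8\Xi_kc_{k,\nu}L_{k,\nu}P_N^s/N^\sigma$, and $P_N=4r_j\sqrt{\bN\cn}$ is the path budget at radius $r_j$. Substituting, the condition reads
\[
N^\sigma\gtrsim_k L_{k,\nu}r_j^{s-2}\mu_1^{-1}(\bN\cn)^{s/2}=\bar\kappa\,2^{-j(s-2)}(\bN\cn)^{s/2}.
\]
Write $\bar\kappa_j=\bar\kappa\,2^{-j(s-2)}$. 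Inverting this as in the "Order of $N(\varepsilon)$" step of \cref{thm:holder} gives
\[
N_j=O_k\bigl(1+\bar\kappa_j^{1/\sigma}[\cn\log(2+\bar\kappa_j\cn^{s/2})]^{s/(2\sigma)}\bigr)\le O_k\bigl(1+\bar\kappa_j^{1/\sigma}\Pi_0\bigr),
\]
using $\bar\kappa_j\le\bar\kappa$, so the logarithm is monotone. The constant $O_k$ must absorb $\Xi_kc_{k,\nu}$ uniformly in $\nu$, which holds because $c_{k,\nu}\le1$.

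\emph{Summation.} If $s=2$, which forces $k=2$ and $\nu=0$, then $\bar\kappa_j=\bar\kappa$ for every $j$. Summing over the $J+1$ phases gives $(J+1)\cdot O_k(1+\bar\kappa^{1/\sigma}\Pi_0)$, and $J+1\le 1+\log_+(\mu_1R^2/\varepsilon)+O(1)$.

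If $s>2$, the terms $\bar\kappa_j^{1/\sigma}=\bar\kappa^{1/\sigma}2^{-j(s-2)/\sigma}$ decrease geometrically. Their sum is at most $\bar\kappa^{1/\sigma}/(1-2^{-(s-2)/\sigma})$, while the constant terms contribute $O_k(J+1)$. Together these give the second bound.

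\emph{Main obstacle.} The delicate step is the uniform inversion of the horizon rule. The rule involves $\bN$, whose size depends on $N$ itself, and the bound must be taken through the computable $e_N$ built from $\Dnu$. The resulting constant has to depend on $k$ only, not on $\nu$ or $j$. I would bound $\bN$ by $O_k(\log(2+\bar\kappa\cn^{s/2}))$ at the candidate $N$, uniformly over all phases, since $\bar\kappa_j\le\bar\kappa$. This is the point where the factor $\Pi_0$ is fixed.
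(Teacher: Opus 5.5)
Your proposal is correct and follows essentially the same route as the paper. The paper likewise proves the localization invariant by induction, bounds the phase-$j$ horizon through the tensor certificate $e_N$ at radius $R2^{-j}$ with $\bar\kappa_j=\bar\kappa\,2^{-j(s-2)}\le\bar\kappa$, and sums a constant series for $s=2$ or a geometric series for $s>2$, with $J+1\le 2+\log_+(\mu_1R^2/\varepsilon)$.
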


For a Lipschitz Hessian ($s=3$) the count is
$\tO(1+\log_+(\mu_1R^2/\varepsilon)+(L_{2,1}R/\mu_1)^{1/5})$. In Euclidean
geometry the condition-number term of second-order methods is of order
$(L_{2,1}R/\mu_2)^{2/7}$ \citep{arjevani2019}, and near the solution Newton steps
reduce the dependence on $\varepsilon$ to $\log\log(1/\varepsilon)$; the
corollary improves the first term when $\mu_1$ is comparable to $\mu_2$, but not
the second.

\begin{proof}[Proof of \cref{cor:tensor-objective-restart}]
Let $\opt$ be the minimizer of $f$ on $K$; strong convexity gives
$f(x)-f(\opt)\ge\frac{\mu_1}2\nrm{x-\opt}_1^2$ on $K$. Let
$J=\ceilp{\log_4(\mu_1R^2/(8\varepsilon))}$, $\rho_j=R2^{-j}$ and
$\delta_j=\mu_1\rho_j^2/8$. If $K_j$ contains $\opt$, the bracket search of
\cref{thm:tensor} on $K_j$, started from one jet on $K_j$ as in the proof of \cref{thm:tensor},
returns $x_j\in K_j$ with $f(x_j)-f(\opt)\le\delta_j$, hence
$\nrm{x_j-\opt}_1^2\le2\delta_j/\mu_1=\rho_{j+1}^2$, and $K_{j+1}$ contains
$\opt$. After the phases $j=0,\dots,J$ the gap is at most
$\delta_J\le\varepsilon$.

On a domain of radius $\rho_j$ the certificate \eqref{eq:tensor-eN} satisfies
$e_N\le8\Xi_kc_{k,\nu}L_{k,\nu}(4\rho_j)^s(\mathsf b_N\cn)^{s/2}/N^\sigma$. Let
$N_j$ be the smallest power of two with
$N_j^\sigma\ge256\Xi_kc_{k,\nu}4^s(L_{k,\nu}/\mu_1)\rho_j^{s-2}(\mathsf b_{N_j}\cn)^{s/2}$;
then $e_{N_j}\le\mu_1\rho_j^2/32=\delta_j/4$, so the horizon used at accuracy
$\delta_j$ is at most $N_j$ and phase $j$ uses at most $1+32(2k+1)N_j$ queries,
for a total of $(J+1)+32(2k+1)\sum_{j\le J}N_j$. With
$\bar\kappa_j=L_{k,\nu}\rho_j^{s-2}/\mu_1=\bar\kappa\,2^{-j(s-2)}$, the dyadic horizon
estimate in the proof of \cref{thm:tensor} gives
$N_j=O_k(1+\bar\kappa_j^{1/\sigma}\{\cn\log(2+\bar\kappa_j\cn^{s/2})\}^{s/(2\sigma)})
=O_k(1+\bar\kappa^{1/\sigma}\Pi_0\,2^{-j(s-2)/\sigma})$, since $s\ge2$.
For $s=2$ the bound does not depend on $j$; for $s>2$ the geometric series is at
most $(1-2^{-(s-2)/\sigma})^{-1}$. Since $J+1\le2+\log_+(\mu_1R^2/\varepsilon)$,
this proves the corollary.
\end{proof}

\subsection{The slack and localization by one gradient}
\label{app:slack}

By \cref{prop:compatibility}, $\kappa_1=n+\Lambda\ge n$, so the factor
$\kappa_1^{1/3}$ of \cref{thm:restart} is at least $n^{1/3}$ even when the slack
$\Lambda=\kappa_1-n$ is small. On the ball the appropriate parameter is
$\Lambda$, because one gradient localizes the minimizer in an $\ell_1$ ball
whose radius is controlled by $\Lambda$ instead of $\kappa_1$.

\begin{restatable}[Localization by one gradient]{lemma}{lemlocalization}
\label{lem:localization}
Let $K=B_1^n(R)$, let $f$ be convex and differentiable on an open set
containing $K$, $L$-smooth in $\ell_2$ and $\mu_1$-strongly convex in $\ell_1$
on $K$, with $\Lambda=L/\mu_1-n>0$, and let $\opt$ be its minimizer on $K$. For
$x\in K$ put $x^+=\proj_K(x-\nabla f(x)/L)$. Then
\begin{equation}\label{eq:localization}
 \nrm{x^+-\opt}_1^2\le\frac{16\Lambda\,[1+\log(1+\Lambda)]}{L}\,\bigl[f(x)-f(\opt)\bigr],
\end{equation}
and if smoothness and strong convexity hold on all of $\R^n$, then
$\nrm{x^+-\opt}_1^2\le\frac{8\Lambda}{L}[f(x)-f(\opt)]$.
\end{restatable}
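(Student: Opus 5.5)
The plan is to read the statement as a quantitative version of the compatibility inequality (\cref{prop:compatibility}). When $\Lambda=0$ the extremal function $\frac L2\nrm x_2^2$ is minimized exactly by one gradient step. So the error of the step $x\mapsto x-\nabla f(x)/L$ should be governed by the ``defect'' operator $P=LI-\nabla^2f$, and $P$ is small in exactly the directions that the $\ell_1$ norm sees, namely sign vectors. I would first treat the case where smoothness and strong convexity hold on all of $\R^n$ and $\opt$ is an unconstrained minimizer, then the unconstrained minimizer outside $K$ (projection), and finally the general boundary case that produces the factor $1+\log(1+\Lambda)$.

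\emph{Step 1 (the sign-vector bound).} Write $v=x-\opt$. I would replace the Hessian by the averaged operator $H=\int_0^1\nabla^2f(\opt+tv)\,dt$, or, without second derivatives, argue directly with the bilinear inequalities. Then $\nabla f(x)-\nabla f(\opt)=Hv$ with $0\preceq H\preceq LI$ and $u^\top Hu\ge\mu_1\nrm u_1^2$; in the $C^1$ case I would obtain this $\ell_1$ lower bound from strong monotonicity of the gradient. With $P=LI-H\succeq0$ and $\nabla f(\opt)=0$, the unconstrained step gives $L(x-\nabla f(x)/L-\opt)=Pv$. The key estimate is
\[
 \nrm{Pv}_1=\max_{s\in\{\pm1\}^n}s^\top Pv\le\max_s\sqrt{s^\top Ps}\,\sqrt{v^\top Pv},\qquad
 s^\top Ps=nL-s^\top Hs\le nL-\mu_1n^2=\mu_1n\Lambda .
\]
I then need $v^\top Pv$ to be small compared with $f(x)-f(\opt)\ge\frac12 v^\top Hv$, with the loss only in $\Lambda$. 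I would try two things. The first is the operator inequality $P\preceq(L/\lambda_{\min})H$ restricted to the relevant range; failing that, the second is to use $P^2\preceq LP$ together with $\nrm{Pv}_2^2\le L\,v^\top Pv$ and then redo the sign-vector estimate with $P^{1/2}$ placed differently. The target is $\nrm{Pv}_1^2\le 8\Lambda L\,[f(x)-f(\opt)]$, which gives the $8\Lambda/L$ bound of the global case. Since projection onto $K$ is Euclidean nonexpansive, but not $\ell_1$-nonexpansive, the projected point must be handled separately in the next step.

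\emph{Step 2 (projection and boundary minimizer).} When $\opt$ lies on the boundary of $K=B_1^n(R)$, $\nabla f(\opt)$ is no longer zero but lies in the normal cone of the $\ell_1$ ball at $\opt$. That normal cone consists of vectors aligned with $\sign(\opt)$ on the support, with $\nrm\cdot_\infty$ attained there. Euclidean projection onto the $\ell_1$ ball is soft-thresholding, $x^+_i=\sign(w_i)(|w_i|-\theta)_+$. I would compare the thresholds for $w=x-\nabla f(x)/L$ and for $\opt-\nabla f(\opt)/L$, whose projection is $\opt$. Soft-thresholding with the same threshold is $\ell_1$-nonexpansive coordinatewise. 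The difficulty is the difference of thresholds, which costs a term proportional to the size of the support.

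\emph{Step 3 (main obstacle: the logarithm).} When the strong convexity and smoothness hold only on $K$, the averaged operator argument still applies along segments in $K$. What is lost is the clean cancellation in Step 2: the threshold shift must be paid on the coordinates where $|w_i|$ is close to the threshold. My plan is a dyadic decomposition of the coordinates according to the magnitude of $|(Pv)_i|$ relative to $\theta$, into roughly $1+\log(1+\Lambda)$ scales. Scales below $1/(1+\Lambda)$ of the total contribute negligibly by the trace bound $s^\top Ps\le\mu_1n\Lambda$. On each remaining scale I would apply the sign-vector bound restricted to that block of coordinates. Summing the blocks with Cauchy--Schwarz then gives the factor $16\Lambda[1+\log(1+\Lambda)]/L$. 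I expect this bookkeeping, and in particular controlling the threshold shift by the same quadratic form, to be the hardest step. If it fails, a fallback is to apply the global bound of Step 1 to the extension $f+\iota_K$ through its Moreau envelope, which is $L$-smooth on $\R^n$ by \eqref{eq:envelope}, and to recover the logarithm from the loss of $\ell_1$ strong convexity of that envelope.
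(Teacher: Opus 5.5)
Your plan has a genuine gap in Step~1, and it looks for the logarithm in the wrong place.

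\emph{The inequality in Step~1 is false.} You use $f(x)-f(\opt)\ge\frac12v^\top Hv$, but this fails for the segment-averaged Hessian once $f$ is not quadratic. Indeed $v^\top Hv=\ip{\nabla f(x)-\nabla f(\opt)}{v}=D_f(x,\opt)+D_f(\opt,x)$, and the second divergence can dominate. Take one dimension with $\nabla f(\opt)=0$ and $x=\opt+1$, and let $f''=\mu_1$ on the segment except on its last piece of length $\sqrt{\mu_1/L}$, where $f''=L$. For $L\gg\mu_1$ this gives $\frac12v^\top Hv\approx\frac12\sqrt{\mu_1L}$ while $f(x)-f(\opt)\approx\mu_1$.

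\emph{The key estimate is in the wrong metric, even for quadratics.} After $\nrm{Pv}_1^2\le\mu_1n\Lambda\,v^\top Pv$, the best comparison you have is $P\preceq\Lambda H$, from $\lambda_{\min}(H)\ge L/(1+\Lambda)$. This overshoots the target $O(\Lambda L)\,v^\top Hv$ by a factor $n\Lambda/(n+\Lambda)\asymp\min\{n,\Lambda\}$, and each of the two bounds is attained by some admissible $H$, so sharper constants cannot repair it. The missing idea is the \emph{dual} sign-vector bound $s^\top(H^{-1}-I/L)s\le\Lambda/L$. It comes from conjugating the $\ell_1$ strong convexity: $\frac12s^\top H^{-1}s=\sup_u\{\ip su-\frac12u^\top Hu\}\le\nrm s_\infty^2/(2\mu_1)$. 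With $J=I-H/L$ one has $JH^{-1}J\preceq H^{-1}-I/L$, so Cauchy--Schwarz in the $H$-metric gives $\nrm{Jd}_1^2\le\frac\Lambda L\,d^\top Hd$.

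\emph{How the paper gets the log-free global constant.} It applies the same bound to $\nabla^2f^*$. The function $\psi=f^*-\nrm\cdot_2^2/(2L)$ is convex and $(\Lambda/L)$-smooth with respect to $\nrm\cdot_\infty$ on all of $\R^n$. Its co-coercivity gives $\nrm{x-\opt-(\nabla f(x)-\nabla f(\opt))/L}_1^2\le\frac{2\Lambda}LD_f(x,\opt)\le\frac{2\Lambda}L[f(x)-f(\opt)]$, the last step by first-order optimality at $\opt$. This needs smoothness of $\psi$ on the whole dual space, which is exactly what is lost when the hypotheses hold only on $K$. It also covers minimizers on the boundary of $K$ with no extra work, so the boundary case is not what produces the logarithm.

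\emph{The projection costs only a constant.} The Euclidean projection onto $B_1^n(R)$ is $2$-Lipschitz in $\ell_1$ (\cref{lem:proj-l1}). On an active set $I$ of size $m$ the threshold is $(\sum_{i\in I}|u_i|-R)/m$, so the threshold shift summed over the $m$ active coordinates is at most the $\ell_1$ change of the input; equivalently, the Jacobian block $I_m-ss^\top/m$ has $\ell_1\to\ell_1$ norm at most $2$. With $\opt=\proj_K(\opt-\nabla f(\opt)/L)$ this accounts for the factor $4$ in both bounds.

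\emph{Where the logarithm actually comes from.} The factor $1+\log(1+\Lambda)$ arises in the domain case, where only the Hessians $H_t=\nabla^2f(\opt+td)$, $d=x-\opt$, on the segment are available. With $J_t=I-H_t/L$, the step error is $x-\opt-(\nabla f(x)-\nabla f(\opt))/L=\int_0^1J_td\,dt$, which has uniform weight. By contrast, $D_f(x,\opt)=\int_0^1(1-t)\,d^\top H_td\,dt$ has weight $1-t$; this is the same mismatch that breaks your inequality above. The paper handles it as follows.
On $[0,1-\eta]$ it combines $\nrm{J_td}_1^2\le\frac\Lambda L\,d^\top H_td$ with a weighted Cauchy--Schwarz, which costs $\log(1/\eta)$.
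On $[1-\eta,1]$ it uses $\nrm{J_td}_1^2\le\Lambda\nrm d_2^2$, from $J_t^2\preceq L(H_t^{-1}-I/L)$.
It also uses $D_f(x,\opt)\ge\frac{L}{2(1+\Lambda)}\nrm d_2^2$, which follows from $H_t\succeq\frac{L}{1+\Lambda}I$ because the trace of $H_t^{-1}-I/L$ is at most $\Lambda/L$.
The choice $\eta=1/(1+\Lambda)$ balances the two pieces, and mollification passes from $C^2$ to $C^1$.

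A dyadic decomposition of coordinates by the size of $(Pv)_i$ does not touch this weight mismatch. The Moreau-envelope fallback is too unspecified to evaluate.
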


\begin{restatable}[Restarts governed by the slack]{proposition}{thmslackrestart}
\label{thm:slack-restart}
Under the assumptions of \cref{lem:localization}, let $\Lambda_*=8\Lambda$ in the
global case and $\Lambda_*=16\Lambda(1+\log(1+\Lambda))$ in the domain case, and
let $N_\Lambda$ be the smallest power of two with
$N_\Lambda^3\ge6144\Lambda_*\,\mathsf b_{N_\Lambda}\cn$. For every
$\varepsilon>0$, the restarted method returns an $\varepsilon$-solution after at
most $1+J(2+24N_\Lambda)$ first-order queries, $J=\ceilp{\log_2(LR^2/(2\varepsilon))}$;
that is,
\begin{equation}\label{eq:slack-restart}
 O\Bigl(\bigl[1+\{\Lambda_*\log(2n)\log(2+\Lambda_*\log(2n))\}^{1/3}\bigr]
 \bigl[1+\log_+\tfrac{LR^2}{\varepsilon}\bigr]\Bigr).
\end{equation}
In the global case, with the gradient-only method \SPP{} in each phase, the
count is
$O([1+(\Lambda\log(2n))^{1/3}\log^{7/3}(2+\Lambda\log(2n))][1+\log_+\frac{LR^2}\varepsilon])$
gradient queries, and no function value is used. If instead $n\le\Lambda^{1/3}$,
\CGC{} with $F=\nabla f$ gives $O(1+n\log(2+LR^2/\varepsilon))$ gradient queries.
\end{restatable}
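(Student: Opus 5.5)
The plan is to alternate one projected-gradient localization step (\cref{lem:localization}) with a run of the bracket search (\cref{alg:spaccel}) on a shrinking $\ell_1$ ball. We maintain a gap target $\varepsilon_j=LR^2 2^{-j}/2$, halved each phase, and a point $x_j\in K$ with $f(x_j)-f(\opt)\le\varepsilon_j$.

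\textbf{Initialization.} Query $\nabla f(0)$ and take $x_0=\proj_K(-\nabla f(0)/L)$. The argument in the proof of \cref{thm:gradient-only} gives $f(x_0)-f(\opt)\le\frac L2\nrm{\opt}_2^2\le LR^2/2=\varepsilon_0$, and this costs one query. If $\varepsilon\ge LR^2/2$ we stop here.

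\textbf{Phase $j$.} Each phase has four steps.
\begin{enumerate}
\item Query $\nabla f(x_j)$ and form $x_j^+=\proj_K(x_j-\nabla f(x_j)/L)$.
\item By \cref{lem:localization}, $\nrm{x_j^+-\opt}_1^2\le\Lambda_*\varepsilon_j/L$. Set $r_j=\sqrt{\Lambda_*\varepsilon_j/L}$. Then $K_j=K\cap(x_j^+ +B_1^n(r_j))$ is a known polytope that contains $\opt$ and lies in a translate of $B_1^n(r_j)$.
\item Run \cref{alg:spaccel} on $K_j$ with accuracy $\varepsilon_{j+1}=\varepsilon_j/2$. The general-polytope version of \cref{thm:main} applies with initial bracket width at most $2Lr_j^2=2\Lambda_*\varepsilon_j$. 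This width is larger than $\varepsilon_{j+1}$, but that is harmless: the horizon rule $N(w)$ is the same, and the bound $1+24N(\varepsilon_{j+1})$ does not depend on the initial width.
\item Check the horizon. $N(\varepsilon_{j+1})$ is the smallest power of two with $N^3\ge 3072Lr_j^2\bN\cn/\varepsilon_{j+1}=6144\Lambda_*\bN\cn$. The radius cancels, so $N(\varepsilon_{j+1})=N_\Lambda$ in every phase.
\end{enumerate}
The output $x_{j+1}$ satisfies $f(x_{j+1})-\min_{K_j}f\le\varepsilon_{j+1}$. Since $\opt\in K_j$, we have $\min_{K_j}f=f(\opt)$, so the invariant holds at $j+1$. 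Each phase uses $1+(1+24N_\Lambda)=2+24N_\Lambda$ queries.

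\textbf{Count and order.} After $J=\ceilp{\log_2(LR^2/(2\varepsilon))}$ phases we have $\varepsilon_J\le\varepsilon$. The total is $1+J(2+24N_\Lambda)$. The order in \eqref{eq:slack-restart} follows from inverting the horizon rule as in the explicit-horizon step of \cref{thm:main}, or as in the proof of \cref{thm:restart}, with $\Theta=6144\Lambda_*\cn$.

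\textbf{Gradient-only variant.} In the global case, replace step 3 by \cref{thm:gradient-only} on $K_j$, with $U_0=2Lr_j^2$ and accuracy $\varepsilon_{j+1}$. In its count the ratio $LR^2\cn/\varepsilon$ becomes $Lr_j^2\cn/\varepsilon_{j+1}=2\Lambda_*\cn=16\Lambda\cn$, which gives the stated per-phase cost. Steps 1 and 2 already use gradients only, provided the initial bound is obtained from the projected step rather than from function values. Since \cref{thm:gradient-only} assumes a starting point with known gap bound $U_0$, we also need $f(y_0)-\min_{K_j}f\le U_0$; this holds by the diameter argument in its initialization.

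\textbf{Small dimension.} If $n\le\Lambda^{1/3}$, apply \cref{prop:cog} directly to $F=\nabla f$ on $B_1^n(R)$. This needs $O(1+n\log(2+LR^2/\varepsilon))$ gradient queries.

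The main obstacle is the bookkeeping that lets the general-polytope form of \cref{thm:main} be applied to $K_j$ with its own center $x_j^+$ and radius $r_j$. One must confirm that the constant $3072$ in the horizon rule refers to the radius $r_j$, and that the phase-count argument ($N_j>1$) still holds once the initial width exceeds the target. The cancellation giving the constant $6144\Lambda_*$ then follows directly.
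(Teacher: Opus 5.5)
Your proposal is correct and follows the paper's proof essentially step by step. In each phase, one projected-gradient step localizes $\opt$ via \cref{lem:localization}. Then \SPA{} (or \SPP{} in the gradient-only global case) is run on $K\cap(x^++B_1^n(r))$ to halve the certified gap, with a horizon rule in which the radius cancels and leaves the constant $6144\Lambda_*$. Finally, \CGC{} handles the case $n\le\Lambda^{1/3}$.

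There are two differences from the paper, and both are harmless:
\begin{itemize}
\item You initialize with the projected step $\proj_K(-\nabla f(0)/L)$; the paper uses the linear-minimization vertex $v$. Both cost one gradient query, use no function value, and give the gap bound $LR^2/2$.
\item You do not cap the radius at $2R$. This only turns the paper's $N'\le N_\Lambda$ into the equality $N(\varepsilon_{j+1})=N_\Lambda$.
\end{itemize}
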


\begin{lemma}[The $\ell_1$ ball projection in $\ell_1$]\label{lem:proj-l1}
$\nrm{\proj_K(u)-\proj_K(v)}_1\le2\nrm{u-v}_1$ for all $u,v\in\R^n$.
\end{lemma}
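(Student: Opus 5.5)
The plan is to reduce the claim to the one-dimensional structure of the Euclidean projection onto the $\ell_1$ ball. The projection onto $K=B_1^n(R)$ is soft thresholding: $\proj_K(u)=u$ if $\nrm u_1\le R$, and otherwise $\proj_K(u)_i=\sign(u_i)(|u_i|-\theta(u))_+$, where $\theta(u)\ge0$ is the unique root of $\sum_i(|u_i|-\theta)_+=R$. Setting $\theta(u)=0$ when $\nrm u_1\le R$ gives one formula for all $u$, with $\theta(u)=\max\{0,\theta^\ast\}$ and $\theta^\ast$ solving $\sum_i(|u_i|-\theta)_+=R$ whenever that root is positive. I would first establish this formula from the KKT conditions, which is routine.

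Next, I would write $S_\theta(u)_i=\sign(u_i)(|u_i|-\theta)_+$ and split the difference by the triangle inequality:
\[
\nrm{\proj_K(u)-\proj_K(v)}_1\le\nrm{S_{\theta(u)}(u)-S_{\theta(u)}(v)}_1+\nrm{S_{\theta(u)}(v)-S_{\theta(v)}(v)}_1 .
\]
The first term is at most $\nrm{u-v}_1$, because scalar soft thresholding at a fixed level is $1$-Lipschitz in each coordinate.

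The second term equals $\sum_i|(|v_i|-\theta_u)_+-(|v_i|-\theta_v)_+|$, where the summands all have the same sign as $\theta_v-\theta_u$, by monotonicity in $\theta$. So this term is $|g_v(\theta_u)-g_v(\theta_v)|$ with $g_v(\theta)=\sum_i(|v_i|-\theta)_+$, which is nonincreasing in $\theta$.

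\textbf{Main step.} The remaining task is to show $|g_v(\theta_u)-g_v(\theta_v)|\le\nrm{u-v}_1$. Assume $\theta_u\ge\theta_v$; the other case is symmetric after exchanging the roles of $u$ and $v$ in the splitting.

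If $\theta_u>0$, then $g_u(\theta_u)=R$, while $g_v(\theta_v)\le R$ always, with equality when $\theta_v>0$ and $g_v(0)=\nrm v_1\le R$ otherwise. Hence
\[
g_v(\theta_v)-g_v(\theta_u)\le R-g_v(\theta_u)=g_u(\theta_u)-g_v(\theta_u)\le\nrm{u-v}_1 ,
\]
since $g_\cdot(\theta)$ is $1$-Lipschitz in $\ell_1$ for fixed $\theta$. If $\theta_u=0$, then $\theta_v=0$ as well, and the term vanishes. In the case $\theta_u<\theta_v$ the same bound follows by symmetry: either redo the splitting through $S_{\theta(v)}(u)$ instead, or observe that $g_v(\theta_u)-g_v(\theta_v)\le g_v(\theta_u)-g_u(\theta_u)+g_u(\theta_u)-R\le\nrm{u-v}_1$, using $g_u(\theta_u)\le R$ and $g_v(\theta_v)=R$ when $\theta_v>0$.

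Adding the two terms gives the factor $2$. The only delicate point is the case analysis around $\theta=0$, when one of the points lies inside the ball. The sign-consistency observation that converts the $\ell_1$ sum into the difference of $g_v$ is what I expect to be the key step.
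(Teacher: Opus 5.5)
Your proof is correct, and it takes a different route from the paper's.

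The paper's argument goes through the derivative. It observes that $\proj_K$ is continuous and piecewise affine. On a piece with $m$ active coordinates and signs $s$, the Jacobian is the block $I_m-ss^\top/m$, with zero columns for the inactive coordinates. Each column of this block has $\ell_1$-norm $2-2/m\le2$, so the Jacobian has $\ell_1\to\ell_1$ norm at most $2$, and the paper integrates this bound along the segment $[u,v]$, which crosses finitely many pieces.

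You work globally instead, with a two-term split.
\begin{itemize}
\item The first term is soft thresholding at the fixed level $\theta_u$, which is $1$-Lipschitz in each coordinate.
\item The second term is the change of threshold. Because all its summands have the same sign, its $\ell_1$-norm collapses to the scalar $|g_v(\theta_u)-g_v(\theta_v)|$. You then bound this through the defining relation: $g(\theta)=R$ when $\theta>0$, and $g(0)=\nrm{\cdot}_1\le R$ otherwise.
\end{itemize}
Your two terms are a finite-difference counterpart of the identity part and the rank-one correction in the paper's Jacobian.

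What your route gains:
\begin{itemize}
\item It needs no differentiability, no polyhedral subdivision and no path integration.
\item The case where one point lies inside the ball is absorbed by the convention $\theta=0$.
\item It extends with no change to other threshold-type projections, for example onto the simplex, where $g_u(\theta_u)=g_v(\theta_v)=R$ always.
\end{itemize}

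What the paper's route gains:
\begin{itemize}
\item It gives the exact local constant $2-2/m$.
\item It shows that the factor $2$ cannot be improved uniformly in $n$: perturb one coordinate of a point whose coordinates have equal absolute values and which lies outside the ball.
\end{itemize}
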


\begin{proof}
For $\nrm u_1\le R$, $\proj_K(u)=u$. For $\nrm u_1>R$, $\proj_K(u)_i=\sign(u_i)(|u_i|-\lambda)_+$
with the unique $\lambda>0$ such that $\sum_i(|u_i|-\lambda)_+=R$. The map is
continuous and piecewise affine on a finite polyhedral subdivision of $\R^n$.
On a piece with active set $I$ ($|I|=m$, the coordinates with $|u_i|>\lambda$)
and signs $s_i=\sign u_i$ we have $\lambda=(\sum_{i\in I}|u_i|-R)/m$, so the Jacobian
has the block $I_m-ss^\top/m$ on the active coordinates and zero columns for
the inactive ones. The $\ell_1$-norm of every column of $I_m-ss^\top/m$ is
$(1-1/m)+(m-1)/m\le2$, so the Jacobian has $\ell_1\to\ell_1$ operator norm at
most $2$ on every piece. Splitting the segment $[u,v]$ into finitely many
pieces and adding gives the claim; on the ball itself the constant is $1$.
\end{proof}

\begin{proof}[Proof of \cref{lem:localization}, global case]
Let $f$ be $L$-smooth and $\mu_1$-strongly convex in $\ell_1$ on $\R^n$. Then
$f$ is coercive and strictly convex, $f^*(g)=\sup_u\{\ip gu-f(u)\}$ is finite,
and $\nabla f:\R^n\to\R^n$ is a homeomorphism with $\nabla f^*(\nabla f(x))=x$;
moreover $\nabla f^*$ is Lipschitz, because $\nabla f$ is strongly monotone in
$\ell_2$ (with modulus at least $\mu_1$, since $\nrm v_1\ge\nrm v_2$).
For $x=\nabla f^*(g)$ and every $h$, the smoothness upper model and the
strong-convexity lower model of $f$ at $x$ give
\begin{equation}\label{eq:conj-two-sided}
 f^*(g)+\ip xh+\frac{\nrm h_2^2}{2L}\ \le\ f^*(g+h)\ \le\ f^*(g)+\ip xh+\frac{\nrm h_\infty^2}{2\mu_1}
\end{equation}
(for the lower bound take $u=x+h/L$ in the supremum; for the upper one use
$f(u)\ge f(x)+\ip g{u-x}+\frac{\mu_1}2\nrm{u-x}_1^2$ and $\sup_v\{\ip hv-\frac{\mu_1}2\nrm v_1^2\}=\nrm h_\infty^2/(2\mu_1)$).
Put $\psi(g)=f^*(g)-\nrm g_2^2/(2L)$. By \eqref{eq:conj-two-sided}, $\psi$ is
convex with $\nabla\psi(g)=x-g/L$, and $\nabla f^*$ is Lipschitz, so by
Rademacher's theorem $\nabla f^*$ is differentiable almost everywhere. At such
a point $g$ the second-order expansion of \eqref{eq:conj-two-sided} gives
$B:=\nabla^2f^*(g)-I/L\succeq0$ and $s^\top Bs\le1/\mu_1-n/L=\Lambda/L$ for every
sign vector $s\in\{-1,1\}^n$. Since $h\mapsto h^\top Bh$ is convex, its maximum
over the cube $[-1,1]^n$ is attained at a vertex, so $h^\top Bh\le(\Lambda/L)\nrm h_\infty^2$
for all $h$. Mollifying $\psi$ with a smooth compactly supported kernel gives
$C^\infty$ functions $\psi_\epsilon$ whose Hessians are averages of the matrices
$B$ and satisfy the same two-sided bound. Hence
$\psi_\epsilon(g+h)\le\psi_\epsilon(g)+\ip{\nabla\psi_\epsilon(g)}h+\frac{\Lambda}{2L}\nrm h_\infty^2$,
and letting $\epsilon\to0$ (local uniform convergence of $\psi_\epsilon$ and of
$\nabla\psi_\epsilon$, as $\nabla\psi$ is continuous) shows that $\psi$ is
$(\Lambda/L)$-smooth with respect to $\nrm\cdot_\infty$ on $\R^n$.

For a convex $\psi$ that is $\gamma$-smooth with respect to a norm $\nrm\cdot$ with
dual norm $\nrm\cdot_*$, $D_\psi(u,v)\ge\nrm{\nabla\psi(u)-\nabla\psi(v)}_*^2/(2\gamma)$
(apply the smoothness upper model of $\psi-\ip{\nabla\psi(v)}\cdot$, which is
minimized at $v$, at $u$ in a direction attaining the dual norm). With
$g=\nabla f(x)$, $g^\star=\nabla f(\opt)$, the conjugate identity
$D_{f^*}(g^\star,g)=D_f(x,\opt)$ and $D_{\nrm\cdot_2^2/(2L)}(g^\star,g)=\nrm{g-g^\star}_2^2/(2L)$
give $D_\psi(g^\star,g)=D_f(x,\opt)-\nrm{g-g^\star}_2^2/(2L)\le D_f(x,\opt)\le f(x)-f(\opt)$,
the last step because $\ip{g^\star}{x-\opt}\ge0$ for $x\in K$ by optimality. Since
$\nabla\psi(g)-\nabla\psi(g^\star)=x-\opt-(g-g^\star)/L$, we obtain
\begin{equation}\label{eq:localization-core}
 \Bigl\|x-\opt-\frac{\nabla f(x)-\nabla f(\opt)}L\Bigr\|_1^2\le\frac{2\Lambda}L\bigl[f(x)-f(\opt)\bigr].
\end{equation}
Finally $\opt=\proj_K(\opt-\nabla f(\opt)/L)$ by the optimality condition, so
\cref{lem:proj-l1} gives $\nrm{x^+-\opt}_1\le2\nrm{x-\opt-(\nabla f(x)-\nabla f(\opt))/L}_1$,
and squaring proves the global bound $8\Lambda/L$.
\end{proof}

\begin{proof}[Proof of \cref{lem:localization}, domain case]
First let $f$ be $C^2$ on a neighborhood of the segment $[z,x]$ with
$\mu_1\nrm v_1^2\le v^\top H_tv\le L\nrm v_2^2$ for $H_t=\nabla^2f(z+td)$,
$d=x-z$, $t\in[0,1]$ (only these bounds are used, so the argument applies to the
mollified functions below). Put $J_t=I-H_t/L$ and
$A_t=H_t^{-1}-I/L\succeq0$. Maximizing $\ip sv-\frac12v^\top H_tv$ over $v$
and using $\ip sv\le\nrm s_\infty\nrm v_1$ gives $\frac12s^\top H_t^{-1}s\le\nrm s_\infty^2/(2\mu_1)$,
so $s^\top A_ts\le\Lambda/L$ for every sign vector $s$; averaging over uniformly
random signs gives $\tr A_t\le\Lambda/L$, hence $\lambda_{\max}(H_t^{-1})\le(1+\Lambda)/L$
and $H_t\succeq\frac L{1+\Lambda}I$. The matrices $H_t,J_t,A_t$ commute, and on
an eigenvalue $0<\lambda\le L$ of $H_t$ one checks
$(1-\lambda/L)^2/\lambda\le1/\lambda-1/L$ and $(1-\lambda/L)^2\le L(1/\lambda-1/L)$;
hence $J_tH_t^{-1}J_t\preceq A_t$ and $J_t^2\preceq LA_t$. For a sign vector $s$,
the Cauchy--Schwarz inequality in the $H_t$-metric and in the Euclidean metric
gives $\ip s{J_td}\le\sqrt{s^\top J_tH_t^{-1}J_ts}\sqrt{d^\top H_td}$ and
$\ip s{J_td}\le\sqrt{s^\top J_t^2s}\,\nrm d_2$, so
\begin{equation}\label{eq:jt-bounds}
 \nrm{J_td}_1^2\le\frac\Lambda L\,d^\top H_td,\qquad \nrm{J_td}_1^2\le\Lambda\nrm d_2^2 .
\end{equation}
Now $\beta:=D_f(x,z)=\int_0^1(1-t)\,d^\top H_td\,dt$ and
$x-z-(\nabla f(x)-\nabla f(z))/L=\int_0^1J_td\,dt$. Let $\eta=1/(1+\Lambda)$. On
$[0,1-\eta]$ the first bound of \eqref{eq:jt-bounds} and the Cauchy--Schwarz
inequality with the weight $(1-t)$ give
\[
 \Bigl\|\int_0^{1-\eta}J_td\,dt\Bigr\|_1\le\sqrt{\frac\Lambda L}\int_0^{1-\eta}\frac{\sqrt{(1-t)d^\top H_td}}{\sqrt{1-t}}dt
 \le\sqrt{\frac{\Lambda\beta}{L}\log\frac1\eta},
\]
and on $[1-\eta,1]$ the second bound and $\beta\ge\frac{L}{2(1+\Lambda)}\nrm d_2^2$ give
$\|\int_{1-\eta}^1J_td\,dt\|_1\le\eta\sqrt\Lambda\nrm d_2\le\sqrt{2\Lambda\beta/(L(1+\Lambda))}$.
By $(a+b)^2\le2a^2+2b^2$,
\begin{equation}\label{eq:localization-domain}
 \Bigl\|x-z-\frac{\nabla f(x)-\nabla f(z)}L\Bigr\|_1^2
 \le\frac\Lambda L\Bigl[2\log(1+\Lambda)+\frac4{1+\Lambda}\Bigr]D_f(x,z)
 \le\frac{4\Lambda[1+\log(1+\Lambda)]}{L}D_f(x,z).
\end{equation}
For a $C^1$ function $f$ satisfying the two inequalities on $K$ only, we apply
\eqref{eq:localization-domain} to mollifications of $f$. For interior $x,z$ the
segment has positive distance to the boundary, and the convolution with a
smooth kernel of smaller radius is $C^\infty$, $L$-smooth and $\mu_1$-strongly
convex near the segment (both inequalities are averaged over translates that
stay in $K$). Its values and gradients converge locally uniformly to those of
$f$, so \eqref{eq:localization-domain} holds for interior points. For boundary
points we replace $x,z$ by $(1-\sigma)x,(1-\sigma)z$ and let $\sigma\downarrow0$,
using the continuity of $f$ and $\nabla f$ on $K$.
With $z=\opt$, $D_f(x,\opt)\le f(x)-f(\opt)$ by optimality, and
\cref{lem:proj-l1} with $\opt=\proj_K(\opt-\nabla f(\opt)/L)$ gives
\eqref{eq:localization}.
\end{proof}

\begin{proof}[Proof of \cref{thm:slack-restart}]
\emph{Initialization.} One query at $0$ gives $g_0=\nabla f(0)$ and a vertex
$v\in\argmin_{u\in K}\ip{g_0}u$. Smoothness at $0$ gives
$f(v)\le f(0)+\ip{g_0}v+\frac L2\nrm v_2^2$, and convexity with the choice of $v$
gives $f(\opt)\ge f(0)+\ip{g_0}{\opt}\ge f(0)+\ip{g_0}v$; hence
$f(v)-f(\opt)\le\frac L2\nrm v_2^2=LR^2/2=:\Delta_0$ without evaluating $f$.
If $\Delta_0\le\varepsilon$, the method returns $v$.

\emph{One phase.} Let $x\in K$ satisfy the certified bound $f(x)-f(\opt)\le\Delta$.
Query $\nabla f(x)$ and put $x^+=\proj_K(x-\nabla f(x)/L)$,
$r=\min\{2R,\sqrt{\Lambda_*\Delta/L}\}$ and $K'=K\cap(x^++B_1^n(r))$, a known
polytope contained in $x^++B_1^n(r)$. By \cref{lem:localization} (the global or
the domain bound, according to the case), $\opt\in K'$, so
$\min_{K'}f=f(\opt)$. We run \cref{alg:spaccel} on $K'$ with accuracy $\Delta/2$. By
\cref{thm:main} for general polytopes, this uses at most $1+24N'$ queries,
where $N'$ is the smallest power of two with
$N'^3\ge3072Lr^2\mathsf b_{N'}\cn/(\Delta/2)$ (if the initial bracket width
of the subproblem is at most $\Delta/2$, one query suffices). Since
$Lr^2/(\Delta/2)\le2\Lambda_*$, every $N$ satisfying
$N^3\ge6144\Lambda_*\mathsf b_N\cn$ satisfies the defining inequality of
$N'$, so $N'\le N_\Lambda$. The phase returns a point $x'\in K'\subseteq K$ with
$f(x')-f(\opt)\le\Delta/2$ and uses at most $2+24N_\Lambda$ queries.

\emph{Phases.} Starting from $x=v$, $\Delta=\Delta_0$, we run $J=\ceilp{\log_2(LR^2/(2\varepsilon))}$
phases, halving $\Delta$ each time; the last certified bound is
$\Delta_02^{-J}\le\varepsilon$. The total is at most $1+J(2+24N_\Lambda)$, and
$N_\Lambda=O(1+[\Lambda_*\cn\log(2+\Lambda_*\cn)]^{1/3})$ by the argument in
the proof of \cref{thm:restart} (substitute $N=C\max\{1,[\Lambda_*\cn\log(2+\Lambda_*\cn)]^{1/3}\}$),
which gives \eqref{eq:slack-restart}.

\emph{Gradient oracle (global case).} Here $f$ is $L$-smooth on $\R^n$, as
\cref{thm:gradient-only} requires. We replace \cref{alg:spaccel} by
\cref{alg:spprox} on $K'$ with accuracy $\Delta/2$, started at $a=x^+$. Its
initial certified bound is $U_0=2Lr^2\le2\Lambda_*\Delta$, and
\cref{thm:gradient-only} on the polytope $K'\subseteq x^++B_1^n(r)$ with the ratio
$Lr^2\cn/(\Delta/2)\le2\Lambda_*\cn$ bounds its cost by
$O(1+(\Lambda_*\cn)^{1/3}\log^{7/3}(2+\Lambda_*\cn))$, with $\Lambda_*=8\Lambda$;
the localization step uses one gradient and no value.

\emph{Dimension branch.} \Cref{prop:cog} with $F=\nabla f$ on $K$ gives the
last claim, since $f(\bar w)-\min_Kf\le\varepsilon$ for its output.
\end{proof}

\subsection{The deterministic complexity for $\Lambda\ge8$}
\label{app:strong-map}

The lower bounds need two ingredients: the randomized bound of
\cref{thm:rand-smooth} with strong convexity, which gives the cubic-rate phase,
and a transfer of a classical finite-dimensional bound, which gives the
high-accuracy phase.

\begin{lemma}[Weighted norm comparison]
\label{lem:weighted-strong-norm}
Let $1\le p<2$, $\beta_p=2/p-1$, and $a_i>0$. Then
\begin{equation}\label{eq:weighted-strong-norm}
 \nrm{h}_p^2\le
 \left(\sum_{i=1}^n a_i^{-1/\beta_p}\right)^{\beta_p}
 \sum_{i=1}^n a_i h_i^2.
\end{equation}
The constant is sharp for the diagonal quadratic form.
\end{lemma}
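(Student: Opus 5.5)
The plan is to apply Hölder's inequality twice: first on the $\ell_p$ norm with a split of each term, then on the resulting sum with the exponents $2/p$ and its conjugate. The case $p=1$ ($\beta_p=1$) is simply Cauchy--Schwarz, $\nrm h_1^2=(\sum_i a_i^{-1/2}\cdot a_i^{1/2}|h_i|)^2\le(\sum_i a_i^{-1})\sum_i a_ih_i^2$, which already shows the intended structure.

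For general $1\le p<2$, write $|h_i|^p=a_i^{-p/2}\cdot(a_i^{p/2}|h_i|^p)$. Apply Hölder with exponents $r=2/p>1$ and $r'=2/(2-p)$ to get
\[
 \sum_i|h_i|^p\le\Bigl(\sum_i a_i^{-p/(2-p)}\Bigr)^{(2-p)/2}\Bigl(\sum_i a_ih_i^2\Bigr)^{p/2}.
\]
Raising this to the power $2/p$ and noting that $p/(2-p)=1/\beta_p$ and $(2-p)/p=\beta_p$ yields exactly \eqref{eq:weighted-strong-norm}. The only work is tracking the exponents: $\beta_p=(2-p)/p$, so $a_i^{-p/(2-p)}=a_i^{-1/\beta_p}$, and the outer power is $\frac{2-p}{2}\cdot\frac2p=\beta_p$.

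For sharpness, use the equality case of Hölder. Equality holds when $a_i^{-p r'/2}\propto a_i^{pr/2}|h_i|^{pr}$, that is, when $a_ih_i^2\propto a_i^{-p/(2-p)}$, which gives $|h_i|\propto a_i^{-1/(2-p)}$. Substituting this choice makes both sides equal. This is the stated sharpness for the diagonal form $\sum_ia_ih_i^2$: the best constant $C$ with $\nrm h_p^2\le C\sum a_ih_i^2$ is exactly the displayed one. No step is a real obstacle; the only thing to be careful about is the exponent bookkeeping and excluding $p=2$, where $\beta_p=0$.
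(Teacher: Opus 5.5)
Your proposal is correct and follows the paper's proof: the same split $|h_i|^p=(a_ih_i^2)^{p/2}a_i^{-p/2}$, H\"older with exponents $2/p$ and $2/(2-p)$, raising to the power $2/p$, and the same equality case $|h_i|\propto a_i^{-1/(2-p)}$. Your exponent bookkeeping, including $p/(2-p)=1/\beta_p$ and the reduction to Cauchy--Schwarz at $p=1$, checks out.
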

\begin{proof}
Apply H\"older with exponents $2/p$ and $2/(2-p)$ to
$|h_i|^p=(a_i h_i^2)^{p/2}a_i^{-p/2}$, then raise to $2/p$.
Equality holds when $|h_i|$ is proportional to $a_i^{-1/(2-p)}$.
\end{proof}

\begin{restatable}[High accuracy at fixed dimension]{proposition}{propnytransfer}
\label{prop:ny-transfer}
Let $n\ge2$, $L,\mu_1,R>0$, $\Lambda=L/\mu_1-n$, and let $d$ be an integer with
$2\le d\le n$ and $d^3-d\le\Lambda$. Consider the class of convex $C^{1,1}$
functions on $\R^n$ that are $L$-smooth in $\ell_2$ and $\mu_1$-strongly convex
in $\ell_1$, with global first-order queries and output in $B_1^n(R)$. If a
deterministic method with at most $N$ queries guarantees error at most
$\varepsilon$ on this class, where $0<\varepsilon<LR^2/(2d^5)$, then
\begin{equation}\label{eq:ny-transfer}
 N+2\ \ge\ c_{\rm NY}\,\frac{d}{\log d}\,\log\frac{LR^2}{2d^5\varepsilon},
\end{equation}
where $c_{\rm NY}>0$ is the absolute constant of
\citet[Theorem~7.2.7]{nemirovski1983}: every deterministic method that solves all
Euclidean $L$-smooth and $(L/d^2)$-strongly convex functions on $\R^d$ from the
start $0$ to relative accuracy $\varepsilon_{\rm rel}\in(0,1)$ has laboriousness
(number of queries plus one for the output) at least
$c_{\rm NY}(d/\log d)\log(1/\varepsilon_{\rm rel})$.
\end{restatable}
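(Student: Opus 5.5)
The plan is to reduce the Nemirovski--Yudin bound on $\R^d$ to the stated class by a padding construction. Given a deterministic method $\mathcal M$ for the $\ell_1$ class, we build a deterministic method $\mathcal M'$ for the Euclidean class $\mathcal C_d$ of $L$-smooth, $(L/d^2)$-strongly convex functions on $\R^d$ started at $0$. The method $\mathcal M'$ should have laboriousness at most $N+2$ and relative accuracy $\varepsilon_{\rm rel}=2d^5\varepsilon/(LR^2)<1$. Then \citet[Theorem~7.2.7]{nemirovski1983} gives \eqref{eq:ny-transfer} directly.

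\emph{Embedding and constants.} For $g\in\mathcal C_d$ and $x=(u,w)\in\R^d\times\R^{n-d}$ put
\[
 f(u,w)=g(u)+\tfrac L2\nrm w_2^2 .
\]
It is $L$-smooth in $\ell_2$ by block orthogonality.

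For $\ell_1$ strong convexity, the lower quadratic model of $f$ has curvature weights $a_i=L/d^2$ on the $d$ first coordinates and $a_i=L$ on the others. \Cref{lem:weighted-strong-norm} with $p=1$, $\beta_1=1$ gives
\[
 \nrm h_1^2\le\Bigl(\frac{d^3}{L}+\frac{n-d}{L}\Bigr)\sum_ia_ih_i^2 .
\]
The hypothesis $d^3-d\le\Lambda$ makes the bracket at most $(n+\Lambda)/L=1/\mu_1$, so $f$ is $\mu_1$-strongly convex in $\ell_1$. This is exactly where the condition $d^3-d\le\Lambda$ enters.

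Queries of $\mathcal M$ at $(u,w)$ are answered by one query of $g$ at $u$, with the known $w$-part added. The output block $\widehat u$ of a feasible output satisfies $\nrm{\widehat u}_1\le R$. Dropping the nonnegative quadratic shows that the error of $g$ at $\widehat u$ is at most that of $f$ at the output, provided $\min_{B_1^n(R)}f=\min_{\R^d}g$.

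\emph{Rescaling so that the minimizer is feasible.} The main obstacle is that $\mathcal M$ is only guaranteed on the restricted class, whereas the theorem of Nemirovski and Yudin concerns the whole class $\mathcal C_d$, with unconstrained minimizers of arbitrary size. I would handle this with one extra query.

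$\mathcal M'$ first queries $g_0=\nabla g(0)$. If $g_0=0$, it outputs $0$. Otherwise strong convexity gives the localization
\[
 \nrm{u^\star}_2\le d^2\nrm{g_0}_2/L ,
\]
and smoothness gives the gap lower bound
\[
 g(0)-g^\star\ge\nrm{g_0}_2^2/(2L).
\]
Put $s=d^{5/2}\nrm{g_0}_2/(LR)$ and $\tilde g(y)=g(sy)/s^2$. This class is invariant under $y\mapsto sy$ with the factor $1/s^2$, so $\tilde g\in\mathcal C_d$. Its minimizer satisfies $\nrm{y^\star}_1\le\sqrt d\,\nrm{y^\star}_2\le R$, so the constrained and unconstrained minima coincide, and the padded $f$ built from $\tilde g$ belongs to the class of the proposition.

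Relative accuracy is scale invariant. For $\tilde g$,
\[
 \tilde g(0)-\tilde g^\star\ge\frac{\nrm{g_0}_2^2}{2Ls^2}=\frac{LR^2}{2d^5} .
\]
Hence error $\varepsilon$ for $f$ gives relative error at most $2d^5\varepsilon/(LR^2)<1$ for $g$, after mapping the output back by $u=s\widehat y$.

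\emph{Counting.} $\mathcal M'$ uses the query at $0$, the $N$ simulated queries (each one query of $g$ at $s u$), and one step for the output. Its laboriousness is therefore at most $N+2$, which yields \eqref{eq:ny-transfer}.

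The points I would check most carefully are that the simulation stays deterministic after the data-dependent rescaling (the scale $s$ is a function of the first answer only), and that the start point $0$ matches the convention of \citet{nemirovski1983}.
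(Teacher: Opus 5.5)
Your proposal is correct and matches the paper's proof essentially step for step. The paper likewise uses one normalizing query at $0$, the rescaling $a=LR/(d^{5/2}\nrm{g_0}_2)$ (your $1/s$), padding by $\frac L2\nrm{w}_2^2$, \cref{lem:weighted-strong-norm} for $\ell_1$ strong convexity, and the gap bound $LR^2/(2d^5)$ that yields relative accuracy $2d^5\varepsilon/(LR^2)$ with laboriousness $N+2$.
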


The proof normalizes an unknown $d$-dimensional instance by one query, so that
its minimizer lies in the ball, and pads the remaining $n-d$ coordinates with
the known curvature $L$. The padded function is $\mu_1$-strongly convex in
$\ell_1$ because $d/(L/d^2)+(n-d)/L=(d^3+n-d)/L\le1/\mu_1$. With
$d\asymp\min\{n,\Lambda^{1/3}\}$ this gives the high-accuracy term of
\cref{thm:strong-map} below.

\begin{proof}[Proof of \cref{prop:ny-transfer}]
Let $\mu_2=L/d^2$ and let $h:\R^d\to\R$ be any convex $C^{1,1}$ function that is
$L$-smooth and $\mu_2$-strongly convex in $\ell_2$ on $\R^d$, with unique minimizer
$u^\star$. We describe a deterministic method $\cB$ for $h$ that simulates the
given method $\cA$ for the mixed class and has laboriousness at most $N+2$ in
the convention of \citet{nemirovski1983}, in which the queries and the final
output are counted.

\emph{Normalization by one query.} $\cB$ queries $h(0)$ and $g_0=\nabla h(0)$. If
$g_0=0$ it outputs $0$, which is exact. Otherwise it puts
$a=R\mu_2/(\sqrt d\nrm{g_0}_2)>0$. Strong monotonicity of $\nabla h$ between $0$ and
$u^\star$ gives $\mu_2\nrm{u^\star}_2^2\le\ip{g_0}{-u^\star}\le\nrm{g_0}_2\nrm{u^\star}_2$, so
$\nrm{u^\star}_2\le\nrm{g_0}_2/\mu_2$ and $\nrm{au^\star}_1\le\sqrt d\,a\nrm{u^\star}_2\le R$.

\emph{Padding.} Write $x=(u,v)\in\R^d\times\R^{n-d}$ and define
\[
 f(u,v)=a^2h(u/a)+\tfrac L2\nrm v_2^2
\]
(no second term if $d=n$). Then $f$ is convex and $L$-smooth on $\R^n$, and for
increments $(\xi_u,\xi_v)$ its strong-convexity remainder is at least
$\frac12(\mu_2\nrm{\xi_u}_2^2+L\nrm{\xi_v}_2^2)$. By \cref{lem:weighted-strong-norm} in
the $\ell_1$ case ($\beta_1=1$),
$\nrm{(\xi_u,\xi_v)}_1^2\le(d/\mu_2+(n-d)/L)(\mu_2\nrm{\xi_u}_2^2+L\nrm{\xi_v}_2^2)$, and
$d/\mu_2+(n-d)/L=(d^3+n-d)/L\le(n+\Lambda)/L=1/\mu_1$ by the hypothesis $d^3-d\le\Lambda$.
Hence $f$ is $\mu_1$-strongly convex in $\ell_1$ and belongs to the mixed
class; its global minimizer $(au^\star,0)$ lies in $B_1^n(R)$, so
$\min_{B_1^n(R)}f=f(au^\star,0)=a^2h(u^\star)$.

\emph{Simulation.} A query of $\cA$ at any $(u,v)\in\R^n$ is answered from one
query of $h$ at $u/a$: $f(u,v)=a^2h(u/a)+\frac L2\nrm v_2^2$ and
$\nabla f(u,v)=(a\nabla h(u/a),Lv)$. Thus $\cB$ makes at most $N+1$ queries of $h$
in total, and it is deterministic. When $\cA$ outputs $(\widehat u,\widehat v)\in B_1^n(R)$,
$\cB$ outputs $\widehat u/a$.

\emph{Accuracy.} Smoothness at $0$ gives $h(0)-h(u^\star)\ge\nrm{g_0}_2^2/(2L)$
(compare the value of the upper model at $-g_0/L$ with the minimum), so
$a^2[h(0)-h(u^\star)]\ge R^2\mu_2^2/(2Ld)=LR^2/(2d^5)$. The guarantee of $\cA$ and the
nonnegativity of the padding give
$a^2[h(\widehat u/a)-h(u^\star)]\le f(\widehat u,\widehat v)-\min_{B_1^n(R)}f\le\varepsilon$,
hence the relative error of $\cB$ is at most
$\varepsilon_{\rm rel}:=2d^5\varepsilon/(LR^2)\in(0,1)$. The cited theorem, applied to $\cB$
with the condition number $L/\mu_2=d^2$, gives
$N+2\ge c_{\mathrm{NY}}(d/\log d)\log(1/\varepsilon_{\rm rel})$, which is \eqref{eq:ny-transfer}.
\end{proof}

\Cref{thm:slack-restart} and these lower bounds determine the deterministic
complexity of the smooth strongly convex class on the ball for every $n\ge2$
and $\Lambda\ge8$, up to factors polylogarithmic in $n$ and $\Lambda$, and for
both first-order oracles. To state the result, let
\begin{equation}\label{eq:map-parameters}
 {\bar n}=\min\{n,\Lambda^{1/3}\},\ \
 W(\varepsilon)=\min\Bigl\{{\bar n},\Bigl(\frac{LR^2}\varepsilon\Bigr)^{1/3}\Bigr\},\ \
 T(\varepsilon)=\log_+\frac{LR^2}{{\bar n}^3\varepsilon},\ \
 \vartheta_{\bar n}=\log(2+\cn {\bar n}^3),
\end{equation}
and let $N_{\rm J}(\varepsilon)$ and $N_{\rm G}(\varepsilon)$ be the smallest
numbers of queries with which a deterministic method, using global queries of
$(f,\nabla f)$ or of $\nabla f$ respectively, guarantees
$f(\widehat x)-\min_{B_1^n(R)}f\le\varepsilon$ on every convex $C^{1,1}$ function
on $\R^n$ that is $L$-smooth in $\ell_2$ and $\mu_1$-strongly convex in
$\ell_1$, with output $\widehat x\in B_1^n(R)$.

\begin{restatable}[Deterministic complexity for $\Lambda\ge8$]{proposition}{thmstrongmap}
\label{thm:strong-map}
There are absolute constants $c,C>0$ such that for all $n\ge2$, $L,\mu_1,R>0$
with $\Lambda=L/\mu_1-n\ge8$, $K=B_1^n(R)$ and all $\varepsilon>0$, with
$c_{\rm NY}$ from \cref{prop:ny-transfer},
\begin{equation}\label{eq:strong-map}
\begin{aligned}
 c\,\min\{1,c_{\rm NY}\}\,\frac{1+W(\varepsilon)+{\bar n}T(\varepsilon)}{\log(2+{\bar n})}
 \ &\le\ N_{\rm J}(\varepsilon)\ \le\ N_{\rm G}(\varepsilon)\ \le\
 C\,\Pi_{\rm G}\,\bigl[1+W(\varepsilon)+{\bar n}T(\varepsilon)\bigr],\\
 N_{\rm J}(\varepsilon)\ &\le\ C\,\Pi_{\rm J}\,\bigl[1+W(\varepsilon)+{\bar n}T(\varepsilon)\bigr],
\end{aligned}
\end{equation}
where $\Pi_{\rm J}=1+\log(2+n)+(\cn\vartheta_{\bar n})^{1/3}$ and
$\Pi_{\rm G}=1+\log(2+n)+\cn^{1/3}\vartheta_{\bar n}^{7/3}$ are polylogarithmic in $n$ and
$\Lambda$.
The lower bound $c(1+W(\varepsilon))$ holds for randomized methods as well.
\end{restatable}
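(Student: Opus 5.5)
The upper and lower bounds in \eqref{eq:strong-map} are proved separately. The inequality $N_{\rm J}\le N_{\rm G}$ is immediate, because a joint oracle can simulate a gradient oracle. On the upper side, every case follows one pattern: first a cheap \emph{coarse phase} reaches accuracy $\varepsilon_0:=\max\{\varepsilon,LR^2/{\bar n}^3\}$, and then, only if $\varepsilon<LR^2/{\bar n}^3$, a \emph{fine phase} pays ${\bar n}$ per halving of the certified gap. The coarse phase ignores strong convexity. For the joint oracle it is \cref{alg:spaccel}, which by \cref{thm:main}(a) costs $O(1+[(LR^2/\varepsilon_0)\cn\log(2+LR^2\cn/\varepsilon_0)]^{1/3})$. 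Since $(LR^2/\varepsilon_0)^{1/3}=W(\varepsilon)$ and $LR^2/\varepsilon_0\le{\bar n}^3$, this is $O(W(\varepsilon)(\cn\vartheta_{\bar n})^{1/3}+1)$. For the gradient oracle it is \cref{alg:spprox}, whose cost by \cref{thm:gradient-only} is $O(1+W\cn^{1/3}\vartheta_{\bar n}^{7/3})$. The coarse phase also returns a point $x$ with a certified gap at most $\varepsilon_0$; for \SPA{} the final bracket width serves as the certificate, and for \SPP{} the stage guarantee does.

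The fine phase splits according to which term attains ${\bar n}$. If ${\bar n}=\Lambda^{1/3}\le n$, I restart the phases of \cref{thm:slack-restart} from $x$ with $\Delta=\varepsilon_0=LR^2/{\bar n}^3$, not from $\Delta_0=LR^2/2$. In the global case $\Lambda_*=8\Lambda$, so each phase costs $O(1+(\Lambda\cn\log(2+\Lambda\cn))^{1/3})=O({\bar n}(\cn\vartheta_{\bar n})^{1/3})$, and the gradient version costs $O({\bar n}\cn^{1/3}\vartheta_{\bar n}^{7/3})$. Only $\ceilp{\log_2(\varepsilon_0/\varepsilon)}=O(1+T(\varepsilon))$ phases are needed, which gives ${\bar n}T$ times the $\Pi$ factor. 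If ${\bar n}=n$, I run \CGC{} (\cref{prop:cog}) with $F=\nabla f$. It needs only gradients and costs $O(1+n\log(2+LR^2/\varepsilon))$, and $\log(2+LR^2/\varepsilon)\le\log(2+n^3)+T+O(1)=O(\log(2+n)+T)$. This accounts for the $\log(2+n)$ summand in both $\Pi$'s. Summing the coarse and fine costs gives both upper bounds.

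For the lower bounds I treat the terms $1$, $W$ and ${\bar n}T$ separately and take the maximum. The constant term is trivial, since at least one query is needed once $\varepsilon<LR^2/2$; for larger $\varepsilon$ the claim reduces to $N\ge0$ after adjusting $c$. For the $W$ term I apply the strongly convex part of \cref{thm:rand-smooth} with $N=\lfloor c'W(\varepsilon)\rfloor$, which holds for randomized methods. Because $W\le{\bar n}$, the conditions $n\ge32N+1$ and $\Lambda\ge2^{51}N^3$ hold for a small absolute $c'$. Because $W\le(LR^2/\varepsilon)^{1/3}$, the guaranteed error $LR^2/(2^{52}N^3)$ then exceeds $\varepsilon$, so any method reaching accuracy $\varepsilon$ needs more than $N$ queries. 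For the ${\bar n}T$ term I apply \cref{prop:ny-transfer} with $d=\lfloor{\bar n}\rfloor$, which satisfies $d\ge2$ and $d^3-d\le\Lambda$ because $\Lambda\ge8$. This gives $N+2\ge c_{\rm NY}(d/\log d)\log(LR^2/(2d^5\varepsilon))$. Here $\log(LR^2/(2d^5\varepsilon))\ge T(\varepsilon)-2\log d-O(1)$.

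The main obstacle I expect is this $d^5$ versus ${\bar n}^3$ bookkeeping. When $T\ge C\log(2+{\bar n})$, the logarithm is at least $T/2$, and the bound gives $c_{\rm NY}{\bar n}T/\log(2+{\bar n})$. When $T<C\log(2+{\bar n})$ and $T>0$, we have $\varepsilon<LR^2/{\bar n}^3$, so $W={\bar n}$ and ${\bar n}T\le C{\bar n}\log(2+{\bar n})=C\,W\log(2+{\bar n})$. In that regime the ${\bar n}T$ term is therefore covered by the $W$ bound after dividing by $\log(2+{\bar n})$, which is exactly the normalization that appears in \eqref{eq:strong-map}. It also needs checking that the restarted slack phases may start from a certified $\Delta$ other than $LR^2/2$. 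The proof of \cref{thm:slack-restart} uses only the certified bound and the localization of \cref{lem:localization}, so I expect this to go through unchanged.
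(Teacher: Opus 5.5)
\paragraph{Verdict.} Your upper bounds and the $W$ and ${\bar n}T$ lower bounds follow the paper's proof essentially step for step. On the upper side: a coarse phase to accuracy $LR^2/{\bar n}^3$ with \SPA{} or \SPP, then slack restarts from that certified bound when ${\bar n}=\Lambda^{1/3}$, or \CGC{} when ${\bar n}=n$. On the lower side: \cref{thm:rand-smooth} with $N=\lfloor c'W\rfloor$, and \cref{prop:ny-transfer} with $d=\lfloor{\bar n}\rfloor$, with the split at $T\asymp\log(2+{\bar n})$ handled by the $W$ bound. The one step that fails is your treatment of the constant term.

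\paragraph{The constant term.} The left-hand side of \eqref{eq:strong-map} is at least $c\min\{1,c_{\rm NY}\}/\log(2+{\bar n})>0$ for every $\varepsilon$. So no choice of $c>0$ reduces the claim to $N\ge0$ when $\varepsilon\ge LR^2/2$. You need $N_{\rm J}\ge1$ for every $\varepsilon>0$, and for randomized methods as well, since the randomized statement contains the term $1$. The threshold $LR^2/2$ is the bracket width after one query at $0$. It shows that one query suffices there, not that zero queries do; if zero queries sufficed, the stated lower bound would be false.

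\paragraph{The missing argument.} The class places no normalization on the linear part. For every $b$, the function $f_b(x)=\frac L2\nrm x_2^2+\ip bx$ is $L$-smooth and $(L/n)$-strongly convex in $\ell_1$. Also $L/n\ge\mu_1$, because $L/\mu_1=n+\Lambda$, so $f_b$ belongs to the class. A method that makes no query returns an output $\widehat x$, possibly random, that does not depend on $b$. For $b=\mp te_1$ with $t>0$, we have $f_b(\widehat x)-\min_Kf_b\ge t(R\mp\widehat x_1)-LR^2/2$. The two errors sum to $2tR-LR^2$, so for one of the signs the error, in expectation, exceeds any given $\varepsilon$ once $t$ is large. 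This gives $N_{\rm J}\ge1$ for every $\varepsilon$, deterministic or randomized.

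\paragraph{The remaining steps.} With $N_{\rm J}\ge1$ established, your conversion of $N+2\ge c_{\rm NY}(d/\log d)\log(\cdot)$ into a bound on $N_{\rm J}$ goes through as in the paper. So does taking the maximum of the three terms.
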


The two terms of \eqref{eq:strong-map} correspond to two regimes: a
cubic-rate phase of $\min\{{\bar n},(LR^2/\varepsilon)^{1/3}\}$ queries, which lasts
until the accuracy $LR^2/{\bar n}^3$, and a high-accuracy phase of
${\bar n}\log(LR^2/({\bar n}^3\varepsilon))$ queries, linear in the logarithm of the accuracy
with the factor $\min\{n,\Lambda^{1/3}\}$. In particular, when
$n\ge\Lambda^{1/3}$ and $\Lambda\ge n$, the count $\tO(\kappa_1^{1/3}\log(1/\varepsilon))$ of
\cref{thm:restart} is optimal for deterministic methods up to polylogarithmic
factors. For randomized methods at fixed dimension, and for $\Lambda<8$, the
complexity is not determined here.

\begin{proof}[Proof of \cref{thm:strong-map}]
Write $A=LR^2$ and $S=1+W+{\bar n}T$ with $W=W(\varepsilon)$, $T=T(\varepsilon)$.
The inequality $N_{\rm J}\le N_{\rm G}$ holds because the joint oracle is more
informative. All upper bounds below use feasible queries only, and
$\Lambda\ge8$ with $n\ge2$ gives ${\bar n}\ge2$.

\emph{Upper bounds, $\varepsilon\ge A/{\bar n}^3$.} Then $T=0$ and $W=(A/\varepsilon)^{1/3}\le {\bar n}$.
\Cref{thm:main} (joint oracle) gives
$N_{\rm J}\le1+24N(\varepsilon)=O(1+[(A/\varepsilon)\cn\log(2+(A/\varepsilon)\cn)]^{1/3})\le C(1+W(\cn\vartheta_{\bar n})^{1/3})\le C\Pi_{\rm J}S$,
because $A/\varepsilon\le {\bar n}^3$; and \cref{thm:gradient-only} gives
$N_{\rm G}=O(1+[(A/\varepsilon)\cn]^{1/3}\log^{7/3}(2+(A/\varepsilon)\cn))\le C(1+W\cn^{1/3}\vartheta_{\bar n}^{7/3})\le C\Pi_{\rm G}S$.

\emph{Upper bounds, $\varepsilon<A/{\bar n}^3$.} Then $W={\bar n}$ and $T>0$. If
${\bar n}=\Lambda^{1/3}\le n$, we first reach the accuracy $e_0=A/\Lambda=A/{\bar n}^3$ with the
convex methods, with the count just computed for $W={\bar n}$, i.e.\ $O(\Pi_{\rm J}{\bar n})$ or
$O(\Pi_{\rm G}{\bar n})$. We then run the phases of \cref{thm:slack-restart} (global case,
$\Lambda_*=8\Lambda$) from the certified bound $e_0$ until it is at most
$\varepsilon$. This takes $\lceil\log_2(e_0/\varepsilon)\rceil\le1+T/\log2$
phases, each using $O(1+(\Lambda\cn\log(2+\Lambda\cn))^{1/3})=O(1+{\bar n}(\cn\vartheta_{\bar n})^{1/3})\le C\Pi_{\rm J}{\bar n}$
joint queries or $O(1+(\Lambda\cn)^{1/3}\log^{7/3}(2+\Lambda\cn))\le C\Pi_{\rm G}{\bar n}$
gradient queries. The total is $O(\Pi_{\rm J}(1+{\bar n}+{\bar n}T))$, resp.\ $O(\Pi_{\rm G}(1+{\bar n}+{\bar n}T))$.
If ${\bar n}=n\le\Lambda^{1/3}$, \cref{prop:cog} with $F=\nabla f$ gives
$N_{\rm G}\le C[1+n\log(2+A/\varepsilon)]$, and
$\log(2+A/\varepsilon)\le\log(2+{\bar n}^3)+T\le3\log(2+{\bar n})+T$, so
$N_{\rm G}\le C[1+{\bar n}(3\log(2+{\bar n})+T)]\le C'(1+\log(2+n))S$, which is at most
$C'\Pi_{\rm J}S$ and $C'\Pi_{\rm G}S$.

\emph{Lower bound $N_{\rm J}\ge1$.} The class contains
$f_b(x)=\frac L2\nrm x_2^2+\ip bx$ for every $b$ (it is $L/n$-strongly convex in
$\ell_1$, and $L/n\ge\mu_1$). A method with no query returns a point $\widehat x$
(random or not) that does not depend on $b$. For $b=\mp te_1$, $t>0$,
$\min_Kf_b\le f_b(\pm Re_1)=LR^2/2-tR$ and $f_b(\widehat x)\ge\mp t\widehat x_1$, so
$f_b(\widehat x)-\min_Kf_b\ge t(R\mp\widehat x_1)-LR^2/2$; the sum over the two
signs is $2tR-LR^2$, so for one of them the (expected) error is at least
$tR-LR^2/2$, which exceeds $\varepsilon$ for large $t$. Hence $N_{\rm J}\ge1$,
also for randomized methods.

\emph{Lower bound of order $W$.} Let $c_0=2^{-19}$ and
$N=\lfloor c_0W\rfloor$. If $N\ge1$, then $W\ge2^{19}$, so $n\ge W\ge2^{19}$ and
$32N+1\le32c_0n+1\le n$, and $2^{51}N^3\le2^{51}c_0^3W^3\le2^{-6}\Lambda\le\Lambda$
since $W^3\le {\bar n}^3\le\Lambda$. Hence \cref{thm:rand-smooth} applies with this $N$
and gives, for every method with at most $N$ queries (randomized methods
included), a function of the class on which the expected error is at least
$A/(2^{52}N^3)\ge A/(2^{52}c_0^3W^3)=32A/W^3\ge32\varepsilon>\varepsilon$, using
$W^3\le A/\varepsilon$. Therefore every method that guarantees $\varepsilon$
makes more than $\lfloor c_0W\rfloor$ queries, and with the previous
paragraph $N_{\rm J}\ge\max\{1,c_0W\}\ge\frac12(1+c_0W)$.

\emph{Lower bound of order ${\bar n}T/\log(2+{\bar n})$.} Let $d=\lfloor {\bar n}\rfloor$, so
$2\le d\le {\bar n}\le n$, ${\bar n}/2\le d$, and $d^3-d\le {\bar n}^3\le\Lambda$. Suppose first
$T\ge6\log(2+{\bar n})$. Then $A/(2d^5\varepsilon)=(A/({\bar n}^3\varepsilon))\,{\bar n}^3/(2d^5)\ge e^T{\bar n}^3/(2{\bar n}^5)=e^T/(2{\bar n}^2)>1$,
so \cref{prop:ny-transfer} applies, and
$\log\frac{A}{2d^5\varepsilon}\ge T-2\log {\bar n}-\log2\ge T-3\log(2+{\bar n})\ge T/2$. Hence
$N_{\rm J}+2\ge c_{\rm NY}\frac{d}{\log d}\cdot\frac T2\ge c_{\rm NY}\frac{{\bar n}T}{4\log(2+{\bar n})}$,
and since $N_{\rm J}\ge1$, $N_{\rm J}\ge\frac13(N_{\rm J}+2)\ge c_{\rm NY}\frac{{\bar n}T}{12\log(2+{\bar n})}$.
If instead $0<T<6\log(2+{\bar n})$, then $W={\bar n}$ (as $\varepsilon<A/{\bar n}^3$) and
${\bar n}T/\log(2+{\bar n})<6{\bar n}=6W$, so the previous paragraph gives
$N_{\rm J}\ge\frac{c_0}{6}\cdot\frac{{\bar n}T}{\log(2+{\bar n})}$. If $T=0$ there is nothing
to prove. Thus $N_{\rm J}\ge\max\{1,\,c_0W,\,\min\{c_{\rm NY}/12,c_0/6\}\,{\bar n}T/\log(2+{\bar n})\}$,
so $N_{\rm J}\ge\frac13\min\{1,c_{\rm NY}\}\frac{c_0}{12}\bigl(1+W+{\bar n}T/\log(2+{\bar n})\bigr)$,
and $S/\log(2+{\bar n})\le1+W+{\bar n}T/\log(2+{\bar n})$ because $\log(2+{\bar n})\ge1$. This proves the
left-hand side of \eqref{eq:strong-map} with $c=2^{-25}$.
\end{proof}

\section{Variational inequalities and saddle-point problems}
\label{app:vi}

This section extends the deep-cut methods to monotone operators: Lipschitz
operators (\cref{app:vi-lipschitz}), operators with a model whose error is a
power of the distance, including H\"older operators and Taylor models of higher
order (\cref{app:vi-models}), and the matching lower bound for Lipschitz
operators (\cref{app:vi-lower}). As before, $K\subseteq c+B_1^n(R)$ is a known
polytope.

\subsection{Lipschitz operators}
\label{app:vi-lipschitz}

Let $F:K\to\R^n$ and consider the variational inequality of finding $\opt\in K$
with $\ip{F(\opt)}{x-\opt}\ge0$ for all $x\in K$. For a monotone $F$ we measure
accuracy by the \emph{weak (Minty) gap} $\sup_{u\in K}\ip{F(u)}{\bar w-u}$; for a
convex--concave $\Phi$ on $K=X\times Y$ with $F=(\nabla_x\Phi,-\nabla_y\Phi)$ we
measure the \emph{saddle gap} $\max_{y\in Y}\Phi(\bar x,y)-\min_{x\in X}\Phi(x,\bar y)$.
Both are certified by the mixture certificate of \cref{alg:spcut}. If
$\alpha\in\Delta_T$ satisfies $\max_{u\in K}\sum_t\alpha_t\ip{F(w_t)}{w_t-u}\le\varepsilon$,
then $\bar w=\sum_t\alpha_tw_t$ has weak gap at most $\varepsilon$ when $F$ is
monotone, and saddle gap at most $\varepsilon$ in the convex--concave case
(\cref{lem:mixture-certificate}). Two methods produce such certificates.

\begin{lemma}[Mixture certificate]\label{lem:mixture-certificate}
Let $w_1,\dots,w_T\in K$, $\alpha\in\Delta_T$, $\bar w=\sum_t\alpha_tw_t$, and
suppose $\sum_t\alpha_t\ip{F(w_t)}{w_t-u}\le\varepsilon$ for all $u\in K$.
\begin{enumerate}[label=(\alph*)]
\item If $F$ is monotone on $K$, then $\ip{F(u)}{\bar w-u}\le\varepsilon$ for all
$u\in K$.
\item If $K=X\times Y$, $F(x,y)=(g,-h)$ with $g\in\partial_x\Phi(x,y)$ and
$-h\in\partial_y(-\Phi)(x,y)$ for a function $\Phi$ that is convex in $x$ and
concave in $y$, then
$\sup_{y\in Y}\Phi(\bar x,y)-\inf_{x\in X}\Phi(x,\bar y)\le\varepsilon$.
\end{enumerate}
\end{lemma}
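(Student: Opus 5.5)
Both parts follow the pattern already used in the certificate step of \cref{thm:spcut}. We fix an arbitrary comparison point, lower-bound each term $\ip{F(w_t)}{w_t-u}$ by a quantity that is either linear in $w_t$ or convex in $w_t$, and then average with the weights $\alpha$.

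\emph{Part (a).} Fix $u\in K$. Monotonicity gives $\ip{F(w_t)-F(u)}{w_t-u}\ge0$, hence $\ip{F(u)}{w_t-u}\le\ip{F(w_t)}{w_t-u}$ for every $t$. We multiply by $\alpha_t$ and sum. Since $\sum_t\alpha_t=1$, the left-hand side is linear in $w_t$ and equals $\ip{F(u)}{\bar w-u}$. The right-hand side is at most $\varepsilon$ by hypothesis. No convexity is used beyond linearity, and $\bar w\in K$ because $K$ is convex.

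\emph{Part (b).} Write $w_t=(x_t,y_t)$ and $F(w_t)=(g_t,-h_t)$, where $g_t\in\partial_x\Phi(x_t,y_t)$ and $h_t$ is a supergradient of $\Phi(x_t,\cdot)$ at $y_t$. Fix $u=(x,y)\in X\times Y$. The subgradient inequality in $x$ gives $\ip{g_t}{x_t-x}\ge\Phi(x_t,y_t)-\Phi(x,y_t)$. The supergradient inequality in $y$ gives $\ip{-h_t}{y_t-y}=\ip{h_t}{y-y_t}\ge\Phi(x_t,y)-\Phi(x_t,y_t)$. Adding the two, the terms $\Phi(x_t,y_t)$ cancel, so $\ip{F(w_t)}{w_t-u}\ge\Phi(x_t,y)-\Phi(x,y_t)$. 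Averaging with $\alpha$ and applying Jensen twice (convexity of $\Phi(\cdot,y)$ gives $\sum_t\alpha_t\Phi(x_t,y)\ge\Phi(\bar x,y)$, and concavity of $\Phi(x,\cdot)$ gives $\sum_t\alpha_t\Phi(x,y_t)\le\Phi(x,\bar y)$) yields $\Phi(\bar x,y)-\Phi(x,\bar y)\le\varepsilon$. This holds for all $(x,y)\in X\times Y$, so taking the supremum over $y$ and the infimum over $x$ separately proves the claim.

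The only point that needs care is the sign convention in (b). We must check that the hypothesis $-h\in\partial_y(-\Phi)$ means exactly that $h$ is a supergradient of the concave function $\Phi(x_t,\cdot)$, so that the $y$-inequality has the orientation used above. The rest is routine.
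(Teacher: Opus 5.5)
Your proof is correct and matches the paper's argument essentially step for step. For (a) you combine monotonicity with the identity $\sum_t\alpha_t(w_t-u)=\bar w-u$. For (b) you use the subgradient inequality in $x$ and the supergradient inequality in $y$ at each $w_t$, then Jensen's inequality in each block. Your reading of $-h\in\partial_y(-\Phi)$ as ``$h$ is a supergradient of $\Phi(x_t,\cdot)$'' gives exactly the orientation the paper uses.
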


\begin{proof}
(a) Monotonicity gives $\ip{F(u)}{w_t-u}\le\ip{F(w_t)}{w_t-u}$; multiplying by
$\alpha_t$ and summing, with $\sum_t\alpha_t(w_t-u)=\bar w-u$, gives the claim.
(b) For $(x,y)\in K$ and $w_t=(x_t,y_t)$, convexity in $x$ and concavity in
$y$ give $\Phi(x_t,y)-\Phi(x,y_t)\le\ip{g_t}{x_t-x}-\ip{h_t}{y_t-y}=\ip{F(w_t)}{w_t-(x,y)}$.
Multiplying by $\alpha_t$, summing, and using convexity in $x$ and concavity in
$y$ on the left ($\sum_t\alpha_t\Phi(x_t,y)\ge\Phi(\bar x,y)$ and
$\sum_t\alpha_t\Phi(x,y_t)\le\Phi(x,\bar y)$ respectively) gives
$\Phi(\bar x,y)-\Phi(x,\bar y)\le\varepsilon$ for all $(x,y)\in K$.
\end{proof}

\begin{restatable}[Bounded and Lipschitz operators]{proposition}{thmvi}
\label{thm:vi}
Let $K\subseteq c+B_1^n(R)$ be a known polytope.
\begin{enumerate}[label=(\alph*)]
\item If $\nrm{F(x)}_2\le G$ on $K$, then \cref{alg:spcut} with the cuts
$\{y:\ip{F(x_t)}{x_t-y}\ge\varepsilon_N\}$, $\varepsilon_N=4G\PN/N$, stops after
at most $N$ operator calls and returns $\bar w\in K$ with the certificate above.
No monotonicity is needed for the certificate.
\item If $F$ is $L$-Lipschitz in $\ell_2$ on $K$, then \SPV{}
(\cref{alg:spvi}), which at the current approximate Steiner point $z_t$ computes
$w_t=\proj_K(z_t-F(z_t)/(2L))$ and cuts with
$\{u:\ip{F(w_t)}{w_t-u}\ge\varepsilon_N\}$, $\varepsilon_N=6L\PN^2/N^2=96LR^2\bN\cn/N^2$,
stops after at most $2N$ operator calls with the certificate above. With a
budget of $Q\ge2$ calls the weak gap of a monotone operator, and the saddle gap
of a convex--concave problem, is $O(LR^2(1+\log Q)(1+\log(2n))/Q^2)$.
\item If moreover $\ip{F(x)-F(y)}{x-y}\ge\mu_1\nrm{x-y}_1^2$ on $K$, then
restarting (b) on the localized polytopes $K\cap(\bar w_j+B_1^n(R2^{-j-1}))$
returns a point at $\ell_1$-distance at most $d_\star$ from the unique solution after
$O\bigl([1+\{\kappa_1\log(2n)\log(2+\kappa_1\log(2n))\}^{1/2}][1+\log_+(R/d_\star)]\bigr)$
calls, $\kappa_1=L/\mu_1$.
\end{enumerate}
\end{restatable}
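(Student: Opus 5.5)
Part (a) is the proof of \cref{thm:spcut} read with $F(x_t)$ in place of the subgradient $g_t$. While the polytope is nonempty, the Steiner point of $C_t$ lies in the cut, so $\ip{F(x_t)}{x_t-\st{C_t}}\ge\varepsilon_N$. With $\nrm{F(x_t)}_2\le G$ this gives $\nrm{\st{C_t}-x_t}_2\ge\varepsilon_N/G$, and hence a Steiner step of at least $\varepsilon_N/(2G)$. \Cref{lem:steiner-path} then forbids $N$ such steps, so some $C_t$ with $t\le N$ is empty. Emptiness together with the minimax argument of \cref{thm:spcut} produces $\lambda\in\Delta_t$ with $\max_{u\in K}\sum_j\lambda_j\ip{F(x_j)}{x_j-u}<\varepsilon_N$. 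This step uses only linear algebra and no monotonicity, and \cref{lem:mixture-certificate} then converts the certificate into the weak gap or the saddle gap.

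For (b) I will use the same certificate and the same termination argument, but the lower bound on each center displacement must now come from the extragradient step instead of from a bound on $\nrm F$. Fix a trial with a nonempty cut. Write $z=z_t$, $w=w_t$, $z^+=z_{t+1}\in C_{t+1}$ and $d=z^+-z$; the displacement is $\nrm{d}_2$, as in \SPL. The projection optimality condition for $w=\proj_K(z-F(z)/(2L))$ gives $\ip{F(z)+2L(w-z)}{u-w}\ge0$ for all $u\in K$. Apply it with $u=z^+$ and combine with the cut inequality $\varepsilon_N\le\ip{F(w)}{w-z^+}$:
\[
\varepsilon_N\le\ip{F(w)-F(z)}{w-z^+}+2L\ip{w-z}{z^+-w}.
\]
Now write $z^+-w=(z^+-z)-(w-z)$ and let $q=\nrm{w-z}_2$. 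The Lipschitz bound $\nrm{F(w)-F(z)}_2\le Lq$ handles the first term, and $\ip{w-z}{z^+-w}\le q\nrm d_2-q^2$ handles the second. Together they give $\varepsilon_N\le 3Lq\nrm d_2-Lq^2$, so $\nrm d_2\ge \varepsilon_N/(3Lq)+q/3$. Minimizing the right-hand side over $q>0$ gives $\nrm d_2\ge\frac23\sqrt{\varepsilon_N/L}$, which is the bound announced in \cref{sec:extensions}. I expect this inequality to be the main obstacle: the constants must be arranged so that the $-Lq^2$ term survives, and the approximation error $\eta$ of the selector has to be absorbed.

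To absorb $\eta$, I will take selector accuracy $\eta_N=\PN/(4N)$, so that the selected path has length at most $\frac32\PN$ as in \eqref{eq:actual-path}. Since the cut is tested against the selected point $z^+\in C_{t+1}$ itself, no error term enters the inequality above. If $N$ trials all had nonempty cuts, then $N\cdot\frac23\sqrt{\varepsilon_N/L}\le\frac32\PN$, that is $\varepsilon_N\le\frac{81}{16}L\PN^2/N^2<6L\PN^2/N^2$, contradicting the choice of $\varepsilon_N$. Hence an empty cut occurs within $N$ trials. Each trial makes two calls, $F(z_t)$ and $F(w_t)$, and the certificate uses only the pairs $(w_j,F(w_j))$. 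For a budget $Q$, set $N=\lfloor Q/2\rfloor$ and bound $\bN\le 2(1+\log Q)$ to get the $O(LR^2(1+\log Q)(1+\log(2n))/Q^2)$ rate.

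For (c), strong monotonicity gives $\mu_1\nrm{\bar w-\opt}_1^2\le\ip{F(\opt)}{\bar w-\opt}+\mathrm{gap}$. The first term is nonpositive up to the Minty gap, because $\ip{F(\opt)}{\bar w-\opt}\le\ip{F(\bar w)}{\bar w-\opt}-\mu_1\nrm{\bar w-\opt}_1^2$ and the weak gap controls the mixture. Concretely, I apply the certificate at $u=\opt$ and use monotonicity termwise, $\ip{F(w_t)}{w_t-\opt}\ge\mu_1\nrm{w_t-\opt}_1^2$, then Jensen: this yields $\mu_1\nrm{\bar w-\opt}_1^2\le\varepsilon$. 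Taking $\varepsilon=\mu_1 r_j^2/4$ on a polytope of radius $r_j$ halves the radius, and the required horizon satisfies $N^2\gtrsim L\bN\cn/\mu_1$ independently of $r_j$. This gives $\tO(\kappa_1^{1/2})$ calls per phase and $1+\log_+(R/d_\star)$ phases.
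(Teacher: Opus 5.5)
Your proposal is correct and follows the paper's proof essentially step for step. For (a) it uses the same deep-cut and path-bound argument. For (b) it combines the projection inequality with the cut at the selected point $z^+\in C_{t+1}$ to get $\varepsilon_N\le 3Lq\nrm{d}_2-Lq^2$, hence $\nrm{d}_2\ge\frac23\sqrt{\varepsilon_N/L}$, exactly as the paper does. For (c) it tests the certificate at $\opt$, applies Jensen, and uses a radius-independent horizon, again as in the paper.

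One small slip in the heuristic opening of (c): $\ip{F(\opt)}{\bar w-\opt}$ is nonnegative, not nonpositive, by the variational inequality at $\opt$. That same variational inequality, together with strong monotonicity (not monotonicity alone), is what gives your termwise bound $\ip{F(w_t)}{w_t-\opt}\ge\mu_1\nrm{w_t-\opt}_1^2$ in the concrete argument.
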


\begin{algorithm}[t]
\caption{\SPV: extragradient step with deep cuts (horizon $N$)}
\label{alg:spvi}
\begin{algorithmic}[1]
\Require known polytope $K\subseteq c+B_1^n(R)$; operator oracle, $F$
$L$-Lipschitz on $K$; horizon $N$; $\varepsilon_N=6L\PN^2/N^2$; $\eta_N=\PN/(4N)$
\State $C_0\gets K$; $z_0\gets$ an $\eta_N$-approximate Steiner point of $K$
\For{$t=0,1,\dots$}
 \State query $F(z_t)$;\quad $w_t\gets\proj_K\bigl(z_t-F(z_t)/(2L)\bigr)$
 \State query $F(w_t)$;\quad $C_{t+1}\gets C_t\cap\{u:\ip{F(w_t)}{w_t-u}\ge\varepsilon_N\}$
 \If{$C_{t+1}=\varnothing$}
  \State solve $\min_{\alpha\in\Delta_{t+1}}\max_{u\in K}\sum_{j\le t}\alpha_j\ip{F(w_j)}{w_j-u}$ (a linear program)
  \State\Return $\bar w=\sum_{j\le t}\alpha_jw_j$
 \EndIf
 \State $z_{t+1}\gets$ an $\eta_N$-approximate Steiner point of $C_{t+1}$
\EndFor
\end{algorithmic}
\end{algorithm}

The method of part (b) is an extragradient step \citep{korpelevich1976} whose
second point is used as a deep cut. In the mismatched setting its rate
$\tO(LR^2/N^2)$ improves on the Euclidean $O(LR^2/N)$ of Mirror-Prox
\citep{nemirovski2004prox}. For $F=\nabla f$ the weak gap of $\bar w$ is at most
$f(\bar w)-\min_Kf$, so lower bounds for minimization do not transfer to the
weak gap. The skew-symmetric construction of \cref{thm:vi-lower} gives the
matching deterministic bound $LR^2/(8(2N+3)^2)$ for $n\ge4N+6$; hence the
exponent $N^{-2}$ of (b) is optimal up to the factor $\log(2N)\log(2n)$. The
saddle gap of a bilinear game on $\Delta_{n_x}\times\Delta_{n_y}$ with a
Euclidean operator bound is a special case of (b), and \cref{thm:vi-lower}(b)
shows that its rate is also optimal in this sense. In small dimension the
center-of-gravity branch \CGC{} of \cref{prop:cog} applies to operators without
change. It returns a mixture with the same certificate after
$O(1+n\log(2+LR^2/\varepsilon))$ operator calls, without any bound on
$\nrm{F(0)}$, so the weak gap of a monotone $L$-Lipschitz operator on
$B_1^n(R)$ can be brought below $\varepsilon$ with
\[
 O\Bigl(1+\min\Bigl\{\Bigl[\frac{LR^2\cn}{\varepsilon}\log\Bigl(2+\frac{LR^2\cn}{\varepsilon}\Bigr)\Bigr]^{1/2},\
 n\log\Bigl(2+\frac{LR^2}{\varepsilon}\Bigr)\Bigr\}\Bigr)
\]
calls.

\begin{proof}[Proof of \cref{thm:vi}]
(a) We run \cref{alg:spcut} with the cut $C_t=C_{t-1}\cap\{y:\ip{F(x_t)}{x_t-y}\ge\varepsilon_N\}$
and selector accuracy $\eta=\varepsilon_N/(2G)$. If $C_t\ne\varnothing$ then
$\st{C_t}\in C_t$ gives $\ip{F(x_t)}{x_t-\st{C_t}}\ge\varepsilon_N$, so
$\nrm{\st{C_t}-x_t}_2\ge\varepsilon_N/G$ and
$\nrm{\st{C_t}-\st{C_{t-1}}}_2\ge\varepsilon_N/(2G)$; as in \cref{thm:spcut},
\cref{lem:steiner-path} forces an empty set within $N$ queries. Emptiness of
$C_T$ means $\max_{u\in K}\min_{t\le T}\ip{F(x_t)}{x_t-u}<\varepsilon_N$, and the
minimax theorem gives $\alpha\in\Delta_T$ with
$\max_{u\in K}\sum_t\alpha_t\ip{F(x_t)}{x_t-u}<\varepsilon_N$, computed by the
linear program of \cref{alg:spcut}. \Cref{lem:mixture-certificate} gives the gaps.

(b) \emph{One nonempty cut.} We drop the index $t$ and write $r=w-z$, $d=z'-z$
for the next selected point $z'\in C_{t+1}$, and $v=-F(z)-2Lr$. By the
optimality condition of the projection, $v$ belongs to the normal cone
$N_K(w)=\{g:\ip g{u-w}\le0\ \forall u\in K\}$. With $e=F(w)-F(z)$,
$\nrm e_2\le L\nrm r_2$, we have $F(w)=-2Lr+e-v$. Since $z'\in K$,
$\ip{-v}{w-z'}\le0$, and the cut gives
\begin{align*}
 \varepsilon_N\le\ip{F(w)}{w-z'}\le\ip{-2Lr+e}{r-d}
 &\le-2L\nrm r_2^2+2L\nrm r_2\nrm d_2+L\nrm r_2^2+L\nrm r_2\nrm d_2\\
 &=-L\nrm r_2^2+3L\nrm r_2\nrm d_2\le\tfrac94L\nrm d_2^2 ,
\end{align*}
using $w-z'=r-d$ and $-\tau^2+3\tau\sigma\le\frac94\sigma^2$. Hence every
nonempty transition has $\nrm{z_{t+1}-z_t}_2\ge\frac23\sqrt{\varepsilon_N/L}$.
If $C_1,\dots,C_N$ were all nonempty, \eqref{eq:app-path} would give
$\frac{2N}3\sqrt{\varepsilon_N/L}\le\frac32\PN$, i.e.\ $\varepsilon_N\le\frac{81}{16}L\PN^2/N^2$,
contradicting $\varepsilon_N=6L\PN^2/N^2$. Hence the method stops within $N$ trials
and $2N$ calls, and the certificate and the gaps follow as in (a). For a
budget $Q\ge2$ we take $N=\lfloor Q/2\rfloor$; then
$\varepsilon_N=96LR^2\bN\cn/N^2=O(LR^2(1+\log Q)(1+\log(2n))/Q^2)$.

(c) Existence of a solution $\opt$ follows from continuity and compactness,
and uniqueness from strong monotonicity. Let $K'\subseteq K$ be a known polytope
containing $\opt$ and let the certificate of (b) on $K'$ hold with right-hand
side $\delta$. Testing it at $u=\opt$ and using
$\ip{F(w_t)}{w_t-\opt}\ge\ip{F(\opt)}{w_t-\opt}+\mu_1\nrm{w_t-\opt}_1^2\ge\mu_1\nrm{w_t-\opt}_1^2$
(by strong monotonicity and the variational inequality at $\opt$, as
$w_t\in K'\subseteq K$) and the convexity of $\nrm\cdot_1^2$,
\[
 \delta\ge\sum_t\alpha_t\ip{F(w_t)}{w_t-\opt}\ge\mu_1\sum_t\alpha_t\nrm{w_t-\opt}_1^2\ge\mu_1\nrm{\bar w-\opt}_1^2 .
\]
With $\rho_j=R2^{-j}$, $K_0=K$ and $\delta_j=\mu_1\rho_j^2/4$, the output $\bar w_j$ of
phase $j$ satisfies $\nrm{\bar w_j-\opt}_1\le\rho_j/2=\rho_{j+1}$, so
$K_{j+1}=K\cap(\bar w_j+B_1^n(\rho_{j+1}))$ contains $\opt$; the projection in
phase $j$ is onto $K_j$. The horizon $N_\star$, the smallest power of two with
$96L\rho_j^2\mathsf b_N\cn/N^2\le\mu_1\rho_j^2/4$, i.e.\ $N^2\ge384\kappa_1\mathsf b_N\cn$,
does not depend on $j$, and $N_\star=O(1+[\kappa_1\cn\log(2+\kappa_1\cn)]^{1/2})$
as in the proof of \cref{thm:restart}. For the target $d_\star>0$ let $J$ be the smallest index
with $R2^{-J-1}\le d_\star$; after $J+1$ phases $\nrm{\bar w_J-\opt}_1\le d_\star$ with at
most $2(J+1)N_\star$ calls.
\end{proof}

\subsection{Operator models with power-law error}
\label{app:vi-models}

The deep-cut argument of \cref{thm:vi}(b) applies to any operator model whose
error is a power of the distance. Let $F:K\to\R^n$ be continuous and monotone,
and suppose that one oracle call at $z\in K$ returns $F(z)$ and a continuous
model $T_z:K\to\R^n$, which can be evaluated to any prescribed accuracy, such
that
\begin{equation}\label{eq:vi-model-remainder}
 \nrm{F(w)-T_z(w)}_2\le M\nrm{w-z}_2^r\qquad(z,w\in K)
\end{equation}
for some $M,r>0$; computations with $T_z$ are not counted as calls. Two classes
satisfy \eqref{eq:vi-model-remainder}. If $F$ is $\nu$-H\"older with constant
$L_{0,\nu}$, $0<\nu\le1$, take $T_z\equiv F(z)$, $r=\nu$ and $M=L_{0,\nu}$. If
$k\ge1$, $F$ is $C^k$ on a neighborhood of $K$ and $D^kF$ is $\nu$-H\"older with
constant $L_{k,\nu}$ in the Euclidean operator norm, $0\le\nu\le1$, an order-$k$
call returns $(F(z),DF(z),\dots,D^kF(z))$, and the Taylor model
$T_z(w)=\sum_{j\le k}\frac1{j!}D^jF(z)[(w-z)^j]$ satisfies
\eqref{eq:vi-model-remainder} with $r=k+\nu$ and
$M=L_{k,\nu}\prod_{j=1}^k(\nu+j)^{-1}$ (see the paragraph on Taylor models
below).

\begin{restatable}[Operator models]{proposition}{thmholdertensorvi}
\label{thm:holder-tensor-vi}
Under \eqref{eq:vi-model-remainder}, the variant of \SPV{} described below
makes at most $2N$ feasible calls and returns the points $w_1,\dots,w_T\in K$ of
its $T\le N$ trials and weights $\alpha\in\Delta_T$ with
\[
 \max_{u\in K}\sum_{t=1}^T\alpha_t\ip{F(w_t)}{w_t-u}\le\varepsilon_N
 =2M\Bigl(\frac{6\PN}N\Bigr)^{r+1}
 =O_r\Bigl(\frac{MR^{r+1}[\log(2N)\log(2n)]^{(r+1)/2}}{N^{r+1}}\Bigr).
\]
Hence $\bar w=\sum_t\alpha_tw_t$ has weak gap at most $\varepsilon_N$, and saddle
gap at most $\varepsilon_N$ for a convex--concave saddle operator. Each trial
uses a finite model search whose arithmetic cost is not bounded.
\end{restatable}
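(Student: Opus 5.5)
The plan is to copy the deep-cut argument of \cref{thm:vi}(b), with the extragradient step replaced by an approximate solution of a regularized model problem, and the constant $L$ replaced by the power-law remainder \eqref{eq:vi-model-remainder}.

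\emph{The trial.} At trial $t$, with current $\eta_N$-approximate Steiner point $z=z_t$ of $C_t$ (nested, $C_0=K$), call the oracle at $z$ to obtain $T_z$. Then, by a finite search over rational points of $K$ as in \cref{lem:model-search}, find $w\in K$ satisfying the approximate regularized variational inequality
\[
 \ip{T_z(w)+2M\nrm{w-z}_2^{r-1}(w-z)}{w-u}\le\theta\quad\text{for all }u\in K,
\]
with tolerance $\theta=\varepsilon_N/4$. A solution with $\theta=0$ exists because $u\mapsto 2M\nrm{u-z}_2^{r-1}(u-z)$ is continuous and strictly monotone and $T_z$ is continuous on the compact set $K$; existence follows from Brouwer/Hartman--Stampacchia, and the positive slack makes the search finite. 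Since $T_z$ need not be monotone I would use only existence, not uniqueness. Next call the oracle at $w$ and cut $C_{t+1}=C_t\cap\{u:\ip{F(w)}{w-u}\ge\varepsilon_N\}$. If $C_{t+1}=\varnothing$, return the mixture certificate from the linear program of \cref{alg:spvi}; the gap claims then follow from \cref{lem:mixture-certificate}. Each trial uses two calls, and all queries lie in $K$.

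\emph{The dichotomy.} For a nonempty cut, let $z'=z_{t+1}\in C_{t+1}$, $\rho=w-z$, $d=z'-z$, and $e=F(w)-T_z(w)$, so that $\nrm e_2\le M\nrm\rho_2^r$. Test the model inequality at $u=z'$ and write $w-z'=\rho-d$:
\[
 \varepsilon_N\le\ip{F(w)}{w-z'}\le\theta+\ip{e-2M\nrm\rho_2^{r-1}\rho}{\rho-d}
 \le\theta-M\tau^{r+1}+3M\tau^r\sigma ,
\]
where $\tau=\nrm\rho_2$ and $\sigma=\nrm d_2$. Maximizing over $\tau\ge0$ gives $-M\tau^{r+1}+3M\tau^r\sigma\le M(3\sigma)^{r+1}$, a crude bound that absorbs the factor $r^r/(r+1)^{r+1}\le1$. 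Hence every nonempty trial satisfies
\[
 \tfrac34\varepsilon_N\le M(3\nrm{z_{t+1}-z_t}_2)^{r+1}.
\]

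\emph{Counting, the main obstacle.} If $N$ cuts were nonempty, then \eqref{eq:app-path} gives $\sum_t\nrm{z_{t+1}-z_t}_2\le\tfrac32\PN$. Combining this with the per-step lower bound yields $N\cdot\tfrac13(3\varepsilon_N/(4M))^{1/(r+1)}\le\tfrac32\PN$, that is,
\[
 \varepsilon_N\le\tfrac43M(9\PN/(2N))^{r+1}<2M(6\PN/N)^{r+1},
\]
which is a contradiction. So the method stops within $N$ trials, that is, within $2N$ calls.

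Substituting $\PN=4R\sqrt{\bN\cn}$ gives the displayed $O_r$ form. For Taylor models, the constant $M$ follows from the integral remainder exactly as in \cref{lem:taylor-remainder}. I expect the delicate step to be the model subproblem: its existence and certifiability when $T_z$ is non-monotone. In particular, the test of the inequality at the single point $u=z'$ must use only the stated slack $\theta$, and the unbounded arithmetic cost is conceded in the statement.
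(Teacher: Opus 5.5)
Your proposal is correct and follows the paper's proof essentially step for step. The ingredients are the same: the regularized model variational inequality solved by a finite search whose existence rests on Brouwer's theorem, the displacement bound $\varepsilon\le\eta+M(3\nrm{z'-z}_2)^{r+1}$ obtained by testing the model inequality at $u=z_{t+1}$, and the contradiction with the path budget $\tfrac32\PN$. The differences are cosmetic: you use model tolerance $\varepsilon_N/4$ where the paper uses $\varepsilon_N/2$, which leaves extra slack in the final contradiction, and you bound $-M\tau^{r+1}+3M\tau^r\sigma$ by calculus rather than the paper's two-case split on $\tau\ge3\sigma$.
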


For $r=1$ this recovers the rate $N^{-2}$ of \cref{thm:vi}(b), with a worse
constant. For $\nu$-H\"older operators ($k=0$) the Euclidean weak gap of
Mirror-Prox-type methods is $O(N^{-(1+\nu)/2})$ \citep{dvurechensky2018}, so on
the $\ell_1$ ball the exponent doubles. For Lipschitz $D^kF$, $k\ge1$,
extragradient-type methods reach weak gap $O(N^{-(k+2)/2})$
\citep{monteiro2012npe,bullins2022higher,lin2025perseus}, and
\citet{zhang2026matching} obtain the tangent residual
$\tO(N^{-(3k+2)/2})$, with a matching lower bound in Euclidean geometry; after
multiplication by the diameter this bounds the weak gap, and against it the
exponent $k+2$ of \cref{thm:holder-tensor-vi} is larger only for $k=1$. A
matching lower bound for \cref{thm:holder-tensor-vi} is known only for
$T_z\equiv F(z)$ and $r=1$ (\cref{thm:vi-lower}); for $r=\nu<1$ the affine
instance of \cref{thm:vi-lower} gives only $\Omega(N^{-2})$, and for Taylor
models it is identified by one call that returns $F$ and $DF$.

\begin{restatable}[Restarts under $\ell_1$-strong monotonicity]{proposition}{thmholdertensorvirestart}
\label{thm:holder-tensor-vi-restart}
Assume \eqref{eq:vi-model-remainder} and
$\ip{F(x)-F(y)}{x-y}\ge\mu_1\nrm{x-y}_1^2$ on $K$ with $\mu_1>0$, and let $\opt$
be the solution of the variational inequality. Let $0<d_\star<R$,
$J=\lceil\log_2(R/d_\star)\rceil$ and $\rho_j=R2^{-j}$, and let $N_j$ be the
smallest power of two with
\begin{equation}\label{eq:vi-holder-restart-horizon}
 N_j^{r+1}\ge8\cdot24^{r+1}\frac{M}{\mu_1}\rho_j^{r-1}(\mathsf b_{N_j}\cn)^{(r+1)/2}.
\end{equation}
Restarting \cref{thm:holder-tensor-vi} on $K_0=K$ and
$K_{j+1}=K\cap(\bar w_j+B_1^n(\rho_{j+1}))$ returns $x\in K$ with
$\nrm{x-\opt}_1\le d_\star$ after at most $2\sum_{j<J}N_j$ feasible calls. With
the condition number $\bar\kappa(\rho)=M\rho^{r-1}/\mu_1$ at scale $\rho$,
$\bar\kappa_\star=\bar\kappa(d_\star)$ for $r<1$ and
$\bar\kappa_\star=\bar\kappa(R)$ for $r\ge1$, and
$\mathcal H_\star=[\cn\log(2+\bar\kappa_\star\cn^{(r+1)/2})]^{1/2}$, this number is
$O_r(J+\mathcal H_\star\bar\kappa(d_\star)^{1/(r+1)})$ for $r<1$,
$O(J[1+\mathcal H_\star(M/\mu_1)^{1/2}])$ for $r=1$, and
$O_r(J+\mathcal H_\star\bar\kappa(R)^{1/(r+1)})$ for $r>1$.
\end{restatable}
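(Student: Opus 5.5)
We follow the pattern of \cref{thm:restart} and \cref{thm:vi}(c): we use the mixture certificate of \cref{thm:holder-tensor-vi} to localize the solution, and we restart on shrinking $\ell_1$ balls. The localization step comes first. Let $K'\subseteq K$ be a known polytope containing $\opt$, and suppose the certificate holds on $K'$ with right-hand side $\delta$. Testing it at $u=\opt$, and combining strong monotonicity with the variational inequality at $\opt$, gives $\ip{F(w_t)}{w_t-\opt}\ge\mu_1\nrm{w_t-\opt}_1^2$. Convexity of $\nrm\cdot_1^2$ then yields $\mu_1\nrm{\bar w-\opt}_1^2\le\delta$, exactly as in the proof of \cref{thm:vi}(c). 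Existence and uniqueness of $\opt$ follow from continuity, compactness and strong monotonicity. We need $\nrm{\bar w_j-\opt}_1\le\rho_{j+1}=\rho_j/2$, so in phase $j$ we require accuracy $\delta_j=\mu_1\rho_j^2/4$. Then $K_{j+1}=K\cap(\bar w_j+B_1^n(\rho_{j+1}))$ is a known polytope that contains $\opt$ and lies inside a translate of $B_1^n(\rho_{j+1})$.

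Next we check that the horizon rule \eqref{eq:vi-holder-restart-horizon} suffices. Phase $j$ runs on $K_j\subseteq x'_j+B_1^n(\rho_j)$, where the path budget is $P_N=4\rho_j\sqrt{\bN\cn}$. By \cref{thm:holder-tensor-vi}, the certificate value is
\[
2M\bigl(24\rho_j\sqrt{\bN\cn}/N\bigr)^{r+1}.
\]
Requiring this to be at most $\mu_1\rho_j^2/4$ is equivalent to
\[
N^{r+1}\ge 8\cdot24^{r+1}(M/\mu_1)\rho_j^{r-1}(\bN\cn)^{(r+1)/2},
\]
which is the rule as stated. Each phase uses at most $2N_j$ calls. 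After phases $0,\dots,J-1$ we have $\nrm{\bar w_{J-1}-\opt}_1\le\rho_J\le d_\star$, which gives the bound $2\sum_{j<J}N_j$. One small point needs checking: the certificate must be invoked with the horizon equal to $N_j$. The monotonicity of $P_N/N$ along powers of two ensures the smallest qualifying power of two works.

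The remaining and main work is estimating $\sum_{j<J}N_j$. Write $\bar\kappa_j=\bar\kappa(\rho_j)=\bar\kappa(R)2^{-j(r-1)}$. The dyadic inversion used in the proofs of \cref{thm:restart} and \cref{thm:holder}, applied with exponent $r+1$ in place of $3$, gives
\[
N_j=O_r\bigl(1+\bar\kappa_j^{1/(r+1)}[\cn\log(2+\bar\kappa_j\cn^{(r+1)/2})]^{1/2}\bigr).
\]
Because $\bar\kappa_j\le\bar\kappa_\star$ in every case, the logarithmic factor is bounded uniformly by $\mathcal H_\star$. The sum then splits into three cases.
\begin{itemize}
\item For $r=1$, $\bar\kappa_j=M/\mu_1$ is constant, and the sum is $J$ times one term.
\item For $r>1$, $\bar\kappa_j^{1/(r+1)}$ decays geometrically with ratio $2^{-(r-1)/(r+1)}$. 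The sum is dominated by $j=0$, giving $O_r(J+\mathcal H_\star\bar\kappa(R)^{1/(r+1)})$.
\item For $r<1$, $\bar\kappa_j^{1/(r+1)}$ grows geometrically, so the sum is dominated by the last index $j=J-1$. There $\rho_{J-1}\ge d_\star$, and since $r-1<0$ this gives $\bar\kappa_{J-1}\le\bar\kappa(d_\star)$. The result is $O_r(J+\mathcal H_\star\bar\kappa(d_\star)^{1/(r+1)})$.
\end{itemize}
The $O_r$ constants absorb the factors $(1-2^{-|r-1|/(r+1)})^{-1}$. In all three cases the additive $J$ comes from the constant $1$ inside each $N_j$.
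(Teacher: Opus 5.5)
Your proposal is correct and follows the paper's proof essentially step for step. It localizes $\opt$ by testing the certificate at $u=\opt$ with phase accuracy $\mu_1\rho_j^2/4$, reads the horizon rule as the equivalent form of that requirement, and sums the three geometric cases using $\bar\kappa_j\le\bar\kappa_\star$, with $\rho_{J-1}>d_\star$ for $r<1$. Your remark about the monotonicity of $P_N/N$ is superfluous: $N_j$ satisfies \eqref{eq:vi-holder-restart-horizon} by definition, so running \cref{thm:holder-tensor-vi} with horizon $N_j$ gives the required certificate directly.
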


\paragraph{Taylor models.} Let $k\ge1$ and let $F$ be $C^k$ on a neighborhood of $K$ with
$\nrm{D^kF(w)-D^kF(z)}\le L_{k,\nu}\nrm{w-z}_2^\nu$, and let $h=w-z$. The
integral form of the remainder,
\[
 F(w)-T_z(w)=\int_0^1\frac{(1-t)^{k-1}}{(k-1)!}\bigl(D^kF(z+th)-D^kF(z)\bigr)[h^k]\,dt,
\]
and the beta integral of \cref{lem:taylor-remainder} give
$\nrm{F(w)-T_z(w)}_2\le L_{k,\nu}\nrm h_2^{k+\nu}\int_0^1\frac{(1-t)^{k-1}t^\nu}{(k-1)!}\,dt
=L_{k,\nu}\prod_{j=1}^k(\nu+j)^{-1}\nrm h_2^{k+\nu}$, which is
\eqref{eq:vi-model-remainder} with $r=k+\nu$.

\paragraph{The model step.} Let $\psi_r(a)=\nrm a_2^{r-1}a$ for $a\ne0$ and
$\psi_r(0)=0$; this map is continuous for every $r>0$. At a selector point $z$
put $\widetilde T_z(w)=T_z(w)+2M\psi_r(w-z)$, the regularized model of higher-order
methods for variational inequalities
\citep{monteiro2012npe,bullins2022higher,lin2025perseus}, and, for $\eta>0$,
find $w\in K$ with
\begin{equation}\label{eq:vi-model-residual}
 \max_{u\in K}\ip{\widetilde T_z(w)}{w-u}\le\eta .
\end{equation}
This condition concerns the model only and is checked by one linear program
over $K$. The method then queries $F(w)$ and uses $F(w)$, not $\widetilde T_z(w)$, in the
cut.

\begin{lemma}[Model search]\label{lem:vi-model-search}
For every $z\in K$ and $\eta>0$, a point satisfying \eqref{eq:vi-model-residual}
exists and is found by a finite enumeration of rational convex combinations of
the vertices of $K$. Monotonicity of $\widetilde T_z$ is not needed.
\end{lemma}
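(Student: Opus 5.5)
The plan is to copy the proof of \cref{lem:model-search}: first show that a point with zero residual exists, then show that the residual is continuous and computable to arbitrary accuracy, so that a dense enumeration with a slackened test must stop.

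\emph{Existence.} The map $\widetilde T_z(w)=T_z(w)+2M\psi_r(w-z)$ is continuous on the compact convex polytope $K$, because $T_z$ is continuous by assumption and $\psi_r$ is continuous for every $r>0$. By the Hartman--Stampacchia theorem, equivalently Brouwer's fixed-point theorem applied to $w\mapsto\proj_K(w-\widetilde T_z(w))$, there is $w^\star\in K$ with $\ip{\widetilde T_z(w^\star)}{w^\star-u}\le0$ for all $u\in K$. No monotonicity is used, since this existence theorem needs only continuity and compactness. Define the gap function
\[
 \cG(w)=\max_{u\in K}\ip{\widetilde T_z(w)}{w-u}.
\]
It satisfies $\cG\ge0$ (take $u=w$) and $\cG(w^\star)=0$. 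It is continuous, being a maximum over the compact set $K$ of a jointly continuous function.

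\emph{Enumeration and test.} Rational convex combinations of the finitely many vertices of $K$ are dense in $K$; enumerate them in a fixed order. At each candidate $v$, evaluate $\widetilde T_z(v)$ to accuracy $\eta/(8R')$, where $R'=2R$ bounds $\diam_2(K)$. The model $T_z$ is evaluable to any accuracy by assumption. For $\psi_r(v-z)$, use certified rational enclosures of $\nrm{v-z}_2^{r-1}$, valid for real $r$, exactly as in \cref{lem:model-search}. Call the result $h$. Compute $\widehat\cG=\max_{u\in K}\ip h{v-u}$ by one linear program and accept $v$ if $\widehat\cG\le\eta/2$. Because $\nrm{v-u}_2\le R'$ on $K$, we have $|\widehat\cG-\cG(v)|\le\eta/8$. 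So an accepted $v$ has $\cG(v)\le5\eta/8\le\eta$, which is \eqref{eq:vi-model-residual}.

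\emph{Termination.} By continuity of $\cG$ at $w^\star$, every candidate close enough to $w^\star$ has $\cG(v)<\eta/4$, hence $\widehat\cG<\eta/2$ and is accepted. Such candidates exist by density, so the enumeration stops after finitely many steps.

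\emph{Main obstacle.} The delicate point is certifying $\psi_r$ near $w=z$ when $r<1$. There $\psi_r$ is continuous but not Lipschitz, so the evaluation error cannot come from a derivative bound. I would handle it with a uniform estimate: when $\nrm{v-z}_2$ is tiny, $\nrm{\psi_r(v-z)}_2=\nrm{v-z}_2^r$ is itself tiny. Concretely, either bound $\psi_r$ directly by $\nrm{v-z}_2^r$ or compute it from enclosures of $\nrm{v-z}_2^{r}$, using the positive slack $\eta/8$ of the test. Either way only continuity, not a modulus, is needed, and the count of candidates is left unbounded, consistent with the statement that the arithmetic cost is not bounded.
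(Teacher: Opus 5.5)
Your proposal is correct and follows essentially the same route as the paper. Brouwer's theorem for $w\mapsto\proj_K(w-\widetilde T_z(w))$ gives a zero of the continuous residual, and a dense enumeration of rational convex combinations of the vertices, with an $\eta/8$-accurate evaluation and acceptance threshold $\eta/2$, must stop at a point with residual at most $5\eta/8$. The ``main obstacle'' you flag is not really one: each candidate needs only a pointwise evaluation of $\psi_r$, which is zero if $v=z$ and otherwise a real power of a positive number, so no modulus of continuity is needed, as you conclude.
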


\begin{proof}
The map $w\mapsto\proj_K(w-\widetilde T_z(w))$ sends the compact convex set $K$
continuously into itself, so by Brouwer's theorem it has a fixed point $w^*$.
The optimality condition of the projection gives $\ip{\widetilde T_z(w^*)}{u-w^*}\ge0$ for
all $u\in K$, so the left-hand side of \eqref{eq:vi-model-residual} vanishes at
$w^*$. This left-hand side is the maximum over the vertices $u$ of $K$ of the
continuous functions $w\mapsto\ip{\widetilde T_z(w)}{w-u}$, hence continuous, and rational
convex combinations of the vertices are dense in $K$. Enumerate them, evaluate
the residual with absolute error at most $\eta/8$, and accept the first point
whose computed residual is at most $\eta/2$. An accepted point has residual at
most $5\eta/8$, and every candidate close enough to $w^*$ has residual below
$\eta/8$ and is accepted, so the enumeration stops.
\end{proof}

\begin{lemma}[Displacement forced by a cut]\label{lem:vi-holder-step}
Assume \eqref{eq:vi-model-remainder} and \eqref{eq:vi-model-residual}. If
$z'\in K$ and $\ip{F(w)}{w-z'}\ge\varepsilon$, then
\begin{equation}\label{eq:vi-holder-step}
 \varepsilon\le\eta+M\bigl(3\nrm{z'-z}_2\bigr)^{r+1}.
\end{equation}
\end{lemma}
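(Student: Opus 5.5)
The plan is to repeat the one-cut computation from the proof of \cref{thm:vi}(b), with the extragradient projection replaced by the model residual condition \eqref{eq:vi-model-residual} and with the Lipschitz error replaced by the power-law remainder \eqref{eq:vi-model-remainder}. Write $r_0=w-z$ and $d=z'-z$, so that $w-z'=r_0-d$, and put $\rho=\nrm{r_0}_2$, $\sigma=\nrm d_2$. We split
\[
 F(w)=\widetilde T_z(w)+\bigl(F(w)-T_z(w)\bigr)-2M\psi_r(r_0).
\]
Since $z'\in K$, the residual condition gives $\ip{\widetilde T_z(w)}{w-z'}\le\eta$. The remainder bound together with Cauchy--Schwarz gives
\[
 \ip{F(w)-T_z(w)}{r_0-d}\le M\rho^r(\rho+\sigma).
\]
For the regularization term we have
\[
 -2M\ip{\psi_r(r_0)}{r_0-d}=-2M\rho^{r+1}+2M\rho^{r-1}\ip{r_0}{d}\le-2M\rho^{r+1}+2M\rho^r\sigma.
\]
When $r_0=0$ the last two terms vanish, which is consistent with $\psi_r(0)=0$.

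Adding the three bounds to the hypothesis $\varepsilon\le\ip{F(w)}{w-z'}$ yields
\[
 \varepsilon\le\eta+M\bigl(-\rho^{r+1}+3\rho^r\sigma\bigr).
\]
It remains to maximize $\phi(\rho)=-\rho^{r+1}+3\sigma\rho^r$ over $\rho\ge0$. The maximizer is $\rho^*=3r\sigma/(r+1)$, and the maximum equals
\[
 (3\sigma)^{r+1}\,\frac{r^r}{(r+1)^{r+1}}\le(3\sigma)^{r+1}.
\]
A cruder argument also works: if $\rho\ge3\sigma$ then $\phi(\rho)\le0$, and otherwise $\phi(\rho)\le3\sigma\rho^r\le(3\sigma)^{r+1}$. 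Substituting this bound gives \eqref{eq:vi-holder-step}.

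I do not expect a real obstacle here. The only points that need care are the sign of the regularization term and the degenerate case $r_0=0$, and both are handled by the inequality $\ip{r_0}{d}\le\rho\sigma$ and by the convention $\psi_r(0)=0$. Neither monotonicity of $F$ nor membership of $z'$ in a cut is used; the only property of $z'$ needed is $z'\in K$, which is what the residual test requires.
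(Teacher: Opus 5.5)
Your proof is correct and matches the paper's: the same decomposition $F(w)=\widetilde T_z(w)-2M\psi_r(w-z)+(F(w)-T_z(w))$ and the same Cauchy--Schwarz bounds give $\varepsilon\le\eta+M\nrm{w-z}_2^r(3\nrm{z'-z}_2-\nrm{w-z}_2)$, and your ``crude argument'' is exactly the paper's case split on whether $\nrm{w-z}_2\ge3\nrm{z'-z}_2$ (the calculus maximization only sharpens the constant by the factor $r^r/(r+1)^{r+1}$). One wording slip: the hypothesis $\ip{F(w)}{w-z'}\ge\varepsilon$ is itself membership of $z'$ in the new cut, so the only thing that goes unused is membership in the earlier cuts.
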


\begin{proof}
Put $a=w-z$, $b=z'-z$ and $e=F(w)-T_z(w)$, so that
$F(w)=\widetilde T_z(w)-2M\psi_r(a)+e$, $\nrm e_2\le M\nrm a_2^r$ and $w-z'=a-b$. By
\eqref{eq:vi-model-residual} with $u=z'$ and the Cauchy--Schwarz inequality,
\begin{align*}
 \varepsilon\le\ip{F(w)}{a-b}
 &\le\eta-2M\nrm a_2^{r+1}+2M\nrm a_2^r\nrm b_2+M\nrm a_2^r(\nrm a_2+\nrm b_2)\\
 &=\eta+M\nrm a_2^r\bigl(3\nrm b_2-\nrm a_2\bigr).
\end{align*}
If $\nrm a_2\ge3\nrm b_2$, the last term is nonpositive. Otherwise
$\nrm a_2^r\le(3\nrm b_2)^r$ and $3\nrm b_2-\nrm a_2\le3\nrm b_2$.
\end{proof}

\begin{proof}[Proof of \cref{thm:holder-tensor-vi}]
We run \cref{alg:spvi} with the same nested sets and selector accuracy
$\PN/(4N)$, with the projected step replaced by a point $w_t$ satisfying
\eqref{eq:vi-model-residual} at $z=z_t$ with $\eta=\varepsilon_N/2$, and with the
deep cut $C_{t+1}=C_t\cap\{u:\ip{F(w_t)}{w_t-u}\ge\varepsilon_N\}$. Each trial
makes one call at $z_t$, which returns the model, and one at $w_t$; the model
search and the selector make no calls. If $C_1,\dots,C_N$ were all nonempty,
\cref{lem:vi-holder-step} with $z'=z_{t+1}$ would give
$M(6\PN/N)^{r+1}=\varepsilon_N/2\le M(3\nrm{z_{t+1}-z_t}_2)^{r+1}$, so every
displacement would be at least $2\PN/N$ and their sum at least $2\PN$, contrary
to the bound $\frac32\PN$ of \eqref{eq:app-path}. Hence some $C_T$, $T\le N$, is
empty, and the minimax theorem gives $\alpha\in\Delta_T$ with the stated
certificate, computed by a linear program as in \cref{thm:vi}(a). Substituting
$\PN=4R\sqrt{\bN\cn}$ gives the rate, and \cref{lem:mixture-certificate} gives
the weak and the saddle gap.
\end{proof}

\begin{proof}[Proof of \cref{thm:holder-tensor-vi-restart}]
By Brouwer's theorem, applied as in \cref{lem:vi-model-search} with $F$ in place
of $\widetilde T_z$, the variational inequality has a solution, which is unique by strong
monotonicity. Suppose that $K_j\subseteq K$ is a known polytope containing
$\opt$ and contained in an $\ell_1$ ball of radius $\rho_j$ (for $j=0$, the ball
$c+B_1^n(R)$). \Cref{thm:holder-tensor-vi} on $K_j$ with horizon $N_j$ gives a
certificate with right-hand side $2M(24\rho_j\sqrt{\mathsf b_{N_j}\cn}/N_j)^{r+1}$,
which is at most $\mu_1\rho_j^2/4$ by \eqref{eq:vi-holder-restart-horizon}. As in
the proof of \cref{thm:vi}(c), testing the certificate at $u=\opt$ gives
$\mu_1\nrm{\bar w_j-\opt}_1^2\le\mu_1\rho_j^2/4$, so
$\nrm{\bar w_j-\opt}_1\le\rho_{j+1}$ and $K_{j+1}$ contains $\opt$. After $J$
phases the distance is at most $\rho_J\le d_\star$, and phase $j$ uses at most
$2N_j$ calls.

For the explicit count, put $\bar\kappa_j=\bar\kappa(\rho_j)$. Bounding
$\mathsf b_N=O(1+\log N)$ as in the proof of \cref{thm:holder} shows that
\eqref{eq:vi-holder-restart-horizon} holds for every power of two above a
suitable multiple, depending only on $r$, of
$1+\bar\kappa_j^{1/(r+1)}(\cn\log(2+\bar\kappa_j\cn^{(r+1)/2}))^{1/2}$; since
$\bar\kappa_j\le\bar\kappa_\star$ (for $r<1$ because $\rho_{J-1}>d_\star$), this gives
$N_j=O_r(1+\bar\kappa_j^{1/(r+1)}\mathcal H_\star)$. For $r=1$ all $\bar\kappa_j$ equal
$M/\mu_1$. For $r<1$ the sequence $\bar\kappa_j^{1/(r+1)}$ increases geometrically
with ratio $2^{(1-r)/(r+1)}$ and its sum is at most a constant times
$\bar\kappa(d_\star)^{1/(r+1)}$; for $r>1$ it decreases geometrically and its sum
is at most a constant times $\bar\kappa(R)^{1/(r+1)}$. The constants contain
$(1-2^{-|r-1|/(r+1)})^{-1}$ and are not uniform as $r\to1$.
\end{proof}

\subsection{A lower bound for monotone operators}
\label{app:vi-lower}

As noted after \cref{thm:vi}, lower bounds for minimization do not transfer to
the weak gap, so a separate construction is needed. It is affine and
skew-symmetric, so the weak and the strong gap
coincide, and it uses a difference matrix whose rotations are revealed two
columns per query.

\begin{restatable}[Lower bound for monotone operators]{proposition}{thmvilower}
\label{thm:vi-lower}
Let $L,R,R_x,R_y>0$, $N\ge1$ and $m=2N+3$.
\begin{enumerate}[label=(\alph*)]
\item Let $n\ge2m=4N+6$. For every deterministic method that makes at most $N$
global queries of the operator and returns $\widehat z\in B_1^n(R)$ there is an
affine operator $F(z)=Hz+b$ with $H^\top=-H$ and $\nrm H_{2\to2}\le L$, hence
monotone and $L$-Lipschitz on $\R^n$, with a zero $z^\star$ of $F$ in $B_1^n(R/2)$,
such that
\[
 \max_{u\in B_1^n(R)}\ip{F(u)}{\widehat z-u}=\max_{u\in B_1^n(R)}\ip{F(\widehat z)}{\widehat z-u}
 \ \ge\ \frac{LR^2}{8(2N+3)^2}\ \ge\ \frac{LR^2}{200N^2}.
\]
\item Let $n_x,n_y\ge m$. For every deterministic method that makes at most $N$
global queries of $(\Phi,\nabla_x\Phi,\nabla_y\Phi)$ and returns
$(\widehat x,\widehat y)\in B_1^{n_x}(R_x)\times B_1^{n_y}(R_y)$ there is a
bilinear-affine $\Phi(x,y)=\frac L2\ip x{Ay-\gamma e_1}$ with
$\nrm A_{2\to2}\le2$, whose operator $(\nabla_x\Phi,-\nabla_y\Phi)$ is
$L$-Lipschitz, such that the saddle gap
$\max_{y\in B_1^{n_y}(R_y)}\Phi(\widehat x,y)-\min_{x\in B_1^{n_x}(R_x)}\Phi(x,\widehat y)$
is at least
$LR_xR_y/(4(2N+3)^2)$. On $\Delta_{2m}(R_x)\times\Delta_{2m}(R_y)$ the function
$\widetilde\Phi(p,q)=\frac12\Phi(Pp,Pq)$ with $P=(I_m,-I_m)$ has an $L$-Lipschitz
operator and saddle gap at least $LR_xR_y/(8(2N+3)^2)$ for every deterministic
method with at most $N$ global queries.
\end{enumerate}
\end{restatable}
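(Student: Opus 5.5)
We would build the hard instance from a skew-symmetric difference operator and answer queries with an adaptive resisting oracle in the style of \cref{app:lower-det}. We prove (a) first and obtain (b) by reading the same construction as a bilinear game.

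\emph{Construction.} Put $m=2N+3$ and split $z=(x,y)\in\R^m\times\R^m$, using the first $2m\le n$ coordinates. Let $D$ be the $m\times m$ bidiagonal difference matrix $(Dy)_i=y_i-y_{i+1}$ with $y_{m+1}:=0$, so $\nrm D_{2\to2}\le2$. Take
\[
 H=\frac L2\begin{pmatrix}0&D\\-D^\top&0\end{pmatrix},\qquad F(z)=Hz+b,\qquad b=\frac L2(-\gamma e_1,0).
\]
Then $H^\top=-H$ and $\nrm H_{2\to2}\le L$. The coordinates are identified with the abstract chain positions only through a hidden signed permutation. We choose $\gamma$ so that the zero $z^\star$ of $F$ has $\ell_1$-norm at most $R/2$. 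This zero solves $D^\top x=0$ and $Dy=\gamma e_1$, so it is spread evenly along the chain, with entries of order $R/m$.

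\emph{Reduction of the gap.} Skew symmetry gives $\ip{H(\widehat z-u)}{\widehat z-u}=0$, hence $\ip{F(u)}{\widehat z-u}=\ip{F(\widehat z)}{\widehat z-u}$. This is the claimed equality of the weak and strong gaps. Consequently
\[
 \max_{u\in B_1^n(R)}\ip{F(\widehat z)}{\widehat z-u}=\ip{F(\widehat z)}{\widehat z}+R\nrm{F(\widehat z)}_\infty=\ip b{\widehat z}+R\nrm{F(\widehat z)}_\infty ,
\]
where the last step again uses $\ip{H\widehat z}{\widehat z}=0$.

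\emph{Resisting oracle.} Each answer $F(z_t)=Hz_t+b$ involves one new column of $D$ in the $x$-block and one in the $y$-block. The oracle assigns the next two chain positions, as in \cref{app:lower-det}, to the unused coordinates of largest $|z_{t,i}|$, with signs aligned to $z_t$. After $N$ queries at most $2N+2<m$ positions are fixed, and every completion of the permutation reproduces all answers exactly, because an answer depends only on the revealed positions. The step we expect to be the main obstacle is this consistency for global, non-span queries. The difficulty is that $Hz_t$ contains entries coupling a revealed position to its unrevealed neighbour, and the oracle must not commit them. We would handle this by revealing both the position and its chain neighbour at each query, which is where the two columns per query come from, and by checking that the affine answer is unchanged under any completion.

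\emph{Lower bound on the residual.} At the output, fix the remaining positions adversarially, with signs aligned to $\widehat z$ as before. Along the unrevealed tail of length at least $m-2N-2\ge1$, the chain equation $Dy=\gamma e_1$ telescopes. Either some difference $|(D\widehat y)_i-\gamma\delta_{i1}|$ is at least $\gamma/m$, or $\widehat z$ would need $\ell_1$ mass of order $m\cdot\gamma$ on coordinates whose identity it cannot know. Combining this dichotomy with the linear term $\ip b{\widehat z}$ should give $\nrm{F(\widehat z)}_\infty\gtrsim L\gamma/m$. With $\gamma\asymp R/m$ this yields the bound $LR^2/(8m^2)$.

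\emph{Part (b).} Take $\Phi(x,y)=\frac L2\ip x{Dy-\gamma e_1}$ on $B_1^{n_x}(R_x)\times B_1^{n_y}(R_y)$, with $\gamma$ scaled separately for $R_x$ and $R_y$. The same oracle and telescoping argument apply, and the saddle gap decouples into $R_y\nrm{\,\cdot\,}_\infty+R_x\nrm{\,\cdot\,}_\infty$ terms. The simplex version follows by composing with $P=(I_m,-I_m)$ exactly as in \cref{prop:simplex-lift}. Since $\nrm P_{2\to2}=\sqrt2$, the factor $\frac12$ keeps the operator $L$-Lipschitz and halves the gap.
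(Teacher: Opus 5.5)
There is a genuine gap, in the resisting oracle, and the fix you sketch does not repair it. A hidden signed permutation suffices for the max-of-coordinates functions of the deterministic first-order bounds only because their answers are local: near $x_t$ only the top form matters. An affine answer is not local. For a global query $z_t=(x_t,y_t)$, consider the $x$-coordinate that carries chain position $i$. The entry of $Hz_t$ there is $\frac L2(\tilde y_i-\tilde y_{i+1})$, where $\tilde y$ is the $y$-block written in chain coordinates. So this entry depends on which coordinate sits at position $i+1$, and this holds for every $i$ simultaneously. One query at $0$ returns $b$. One further query whose $y$-block is dense with generic entries then lets the method read off the whole chain, hence $H$, and output $z^\star$ exactly. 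Revealing two positions per query cannot make such answers invariant under completion, so your claim that an answer ``depends only on the revealed positions'' fails for non-span queries.

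The missing idea is orthogonal invariance. Hide the chain as $A=UBV^\top$ with orthogonal $U,V$ and $Ue_1=Ve_1=e_1$, built column by column. At query $t$, take $u_{2t},v_{2t}$ to be the normalized components of $x_t,y_t$ orthogonal to the columns already fixed, and take $u_{2t+1},v_{2t+1}$ arbitrary. Then $U^\top x_t$ and $V^\top y_t$ are supported on the first $2t$ positions. Bidiagonality confines $Ay_t$ to $u_1,\dots,u_{2t+1}$ and $A^\top x_t$ to $v_1,\dots,v_{2t}$. Hence every answer, including the value in (b), is fixed by columns already chosen, whatever the query.

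Your chain is also oriented the wrong way. With $(Dy)_i=y_i-y_{i+1}$ and $y_{m+1}=0$, the equation $Dy=\gamma e_1$ has the solution $y=\gamma e_1$, which is not spread along the chain. Moreover $\sum_i(D\widehat y)_i=\widehat y_1$ involves the first, revealed position rather than a hidden one. You need $B\1=e_1$, that is, $(Bv)_i=v_i-v_{i-1}$ with $v_0=0$, so that $\sum_i(Bv)_i=v_m$.

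With this choice no dichotomy is needed. Choose $v_{m-1}$ from the component of $\widehat y$, so that $\widehat v=V^\top\widehat y$ has $\widehat v_m=0$. The entries of $B\widehat v-\gamma e_1$ then sum to $-\gamma$, so its Euclidean norm is at least $\gamma/\sqrt m$. Multiplication by $U$ preserves this norm, which gives $\nrm{A\widehat y-\gamma e_1}_\infty\ge\gamma/m$.

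For (a), control the linear term as follows. From $b=-Hz^\star$ and skew symmetry, $\ip b{\widehat z}=\ip{z^\star}{F(\widehat z)}\ge-\nrm{z^\star}_1\nrm{F(\widehat z)}_\infty$. With $\nrm{z^\star}_1\le R/2$ and $\gamma=R/(2m)$, this yields $LR^2/(8m^2)$.

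For (b), do not try to decouple the gap, because $\max_y\Phi(\widehat x,y)$ contains the term $-\frac L2\gamma\widehat x_1$. Instead bound it below by $\Phi(\widehat x,y^\star)=0$; a single $\gamma=R_y/(2m)$ then suffices. Your identity between the weak and strong gaps and your simplex lift are correct.
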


Part (a) shows that the exponent of the rate $\tO(LR^2/N^2)$ of \SPV{}
(\cref{thm:vi}) is optimal for deterministic methods when $n\ge4N+6$. The
logarithmic gap remains open, and the construction gives no randomized bound,
since its last columns depend on the whole transcript of the method. Part (b)
gives the same for the saddle gap of bilinear games on products of $\ell_1$
balls and of simplices, with the operator measured in the spectral norm. The
constant $8$ on simplices comes from the factor $\frac12$ in $\widetilde\Phi$,
which keeps the lifted operator $L$-Lipschitz because
$\nrm{(I,-I)}_{2\to2}=\sqrt2$. At fixed dimension and large
$N$ no such bound can hold, since an affine operator is identified by $n+1$
queries.

\begin{proof}[Proof of \cref{thm:vi-lower}]
Let $m=2N+3$ and let $B\in\R^{m\times m}$ be the lower bidiagonal difference
matrix, $(Bv)_i=v_i-v_{i-1}$ with $v_0=0$. Then $B=I-S$ for the shift $S$
with $\nrm S_{2\to2}\le1$, so $\nrm B_{2\to2}\le2$; $B\1=e_1$; and
$\sum_i(Bv)_i=v_m$ by telescoping.

\emph{A resisting pair of rotations.} Fix a deterministic method with $N$
queries $(x_t,y_t)\in\R^m\times\R^m$, which receives answers of the form
$\bigl(c(Ay_t-\gamma e_1),\,-cA^\top x_t,\,c\ip{x_t}{Ay_t-\gamma e_1}\bigr)$ with
known $c,\gamma>0$ and $A=UBV^\top$, and returns $(\widehat x,\widehat y)$. We
construct orthogonal $U,V$ with $Ue_1=Ve_1=e_1$ column by column. Put
$u_1=v_1=e_1$. Before query $t$ the columns $u_1,\dots,u_{2t-1}$ and
$v_1,\dots,v_{2t-1}$ are fixed, and the transcript so far determines $x_t,y_t$.
Let $u_{2t}$ be the normalized component of $x_t$ orthogonal to
$\spn\{u_1,\dots,u_{2t-1}\}$ (any unit vector orthogonal to that span if the
component vanishes), and $v_{2t}$ likewise for $y_t$; then
$x_t\in\spn\{u_1,\dots,u_{2t}\}$ and $y_t\in\spn\{v_1,\dots,v_{2t}\}$. Choose
$u_{2t+1},v_{2t+1}$ as arbitrary further orthonormal vectors. Now
$V^\top y_t$ is supported on the first $2t$ coordinates, so $BV^\top y_t$ is
supported on the first $2t+1$ coordinates and $Ay_t=U(BV^\top y_t)$ uses only
$u_1,\dots,u_{2t+1}$; similarly $U^\top x_t$ is supported on the first $2t$
coordinates, $B^\top U^\top x_t$ as well, and $A^\top x_t$ uses only
$v_1,\dots,v_{2t}$. Hence the answer to query $t$, including the value, is
determined by the columns already fixed and is unchanged by any later
completion. After $N$ queries, $2N+1=m-2$ columns of each matrix are fixed
and the output $(\widehat x,\widehat y)$ is determined; choose $u_{m-1},v_{m-1}$
from the orthogonal components of $\widehat x,\widehat y$ as above, and complete
with $u_m,v_m$. The resulting single matrix $A$ reproduces the entire transcript
(the method, being deterministic, makes the same queries and the same output
against $A$), and $\widehat y\in\spn\{v_1,\dots,v_{m-1}\}$, i.e.\ $(V^\top\widehat y)_m=0$.
A method that stops early is padded with ignored queries.

\emph{The key inequality.} Put $\widehat v=V^\top\widehat y$. Since $\widehat v_m=0$,
$\sum_i(B\widehat v-\gamma e_1)_i=\widehat v_m-\gamma=-\gamma$, so
$\nrm{B\widehat v-\gamma e_1}_1\ge\gamma$ and $\nrm{B\widehat v-\gamma e_1}_2\ge\gamma/\sqrt m$.
As $Ue_1=e_1$, $A\widehat y-\gamma e_1=U(B\widehat v-\gamma e_1)$, and therefore
\begin{equation}\label{eq:vi-key}
 \nrm{A\widehat y-\gamma e_1}_\infty\ \ge\ \frac{\nrm{A\widehat y-\gamma e_1}_2}{\sqrt m}
 \ \ge\ \frac\gamma m .
\end{equation}

\emph{(b) Saddle gap on two balls.} Let $c=L/2$, $\gamma=R_y/(2m)$ and
$\Phi(x,y)=c\ip x{Ay-\gamma e_1}$ on $\R^m\times\R^m$, which is bilinear-affine,
hence convex--concave. Its operator is $F(x,y)=H(x,y)+b$ with
$H=c\bigl(\begin{smallmatrix}0&A\\-A^\top&0\end{smallmatrix}\bigr)$, $H^\top=-H$
and $\nrm H_{2\to2}=c\nrm A_{2\to2}\le L$, so $F$ is monotone and $L$-Lipschitz.
The point $(x^\star,y^\star)=(0,\gamma V\1)$ satisfies $Ay^\star=\gamma UBV^\top V\1=\gamma UB\1=\gamma e_1$,
so $\Phi(x,y^\star)=0=\Phi(0,y)$ for all $x,y$: it is a saddle point of value $0$,
and $\nrm{y^\star}_1\le\sqrt m\nrm{y^\star}_2=m\gamma=R_y/2$, so it is feasible. For the
output of the method (whose queries receive the answers used in the
construction),
\begin{align*}
 \max_{y\in B_1^m(R_y)}\Phi(\widehat x,y)-\min_{x\in B_1^m(R_x)}\Phi(x,\widehat y)
 &\ge\Phi(\widehat x,y^\star)-\min_{x\in B_1^m(R_x)}c\ip x{A\widehat y-\gamma e_1}\\
 &=cR_x\nrm{A\widehat y-\gamma e_1}_\infty\ \ge\ \frac{LR_xR_y}{4m^2}
\end{align*}
by \eqref{eq:vi-key}. If $n_x>m$ or $n_y>m$, let $\Phi$ depend on the first
$m$ coordinates of each block only: a global query is answered from its first
$m$ coordinates, the padded operator is still $L$-Lipschitz, and the output's
first $m$ coordinates are feasible. Since $2N+3\le5N$, the
bound is at least $LR_xR_y/(100N^2)$.

\emph{Simplices.} Let $P=(I_m,-I_m)$, so that $P\Delta_{2m}(r)=B_1^m(r)$ for
every $r>0$ and $\nrm P_{2\to2}=\sqrt2$, and let
$\widetilde\Phi(p,q)=\frac12\Phi(Pp,Pq)=\frac L4\ip{Pp}{APq-\gamma e_1}$. Its
operator is $\frac L4\bigl(\begin{smallmatrix}0&P^\top AP\\-P^\top A^\top P&0\end{smallmatrix}\bigr)(p,q)+\text{const}$,
of spectral norm at most $\frac L4\cdot2\cdot2=L$. A method for $\widetilde\Phi$
with global queries $(p_t,q_t)$ defines a method for $\Phi$ with the
queries $(Pp_t,Pq_t)$, since the answers for $\widetilde\Phi$ are those for
$\Phi$ multiplied by known matrices; an output
$(\widehat p,\widehat q)$ on the simplices gives the feasible output
$(P\widehat p,P\widehat q)$ on the balls. Because $P$ maps each simplex onto the
corresponding ball, the saddle gap of $(\widehat p,\widehat q)$ for
$\widetilde\Phi$ equals half the saddle gap of $(P\widehat p,P\widehat q)$ for
$\Phi$, which is at least $LR_xR_y/(8m^2)$ by the previous paragraph applied to
the simulating method.

\emph{(a) Weak gap on one ball.} Let $K=B_1^{2m}(R)$, $c=L/2$, $\gamma=R/(2m)$,
and let $F(z)=Hz+b$ be the operator of $\Phi$ above, now on the single ball.
The point $z^\star=(0,\gamma V\1)$ has $F(z^\star)=0$ and $\nrm{z^\star}_1\le R/2$. For any
affine skew-symmetric operator, $\ip{Hu}u=0$ and $\ip{Hu}z=-\ip{Hz}u$ give
\[
 \max_{u\in K}\ip{F(u)}{z-u}=\ip bz+\max_{u\in K}\ip{-(Hz+b)}u=\ip bz+R\nrm{F(z)}_\infty
 =\max_{u\in K}\ip{F(z)}{z-u},
\]
so the weak and the strong gap coincide. As $b=-Hz^\star$ and $\ip{z^\star}{b}=-\ip{z^\star}{Hz^\star}=0$,
$\ip bz=\ip{z^\star}{Hz}=\ip{z^\star}{F(z)}$, hence
$\max_{u\in K}\ip{F(u)}{z-u}\ge(R-\nrm{z^\star}_1)\nrm{F(z)}_\infty\ge\frac R2\nrm{F(z)}_\infty$.
The method queries the full operator, which is the answer form used in the
construction with $x_t,y_t$ the two blocks of the query; its output
$\widehat z=(\widehat x,\widehat y)$ satisfies \eqref{eq:vi-key}, and the first
block of $F(\widehat z)$ is $c(A\widehat y-\gamma e_1)$, so
$\nrm{F(\widehat z)}_\infty\ge c\gamma/m$ and the weak gap is at least
$\frac R2\cdot\frac L2\cdot\frac{R}{2m^2}=\frac{LR^2}{8m^2}\ge\frac{LR^2}{200N^2}$.
For $n>2m$ let $F$ act on the first $2m$ coordinates and vanish on the others;
the operator remains affine, skew-symmetric, monotone and $L$-Lipschitz, the
support function of $B_1^n(R)$ is still $R\nrm\cdot_\infty$, so the gap identity
and the bound hold with the first $2m$ coordinates of the output.
\end{proof}

\section{Convex quadratics}
\label{app:quadratics}

The Steiner-point methods settle the general smooth class on the $\ell_1$ ball
up to logarithmic factors. For convex quadratics a different mechanism,
\emph{curvature learning}, gives the rate $O_p(LR^2/N^{1+2/p})$ without
logarithms, on every $\ell_p$ ball with $1\le p<2$ and in every dimension. The
matching lower bound, and with it optimality of the exponent, needs $n\ge3N+2$
(\cref{thm:power-chain}); some such condition is necessary, since in dimension
$n$ a quadratic is identified by $n+1$ gradient queries \citep{arjevani2015bounds}. Throughout this section $1\le p<2$,
$K\subseteq B_p^n(1)$ (the general radius is recovered by scaling), and
\begin{equation}\label{eq:quadratic-model}
 f(x)=\tfrac12\ip{x}{Qx}+\ip{b}{x},\qquad 0\preceq Q\preceq LI,
\end{equation}
with $L$ known and $Q$ unknown. The method needs $b$ and products $Qd$, both
obtained from gradients: $b=\nabla f(0)$, and $Qd=\nabla f(d)-\nabla f(0)$ for
the global quadratic. Thus $T$ products require $T+1$ first-order queries. The
query at $0$ and the probe points $d$ need not lie in $K$; these are the only
infeasible queries used by an upper bound in this paper; on $B_p^n(R)$ itself the
query at $0$ and the scaled probe $hd$ with $h\nrm d_p\le R$ are feasible.

\subsection{Mechanism}
\label{sec:cla-mechanism}

An accelerated estimate sequence needs a regularizer with quadratic curvature,
while the comparison with the minimizer needs a regularizer that is bounded on
$B_p^n(1)$. For $p<2$ the function $\frac12\nrm{x}_2^2$ is bounded on the ball but
gives only the Euclidean rate. We use, for a threshold $0<\tau\le1$ and
$c_p=\frac{2-p}{2p}$, the \emph{power--Huber function}
\begin{equation}\label{eq:power-huber}
 \psi_{\tau,p}(s)=\begin{cases}\frac12\tau^{p-2}s^2,&|s|\le\tau,\\[2pt]
 \frac1p|s|^p-c_p\tau^p,&|s|>\tau,\end{cases}
 \qquad
 \Psi_I(x)=\sum_{j\notin I}\psi_{\tau,p}(x_j)+\frac1p\sum_{j\in I}|x_j|^p ,
\end{equation}
where $I\subseteq\{1,\dots,n\}$ is the set of \emph{promoted} coordinates.

\begin{lemma}[Power--Huber regularizer]\label{lem:power-huber}
The function $\psi_{\tau,p}$ is convex and continuously differentiable, $\Psi_I$ is
convex, and on $B_p^n(1)$:
\textup{(i)} $0\le \Psi_I(x)\le 1/p$;
\textup{(ii)} $\psi_{\tau,p}(s)\ge s^2/2$ for $|s|\le1$;
\textup{(iii)} $\psi_{\tau,p}$ has curvature $\tau^{p-2}$ on $[-\tau,\tau]$;
\textup{(iv)} if $|x_j|\ge\tau$, moving $j$ into $I$ increases $\Psi_I(x)$ by exactly
$c_p\tau^p$ and does not change $\partial \Psi_I(x)$.
\end{lemma}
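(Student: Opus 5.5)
The proof is elementary and runs through the one-dimensional function $\psi_{\tau,p}$, with the four items checked on $B_p^n(1)$.

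\emph{Regularity and convexity.} On $[0,\tau]$ the derivative is $\tau^{p-2}s$. On $(\tau,\infty)$ it is $s^{p-1}$. Both branches give the value $\tau^{p-1}$ at $s=\tau$. The function values also match there: $\frac12\tau^p$ on the left, and $\frac1p\tau^p-c_p\tau^p=\tau^p\bigl(\frac1p-\frac{2-p}{2p}\bigr)=\frac12\tau^p$ on the right. So $\psi_{\tau,p}$ is even and $C^1$. Its derivative is nondecreasing on $[0,\infty)$, since both branches are nondecreasing and they agree at $\tau$; by evenness the derivative is odd, hence nondecreasing on all of $\R$, and $\psi_{\tau,p}$ is convex. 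The map $s\mapsto|s|^p/p$ is convex because $p\ge1$. Therefore $\Psi_I$, a sum of convex functions of single coordinates, is convex. Item (iii) is immediate from the quadratic branch.

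\emph{Bounds (i) and (ii).} For the upper bound in (i), I will show $\psi_{\tau,p}(s)\le\frac1p|s|^p$ for all $s$. On the outer branch this holds because $c_p\ge0$. On the inner branch it reads $\frac12\tau^{p-2}s^2\le\frac1p|s|^p$, i.e.\ $(|s|/\tau)^{2-p}\le 2/p$, which holds since $|s|\le\tau$ and $2/p\ge1$. Summing over coordinates gives $\Psi_I(x)\le\frac1p\nrm x_p^p\le\frac1p$ on the ball. Nonnegativity follows from $\psi_{\tau,p}\ge\psi_{\tau,p}(0)=0$, since $0$ is the minimizer of an even convex function.

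For (ii), on $|s|\le\tau\le1$ we have $\tau^{p-2}\ge1$, which gives $\psi_{\tau,p}(s)\ge s^2/2$. On $\tau<|s|\le1$, let $g(s)=\frac1p s^p-c_p\tau^p-\frac12s^2$. Then $g(\tau)=0$ by the value matching above, and $g'(s)=s^{p-1}-s\ge0$ for $s\le1$ because $p\le2$. So $g\ge0$ on $[\tau,1]$, and by evenness the same holds for negative $s$.

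\emph{Item (iv).} If $|x_j|\ge\tau$, the coordinate contributes $\frac1p|x_j|^p-c_p\tau^p$ to $\Psi_I$ when $j\notin I$, and $\frac1p|x_j|^p$ after promotion. The increase is therefore exactly $c_p\tau^p$. For the subdifferential, $\Psi_I$ is separable, so $\partial\Psi_I(x)$ is the product of the one-dimensional subdifferentials. Coordinate $j$ contributes $\psi_{\tau,p}'(x_j)=\sign(x_j)|x_j|^{p-1}$ before promotion. After promotion it contributes the derivative of $|s|^p/p$ at $x_j$, which is the same number; this is a gradient because $x_j\ne0$, as $|x_j|\ge\tau>0$ (this covers $p=1$ too). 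At $|x_j|=\tau$ the equality uses the $C^1$ matching shown first.

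The main obstacle is only bookkeeping at the branch point $|s|=\tau$ and the endpoint case $p=1$, where $c_1=\frac12$ and $|s|^p$ is not differentiable at $0$. That point is never reached in (iv), since there $|x_j|\ge\tau>0$.
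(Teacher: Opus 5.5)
Your proof is correct and follows the paper's argument essentially step by step. It uses the same ingredients: matching values and derivatives of the two branches at $\pm\tau$ for convexity and $C^1$, the pointwise bound $\psi_{\tau,p}(s)\le|s|^p/p$ for (i), monotonicity of $\psi_{\tau,p}(s)-s^2/2$ on $[\tau,1]$ for (ii), and the branch identity together with separability for (iv).

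There is one slip in (ii). The value matching gives $g(\tau)=\frac12\tau^p-\frac12\tau^2$, not $0$. This is still nonnegative because $0<\tau\le1$ and $p\le2$, and nonnegativity is all your monotonicity argument needs (the paper uses exactly this).
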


Acceleration uses property (iii) and the comparison with the minimizer uses
property (i); the two are compatible only if coordinates that leave
$[-\tau,\tau]$ are promoted. By (iv) a promotion keeps the current model
minimizer optimal and increases the potential by $c_p\tau^p$, so at most
$O(\tau^{-p})$ promotions can occur. The method \emph{learns} curvature: at every
stage it queries the product $Qx_{t-1}$ at the previous iterate and, after a
promotion, the product $Qe_j$ for every newly promoted $j$, and it updates a lower model $0\preceq H\preceq Q$
by the positive semidefinite SR1 step
\begin{equation}\label{eq:psd-sr1}
 v=(Q-H)d,\qquad
 H^+=\begin{cases}H+\dfrac{vv^\top}{\ip{d}{v}},&\ip{d}{v}>0,\\[4pt]
 H,&\ip{d}{v}=0.\end{cases}
\end{equation}
The update preserves $0\preceq H\preceq H^+\preceq Q$, makes $(Q-H^+)d=0$, and
keeps all previously learned null directions of $Q-H$. Consequently the residual
$f-h_t$, $h_t(x)=\frac12\ip{x}{H_tx}+\ip bx$, is a positive semidefinite
quadratic that vanishes on the span of the history, including the accelerated
averages at which the estimate sequence evaluates it. Every stage without a full
accelerated step promotes a coordinate, and every promoted coordinate costs one
product, so $T$ products and $m$ promoted coordinates leave at least $T-2m-1$
accelerated steps; the threshold is tuned so that $\tau^{p}\asymp1/N$ and
$m\le N/3$:
\begin{equation}\label{eq:exponent-arithmetic}
 \underbrace{N^{-2}}_{\text{acceleration}}\cdot
 \underbrace{\tau^{2-p}}_{\text{inverse curvature of }\psi_{\tau,p}}
 \asymp N^{-2}\cdot N^{-(2-p)/p}=N^{-(1+2/p)} .
\end{equation}
At $p=1$ this is the inverse-cubic mechanism of \citet{ouyang2026cubic}, whose
regularizer is the Huber function; for $p>1$ the power branch $|s|^p/p$ makes
(i) and (iv) hold simultaneously.

\subsection{The method and its rate}

For a stage $t\ge1$, a step parameter $\gamma\in[0,1)$ and the current promoted set,
the \emph{model} is
\begin{equation}\label{eq:cla-model}
 m_t(\gamma;x)=L_\tau\,\Gamma_{t-1}(1-\gamma)\,\Psi_{I_{t-1}}(x)+h_t(x),
 \qquad r_t(x)=\max_{j\notin I_{t-1}}|x_j| ,
\end{equation}
with $\max\varnothing=0$, $L_\tau=L\tau^{2-p}$, and we write
$X_t(\gamma)=\argmin_K m_t(\gamma;\cdot)$ and
$Y_t(\gamma)=\{x\in X_t(\gamma):r_t(x)\le\tau\}$.
\Cref{alg:cla} states the method.

\begin{algorithm}[t]
\caption{\CLA: curvature-learning acceleration for quadratics on $B_p^n(1)$}
\label{alg:cla}
\begin{algorithmic}[1]
\Require compact convex $K\subseteq B_p^n(1)$, $1\le p<2$; $L$ and $b$; product
oracle $d\mapsto Qd$; budget $T$ with $T+1\ge 6/(2-p)$
\State $\tau\gets[6/((2-p)(T+1))]^{1/p}$, $L_\tau\gets L\tau^{2-p}$, $\Gamma_0\gets\tau^{p-2}$,
$H_0\gets0$, $\mathsf{calls}\gets0$
\State $x_0\gets$ a minimizer of $L_\tau\Gamma_0\Psi_\varnothing(x)+\ip bx$ over $K$;
$\bar x_0\gets x_0$; $I_0\gets U_0\gets\{j:|x_{0,j}|\ge\tau\}$
\For{$t=1,2,\dots$}
 \State $H\gets H_{t-1}$
 \For{$d=e_j$, $j\in U_{t-1}$, and then $d=x_{t-1}$}
  \State query $Qd$; $\mathsf{calls}\gets\mathsf{calls}+1$
   \Comment{redundant directions are also counted}
  \If{$\mathsf{calls}=T$} \Return $\bar x_{t-1}$ \EndIf
  \State update $H$ by \eqref{eq:psd-sr1}
 \EndFor
 \State $H_t\gets H$; \ $\bar\gamma_t\gets$ the root in $(0,1)$ of
 $\bar\gamma^2=\Gamma_{t-1}(1-\bar\gamma)$
 \If{some $x\in X_t(0)$ has $r_t(x)\ge\tau$}
  \State $\gamma_t\gets0$, $x_t\gets x$ \Comment{promotion step}
 \ElsIf{$Y_t(\bar\gamma_t)\neq\varnothing$}
  \State $\gamma_t\gets\bar\gamma_t$, $x_t\in Y_t(\bar\gamma_t)$ \Comment{full accelerated step}
 \Else
  \State
$\gamma_t\gets\max\{\gamma\in[0,\bar\gamma_t]:Y_t(\gamma)\ne\varnothing\}$;
  $x_t\in Y_t(\gamma_t)$ with $r_t(x_t)=\tau$ \Comment{partial step}
 \EndIf
 \State $\Gamma_t\gets(1-\gamma_t)\Gamma_{t-1}$;\quad
 $\bar x_t\gets(1-\gamma_t)\bar x_{t-1}+\gamma_tx_t$
 \State $U_t\gets\{j\notin I_{t-1}:|x_{t,j}|\ge\tau\}$ if $\gamma_t<\bar\gamma_t$,
 else $U_t\gets\varnothing$;\quad $I_t\gets I_{t-1}\cup U_t$
\EndFor
\end{algorithmic}
\end{algorithm}

The step selection is well defined (\cref{app:cla-selection}), and every
stage with $\gamma_t<\bar\gamma_t$ promotes at least one new coordinate. The
algorithm assumes exact minimization of the known models and an exact choice of
the largest admissible $\gamma$; both assumptions are removed below.

\begin{restatable}[Rate of \CLA]{proposition}{thmcla}
\label{thm:cla}
Let $1\le p<2$ and let $T$ be an integer with $T+1\ge6/(2-p)$. After $T$
products, \cref{alg:cla} returns $\bar x\in K$ with
\begin{equation}\label{eq:cla-rate}
 f(\bar x)-\min_Kf\ \le\ \frac{C_pL}{(T+1)^{2/p}(T-2)},\qquad
 C_p=\frac{9}{2p}\Bigl(\frac{6}{2-p}\Bigr)^{(2-p)/p}.
\end{equation}
For $K\subseteq B_p(c,R)$ and a quadratic with Euclidean smoothness $L$, the method
applied to $u\mapsto f(c+Ru)$ uses at most $T+1$ first-order queries and the
bound becomes $C_pLR^2/[(T+1)^{2/p}(T-2)]$. The quadratic lower bound of
\cref{thm:power-chain} (dimension $n\ge3N+2$) shows that the exponent $1+2/p$
is optimal for every fixed $p<2$.
\end{restatable}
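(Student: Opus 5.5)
The plan is an estimate-sequence argument with a potential that combines the accelerated weight $\Gamma_t$ and the regularizer $\Psi_{I_t}$, and that is charged only for promotions. Define the lower estimate functions $\phi_t(x)=L_\tau\Gamma_t\Psi_{I_t}(x)+\sum_{s\le t}(\text{weights})\,\ell_s(x)$. Here the $\ell_s$ are linearizations of the learned model $h_s$ at the averages $\bar x_{s-1}$. Because the residual $f-h_s$ is a positive semidefinite quadratic vanishing on the span of the history, these linearizations lower-bound $f$ and are exact at the averages.

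I would prove by induction that
\[
 f(\bar x_t)\ \le\ \min_K\phi_t+\Gamma_t\cdot(\text{accumulated promotion cost}).
\]
The three kinds of stage are handled separately.

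\emph{Full steps.} Since $x_t\in Y_t(\bar\gamma_t)$ has all unpromoted coordinates in $[-\tau,\tau]$, property (iii) of \cref{lem:power-huber} gives curvature $L_\tau\tau^{p-2}\Gamma_t$ of the regularizer in the relevant coordinates. The choice $\bar\gamma_t^2=\Gamma_{t-1}(1-\bar\gamma_t)$ then matches the smoothness term $\frac{L}{2}\gamma_t^2\nrm{x_t-x_{t-1}}_2^2$ of the quadratic $f$ between $\bar x_t$ and the mixed point. This is the usual Nesterov step, with $L$ replaced by $L_\tau\tau^{p-2}=L$ on coordinates where $\psi_{\tau,p}$ is quadratic. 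On promoted coordinates, $|x|^p/p$ is used only for boundedness, so I need its strong-convexity-free treatment: the exactness $(Q-H_t)e_j=0$ for promoted $j$ should make the smoothness remainder in those directions vanish.

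\emph{Promotion and partial steps.} With $\gamma_t<\bar\gamma_t$, property (iv) shows that adding $U_t$ to $I$ keeps the model minimizer optimal and raises $\Psi$ by $c_p\tau^p$ per coordinate. Each such stage therefore adds at most $L_\tau\Gamma_{t-1}c_p\tau^p|U_t|$ to the estimate. Partial steps are full steps with a smaller $\gamma$, which is still admissible because $\gamma\le\bar\gamma_t$ keeps the smoothness inequality.

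Next comes the counting. By property (i), $\Psi_I\le 1/p$ on $B_p^n(1)$, so the number $m$ of promoted coordinates satisfies $m c_p\tau^p\le 1/p+\text{(initial)}$. The choice $\tau^p=6/((2-p)(T+1))$ gives $m\le (T+1)/3$. Each non-full stage costs at least one product for a newly promoted $e_j$ plus one for $x_{t-1}$, so at least $T-2m-1\ge (T-2)/3$ full steps occur. Along full steps $\Gamma_t\lesssim 1/k^2$ with $\Gamma_0=\tau^{p-2}$, giving $\Gamma_T\lesssim 9/(T-2)^2\cdot$const. Comparing $\phi_T$ with $f(x^\star)$ via $\Psi\le1/p$ gives
\[
 f(\bar x_T)-f^\star\le L_\tau\Gamma_T\cdot\frac1p\cdot O(1)\asymp \frac{L\tau^{2-p}}{T^2}=O\!\Big(\frac{L}{T^{2/p}\,T}\Big),
\]
with the constants tracked to $C_p$.

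The scaling claim follows by applying the result to $u\mapsto f(c+Ru)$, which has smoothness $LR^2$ and uses $T$ products plus $\nabla f$ at $c$. Optimality then follows from \cref{thm:power-chain}.

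I expect the main obstacle to be the full-step inequality. The estimate sequence evaluates the residual $f-h_t$ at the new average $\bar x_t$, which involves $x_t$, and $x_t$ is not in the learned span when the step is taken. I would handle this by bounding $f(\bar x_t)$ with $h_t$ plus the residual along $\bar x_t-\text{(mixed point)}=\gamma_t(x_t-x_{t-1})$. The quadratic residual is then controlled by $\frac L2\gamma_t^2\nrm{\cdot}_2^2$, and only its non-promoted part survives, because promoted directions are learned. This is exactly where the curvature bound $\tau^{p-2}$ on $[-\tau,\tau]$ is needed.
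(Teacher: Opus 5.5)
\textbf{Main gap: how promotions are counted.} Promotions must enter the estimate sequence as a \emph{gain}, and that gain is exactly what bounds the number $m$ of promoted coordinates. Your invariant $f(\bar x_t)\le\min_K\phi_t+\Gamma_t\cdot(\text{accumulated promotion cost})$ has the opposite sign. Under it a promotion only weakens the inequality, so nothing in it limits $m$.

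Property (i) of \cref{lem:power-huber} cannot supply the bound by itself. It controls $\Psi_I$ at one point, but the coordinates in $I_J$ exceeded $\tau$ at different iterates. Without a bound on $m$, your count of full steps $r\ge T-2m-1$ says nothing.

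The paper sets $W_t=\lambda_t\Psi_{I_t}(x_t)+h_t(x_t)=\min_K(\lambda_t\Psi_{I_t}+h_t)$; by (iv), promotion keeps $x_t$ optimal. It then proves
\[
 W_t\ge(1-\gamma_t)W_{t-1}+\gamma_th_t(x_t)+\tfrac{L\gamma_t^2}{2}\nrm{(x_t-x_{t-1})_{I_{t-1}^c}}_2^2+Lc_p\tau^2\Gamma_t|U_t| .
\]
Subtracting the matching upper bound for $f(\bar x_t)$ and dividing by $\Gamma_t$ telescopes to $W_t-f(\bar x_t)\ge Lc_p\tau^2\Gamma_t|I_t|$. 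The initial promotions also enter as a gain, via $\Psi_\varnothing(x_0)\ge\frac12\nrm{x_0}_2^2$, not as an extra term.

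Combined with $W_t\le\fstar+L\tau^{2-p}\Gamma_t/p$, this gives the gap bound $L\tau^{2-p}\Gamma_t(\frac1p-c_p\tau^p|I_t|)$. Its nonnegativity is what yields $m\le(T+1)/3$.

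The same factor is also needed for the constant. Write $\frac1p-c_p\tau^pm=t_0/(p(T+1))$ with $t_0=T+1-3m$, and trade it against $\Gamma_J\le4/(T-2m)^2$ using $(u+2t_0)^2\ge8ut_0$, $u=T-2$. This gives $9L\tau^{2-p}/(2p(T+1)(T-2))$. If instead you bound the factor by $1/p$ and $T-2m$ by $(T-2)/3$ separately, you lose a factor of at least $8$, so $C_p$ is out of reach that way. Two smaller points: the case $m=(T+1)/3$ gives zero error and is treated separately, and in general $T-2m-1\ge(T-5)/3$, not $(T-2)/3$.

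\textbf{Second problem: the lower model.} Your $\phi_t$ is built from linearizations $\ell_s$. The remainder of $f$ over a linearization is governed by $Q$, not by $Q-H_t$. So the cancellation in learned and promoted directions that you rely on is not available for $\phi_t$; for $p=1$ the promoted terms $|x_j|$ have no curvature to absorb that remainder. Moreover, $x_t$ is not a minimizer of $\phi_t$. \Cref{alg:cla} minimizes $L_\tau\Gamma_{t-1}(1-\gamma)\Psi_{I_{t-1}}+h_t$, which contains the full learned quadratic $h_t$.

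The estimate function has to be the algorithm's own model. The facts you then need are:
\begin{itemize}
\item $h_{t-1}\le h_t\le f$ (\cref{lem:psd-sr1}), used for the $W$-recursion and for the comparison with $\fstar$;
\item $Q-H_t$ annihilates $x_0,\dots,x_{t-1}$, every $e_j$ with $j\in I_{t-1}$, and hence $\bar x_{t-1}$.
\end{itemize}
The second fact gives $f(\bar x_t)\le(1-\gamma_t)f(\bar x_{t-1})+\gamma_th_t(x_t)+\frac{L\gamma_t^2}{2}\nrm{(x_t-x_{t-1})_{I_{t-1}^c}}_2^2$. Your last paragraph sketches this $f$-side inequality correctly, but your $\phi_t$ is not consistent with it.
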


At $p=1$, $C_1=27$ and \eqref{eq:cla-rate} is the bound of
\citet{ouyang2026cubic}. The constant tends to $9/4$ as $p\to2$, while the
smallest admissible $T$, $\lceil6/(2-p)\rceil-1$, diverges; at $p=2$ Euclidean acceleration
gives the matching $N^{-2}$. \Cref{tab:quadratic-exponents} lists some exponents: the
upper bounds are \cref{thm:cla,cor:cla-restart} (Euclidean acceleration for $p=2$),
and the matching lower bounds are \cref{thm:power-chain,thm:sc-constant-reduction}.

\begin{table}[t]
\centering
\caption{Quadratics on $B_p^n(R)$: error after $N$ queries, and queries per
halving of the error for strongly convex quadratics.}
\label{tab:quadratic-exponents}
\begin{tabular}{lcccl}
\toprule
$p$ & $1$ & $4/3$ & $3/2$ & $2$\\
\midrule
convex rate & $N^{-3}$ & $N^{-5/2}$ & $N^{-7/3}$ & $N^{-2}$\\
queries per halving & $\kappa_1^{1/3}$ & $\kappa_{4/3}^{2/5}$ &
$\kappa_{3/2}^{3/7}$ & $\kappa_2^{1/2}$\\
\bottomrule
\end{tabular}
\end{table}

\paragraph{A certified implementation.}
Exact model minimization and the exact largest admissible step can be replaced by
finite procedures without changing the exponent. The certified version of \CLA{}
(\cref{alg:cla-certified} in \cref{app:cla-certified}) makes three changes:
(i) each model, stabilized by $\frac\zeta2\nrm{x}_2^2$, is solved only to an
$\eta$-certificate (\cref{def:inner-certificate}); (ii) the step is selected by a
bisection that stops as soon as the largest unpromoted coordinate lies in the
band $[(1-\sigma)\tau,\tau]$, $\sigma=c_p\tau/16$, and coordinates are promoted
from $(1-\sigma)\tau$ on; (iii) the SR1 matrix is replaced by the ridge model
\begin{equation}\label{eq:ridge-model}
 H_\delta(S)=QV_S\bigl(V_S^\top QV_S+\delta I\bigr)^{-1}V_S^\top Q ,
\end{equation}
where $V_S$ is a matrix whose columns form an orthonormal basis of the learned
span $S$. Unlike \eqref{eq:psd-sr1}, the ridge model needs no exact test
$\ip dv=0$; it satisfies $0\preceq H_\delta(S)\preceq Q$, increases with $S$, and
$\ip v{(Q-H_\delta(S))v}\le\delta\nrm v_2^2$ for $v\in S$ (\cref{lem:ridge}). The
clipped-coordinate inequality \eqref{eq:clipped-huber} shows that an objective
certificate controls the clipped coordinates, which are all the threshold test
needs, although $\psi_{\tau,p}$ has little curvature far from the origin.

For inexact inner minimization we use the following certificate, which can be
checked by one linear minimization over $K$.

\begin{definition}[Certified inner solution]\label{def:inner-certificate}
Let $\mathcal J$ be a known convex function on $K$ and $\eta\ge0$. A triple
$(x,g,\underline s)$ with $x\in K$, $g\in\partial\mathcal J(x)$ and
$\underline s\le\min_{y\in K}\ip{g}{y}$ is an $\eta$-certificate for $\mathcal J$ if
$\ip{g}{x}-\underline s\le\eta$. It implies $\mathcal J(x)-\min_K\mathcal J\le\eta$.
\end{definition}

\begin{restatable}[Certified implementation]{proposition}{thmclacertified}
\label{thm:cla-certified}
Let $1\le p<2$, let $T+1\ge24/(2-p)$, and assume the certificate interface of
\cref{def:inner-certificate} is available on $K$. With the parameters of
\eqref{eq:certified-parameters}, the certified version of \CLA{} performs finitely
many certified inner solves and exact-real comparisons, uses $T$ products, and
returns $\bar x\in K$ with
\begin{equation}\label{eq:cla-certified-rate}
 f(\bar x)-\min_Kf\le\frac{\widetilde C_pL}{(T+1)^{2/p}(T-2)},\qquad
 \widetilde C_p=\frac{18}{p}\Bigl(\frac{24}{2-p}\Bigr)^{(2-p)/p},
 \qquad \widetilde C_1=432.
\end{equation}
\end{restatable}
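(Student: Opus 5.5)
The plan is to rerun the potential argument behind \cref{thm:cla}, keeping track of three error sources: the ridge residual $\delta$, the inner accuracy $\eta$ of \cref{def:inner-certificate}, and the band width $\sigma$ of the step selection. Each will be charged against the main term $L_\tau\Gamma_T$. The parameters in \eqref{eq:certified-parameters} are chosen so that each error adds at most a constant multiple of that term. The larger admissibility threshold $T+1\ge24/(2-p)$ in place of $6/(2-p)$ comes from a smaller $\tau$, namely $\tau^p=24/((2-p)(T+1))$, and this change is what turns $C_p$ into $\widetilde C_p$.

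\emph{Step 1: the lower model.} I will use \cref{lem:ridge}. It gives $0\preceq H_\delta(S)\preceq Q$, monotonicity in $S$, and $\ip v{(Q-H_\delta(S))v}\le\delta\nrm v_2^2$ on $S$. Together these replace the exact identity $(Q-H)d=0$ of \eqref{eq:psd-sr1}. The residual $f-h_t$ is therefore still a positive semidefinite quadratic. At the averages $\bar x_t$, which lie in the learned span, it is at most $\frac\delta2\nrm{\bar x_t}_2^2\le\frac\delta2$, since $\nrm x_2\le\nrm x_p\le1$. Summed over the estimate sequence with weights $\gamma_t$, this contributes at most $\delta$ per stage in the worst case. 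Choosing $\delta\lesssim L_\tau\Gamma_T/T$ keeps the contribution $O(L_\tau\Gamma_T)$.

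\emph{Step 2: inexact model minimizers.} In the estimate-sequence inequality, the exact optimality $m_t(\gamma;y)\ge m_t(\gamma;x_t)$ will be replaced by the first-order relation supplied by an $\eta$-certificate: $m_t(y)\ge m_t(x_t)+\ip g{y-x_t}\ge m_t(x_t)-\eta$. The stabilizer $\frac\zeta2\nrm x_2^2$ gives enough strong convexity to turn an objective certificate into proximity of $x_t$ to the exact minimizer. Where it does not, the clipped-coordinate inequality \eqref{eq:clipped-huber} controls $\min\{|x_j|,\tau\}$ for the unpromoted coordinates, which is all the threshold test needs. The cost is an additive $\eta+\zeta/2$ per stage, and I will set both to be $\lesssim L_\tau\Gamma_T/T$.

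\emph{Step 3: bisection and promotions.} The bisection stops once $r_t(x_t)\in[(1-\sigma)\tau,\tau]$. Coordinates are promoted from $(1-\sigma)\tau$ onward, so property (iv) of \cref{lem:power-huber} holds only approximately. Promoting a coordinate with $|x_j|\ge(1-\sigma)\tau$ still raises $\Psi$ by at least $c_p\tau^p-O(\sigma\tau^{p})$, and $\sigma=c_p\tau/16$ keeps this at least $\frac12c_p\tau^p$. Because $0\le\Psi\le1/p$, the number of promotions is then $m\le 2/(pc_p\tau^p)$, which with the new $\tau$ is at most $(T+1)/6$. Each promotion costs one product, so at least $T-2m-1$ full accelerated steps remain, and $\Gamma_T\lesssim\tau^{p-2}/T^2$. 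The accompanying change in the subgradient is of order $\sigma$, and its cost in the potential is of the same order as the increase, so it can be absorbed.

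\emph{Combining.} Termination is immediate: the bisection depth is bounded by $\log(1/\sigma)$, and each step is one certified solve plus exact comparisons. The final estimate will be $f(\bar x)-\min f\le L_\tau\Gamma_T(1/p+O(1))$, which gives \eqref{eq:cla-certified-rate} with $\widetilde C_p=\frac{18}{p}(24/(2-p))^{(2-p)/p}$. I expect Step 2 to be the main obstacle. The threshold decision has to be robust to inexact minimization even though $\psi_{\tau,p}$ is nearly flat far from the origin. The approach I would try first is to compare the clipped coordinates of the certified point with those of the exact minimizer, using the stabilizer together with \eqref{eq:clipped-huber}, and to show that the misclassification is absorbed by the band $[(1-\sigma)\tau,\tau]$.
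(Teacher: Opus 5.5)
Your overall plan matches the paper's: rerun the estimate sequence of \cref{thm:cla} and charge $\delta$, $\eta+\zeta$ and the band to the main term. But Step 1 has a genuine gap. The averages $\bar x_t$ do \emph{not} lie in the span $S$ from which $h_t$ is built. At stage $t$ that span contains $x_0,\dots,x_{t-1}$ and $e_j$, $j\in I_{t-1}$, while $x_t$ is computed afterwards from $h_t$. Even in the exact proof the residual at $\bar x_t$ is not small. It equals $\frac{\gamma_t^2}{2}\ip{u_t}{(Q-H_t)u_t}$ with $u_t=(x_t-x_{t-1})_{I_{t-1}^c}$, and this is the acceleration error that the regularizer's curvature must pay for. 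What lies in $S$ is $v=\bar x_t-\gamma_tu_t$, with $\nrm{v}_2\le3$. Put $E=Q-H_\delta(S)$; it no longer annihilates $v$. So $\frac12\ip{\bar x_t}{E\bar x_t}$ contains the cross term $\gamma_t\ip{v}{Eu_t}$, which is only of order $\gamma_t\sqrt{\delta L}\nrm{u_t}_2$ and is not $O(\delta)$ however small $\delta$ is. The paper splits it by Young's inequality in the $E$-seminorm, $\frac12\ip{\bar x_t}{E\bar x_t}\le\ip{v}{Ev}+\gamma_t^2\ip{u_t}{Eu_t}\le9\delta+L\gamma_t^2\nrm{u_t}_2^2$. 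This doubles the acceleration term. It is affordable only because \eqref{eq:certified-parameters} takes $L_\tau=2L\tau^{2-p}$: the unpromoted curvature of $\lambda_t\Psi_{I_t}$ is then $2L\Gamma_t$, and the $W$-recursion yields $L\gamma_t^2\nrm{u_t}_2^2$. Your plan never uses this factor $2$, and your bound $\frac\delta2\nrm{\bar x_t}_2^2$ is false. For the same reason, in Step 2 you must turn the certificate into the local curvature inequality \eqref{eq:cla-local-curvature} with an additive error, keeping the Bregman term of $\lambda\Psi$. The bound $m_t(y)\ge m_t(x_t)-\eta$ alone is not enough, since that curvature is what cancels the term above.

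Your Step 3 bookkeeping would not give $\widetilde C_p$ either. The new $\tau$ is larger, not smaller: $\tau^p$ is four times that of \cref{alg:cla}, and $T+1\ge24/(2-p)$ just ensures $\tau\le1$. The count $m\le(T+1)/6$ from $0\le\Psi\le1/p$ ignores the accumulated errors. After normalization by $\Gamma_t\ge(T+1)^{-2}$ these are budgeted at $L_\tau\Gamma_J/p$, as large as the main term, and they enter the same inequality. The paper puts everything into the single ledger \eqref{eq:certified-ledger}. There the band cost $2\lambda_t\sigma\tau^{p-1}$ per promoted coordinate is one eighth of the gain $\lambda_tc_p\tau^p$, which is what $\sigma=c_p\tau/16$ is for. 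This gives $\pi_\sigma-2\sigma/\tau\ge c_p/2$ and $f(\bar x_J)-\fstar\le\frac{2L_\tau\Gamma_J}{p}\bigl(1-\frac{3m}{T+1}\bigr)$. Hence $m\le(T+1)/3$, and the optimization over $m$ from \cref{thm:cla} gives exactly $9L_\tau/(p(T+1)(T-2))$, i.e.\ $\widetilde C_p$. The factor $4$ in $\widetilde C_p/C_p=4\cdot4^{(2-p)/p}$ comes from $L_\tau=2L\tau^{2-p}$ and the doubled budget $2/p$, not from the change of $\tau$. Finally, the bisection depth is not bounded by $\log(1/\sigma)$. Finiteness needs two facts. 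First, the $\zeta$-strong monotonicity makes $\gamma\mapsto X(\gamma)$ Lipschitz with constant $L_\tau\Gamma_{t-1}n^{1/p-1/2}/\zeta$. Second, \eqref{eq:clipped-huber} keeps the clipped coordinates of certified points within $\xi$ of those of $X(\gamma)$. The separation $\sigma\tau-2\xi>0$ between the two endpoint types then bounds the depth.
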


\subsection{Restarts and the quadratic assumption}

\paragraph{Restarts under quadratic growth.}
Suppose that $f$ satisfies the quadratic growth condition
\begin{equation}\label{eq:quadratic-growth}
 f(x)-\fstar\ge\frac{\mu_p}2\dist_p(x,X^\star)^2\qquad(x\in K),
\end{equation}
where $X^\star=\argmin_Kf$ and $\dist_p(x,X^\star)=\min_{y\in X^\star}\nrm{x-y}_p$;
strong convexity in $\ell_p$ implies it. If
$f(z)-\fstar\le\Delta$, every point of $X^\star$ closest to $z$ lies in
$B_p(z,r)$ with $r^2=2\Delta/\mu_p$, so \CLA{} can be restarted on
$K\cap B_p(z,r)$.

\begin{restatable}[Restarted \CLA]{corollary}{corclarestart}
\label{cor:cla-restart}
Assume \eqref{eq:quadratic-growth}, a point $z_0\in K$ with
$f(z_0)-\fstar\le\Delta_0$, and $0<\varepsilon<\Delta_0$. Let
$\alpha_p=1+2/p$ and
\[
 M_p=\Bigl\lceil\max\Bigl\{\frac{6}{2-p}-1,\
 \bigl(8C_pL/\mu_p\bigr)^{1/\alpha_p}\Bigr\}\Bigr\rceil .
\]
Running \cref{alg:cla} with budget $M_p$ on $K\cap B_p(z_j,r_j)$,
$r_j^2=2^{1-j}\Delta_0/\mu_p$, and setting $z_{j+1}$ to its output, gives an
$\varepsilon$-solution after at most
$(M_p+1)\lceil\log_2(\Delta_0/\varepsilon)\rceil
=O_p\bigl((1+\kappa_p^{p/(p+2)})\log(2\Delta_0/\varepsilon)\bigr)$
first-order queries. No uniqueness of the minimizer is needed.
\end{restatable}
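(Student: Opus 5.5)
The plan is a restart argument in the pattern of \cref{thm:restart}, built on three ingredients: \cref{thm:cla} as a black box on each localized domain, quadratic growth \eqref{eq:quadratic-growth} to pass from a gap bound to an $\ell_p$ localization radius, and scale invariance of the \CLA{} guarantee.

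\textbf{Invariant.} I would prove by induction on $j$ the statement
\[
 f(z_j)-\fstar\le\Delta_j:=2^{-j}\Delta_0,\qquad z_j\in K .
\]
For $j=0$ this is the hypothesis on $z_0$.

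\textbf{Localization.} Assume the invariant at step $j$, and let $y\in X^\star$ be a point closest to $z_j$ in $\nrm\cdot_p$. Quadratic growth gives $\frac{\mu_p}{2}\nrm{z_j-y}_p^2\le\Delta_j$, so $\nrm{z_j-y}_p\le r_j$ with $r_j^2=2\Delta_j/\mu_p=2^{1-j}\Delta_0/\mu_p$. Set $K_j=K\cap B_p(z_j,r_j)$. This set is compact and convex, and it contains $y$, so
\[
 \min_{K_j}f=\fstar .
\]
This is exactly why uniqueness of the minimizer is never used: one minimizer in $K_j$ suffices.

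\textbf{One phase.} Apply \cref{alg:cla} to $u\mapsto f(z_j+r_ju)$ on $(K_j-z_j)/r_j\subseteq B_p^n(1)$ with budget $M_p$. The requirement $M_p+1\ge6/(2-p)$ holds by the first term in the definition of $M_p$. The scaled version of \cref{thm:cla} then gives at most $M_p+1$ queries and an output $z_{j+1}\in K_j\subseteq K$ with
\[
 f(z_{j+1})-\fstar\le\frac{C_pLr_j^2}{(M_p+1)^{2/p}(M_p-2)} .
\]

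\textbf{Closing the induction.} Substitute $r_j^2=2\Delta_j/\mu_p$. It suffices that
\[
 (M_p+1)^{2/p}(M_p-2)\ge 4C_pL/\mu_p ,
\]
since then the error is at most $\Delta_j/2=\Delta_{j+1}$. This numerical step is the one real obstacle, because $M_p-2$ can be small compared with $M_p$. I would use $M_p\ge6/(2-p)-1\ge5$, hence $M_p-2\ge\tfrac35 M_p$. Then
\[
 (M_p+1)^{2/p}(M_p-2)\ge\tfrac35 M_p^{\alpha_p}\ge\tfrac35\cdot 8C_pL/\mu_p\ge 4C_pL/\mu_p ,
\]
using the second term in the definition of $M_p$. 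This is where the factor $8$ in $M_p$ is spent.

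\textbf{Counting.} After $J=\lceil\log_2(\Delta_0/\varepsilon)\rceil$ phases, $\Delta_J\le\varepsilon$, so $z_J$ is an $\varepsilon$-solution. The total number of queries is at most $(M_p+1)J$. Finally, $M_p=O_p(1+\kappa_p^{1/\alpha_p})$ and $1/\alpha_p=p/(p+2)$, which together with $J\le\log_2(2\Delta_0/\varepsilon)$ gives the stated $O_p$ bound.
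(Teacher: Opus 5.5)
Your proposal is correct and follows the paper's proof: induction on $f(z_j)-\fstar\le2^{-j}\Delta_0$, localization of a (not necessarily unique) minimizer in $K\cap B_p(z_j,r_j)$ by quadratic growth, the rescaled \cref{thm:cla} with budget $M_p$ in each phase, and the count $(M_p+1)\lceil\log_2(\Delta_0/\varepsilon)\rceil$. The only difference is cosmetic: the paper closes the halving step with $(T+1)^{2/p}(T-2)\ge T^{\alpha_p}/2$ for $T\ge5$, where you use $M_p-2\ge\frac35M_p$, and either way the factor $8$ in $M_p$ gives error at most $\Delta_j/2$.
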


By \cref{thm:sc-constant-reduction}, the power $\kappa_p^{p/(p+2)}$ cannot be
improved for a constant-factor reduction. Restarting under quadratic growth is
standard \citep{fercoq2019restart}.

\paragraph{Why only quadratics.}
The proof of \cref{thm:cla} uses that the residual $f-h_t$ is a fixed positive
semidefinite quadratic vanishing on every learned direction. Off the quadratic
class this invariant can fail in one dimension: for $f=\log\cosh$ the secant
slope learned between $0$ and $a>0$ is
$H=\tanh(a)/a$, while $f''(a)=\operatorname{sech}^2 a<H$, so $f-\frac12Hx^2$ is
not convex near $a$. The Steiner-point methods do not model curvature and store
only first-order cuts, so the difficulty does not arise for them.

\subsection{Proofs for curvature learning}
\label{app:cla}

Throughout this subsection $1\le p<2$, $K\subseteq B_p^n(1)$ is nonempty,
compact and convex, and $f(x)=\frac12\ip x{Qx}+\ip bx$ with $0\preceq Q\preceq LI$.
The vector $b$ and the bound $L$ are known, and one call returns a product
$Qd$. For a global quadratic one call takes one gradient query, and obtaining
$b$ takes one more (see the beginning of this section). Exact minimization of known models
and the exact step selection of \cref{alg:cla} are internal operations;
\cref{app:cla-certified} replaces them by finite procedures.

\subsubsection{The power--Huber regularizer}
\label{app:power-huber}

\begin{proof}[Proof of \cref{lem:power-huber}]
The two branches of $\psi_{\tau,p}$ have the same value and the same derivative
$\pm\tau^{p-1}$ at $s=\pm\tau$, and both are convex; hence $\psi_{\tau,p}$ is
convex and $C^1$, and $\Psi_I$ is convex as a sum of convex functions. Property
(iii) is immediate.

For (i), write $|s|=\tau u$ with $0\le u\le1$ inside the threshold; then
$\frac12\tau^{p-2}s^2=\frac12\tau^pu^2\le\frac1p\tau^pu^p=|s|^p/p$ because
$u^2\le u^p$ and $\frac12\le\frac1p$. Outside the threshold
$\psi_{\tau,p}(s)=|s|^p/p-c_p\tau^p\le|s|^p/p$. Hence
$0\le \Psi_I(x)\le\sum_j|x_j|^p/p\le1/p$ on $B_p^n(1)$; nonnegativity is clear on
both branches, since $\tau^p/p-c_p\tau^p=\tau^p/2>0$ and $|s|^p/p$ increases.

For (ii), note that $\tau^{p-2}\ge1$ inside the threshold. On $[\tau,1]$ the function
$\psi_{\tau,p}(s)-s^2/2$ has derivative $s^{p-1}-s\ge0$ and value
$(\tau^p-\tau^2)/2\ge0$ at $s=\tau$. The case $s<0$ follows by symmetry.

For (iv), if $|x_j|\ge\tau$ then $|x_j|^p/p-\psi_{\tau,p}(x_j)=c_p\tau^p$ (at
$|x_j|=\tau$ both formulas give $\tau^p/2$), and the derivatives of the two
functions coincide at $x_j$; the other terms of $\Psi_I$ do not change.
\end{proof}

\subsubsection{The positive semidefinite SR1 update}

\begin{lemma}\label{lem:psd-sr1}
Let $0\preceq H\preceq Q$, $d\in\R^n$, $v=(Q-H)d$, and define $H^+$ by
\eqref{eq:psd-sr1}. Then $0\preceq H\preceq H^+\preceq Q$, $(Q-H^+)d=0$, and
$(Q-H)u=0$ implies $(Q-H^+)u=0$.
\end{lemma}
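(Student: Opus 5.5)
The plan is to work in the $(Q-H)$-geometry. Put $P=Q-H\succeq0$, so that $v=Pd$ and $\ip dv=\ip d{Pd}\ge0$. I would first dispose of the degenerate case $\ip dv=0$. Then $P^{1/2}d=0$, hence $v=Pd=0$; with $H^+=H$ the identity $(Q-H^+)d=v=0$ holds, and all the other claims are trivial. From now on assume $\ip dv>0$.

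\emph{Ordering.} The matrix $vv^\top/\ip dv$ is positive semidefinite, so $H\preceq H^+$, and together with $H\succeq0$ this also gives $0\preceq H^+$. For the upper bound I need
\[
 P-\frac{Pdd^\top P}{\ip d{Pd}}\succeq0 .
\]
For any $u$, Cauchy--Schwarz in the semi-inner product $\ip\cdot{P\cdot}$ gives $\ip u{Pd}^2\le\ip u{Pu}\ip d{Pd}$, which is exactly $u^\top(P-vv^\top/\ip dv)u\ge0$. Equivalently, the matrix is $P^{1/2}(I-ww^\top/\nrm w_2^2)P^{1/2}$ with $w=P^{1/2}d$, an orthogonal projection sandwiched by $P^{1/2}$. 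This step yields $H^+\preceq Q$.

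\emph{Interpolation.} I would compute $(Q-H^+)d=Pd-v\,\ip vd/\ip dv=v-v=0$.

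\emph{Heredity.} Suppose $Pu=0$. Then $\ip vu=\ip{Pd}u=\ip d{Pu}=0$ by symmetry of $P$, so $(Q-H^+)u=Pu-v\ip vu/\ip dv=0$.

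None of these steps is a real obstacle. The only point requiring care is the degenerate case, where one must note that $\ip d{Pd}=0$ forces $Pd=0$ because $P\succeq0$. That observation is what makes the rule ``$H^+=H$'' still achieve $(Q-H^+)d=0$.
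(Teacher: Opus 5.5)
Your proof is correct and matches the paper's argument essentially step for step. The paper also sets $E=Q-H$ and shows $Ed=0$ when $\ip{d}{Ed}=0$. It then gets $0\preceq Q-H^+\preceq Q-H$ from Cauchy--Schwarz in the semi-inner product of $E$, and checks the interpolation and heredity properties by the same direct computation. The only cosmetic difference is that you obtain $Pd=0$ in the degenerate case through $P^{1/2}d=0$, whereas the paper uses Cauchy--Schwarz.
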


\begin{proof}
Write $E=Q-H\succeq0$. If $\ip dv=\ip d{Ed}=0$, the Cauchy--Schwarz inequality
for the semidefinite form $E$ gives $Ed=0$, so $v=0$ and $H^+=H$.
Otherwise $E^+=E-Edd^\top E/\ip d{Ed}$. Cauchy--Schwarz gives
$(u^\top Ed)^2\le(u^\top Eu)(d^\top Ed)$, i.e.\ $E^+\succeq0$, and
$E^+\preceq E$ is clear; thus $H\preceq H^+\preceq Q$. Finally $E^+d=0$, and
$Eu=0$ implies $E^+u=0$.
\end{proof}

\subsubsection{Well-posedness of the step selection}
\label{app:cla-selection}

Fix a stage $t$ and write $X(\gamma)=X_t(\gamma)$, $Y(\gamma)=Y_t(\gamma)$ and
$r=r_t$. The graph of $X$ over $[0,\bar\gamma_t]$ is compact, because $K$ is
compact and the model is jointly continuous in $(\gamma,x)$. If one minimizer
$x\in X(\gamma)$ satisfies $r(x)<\tau$, then every minimizer has the same
unpromoted coordinates. Along the segment to another minimizer the model is
constant, so each of its convex terms is affine there; but the unpromoted
power--Huber terms are strictly convex near $x$ in every direction that
changes an unpromoted coordinate (their coefficient $L_\tau\Gamma_{t-1}(1-\gamma)$ is
positive). By compactness, the strict inequality $r<\tau$ on $X(\gamma)$ persists
for all $\gamma'$ close to $\gamma$. Consequently, if $X(0)$ contains no point with $r\ge\tau$, the set
$\{\gamma:Y(\gamma)\neq\varnothing\}$ is compact and contains a neighborhood of
$0$ in $[0,\bar\gamma_t]$. If it does not contain $\bar\gamma_t$, its largest
element $\gamma_t$ lies in $(0,\bar\gamma_t)$, and some $x\in Y(\gamma_t)$ has
$r(x)=\tau$, since otherwise the strict inequality would persist beyond
$\gamma_t$. Hence every stage with $\gamma_t<\bar\gamma_t$ promotes at least one
new coordinate.

\subsubsection{The estimate sequence and the proof of the rate}

Let $\lambda_t=L_\tau\Gamma_t$ with $L_\tau=L\tau^{2-p}$.

\begin{lemma}[Estimate sequence]\label{lem:cla-potential}
At every completed stage $t\ge0$ of \cref{alg:cla},
\begin{equation}\label{eq:cla-potential}
 f(\bar x_t)-\fstar\le L\tau^{2-p}\Gamma_t\Bigl(\frac1p-c_p\tau^p|I_t|\Bigr),
 \qquad |I_t|\le\frac{2}{(2-p)\tau^p}.
\end{equation}
\end{lemma}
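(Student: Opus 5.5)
The plan is to prove \eqref{eq:cla-potential} by induction over completed stages, maintaining an estimate-sequence inequality of the form
\[
 f(\bar x_t)\ \le\ \min_{x\in K}\Bigl\{\lambda_t\bigl(\Psi_{I_t}(x)-\Psi_{I_t}(x^\star)\bigr)\Bigr\}\text{-type bound},
\]
concretely the stronger invariant
\[
 f(\bar x_t)\ \le\ \phi_t^\star:=\min_{x\in K}\Bigl\{\lambda_t\,\Psi_{I_t}(x)+\sum_{s\le t}\text{(weighted linear lower models of $f$)}\Bigr\}-\lambda_t c_p\tau^p|I_t|\cdot0,
\]
i.e. $\Gamma_t^{-1}$-scaled estimate functions $\phi_t(x)=\lambda_t\Psi_{I_t}(x)+\ell_t(x)$, where $\ell_t$ is a convex combination (weights $\gamma_s\prod_{r>s}(1-\gamma_r)$) of the learned-model minorants $h_s(x)+\ip{\nabla(f-h_s)(\bar x_{s-1})}{x-\bar x_{s-1}}$, which lie below $f$ because $f-h_s$ is a psd quadratic. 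Comparing $\phi_t$ at $x^\star$ with $\min_K\phi_t\ge f(\bar x_t)$ gives $f(\bar x_t)-\fstar\le\lambda_t\Psi_{I_t}(x^\star)$ up to the promotion credit. I then track the promotion credit separately: by \cref{lem:power-huber}(iv), each promotion raises $\Psi_I$ at the current model minimizer by exactly $c_p\tau^p$ without changing optimality, so I define the invariant as $f(\bar x_t)\le\min_K\phi_t - \lambda_t c_p\tau^p|I_t|+\lambda_t c_p\tau^p|I_t|$ bookkeeping, yielding after comparison with $x^\star$ and $\Psi_{I_t}(x^\star)\le1/p$ (\cref{lem:power-huber}(i)) the stated bound $\lambda_t(1/p-c_p\tau^p|I_t|)$. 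The base case $t=0$ uses the definition of $x_0$, $\Gamma_0=\tau^{p-2}$ (so $\lambda_0=L$), and the fact that the initial promotions $I_0$ are exactly coordinates with $|x_{0,j}|\ge\tau$.

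The inductive step splits into the three cases of \cref{alg:cla}. For a promotion step ($\gamma_t=0$) nothing is averaged; $\Gamma_t=\Gamma_{t-1}$ and the new promoted coordinates each add $c_p\tau^p$ to $\min\phi_t$ by (iv), matching the decrease of the right-hand side. For full or partial accelerated steps, the standard Nesterov argument applies: write $\bar x_t=(1-\gamma_t)\bar x_{t-1}+\gamma_t x_t$, use that $x_t$ minimizes $m_t(\gamma_t;\cdot)$ and that the unpromoted coordinates of $x_t$ (and of the comparison points on the segment) stay in $[-\tau,\tau]$, where $\psi_{\tau,p}$ has curvature $\tau^{p-2}$ by (iii). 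The resulting strong convexity of $\lambda_{t-1}(1-\gamma_t)\Psi$ with modulus $L\Gamma_{t-1}(1-\gamma_t)$ in the coordinates that change, together with $\gamma_t^2\le\Gamma_{t-1}(1-\gamma_t)$, absorbs the quadratic term $\frac L2\gamma_t^2\nrm{x-x_t}_2^2$ from the descent inequality of $f$. For the promoted coordinates the curvature argument is unavailable, and I expect this to be the crux. The plan is to use that $f-h_t$ vanishes along $e_j$ for $j\in I_{t-1}$ and along $x_{t-1}$ (learned in this stage), so the smoothness error is measured by $h_t$, which is exactly represented inside the model, and only the residual in unpromoted directions must be paid for. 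Promotions after a partial step also add credit as above.

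Finally, the cardinality bound follows from positivity. Since the left side of \eqref{eq:cla-potential} is nonnegative and $\lambda_t>0$, we get $1/p-c_p\tau^p|I_t|\ge0$, i.e. $|I_t|\le 1/(pc_p\tau^p)=2/((2-p)\tau^p)$. This requires the invariant to hold even when $\fstar$ is approached, which it does because the right-hand side is an upper bound on a nonnegative quantity. The main obstacle is making the residual argument precise, namely verifying that every point where the estimate sequence evaluates $f-h_t$ lies in the span where $Q-H_t$ vanishes, using \cref{lem:psd-sr1}.
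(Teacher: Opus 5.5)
Your ingredients match the paper's proof: promotion credit from \cref{lem:power-huber}(iv), curvature $\tau^{p-2}$ on the quadratic branch with $\Gamma_t\ge\gamma_t^2$, vanishing of $Q-H_t$ on the learned span, comparison at $x^\star$ via (i), and the cardinality bound from nonnegativity. The potential you build, however, does not work as written.

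Your $\phi_t$ aggregates a convex combination of the minorants. The gradient corrections vanish, since $(Q-H_s)\bar x_{s-1}=0$, so these minorants are just the $h_s$. Hence the quadratic and linear parts of $\phi_t$ differ from those of the model $\lambda_t\Psi_{I_{t-1}}+h_t$ that the algorithm minimizes. In general $x_t$ does not minimize $\phi_t$, so the optimality and curvature step you plan cannot be applied to $\phi_t$. The invariant you display also carries the promotion credit with coefficient zero, so it cannot produce the term $-c_p\tau^p|I_t|$.

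What works is the value of the algorithm's own model at its minimizer, $W_t=\lambda_t\Psi_{I_t}(x_t)+h_t(x_t)$; by (iv), $x_t$ still minimizes $\lambda_t\Psi_{I_t}+h_t$ after promotion. The invariant is
\[
 W_t-f(\bar x_t)\ \ge\ \lambda_tc_p\tau^p|I_t|=Lc_p\tau^2\Gamma_t|I_t| ,
\]
and it is closed by $W_t\le\lambda_t\Psi_{I_t}(x^\star)+h_t(x^\star)\le\fstar+\lambda_t/p$, which uses $h_t\le f$.

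Propagating this invariant needs two facts you do not use.

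First, you need the monotonicity $H_{t-1}\preceq H_t$ from \cref{lem:psd-sr1}. The products queried at the start of stage $t$ change the model before the step is taken. Only $h_t\ge h_{t-1}$ gives $W_t\ge(1-\gamma_t)\bigl[\lambda_{t-1}\Psi_{I_{t-1}}(x_t)+h_{t-1}(x_t)\bigr]+\gamma_th_t(x_t)+\lambda_tc_p\tau^p|U_t|$, after which you apply the curvature inequality of stage $t-1$ at the point $x_t$. In a promotion step the same monotonicity gives $\min_K[\lambda_{t-1}\Psi_{I_{t-1}}+h_t]\ge W_{t-1}$; your description of that case ignores that the model changed.

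Second, the base case needs \cref{lem:power-huber}(ii) together with $Q\preceq LI$:
\[
 W_0-f(x_0)=L\Psi_\varnothing(x_0)-\tfrac12\ip{x_0}{Qx_0}+Lc_p\tau^p|I_0|\ge Lc_p\tau^p|I_0| .
\]
This is where the choice $\Gamma_0=\tau^{p-2}$ matters.

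The step you call the main obstacle is short. $\bar x_{t-1}$ is a convex combination of $x_0,\dots,x_{t-1}$, and these, like $e_j$ for $j\in I_{t-1}$, were all product directions by stage $t$, so $Q-H_t$ annihilates them. Write $\bar x_t=\bar x_{t-1}+\gamma_t(x_t-x_{t-1})+\gamma_t(x_{t-1}-\bar x_{t-1})$ and use convexity of $h_t$ to get
\[
 f(\bar x_t)\le(1-\gamma_t)f(\bar x_{t-1})+\gamma_th_t(x_t)+\tfrac{L\gamma_t^2}2\nrm{(x_t-x_{t-1})_{I_{t-1}^c}}_2^2 .
\]
Subtracting this from the recursion for $W_t$, dividing by $\Gamma_t$ and telescoping gives the invariant.
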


\begin{proof}
By \cref{lem:power-huber}(iv), promotion does not change the subdifferential of
the model at the current point, so $x_t$ minimizes $\lambda_t\Psi_{I_t}+h_t$ over $K$.
Put $W_t=\lambda_t\Psi_{I_t}(x_t)+h_t(x_t)$. If $x$ and $x_t$ have all unpromoted
coordinates in $[-\tau,\tau]$, convex optimality and the quadratic branch of
$\psi_{\tau,p}$ give
\begin{equation}\label{eq:cla-local-curvature}
 \lambda_t\Psi_{I_t}(x)+h_t(x)-W_t\ \ge\ \frac{L\Gamma_t}{2}
 \nrm{(x-x_t)_{I_t^c}}_2^2 .
\end{equation}
Let $\gamma_t>0$; we apply \eqref{eq:cla-local-curvature} at stage $t-1$ with
$x=x_t$. The increase $H_t-H_{t-1}\succeq0$ does not decrease the quadratic part,
and promotion increases the regularizer at $x_t$ by $c_p\tau^p|U_t|$. Multiplying
by $1-\gamma_t$, adding $\gamma_th_t(x_t)$ and using $\Gamma_t\ge\gamma_t^2$,
we obtain
\begin{equation}\label{eq:cla-W}
 W_t\ \ge\ (1-\gamma_t)W_{t-1}+\gamma_th_t(x_t)
 +\frac{L\gamma_t^2}{2}\nrm{(x_t-x_{t-1})_{I_{t-1}^c}}_2^2
 +Lc_p\tau^2\Gamma_t|U_t| .
\end{equation}
For $\gamma_t=0$ the same inequality follows by minimizing the old
model, using $H_t\succeq H_{t-1}$ and adding the promotion increment.

The residual $Q-H_t$ annihilates $x_0,\dots,x_{t-1}$ and $e_j$ for
$j\in I_{t-1}$ (\cref{lem:psd-sr1}), hence also $\bar x_{t-1}$. Expanding the quadratic form of $Q-H_t$ at
$\bar x_t=\bar x_{t-1}+\gamma_t(x_t-x_{t-1})+\gamma_t(x_{t-1}-\bar x_{t-1})$ and
using convexity of $h_t$ gives
\begin{equation}\label{eq:cla-f}
 f(\bar x_t)\ \le\ (1-\gamma_t)f(\bar x_{t-1})+\gamma_th_t(x_t)
 +\frac{L\gamma_t^2}{2}\nrm{(x_t-x_{t-1})_{I_{t-1}^c}}_2^2 .
\end{equation}
(The promoted part of $x_t-x_{t-1}$ lies in the span of the learned coordinates
and does not contribute.) We subtract \eqref{eq:cla-f} from \eqref{eq:cla-W} and
divide by $\Gamma_t$. At initialization, \cref{lem:power-huber}(ii) and
$Q\preceq LI$ give
\[
 \frac{W_0-f(x_0)}{\Gamma_0}
 =\frac{L\Psi_\varnothing(x_0)-\frac12\ip{x_0}{Qx_0}}{\Gamma_0}+Lc_p\tau^2|I_0|
 \ \ge\ Lc_p\tau^2|I_0| .
\]
Telescoping yields $W_t-f(\bar x_t)\ge Lc_p\tau^2\Gamma_t|I_t|$. On the other
hand, optimality of $x_t$, $H_t\preceq Q$ and $\Psi_{I_t}(\opt)\le1/p$ give
$W_t\le\fstar+L\tau^{2-p}\Gamma_t/p$. The two inequalities give the gap bound,
and nonnegativity of its left side gives the cardinality bound.
\end{proof}

\begin{proof}[Proof of \cref{thm:cla}]
Let $J$ be the last completed stage, $m=|I_J|$, and let $r$ be the number of
completed stages with $\gamma_t=\bar\gamma_t$. Every other completed stage promotes
a new coordinate, so $J-r\le m$. Before stopping, at most $J+1$ iterate
directions and $m$ coordinate directions were queried; hence
\[
 T\le J+m+1,\qquad r\ge T-2m-1,\qquad m\le\frac{T+1}3 ,
\]
the last inequality by \eqref{eq:cla-potential} and the choice
$\tau^p=6/((2-p)(T+1))$. If $m=(T+1)/3$, then \eqref{eq:cla-potential} gives zero
error. Otherwise $m\le T/3$ and $T\ge5$ imply $r\ge1$. At a full stage,
$\Gamma_t=\bar\gamma_t^2$ and
\[
 \frac1{\sqrt{\Gamma_t}}=\frac{1+\sqrt{1+4/\Gamma_{t-1}}}{2}
 \ \ge\ \frac1{\sqrt{\Gamma_{t-1}}}+\frac12 ,
\]
and the first full stage gives $\Gamma_t<1$; at the other stages $\Gamma$
does not increase. Thus $\Gamma_J\le4/(r+1)^2\le4/(T-2m)^2$. Put $t_0=T+1-3m>0$
and $u=T-2>0$. By \eqref{eq:cla-potential},
\[
 f(\bar x_J)-\fstar\ \le\ \frac{4L\tau^{2-p}}{p(T+1)}\cdot\frac{t_0}{(T-2m)^2}
 \ \le\ \frac{9L\tau^{2-p}}{2p(T+1)(T-2)},
\]
where the last step uses $T-2m=(u+2t_0)/3$ and $(u+2t_0)^2\ge8ut_0$.
Substituting $\tau$ proves \eqref{eq:cla-rate}, since
$1+(2-p)/p=2/p$.

For $K\subseteq B_p(c,R)$ we substitute $x=c+Rz$; the Hessian becomes $R^2Q\preceq LR^2I$
and the linear coefficient $R\nabla f(c)$. One query at $c$ gives this
coefficient, and each product requires one query at $c+Rd$, since
$R(\nabla f(c+Rd)-\nabla f(c))=R^2Qd$. Optimality of the exponent follows from
\cref{thm:power-chain}, whose hard function is a convex quadratic.
\end{proof}

\subsubsection{A certified implementation}
\label{app:cla-certified}

In the certified version of \cref{alg:cla} the inner problems are solved only
up to the certificate of \cref{def:inner-certificate}, the step is chosen by a
finite bisection, and exact null directions are replaced by a ridge-regularized
model. Products $Qd$ are exact. ``Finite'' refers to the number of inner calls and
exact-real comparisons, not to the arithmetic cost of the certificates.

\paragraph{Inner certificates.}
If $(x,g,\underline s)$ is an $\eta$-certificate for a convex $\mathcal J$, then
for every $y\in K$, $\mathcal J(y)\ge\mathcal J(x)+\ip g{y-x}\ge\mathcal J(x)-\eta$.
For the stabilized objective $\widetilde{\mathcal J}=\mathcal J+\frac\zeta2\nrm x_2^2$, $\zeta>0$, the exact minimizer $X$ is
unique and a certified point satisfies $\nrm{x-X}_2\le\sqrt{2\eta/\zeta}$.
Removing the stabilizer increases the certificate by at most $\zeta/4$, because
$\ip x{y-x}\le\nrm x_2-\nrm x_2^2\le\frac14$ for $x,y\in B_2(1)$. On the full
balls and on the simplex such certificates can be found by finite enumeration of
rational points, because the continuous VI gap tends to zero along a sequence
converging to the minimizer. On a general compact convex $K$ the certificate
interface is assumed.

\paragraph{The ridge model.}
Let $S$ be the span of the learned directions, $V_S$ a matrix whose columns form
an orthonormal basis of $S$, and $\delta>0$ fixed for the whole run. Define
$H_\delta(S)$ by \eqref{eq:ridge-model} ($H_\delta(\{0\})=0$) and
$h_S(x)=\frac12\ip x{H_\delta(S)x}+\ip bx$. The model is determined by the
products $QV_S$, and each new orthogonalized direction needs one new product.

\begin{lemma}[Ridge model]\label{lem:ridge}
Let $0\preceq Q$ and $\delta>0$. Then $H_\delta(S)$ does not depend on the choice
of the basis $V_S$, $0\preceq H_\delta(S)\preceq Q$, $H_\delta(S)\preceq H_\delta(S')$
whenever $S\subseteq S'$, and $0\le\ip v{(Q-H_\delta(S))v}\le\delta\nrm v_2^2$ for
every $v\in S$.
\end{lemma}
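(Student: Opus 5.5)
The plan is to work directly from the compressed form $H_\delta(S)=QV(V^\top QV+\delta I)^{-1}V^\top Q$ and reduce every claim to a statement about the positive semidefinite matrix $Q$ restricted to the subspace $S$.

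\emph{Basis independence.} Any other orthonormal basis of $S$ has the form $V'=VO$ with $O$ orthogonal. Substituting,
\[
V'(V'^\top QV'+\delta I)^{-1}V'^\top=VO\bigl(O^\top(V^\top QV+\delta I)O\bigr)^{-1}O^\top V^\top=V(V^\top QV+\delta I)^{-1}V^\top,
\]
so $H_\delta(S)$ depends only on $S$.

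\emph{Sandwich $0\preceq H_\delta(S)\preceq Q$.} Positivity is immediate, since $H_\delta(S)=QVMV^\top Q$ with $M=(V^\top QV+\delta I)^{-1}\succ0$. For the upper bound, write $Q=Q^{1/2}Q^{1/2}$ and set $A=Q^{1/2}V$. Then
\[
H_\delta(S)=Q^{1/2}\,A(A^\top A+\delta I)^{-1}A^\top\,Q^{1/2}.
\]
The middle factor $P_\delta=A(A^\top A+\delta I)^{-1}A^\top$ has eigenvalues $\sigma_i^2/(\sigma_i^2+\delta)\in[0,1)$ by the SVD of $A$, so $P_\delta\preceq I$. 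Conjugating by $Q^{1/2}$ gives $H_\delta(S)\preceq Q$.

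\emph{Monotonicity in $S$.} The ridge identity $A(A^\top A+\delta I)^{-1}A^\top=I-\delta(AA^\top+\delta I)^{-1}$ holds by the push-through identity. Since $AA^\top=Q^{1/2}VV^\top Q^{1/2}=Q^{1/2}\Pi_SQ^{1/2}$, with $\Pi_S$ the orthogonal projector onto $S$, we get
\[
P_\delta(S)=I-\delta\bigl(Q^{1/2}\Pi_SQ^{1/2}+\delta I\bigr)^{-1}.
\]
This is basis free, which also gives a second proof of basis independence. If $S\subseteq S'$ then $\Pi_S\preceq\Pi_{S'}$, and operator monotonicity of $X\mapsto-(X+\delta I)^{-1}$ gives $P_\delta(S)\preceq P_\delta(S')$. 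Conjugating by $Q^{1/2}$ yields $H_\delta(S)\preceq H_\delta(S')$.

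\emph{Residual bound on $S$.} Take $v\in S$ and write $v=Vc$, so that $\nrm c_2=\nrm v_2$. Then
\[
\ip v{(Q-H_\delta(S))v}=c^\top\bigl(G-G(G+\delta I)^{-1}G\bigr)c,\qquad G=V^\top QV\succeq0.
\]
In the eigenbasis of $G$ this matrix has eigenvalues $g-g^2/(g+\delta)=g\delta/(g+\delta)$, which lie in $[0,\delta]$. Hence the quadratic form is between $0$ and $\delta\nrm c_2^2=\delta\nrm v_2^2$. The degenerate case $S=\{0\}$ is trivial.

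The main obstacle is the monotonicity step. The plan handles it with the resolvent identity together with operator monotonicity of the inverse; the remaining claims are spectral calculus on commuting matrices.
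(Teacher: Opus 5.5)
Your proof is correct and follows essentially the same route as the paper. Both arguments factor through $Q^{1/2}V_S$ and use the push-through identity to get the basis-free form $I-\delta(Q^{1/2}\Pi_SQ^{1/2}+\delta I)^{-1}$, which yields monotonicity via operator monotonicity of the inverse, and both bound the residual on $S$ through the eigenvalues $g\delta/(g+\delta)$ of $G-G(G+\delta I)^{-1}G$. The differences are cosmetic: you check basis independence directly with $V'=VO$ and the upper bound via the SVD of $Q^{1/2}V_S$, whereas the paper reads both off the resolvent form.
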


\begin{proof}
Let $A=Q^{1/2}$ and $B=AV_S$. The push-through identity gives
\[
 H_\delta(S)=A^\top B\bigl(B^\top B+\delta I\bigr)^{-1}B^\top A
 =A^\top\bigl[I-\delta\bigl(BB^\top+\delta I\bigr)^{-1}\bigr]A ,
 \qquad BB^\top=A\Pi_SA ,
\]
where $\Pi_S=V_SV_S^\top$ is the orthogonal projector onto $S$. This depends on
$S$ only. The matrix in brackets lies between $0$ and $I$, which gives
$0\preceq H_\delta(S)\preceq A^\top A=Q$. If $S\subseteq S'$, then
$\Pi_S\preceq\Pi_{S'}$, hence $(A\Pi_SA+\delta I)^{-1}\succeq(A\Pi_{S'}A+\delta I)^{-1}$
and $H_\delta(S)\preceq H_\delta(S')$. Finally, for $v=V_Sy\in S$ put
$G=B^\top B\succeq0$; then $\nrm y_2=\nrm v_2$ and
\[
 \ip v{(Q-H_\delta(S))v}=y^\top\bigl[G-G(G+\delta I)^{-1}G\bigr]y
 =\delta\,y^\top G(G+\delta I)^{-1}y\ \le\ \delta\nrm y_2^2 ,
\]
since $0\preceq G(G+\delta I)^{-1}\preceq I$.
\end{proof}

\paragraph{Parameters.}
For an integer $T$ with $T+1\ge24/(2-p)$ we use
\begin{equation}\label{eq:certified-parameters}
\begin{aligned}
 \tau^p&=\frac{24}{(2-p)(T+1)},& L_\tau&=2L\tau^{2-p},&
 \sigma&=\frac{c_p\tau}{16},& a&=(1-\sigma)\tau,&
 \xi&=\frac{\sigma\tau}8,\\
 e&=\frac{L_\tau}{p(T+1)^3},& \zeta&=e,& \delta&=\frac e{18},&
 \eta&=\min\Bigl\{\frac e4,\frac{L\xi^2}{(T+1)^2}\Bigr\}. &&
\end{aligned}
\end{equation}
Here $a$ is the lower promotion threshold and $\xi$ a positional tolerance. For
the current learned model $h_t=h_S$ let
\[
 J_t(\gamma;x)=L_\tau\Gamma_{t-1}(1-\gamma)\Psi_{I_{t-1}}(x)+h_t(x)+\frac\zeta2\nrm x_2^2,
\]
and let $\sub{Solve}_t(\gamma)$ return an $\eta$-certified point for $J_t(\gamma;\cdot)$.
\Cref{alg:cla-certified} states the certified version of \CLA.

\begin{algorithm}[t]
\caption{Certified implementation of \CLA}
\label{alg:cla-certified}
\begin{algorithmic}[1]
\Require as in \cref{alg:cla}, with $T+1\ge24/(2-p)$; exact products; the
certificate interface for the known models
\State set the parameters \eqref{eq:certified-parameters}; $S\gets\{0\}$,
$\mathsf{calls}\gets0$, $\Gamma_0\gets\tau^{p-2}$
\State $x_0\gets$ an $\eta$-certified minimizer of
$L_\tau\Gamma_0\Psi_\varnothing+\ip b\cdot+\frac\zeta2\nrm\cdot_2^2$;
$\bar x_0\gets x_0$; $I_0\gets U_0\gets\{j:|x_{0,j}|\ge a\}$
\For{$t=1,2,\dots$}
 \For{$d=e_j$, $j\in U_{t-1}$, and then $d=x_{t-1}$}
  \State query $Qd$; $\mathsf{calls}\gets\mathsf{calls}+1$;
  \If{$\mathsf{calls}=T$} \Return $\bar x_{t-1}$ \EndIf
  \State $S\gets\spn(S\cup\{d\})$
 \EndFor
 \State build $h_t=h_S$ from the stored products; $\bar\gamma_t\gets$ root of
 $\bar\gamma^2=\Gamma_{t-1}(1-\bar\gamma)$
 \State $\gamma_t\gets0$; $x_t\gets\sub{Solve}_t(0)$
 \If{$r_t(x_t)<a$}
  \State $\gamma_t\gets\bar\gamma_t$; $x_t\gets\sub{Solve}_t(\bar\gamma_t)$
  \If{$r_t(x_t)>\tau$}
   \State $\gamma_-\gets0$, $\gamma_+\gets\bar\gamma_t$
   \Repeat
    \State $\gamma_t\gets(\gamma_-+\gamma_+)/2$; $x_t\gets\sub{Solve}_t(\gamma_t)$
    \State if $r_t(x_t)<a$ then $\gamma_-\gets\gamma_t$; if $r_t(x_t)>\tau$ then $\gamma_+\gets\gamma_t$
   \Until{$a\le r_t(x_t)\le\tau$}
  \EndIf
 \EndIf
 \State $\Gamma_t\gets(1-\gamma_t)\Gamma_{t-1}$; $\bar x_t\gets(1-\gamma_t)\bar x_{t-1}+\gamma_tx_t$
 \State $U_t\gets\{j\notin I_{t-1}:|x_{t,j}|\ge a\}$ if $\gamma_t<\bar\gamma_t$, else
 $U_t\gets\varnothing$; $I_t\gets I_{t-1}\cup U_t$
\EndFor
\end{algorithmic}
\end{algorithm}

\paragraph{Clipped coordinates.}
Let $\operatorname{clip}_\tau(s)=\max\{-\tau,\min\{s,\tau\}\}$. The Huber function
$\tau\psi_{\tau,1}(s)=s\operatorname{clip}_\tau(s)-\operatorname{clip}_\tau(s)^2/2$
has derivative $\operatorname{clip}_\tau$, and checking the cases $y$ inside,
above and below the threshold interval gives
\[
 \tau D_{\psi_{\tau,1}}(y,x)=\tfrac12\bigl(\operatorname{clip}_\tau(y)-\operatorname{clip}_\tau(x)\bigr)^2
 +\bigl(y-\operatorname{clip}_\tau(y)\bigr)\bigl(\operatorname{clip}_\tau(y)-\operatorname{clip}_\tau(x)\bigr),
\]
where the second term is nonnegative. The difference
$\psi_{\tau,p}-\tau^{p-1}\psi_{\tau,1}$ is convex, since it vanishes on
$[-\tau,\tau]$ and its derivative $\sign(s)(|s|^{p-1}-\tau^{p-1})$ is
nondecreasing outside. Hence
\begin{equation}\label{eq:clipped-huber}
 D_{\psi_{\tau,p}}(y,x)\ \ge\ \tfrac12\tau^{p-2}\,
 \bigl|\operatorname{clip}_\tau(y)-\operatorname{clip}_\tau(x)\bigr|^2 .
\end{equation}
Every tested step $\gamma$ has $\Gamma_{\rm trial}:=\Gamma_{t-1}(1-\gamma)\ge(T+1)^{-2}$, because
$\Gamma_{\rm trial}^{-1/2}$ increases by at most one per stage and is at most one
initially. Thus the coefficient of the unpromoted quadratic curvature is at least
$L_\tau\Gamma_{\rm trial}\tau^{p-2}\ge2L/(T+1)^2$, and \eqref{eq:clipped-huber}
together with the choice of $\eta$ shows that the clipped unpromoted coordinates of
a certified point are within Euclidean distance $\xi$ of those of the exact
stabilized minimizer $X(\gamma)$.

\paragraph{The bisection is finite.}
Subgradients of $\Psi_I$ on $B_p^n(1)$ have Euclidean norm at most
$n^{1/p-1/2}$. Strong monotonicity of the stabilized optimality condition gives
$\nrm{X(\gamma)-X(\gamma')}_2\le L_\tau\Gamma_{t-1}n^{1/p-1/2}|\gamma-\gamma'|/\zeta$, and the
clipped maximum $\widehat r(X)=\min\{r_t(X),\tau\}$ is Lipschitz with the same
constant. A sampled left endpoint has $\widehat r(X)<a+\xi$ and a sampled right
endpoint has $\widehat r(X)\ge\tau-\xi$; these differ by more than
$\sigma\tau-2\xi>0$, so an interval shorter than
$\zeta(\sigma\tau-2\xi)/(L_\tau\Gamma_{t-1}n^{1/p-1/2})$ cannot have endpoints of both types,
and the bisection stops. With rational thresholds
$a_{\rm rat}\in(a,a+\sigma\tau/8)$ and $t_{\rm rat}\in(\tau-\sigma\tau/8,\tau)$ all
comparisons become rational and the same argument applies.

\begin{proof}[Proof of \cref{thm:cla-certified}]
For $|s|\ge a$, promotion adds at least $\pi_\sigma\tau^p$ to the regularizer, where
$\pi_\sigma=\frac{(1-\sigma)^p}p-\frac{(1-\sigma)^2}2\ge c_p-\frac{\sigma^2}2$, because on
$[1-\sigma,1]$ the derivative of $r^p/p-r^2/2$ lies in $[0,1-r]$. Promotion does not
change the subgradient above $\tau$ and changes it by at most $\sigma\tau^{p-1}$
inside the band. Hence the returned point has VI residual, for the unstabilized promoted
model, at most
\begin{equation}\label{eq:certified-residual}
 \bar\eta_t=\alpha+2\lambda_t\sigma\tau^{p-1}|U_t|,\qquad
 \alpha=\eta+\zeta/4\le e/2,
\end{equation}
where $\lambda_t=L_\tau\Gamma_t$ and $\nrm{y-x}_\infty\le2$ was used.

Let $W_t=\lambda_t\Psi_{I_t}(x_t)+h_t(x_t)$ and $u_t=(x_t-x_{t-1})_{I_{t-1}^c}$. For a
positive step, the local curvature inequality with error $\bar\eta_{t-1}$ and the
monotonicity of the ridge models give
\[
 W_t\ \ge\ (1-\gamma_t)W_{t-1}+\gamma_th_t(x_t)+L\gamma_t^2\nrm{u_t}_2^2
 +L_\tau\Gamma_t\pi_\sigma\tau^p|U_t|-(1-\gamma_t)\bar\eta_{t-1},
\]
where we used $\Gamma_t\ge\gamma_t^2$; the curvature coefficient is now $2L$. For a zero
step the same inequality follows from the global VI residual.

The learned span contains all previous iterates and all $e_j$, $j\in I_{t-1}$.
Hence $v=\bar x_t-\gamma_tu_t\in S$ and $\nrm v_2\le1+2\gamma_t\le3$. Young's
inequality in the seminorm of $Q-H_\delta(S)$ and the defect bound of
\cref{lem:ridge} give
$\frac12\ip{\bar x_t}{(Q-H_\delta(S))\bar x_t}\le L\gamma_t^2\nrm{u_t}_2^2+9\delta$,
and therefore
\[
 f(\bar x_t)\ \le\ (1-\gamma_t)f(\bar x_{t-1})+\gamma_th_t(x_t)
 +L\gamma_t^2\nrm{u_t}_2^2+9\delta,\qquad 9\delta=\frac e2 .
\]
At initialization $\Psi_\varnothing(x_0)\ge\nrm{x_0}_2^2/2$ and $Q\preceq LI$ give
$W_0-f(x_0)\ge L_\tau\Gamma_0\pi_\sigma\tau^p|I_0|$. Subtracting the two inequalities,
dividing by $\Gamma_t$, telescoping and using $W_J\le\fstar+L_\tau\Gamma_J/p+\bar\eta_J$, we get
\begin{equation}\label{eq:certified-ledger}
 f(\bar x_J)-\fstar\ \le\ L_\tau\Gamma_J\Bigl(\frac1p-\pi_\sigma\tau^pm\Bigr)
 +\Gamma_J\sum_{j=0}^J\frac{\bar\eta_j}{\Gamma_j}
 +\Gamma_J\sum_{t=1}^J\frac{9\delta}{\Gamma_t},\qquad m=|I_J| .
\end{equation}
The promotion part of $\bar\eta_j$ contributes
$2L_\tau\Gamma_J\sigma\tau^{p-1}m$, and $\pi_\sigma-2\sigma/\tau\ge c_p/2$. For every allowed
step $1/\sqrt{\Gamma_t}\le1/\sqrt{\Gamma_{t-1}}+1$, so $\Gamma_j\ge(T+1)^{-2}$ for
$j\le J\le T$, and the remaining normalized errors sum to at most
$e(T+1)^3=L_\tau/p$. Hence
\[
 f(\bar x_J)-\fstar\ \le\ \frac{2L_\tau\Gamma_J}{p}\Bigl(1-\frac{3m}{T+1}\Bigr),
\]
so $m\le(T+1)/3$, and the counting argument of \cref{thm:cla} gives
$\Gamma_J\le4/(r+1)^2$ and the bound $9L_\tau/[p(T+1)(T-2)]$. Substituting $L_\tau$
and $\tau$ proves \eqref{eq:cla-certified-rate}.
\end{proof}

\subsubsection{Restarts under quadratic growth}
\label{app:cla-restart}

\begin{proof}[Proof of \cref{cor:cla-restart}]
Let $\Delta_j=2^{-j}\Delta_0$ and assume $f(z_j)-\fstar\le\Delta_j$. For the
analysis only, let $x_j^\star\in X^\star$ be a point closest to $z_j$ in $\ell_p$. By
\eqref{eq:quadratic-growth}, $\nrm{z_j-x_j^\star}_p\le r_j$ with
$r_j^2=2\Delta_j/\mu_p$, so $K_j=K\cap B_p(z_j,r_j)$ contains a global minimizer and
$\min_{K_j}f=\fstar$. \Cref{thm:cla} with $T=M_p$ applies on $K_j$, and
$M_p\ge6/(2-p)-1\ge5$. For $T\ge5$,
$(T+1)^{2/p}(T-2)\ge T^{\alpha_p}/2$, so the error is at most
\[
 \frac{2C_pLr_j^2}{M_p^{\alpha_p}}=\frac{4C_pL\Delta_j}{\mu_pM_p^{\alpha_p}}
 \ \le\ \frac{\Delta_j}2 .
\]
Each stage uses $M_p$ products and one query for the linear coefficient. After
$\lceil\log_2(\Delta_0/\varepsilon)\rceil$ stages the gap is at most $\varepsilon$.
Only the common value $\fstar$ enters the induction, so the minimizer need not be
unique.
\end{proof}

\subsection{Lower bounds for quadratics}
\label{app:quadratic-lower}

\begin{proposition}[Quadratic lower bound]
\label{thm:power-chain}
Let $N\ge1$, $n\ge3N+2$, $1\le p\le2$ and $R,L>0$, and put $n_0=3N+2$, $m=2N+1$ and
$S_m=\sum_{j=1}^mj^2$. For every deterministic method with at most $N$ global
first-order queries and output $\widehat x\in B_p^n(R)$ there is a convex
quadratic $f$ with $0\preceq\nabla^2f\preceq LI$ and a global minimizer in
$B_p^n(R)$ such that
\begin{equation}\label{eq:mixed-jet-power}
 f(\widehat x)-\min_{B_p^n(R)}f
 \ \ge\ \frac{L(N+1)R^2n_0^{1-2/p}}{8S_m}
 \ \ge\ \frac{LR^2}{8\cdot3^{2+2/p}(N+1)^{1+2/p}} .
\end{equation}
\end{proposition}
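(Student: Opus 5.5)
The plan is to adapt the resisting-rotation construction of \cref{thm:vi-lower}: an unknown orthogonal change of basis is fixed column by column while the method runs. Work on the first $n_0=3N+2$ coordinates and let the rest be ignored, as in the padding step of \cref{thm:vi-lower}. Take the chain quadratic
\[
 f(x)=\frac L4\,\nrm{BU^\top x}_2^2-\beta\ip{Ue_1}{x},
\]
where $B$ is the $m\times m$ lower bidiagonal difference matrix of \cref{thm:vi-lower}, so that $\nrm B_{2\to2}\le2$ and $0\preceq\nabla^2f\preceq LI$. Here $U$ is an $n_0\times m$ matrix with orthonormal columns, fixed adaptively, with $Ue_1=e_1$, and $\beta>0$ is chosen at the end.

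\emph{Resisting oracle.} At query $t$, add to $U$ the normalized component of $x_t$ orthogonal to the columns already fixed, plus one further arbitrary orthonormal column. Because $B^\top B$ is tridiagonal, the gradient $\frac L2UB^\top BU^\top x_t-\beta e_1$ and the value then depend only on the columns already fixed, exactly as in the proof of \cref{thm:vi-lower}. After $N$ queries at most $2N+1$ columns are fixed. The remaining ones are completed orthogonally to the output $\widehat x$ as well, which is possible because $n_0-m=N+1$ spare dimensions remain. As a result, $U^\top\widehat x$ is supported on the first $k\le2N$ chain coordinates, and one fixed $f$ reproduces the whole transcript.

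\emph{Minimizer and gap.} The unconstrained minimizer is $x^\star=Uw^\star$ with $B^\top Bw^\star=(2\beta/L)e_1$, so $Bw^\star$ is a constant vector and $w^\star_j\propto j$. This gives $f(x^\star)=-\frac{\beta}{2}w^\star_1$, with an explicit expression in terms of $S_m=\sum_jj^2$ after normalization. For any $x$ with $U^\top x$ supported on the first $k$ chain coordinates, the minimum of $f$ is the same chain problem truncated at length $k$. The difference between the length-$k$ and length-$m$ optimal values should be of order $\beta^2(m-k)/L$, which is $\gtrsim\beta^2(N+1)/L$ because $m-k\ge1$ and the chain is long.

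\emph{Feasibility and calibration.} $x^\star$ is supported on an $n_0$-dimensional subspace, but it is not coordinate-sparse. I would instead bound $\nrm{x^\star}_p\le n_0^{1/p-1/2}\nrm{x^\star}_2$ using \eqref{eq:sparse-inclusion} on the $n_0$ active coordinates. Since $\nrm{x^\star}_2^2=\nrm{w^\star}_2^2\propto\beta^2S_m/L^2$, the feasibility condition $\nrm{x^\star}_p\le R$ fixes $\beta^2\asymp L^2R^2n_0^{1-2/p}/S_m$. Substituting this into the gap yields $L(N+1)R^2n_0^{1-2/p}/(8S_m)$. The second inequality in \eqref{eq:mixed-jet-power} then follows from $S_m\le m^3\le(3(N+1))^3$ and $n_0\le3(N+1)$.

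\emph{Main obstacle.} I expect the truncated-chain gap computation to be the hardest step: showing that the truncated optimal value exceeds the full one by a quantity of order $(N+1)/S_m$ with the exact constant $1/8$. Restricting to the ball cannot make the method's error smaller, since $x^\star$ is feasible. I would first try the explicit discrete solution $w_j=c(m+1-j)$, using the $-\beta$ linear term placed on the first chain coordinate, and compare it with the length-$k$ solution. If that bookkeeping fails, I would fall back on the weaker estimate: the difference of optimal values is at least the quadratic form of $B$ on the missing tail of $w^\star$.
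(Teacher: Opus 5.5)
There is a genuine gap, and the main one is in the resisting oracle. The scheme you borrow from \cref{thm:vi-lower} spends two chain columns per query. $u_1=e_1$ is fixed in advance, and query $t$ fixes $u_{2t}$ (the new component of $x_t$) and $u_{2t+1}$ (needed because $B^\top BU^\top x_t$ is supported on $2t+1$ coordinates). After $N$ queries this fixes $1+2N=m$ columns, that is, all of them, so no column is left to complete orthogonally to $\widehat x$. The $N+1$ spare ambient dimensions do not help, because what must stay hidden are chain columns. Indeed, a method that queries generic points can recompute each $u_{2t}$ from its own query and read off $u_{2t+1}$ from the gradient. After $N$ queries it therefore knows $U$ and can output $x^\star$. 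So the claim that $U^\top\widehat x$ lives on the first $k\le2N$ chain coordinates is false, and the construction gives no bound. (\Cref{thm:vi-lower} takes $m=2N+3$ precisely to keep two columns free.) The missing idea is the one-direction-per-query oracle used in the paper. Pick $v_t$ orthogonal to $x_1,\dots,x_t$ and to $v_1,\dots,v_{t-1}$, so that every direction is orthogonal to all earlier queries and $\ip{v_j}{x_t}=0$ for $j\ge t$. Then $V^\top x_t$ is supported on the first $t-1$ chain coordinates, and by tridiagonality the answer involves only $v_1,\dots,v_t$. Only $N$ directions get fixed. The remaining $m-N=N+1$ are chosen orthogonal to the $N$ queries, the output and $v_1,\dots,v_N$; this is where $n_0-(2N+1)=N+1$ is used.

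The gap computation also fails as written. The factor $N+1$ in \eqref{eq:mixed-jet-power} is the number of hidden chain coordinates, so ``$m-k\ge1$, hence $\gtrsim\beta^2(N+1)/L$'' is a non sequitur. Take the linear-profile chain you describe, in the paper's form: $w=B_mz$ with $(B_mz)_i=z_i-z_{i+1}$, $z_{m+1}=0$, and linear term $-az_1=-a\sum_iw_i$. Then $\Phi$ is separable, and the minimum under $z_j=0$ for $j>t$ is exactly $-\frac t2a^2$. A single hidden coordinate would give only $Lr^2/(8S_m)$, which loses a factor $N$ in the rate. Your formulas also do not fit your own $f$. With the lower bidiagonal $B$ of \cref{thm:vi-lower} ($B\1=e_1$) and the linear term on $Ue_1$, the equation $B^\top Bw^\star=(2\beta/L)e_1$ gives $Bw^\star=(2\beta/L)e_1$. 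Hence $w^\star=(2\beta/L)\1$ is a flat minimizer, not $w^\star_j\propto j$. (That flat chain has truncation gap $\beta^2/(L(k+1))$, so together with the correct oracle it would also yield the stated bound, but that is a different computation from the one you outline.) With the paper's oracle and chain, set $r=Rn_0^{1/2-1/p}$ and $a=r/\sqrt{S_m}$. Then $\nrm{x^\star}_2=r$ and $\nrm{x^\star}_p\le R$ by \eqref{eq:sparse-inclusion}, and the gap $\frac L4\cdot\frac{(N+1)a^2}{2}$ is exactly the first bound.
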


\begin{proof}
Let $B_mz=(z_1-z_2,\ldots,z_m-z_{m+1})$ with $z_{m+1}=0$; it is invertible and
$\nrm{B_m}_{2\to2}\le2$. For $a>0$ put
$\Phi(z)=\frac12\nrm{B_mz}_2^2-a z_1$. In the variables $w=B_mz$ we have
$z_1=\sum_iw_i$, so every term $\frac12w_i^2-aw_i$ is minimized at $w_i=a$, and
\begin{equation}\label{eq:chain-minimizers-values}
 z_i^\star=(m+1-i)a,\qquad
 \min\Phi=-\frac m2a^2,\qquad
 \min_{z_j=0\ (j>t)}\Phi=-\frac t2a^2 .
\end{equation}

\emph{Resisting oracle.} All directions are chosen in the span of the first $n_0$
coordinate vectors. At the query $x_t$, choose a unit vector $v_t$ in this span
perpendicular to $x_1,\ldots,x_t$ and $v_1,\ldots,v_{t-1}$; these are at most
$2t-1<n_0$ linear constraints, and later directions are also perpendicular to all
earlier queries. Hence $\ip{v_j}{x_t}=0$ for $j\ge t$, and since $B_m^\top B_m$ is
tridiagonal, the gradient of $\Phi$ at $V^\top x_t$ involves only the directions
$v_1,\dots,v_t$ chosen so far. The value and the gradient at $x_t$ can therefore
be returned consistently with every completion. After the output $\widehat x$,
complete $v_1,\ldots,v_N$ by $N+1$ orthonormal directions perpendicular to all
queries, to the output and to the first $N$ directions; the available dimension
is at least $n_0-(2N+1)=N+1$.

\emph{The hard function.} Let $V=[v_1\ \cdots\ v_m]$, $r=Rn_0^{1/2-1/p}$,
$a=r/\sqrt{S_m}$ and $f(x)=\frac L4\Phi(V^\top x)$. Since $V^\top V=I$ and
$\nrm{B_m}_{2\to2}\le2$, $f$ is a convex quadratic with
$0\preceq\nabla^2f\preceq LI$, and it reproduces every answer. Its minimizer
$\opt=Vz^\star$ has Euclidean norm $a\sqrt{S_m}=r$ and at most $n_0$ nonzero
coordinates, so $\nrm{\opt}_p\le n_0^{1/p-1/2}r=R$ by \eqref{eq:sparse-inclusion}.
The last $m-N=N+1$ chain coordinates $\ip{v_j}{\widehat x}$ of the output vanish,
so \eqref{eq:chain-minimizers-values} gives
\[
 f(\widehat x)-\min f\ \ge\ \frac L4\cdot\frac{(m-N)a^2}{2}
 =\frac{L(N+1)r^2}{8S_m},
\]
which is the first bound in \eqref{eq:mixed-jet-power}. Finally $n_0\le3(N+1)$,
$1-2/p\le0$ and $S_m\le m^3\le[3(N+1)]^3$ give the second.
\end{proof}

For strongly convex quadratics, a tridiagonal chain with an explicit inverse
gives an exponential tail.

\begin{proposition}[Explicit exponential first-order tail]
\label{cor:sc-exponential}
Let $L>\mu_2>0$, $R>0$, $n\ge3N+2$, $1\le p\le2$ and $N\ge1$.
Put $n_0=3N+2$ and
$\rho=(\sqrt{L/\mu_2}-1)/(\sqrt{L/\mu_2}+1)\in(0,1)$.
For every deterministic $N$-query first-order algorithm on $B_p^n(R)$
there is a globally $L$-smooth, globally Euclidean
$\mu_2$-strongly convex quadratic with a feasible global minimizer, on
which the error of the algorithm is at least
\begin{equation}\label{eq:mixed-strong-exponential}
 \frac{\mu_2 R^2n_0^{1-2/p}}4(1+\rho)^2\rho^{2N}
                  (1-\rho^{2N+2}).
\end{equation}
\end{proposition}
\begin{proof}
Set $\theta=(L-\mu_2)/4$, $m=2N+1$,
$H_j=\mu_2 I_j+\theta B_j^\top B_j$, and
$s_j=e_1^\top H_j^{-1}e_1$.
The diagonal of $H_j$ is $(\mu_2+\theta,\mu_2+2\theta,\ldots,
\mu_2+2\theta)$ and its neighboring off-diagonal entries are $-\theta$,
so $H_N$ is the leading principal block of $H_m$, also
when $N=1$.
The identity $\mu_2=\theta(1-\rho)^2/\rho$ and substitution into this
tridiagonal system give, for $1\le i\le j$,
\[
 (H_j^{-1}e_1)_i=
 \frac{c}{1+\rho^{2j+1}}
    (\rho^{i-1}-\rho^{2j-i+1}),
 \qquad c=\frac{\rho}{\theta(1-\rho)}.
\]
To check this, note that the two geometric sequences solve each interior
recurrence, their difference has zero coordinate $j+1$, and the first row
equals one with the displayed coefficient.  Thus
\[
 s_j=c\frac{1-\rho^{2j}}{1+\rho^{2j+1}},\qquad
 s_m-s_N=
 \frac{c(1+\rho)\rho^{2N}(1-\rho^{2(m-N)})}
      {(1+\rho^{2m+1})(1+\rho^{2N+1})}.
\]
Let $w=H_m^{-1}e_1$, $r=Rn_0^{1/2-1/p}$, and choose the linear
coefficient $b=r/\nrm{w}_2$ in
$f(x)=\theta\nrm{B_mV^\top x}_2^2/2-b\langle v_1,x\rangle
+\mu_2\nrm{x}_2^2/2$.
Its global minimizer is $bVw$, of norm $r$, hence feasible.
The resisting oracle is that of \cref{thm:power-chain}, and the hidden
chain coordinates of the output vanish.  The full and truncated
unconstrained minima are $-b^2s_m/2$ and $-b^2s_N/2$, respectively, so the
error is at least $r^2(s_m-s_N)/(2\nrm{w}_2^2)$.
Finally,
\[
 \nrm{w}_2^2\le
 \frac{c^2}{(1+\rho^{2m+1})^2(1-\rho^2)},\qquad
 \frac{1+\rho^{2m+1}}{1+\rho^{2N+1}}\ge\frac12.
\]
Combining these inequalities and using
$c^{-1}(1-\rho^2)(1+\rho)=\mu_2(1+\rho)^2$ proves
\eqref{eq:mixed-strong-exponential}, since $m-N=N+1$.
\end{proof}

\subsubsection{A matching condition-number power for constant reduction}
\label{app:sc-matching}

The explicit chain also bounds the initial gap, which gives a
condition-number power matching restarted \CLA{} for a constant-factor
reduction.

\begin{proposition}[Strong quadratic constant-reduction lower bound]
\label{thm:sc-constant-reduction}
Fix $1\leq p\leq2$, $N\geq1$, $R>0$, and $\mu_p>0$, and put
\[
 n_0=n=3N+2,\qquad L_N=\mu_pn_0^{\beta_p}(2N+1)^2 .
\]
For every deterministic algorithm using at most $N$ first-order queries
on $B_p^{n_0}(R)$, there is a globally $L_N$-smooth quadratic that is globally
$\mu_p$-strongly convex in $\nrm{\cdot}_p$, has a feasible unconstrained
minimizer, and satisfies
\begin{equation}\label{eq:strong-quadratic-constant-gap}
 f(0)-f^\star\leq\frac{100}{27}\mu_pR^2<4\mu_pR^2,
 \qquad
 f(\widehat x)-f^\star\geq\frac{3}{64}\mu_pR^2.
\end{equation}
The algorithm may be given the feasible start $0$ and the certificate
$\Delta_0=4\mu_pR^2$ in advance.  All gradient queries, including any
query at zero, are counted; queries anywhere in $\R^{n_0}$ are allowed.
Consequently $N$ queries cannot guarantee error at most $\Delta_0/128$.
\end{proposition}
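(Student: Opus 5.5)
I would build the hard function from the explicit tridiagonal chain of \cref{cor:sc-exponential}, with the resisting oracle of \cref{thm:power-chain}, and tune the condition number so that the chain is just long enough to hide a constant fraction of the initial gap after $N$ queries.

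\emph{Construction.} Let $m=2N+1$ and $n_0=3N+2$. Write $\theta$ for the chain weight and $\mu_2$ for the Euclidean curvature, and set
\[
 f(x)=\frac\theta2\nrm{B_mV^\top x}_2^2-b\ip{v_1}x+\frac{\mu_2}2\nrm x_2^2 .
\]
I take $\mu_2=\mu_pn_0^{\beta_p}$. By \cref{prop:compatibility} (equality case) and \cref{lem:weighted-strong-norm} with equal weights, the term $\frac{\mu_2}2\nrm x_2^2$ alone makes $f$ $\mu_p$-strongly convex in $\ell_p$, since $\nrm h_p^2\le n_0^{\beta_p}\nrm h_2^2$ on $\R^{n_0}$. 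I then choose $\theta$ so that $L_N=\mu_2+4\theta=\mu_2m^2$, which gives $\sqrt{L_N/\mu_2}=m$ and $\rho=(m-1)/(m+1)=N/(N+1)$. The oracle is exactly that of \cref{thm:power-chain}: directions are orthogonal to all queries, so the last $N+1$ chain coordinates of the output vanish.

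\emph{Lower bound on the output error.} I would use \eqref{eq:mixed-strong-exponential}, possibly with a sharper, non-normalized computation. With $\rho=N/(N+1)$ we get $\rho^{2N}\ge e^{-2}$, and $1-\rho^{2N+2}$ is bounded below by a constant. This gives a bound of order $\mu_pR^2$ once $b$ is chosen so that the minimizer $bVw$ has $\ell_p$ norm at most $R$; by \eqref{eq:sparse-inclusion} it suffices that $\nrm{bVw}_2=Rn_0^{1/2-1/p}$. The factor $n_0^{1-2/p}$ in \eqref{eq:mixed-strong-exponential} cancels against $\mu_2=\mu_pn_0^{\beta_p}$, so the bound is $\frac14\mu_pR^2(1+\rho)^2\rho^{2N}(1-\rho^{2N+2})$. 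I would check that this is at least $\frac3{64}\mu_pR^2$ for every $N\ge1$. The worst case may be $N=1$ or $N\to\infty$, where the bound tends to $\frac14\cdot4e^{-2}(1-e^{-2})\approx0.117$. If these numbers fall short of $3/64$, I would instead use the exact formula $r^2(s_m-s_N)/(2\nrm w_2^2)$ in place of the crude estimates of $\nrm w_2$.

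\emph{Upper bound on the initial gap.} For the quadratic, $f(0)-f^\star=\frac12b^2s_m=\frac12\ip{x^\star}{\nabla^2f\,x^\star}$. I would bound this by $\frac12 b^2c$ using $s_m\le c$, together with $\nrm w_2^2\ge w_1^2$ and the explicit $w_1$, or a lower bound on $\nrm w_2^2$ by summing the geometric profile. The goal is the ratio $s_m/\nrm w_2^2\lesssim\mu_2\cdot\mathrm{const}$, which yields $f(0)-f^\star\le\frac{100}{27}\mu_pR^2$.

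\emph{Main obstacle.} I expect this last constant to be the hardest step. It requires a lower bound on $\nrm w_2^2=\sum_i(c/(1+\rho^{2m+1}))^2(\rho^{i-1}-\rho^{2m-i+1})^2$ of order $c^2/(1-\rho)$, uniformly in $N$. I would prove it by keeping only the indices $i\le m/2$, where $\rho^{i-1}-\rho^{2m-i+1}\ge\rho^{i-1}(1-\rho^{m})$, summing the geometric series, and checking the small cases $N=1,2$ by hand.

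The final claim then follows directly: $\Delta_0/128=\frac4{128}\mu_pR^2=\frac1{32}\mu_pR^2<\frac3{64}\mu_pR^2$.
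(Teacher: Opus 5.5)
Your proposal is correct and is essentially the paper's proof: the same chain with $\mu_2=\mu_pn_0^{\beta_p}$ and $\rho=N/(N+1)$; the output bound read off \eqref{eq:mixed-strong-exponential} with constant bounds on $(1+\rho)^2$, $\rho^{2N}$ and $1-\rho^{2N+2}$; and the initial gap $r^2s_m/(2\nrm{w}_2^2)$ controlled by $s_m\le c$ together with a geometric-sum lower bound on $\nrm{w}_2^2$ over the first half of the chain (the paper keeps $i\le N+1$, where the subtracted term is at most a quarter of the leading one), a bound that does not depend on the hidden rotation, which is why $\Delta_0$ can be given in advance. The only flaw is the alternative $\nrm{w}_2^2\ge w_1^2$ that you also mention, which would fail because it gives $r^2/(2s_m)$, of order $(N+1)\mu_pR^2$; the geometric-sum route you settle on in your last paragraph is the right one.
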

\begin{proof}
Set $\mu_2=\mu_pn_0^{\beta_p}$, $m=2N+1$,
$\theta=(L_N-\mu_2)/4$, and $r=Rn_0^{-\beta_p/2}$.
Use the construction of \cref{cor:sc-exponential},
with $H_m=\mu_2I+\theta B_m^\top B_m$, $w=H_m^{-1}e_1$,
$b=r/\nrm{w}_2$, and
\[
 f(x)=\tfrac\theta2\nrm{B_mV^\top x}_2^2-b\ip{v_1}{x}
                                     +\tfrac{\mu_2}{2}\nrm{x}_2^2.
\]
Its minimizer $bVw$ has Euclidean norm $r$ and $p$-norm at most $R$.
Furthermore,
$\mu_2\nrm{h}_2^2\geq\mu_2n_0^{-\beta_p}\nrm{h}_p^2
=\mu_p\nrm{h}_p^2$, proving the claimed global strong convexity.
The Hessian upper bound is $\mu_2+4\theta=L_N$.

The parameter of \cref{cor:sc-exponential} is now
$\rho=N/(N+1)$.  For every integer $N\geq1$,
\begin{equation}\label{eq:rho-constant-bounds}
 \rho\geq\tfrac12,\qquad \rho^{2N}\geq\tfrac19,
 \qquad \rho^{2N+2}\leq\tfrac14.
\end{equation}
The binomial theorem and $k!\geq2^{k-1}$ for $k\geq1$ give
$(1+1/N)^N\leq\sum_{k=0}^N1/k!<3$, and
Bernoulli's inequality gives
$(1+1/N)^{N+1}\geq1+(N+1)/N>2$.
Substituting \eqref{eq:rho-constant-bounds} into
\eqref{eq:mixed-strong-exponential}, and using $\mu_2r^2=\mu_pR^2$,
yields
\[
 f(\widehat x)-f^\star
 \geq\frac{\mu_pR^2}{4}(1+\rho)^2\rho^{2N}
                                  (1-\rho^{2N+2})
 \geq\frac{3}{64}\mu_pR^2.
\]

It remains to bound the initial gap without a condition-number
factor.  The inverse formula in the preceding proof, with
$c=(1-\rho)/\mu_2$, gives
\[
 w_i=\frac{c(\rho^{i-1}-\rho^{2m-i+1})}{1+\rho^{2m+1}},
 \qquad s_m=w_1\leq c,
 \qquad f(0)-f^\star=\frac{r^2s_m}{2\nrm{w}_2^2}.
\]
For $1\leq i\leq N+1$, the ratio of the subtracted geometric term to
the first is $\rho^{2m-2i+2}\leq\rho^{2N+2}\leq1/4$.
Also $\rho^{2m+1}=\rho^{4N+3}\leq1/4$.
Consequently $w_i\geq(3/5)c\rho^{i-1}$, and
\[
 \nrm{w}_2^2\geq\frac9{25}c^2\sum_{j=0}^N\rho^{2j}
 \geq\frac{27c^2}{100(1-\rho^2)}.
\]
It follows that
\[
 f(0)-f^\star
 \leq\frac{50r^2(1-\rho^2)}{27c}
 =\frac{50}{27}\mu_2r^2(1+\rho)
 \leq\frac{100}{27}\mu_pR^2.
\]
This certificate is independent of the hidden resisting rotation.
Finally $\Delta_0/128=\mu_pR^2/32<3\mu_pR^2/64$, proving the
constant-reduction assertion.
\end{proof}

The condition $n=n_0$ is used here, because the conversion to $\ell_p$ strong
convexity holds only in $n_0$ ambient coordinates.

\section{Experimental details}
\label{app:experiments}

Except for the study of \cref{app:server-study}, all runs used Python~3.11 with
NumPy, SciPy (HiGHS) and Matplotlib on a two-core 2.8\,GHz Intel Xeon with
7\,GB of memory. The randomized runs use fixed seeds; only in the largest
shared-direction runs can a repeated run differ, by up to a few percent, because
HiGHS may return different optimal support points.

Each budget $N$ defines its own instance, including the dimension, so slopes
against $N$ describe budget-indexed families, not the trajectory of one method on
one objective. The sparse quadratic family is deterministic, the spread quadratic
family uses three rotations per budget, and the Steiner experiments use one seed
per budget; these runs do not estimate a worst-case expectation over the
randomized selector. The study of \cref{app:server-study} adds fixed instances,
repeated instance seeds and more baselines.

\subsection{Setting and further results}
\label{app:experiments-main}

Instances, constants and baselines are those of \cref{sec:experiments}; the
quadratic comparison also uses $p=4/3$ and $3/2$. \AGDE{} is FISTA
\citep{beck2009fista}, \AGDEnt{} is the accelerated method with the entropy
prox-function on the simplex lift \citep{dvurechensky2017triangles}, \PSG{} and
\MDE{} return their best iterate, and \EG{} is the extragradient method of
\citet{korpelevich1976}. ``\PSG{} + mixture LP'' and ``\EG{} + mixture LP'' apply
the certificate program of \cref{alg:spcut} to the points of \PSG{} and \EG.

\paragraph{Steiner paths.}
\Cref{fig:main-mechanism}(a) uses the sign chain
$C_t=K\cap\{x:x_i\le0,\ i\le t\}$, $t=0,\dots,n$, in the unit balls $K$ of $\ell_1$
and $\ell_2$, and the faces of \cref{prop:path-tight} in $B_1^n(1)$. For the sign
chain, $\st{C_t}=\alpha_t(e_1+\dots+e_t)$ by symmetry, with scalars $\alpha_t$ that
we estimate from $40\,000$ Gaussian samples; the path length is then
$\sum_t((t-1)(\alpha_t-\alpha_{t-1})^2+\alpha_t^2)^{1/2}$. For the faces the path is
computed exactly. Over $n=8,16,\dots,4096$ the sign chain travels between $0.687$
and $0.710$ in the $\ell_1$ ball and between $0.467\sqrt n$ and $0.491\sqrt n$ in the
$\ell_2$ ball. The faces travel $8.337$ at $n=4096$, between the bounds $7.90$ and
$8.89$ of \cref{prop:path-tight}, and the path budget at $N=n=4096$ is
$\PN=53.96$.

\paragraph{Smooth chains.}
The instances (\cref{app:experiments-instances}) are Nesterov's chains, quadratic
or nonquadratic, in dimension $n=2N+1$, rotated so that the minimizer is a fixed
$5$-sparse point. \Cref{fig:main-mechanism}(b) shows the quadratic chain. \SPL{}
runs at the exact level $\ell=\fstar$ with horizon $N$ and $m=256$ directions,
which separates the rate of the level method from the search for the level. On
both chains \AGDE, \AGDEnt{} and \PG{} remove a small fraction of the initial gap,
and so does \CLA{} on the quadratic chain at these budgets. \SPL{} falls below the
best error attainable in the span of the first $N$ chain coordinates at $N=64$.
Its fitted slope is $-2.32$ on the quadratic chain ($8\le N\le256$) and $-2.31$ on
the nonquadratic one ($8\le N\le128$), and at $N=256$ its error is
$6.90\cdot10^{-8}$, against $2.52\cdot10^{-7}$ for \AGDE. The nonquadratic chain is
nearly quadratic near its minimizer: its initial gaps and baseline errors differ
from those of the quadratic chain by a relative amount of at most $10^{-4}$, and
the errors of \SPL{} on the two chains agree within $5\%$. At $N=256$ the
guarantee of \cref{thm:splevel} is $1.7\cdot10^4$ times the initial gap.

\begin{figure}[t]
\centering
\includegraphics[width=\linewidth]{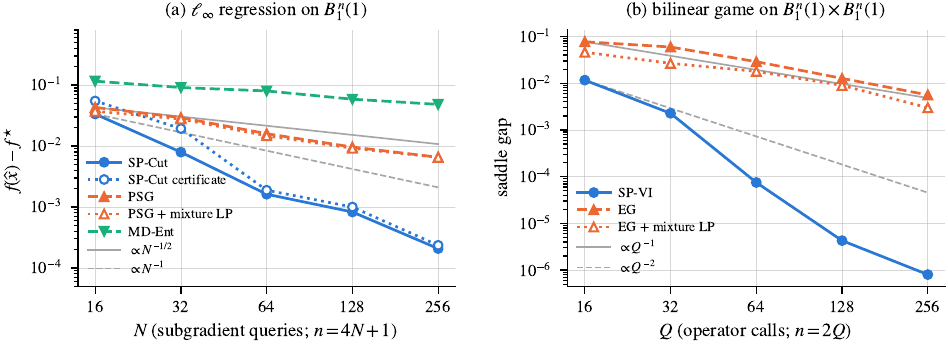}
\caption{(a) $\ell_\infty$ regression, $n=4N+1$: error after $N$ subgradient
queries. (b) Bilinear games, $n=2Q$: saddle gap after $Q$ operator calls.}
\label{fig:sp-nonsmooth-vi}
\end{figure}

\paragraph{Nonsmooth objectives.}
On $\ell_\infty$ regression (\cref{fig:sp-nonsmooth-vi}a) the error of \SPC{} with
$m=32$ directions decays with the exponent $-1.8$, against $-0.7$ for \PSG{} and
$-0.3$ for \MDE; at $N=256$ ($n=1025$) it is $2.1\cdot10^{-4}$, against
$6.6\cdot10^{-3}$ for \PSG. The mixture certificate improves \PSG{} by less than
$10\%$ for $N\ge32$, so the gain comes from the query points and not from the
certificate. The certified value $\widehat\varepsilon$ of \SPC{} bounds the true
error on every run and is within a factor $1.25$ of it for $N\ge64$. The
best-iterate baselines use additional function values to select their output.

\paragraph{Saddle problems.}
On the bilinear games (\cref{fig:sp-nonsmooth-vi}b) the saddle gap of \SPV{}
decays faster than $Q^{-2}$ for $16\le Q\le256$ (fitted exponent $-3.7$, local
slope $-2.4$ over the last doubling). At $Q=256$ it is $8\cdot10^{-7}$, against
$5.6\cdot10^{-3}$ for \EG, whose gap decays like $Q^{-1}$; the mixture
certificate improves \EG{} by a factor between $1.4$ and $2.2$. Localization
probably benefits from the unique interior saddle point of these games. Because
the objective is bilinear, the certificate equals the saddle gap of the returned
mixture.

\begin{figure}[t]
\centering
\includegraphics[width=\linewidth]{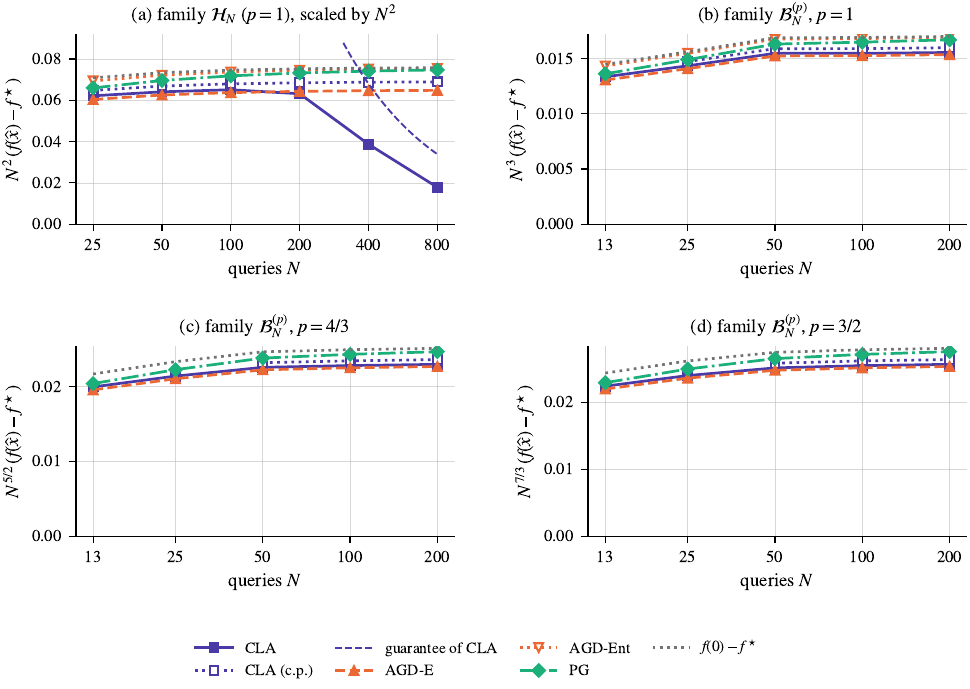}
\caption{Quadratics: error after $N$ gradient queries, multiplied by $N^2$ on
$\mathcal H_N$ (a) and by $N^{1+2/p}$ on $\mathcal B_N^{(p)}$ (b--d).}
\label{fig:rates}
\end{figure}

\paragraph{Quadratics on $\ell_p$ balls.}
\Cref{fig:rates} uses two quadratic families with $L=1$
(\cref{app:experiments-cla}): $\mathcal H_N$, a rotated chain with the $1$-sparse
minimizer $0.9e_1$ and $n=2N+1$, and $\mathcal B^{(p)}_N$, the chain of
\cref{thm:power-chain} in dimension $n=3N+2$ under a random rotation, with a
minimizer of $\ell_p$-norm $0.9$ spread over all coordinates. \CLA{} (c.p.), for
``certified parameters'', is \CLA{} with the parameters of
\cref{thm:cla-certified}. On $\mathcal H_N$ the Euclidean methods stay close to
the initial gap, and the entropic method does not help. The bound of
\cref{thm:cla}, which becomes $27/(N-3)$ after multiplication by $N^2$ (dashed in
\cref{fig:rates}(a)), falls below the plateau $N^2(f(0)-\fstar)\approx0.076$ near
$N=360$; the error of
\CLA{} is already slightly below that of \AGDE{} at $N=200$, and at $N=800$ it is
$0.275$ times that of \AGDE. On $\mathcal B^{(p)}_N$ no method removes more than
$10\%$ of the initial gap, which is dotted in \cref{fig:rates}(b--d), and the fitted
exponents of all methods agree with $1+2/p$ (\cref{tab:exponents}), as the lower
bound predicts. \CLA{} (c.p.) runs only for $N\ge24/(2-p)$, so its slopes for
$p=4/3$ and $3/2$ are fitted on $N\in\{50,100,200\}$; the relative errors in the
last row agree for the three values of $p$ up to $0.01$.

\begin{table}[t]
\centering
\caption{Family $\mathcal B_N^{(p)}$, $25\le N\le200$: fitted slopes of
$\log(f(\widehat x)-\fstar)$ against $\log N$, and the median of
$(f(\widehat x)-\fstar)/(f(0)-\fstar)$.}
\label{tab:exponents}
\begin{tabular}{lccccccc}
\toprule
 & predicted & \CLA & \CLA{} (c.p.) & \AGDE & \AGDEnt & \PG & $f(0)-\fstar$\\
\midrule
$p=1$ & $-3$ & $-2.97$ & $-2.97$ & $-2.97$ & $-2.96$ & $-2.95$ & $-2.97$\\
$p=4/3$ & $-2.5$ & $-2.47$ & $-2.49$ & $-2.47$ & -- & $-2.45$ & $-2.47$\\
$p=3/2$ & $-2.33$ & $-2.30$ & $-2.32$ & $-2.30$ & -- & $-2.29$ & $-2.30$\\
\midrule
relative error & & $0.92$ & $0.94$ & $0.90$ & $0.99$ & $0.97$ & $1$\\
\bottomrule
\end{tabular}
\end{table}

\subsection{Implementation of the Steiner-point methods}
\label{app:experiments-sp}

\paragraph{Support points.}
A localization set is $C=K\cap\{x:\ip{a_j}x\le\beta_j,\ j\le k\}$, where $K$ is the
unit $\ell_1$ ball or a product of two such balls. The support problem
$\max_{x\in C}\ip zx$ is solved through its dual linear program, which has one
variable per cut and per ball and $2n$ constraints; the support point is read off
the dual multipliers. HiGHS works in floating point, so support points and
certificates are numerical approximations, which we check against the primal
problem. An unbounded dual means that $C$ is empty; a primal feasibility problem
confirms it. One support problem takes about $40$\,ms at $n=257$ with $k\le128$
cuts and about $0.4$\,s at $n=1025$ with $k\le256$.

\paragraph{Selector.}
The approximate Steiner point is the average $\bar x=\frac1m\sum_iV_i$ of
\cref{thm:random-selector} over $m$ Gaussian directions. Within one level call,
or one run of \SPC{} or \SPV, all centers use the same $m$ directions. The average
stays feasible up to LP errors, and sharing the directions reduces fluctuations
between successive centers (\cref{app:selector-robustness}). Because later
localization sets then depend on the directions, the guarantee of
\cref{thm:random-selector}, which uses fresh samples, does not apply. Its schedule
$m_N$ would require at least $32N^2/(\bN\cn)\approx1.9\cdot10^4$ directions per
center at $N=256$; the level runs use $m=256$.

\paragraph{\SPL{} and \SPA.}
\SPL{} is \cref{alg:splevel} with the threshold $\delta_N=4\PN/N$; at $n=2N+1$ it
equals $10.6$, $6.3$, $3.7$, $2.1$, $1.2$ and $0.65$ for $N=8,\dots,256$, so it
exceeds the diameter $2$ of $K$ for $N\le64$. Every trial was accepted at every
budget. At $N=256$ the largest displacement of the center is $0.063$ and its total
path is $2.09$, against the path budget $\PN=41.6$. \SPA{} is \cref{alg:spaccel}
with three changes that keep every certificate valid. (i) After each level call
the value of the returned point is queried and counted; $\fhi$ is the best measured
value, and $\flo$ is the largest of the certificates and of the bundle lower bound
$\min_{u\in K}\max_j[f(x_j)+\ip{g_j}{u-x_j}]$ over the stored queries. (ii) The
horizon is doubled whenever a level call at width $w$ returns a point with value
above $\ell+w/4$. The level method succeeds once the horizon reaches
\eqref{eq:horizon-rule}, so this costs at most twice that horizon per phase plus
one query per retry. (iii) Each level call starts from $K$ intersected with the
level cuts of all stored queries, and a level whose initial polytope is empty is
certified without a query. The output is the best measured point.

\paragraph{\SPC{} and \SPV.}
Both methods choose the cut depth from the certificate. After every query the
mixture certificate $\widehat\varepsilon$ of the stored cuts is recomputed (a
linear program with one variable per stored point), the depth $\varepsilon$ is
replaced by $\min\{\varepsilon,\widehat\varepsilon/2\}$, and the cuts are rebuilt
when the depth changes. By the minimax identity of \cref{thm:spcut}, the
localization set is never empty at a depth below $\widehat\varepsilon$, and the
returned mixture is the certified one, so the certificate bounds the error or gap
whatever the accuracy of the selector. The depth $4G\PN/N$ of the analysis exceeds
$GR$ for $N\le128$ and cannot be used at these budgets. \SPV{} uses the step
$w=\proj_K(z-F(z)/(2L))$ of \cref{alg:spvi}, the cut $\ip{F(w)}{w-u}\ge\varepsilon$
and $m=32$ directions.

\subsection{Instances}
\label{app:experiments-instances}

\paragraph{Smooth chains.}
Let $B$ be the chain difference operator $(By)_i=y_i-y_{i+1}$, $i<n$, $(By)_n=y_n$,
used in the proof of \cref{thm:power-chain}, and $A=\frac14B^\top B$, so that
$0\prec A\preceq I$. Let $w$ be the unit vector proportional to $A^{-1}e_1$, the
chain minimizer, for which $Bw$ is a constant vector; let $u$ be the unit vector
proportional to $(1,-0.7,0.5,-0.3,0.2,0,\dots,0)$ and $U$ the Householder
reflection with $Uw=u$. With $\opt=0.9u/\nrm u_1$, the objective is
\[
 f(x)=\tfrac14\textstyle\sum_i\psi((BUx)_i)-\ip{g_{\mathrm{lin}}}x,\qquad
 g_{\mathrm{lin}}=\tfrac14U^\top B^\top\psi'(BU\opt),
\]
so that $\opt$ is the unconstrained minimizer and $\nrm{\opt}_1=0.9$. For
$\psi(t)=t^2/2$ this is Nesterov's chain $\frac12\ip x{Qx}-\ip{Q\opt}x$ with
$Q=UAU$; for $\psi(t)=\sqrt{1+t^2}-1$ the Hessian is at most $I$. In both cases
$\psi'(BU\opt)$ is a constant vector, so $g_{\mathrm{lin}}$ is proportional to
$U^\top e_1$, and for every method whose iterates lie in the span of the past
gradients, the $k$th gradient is supported on the first $k$ coordinates of $Ux$.
The minimum over the span of the first $N$ of these coordinates has the gap
$\frac{n-N}{n}(f(0)-\fstar)$, the dotted line of \cref{fig:main-mechanism}(b); it
bounds only methods whose outputs stay in that span. The nonquadratic objective is
nearly quadratic on the region explored: $BU\opt=t_\star\1$ with
$t_\star\approx2\cdot10^{-4}$ at $N=128$, where $\psi''(t_\star)\approx1-5\cdot10^{-8}$.

\paragraph{$\ell_\infty$ regression.}
$f(x)=\nrm{M(x-\opt)}_\infty$, where $M\in\R^{4n\times n}$ is Gaussian with unit
rows ($G=1$), $\opt=(0.5,-0.3,0.1,0,\dots,0)$ and $n=4N+1$; the oracle returns
$\pm M_i^\top$ for a row $i$ of largest residual. \PSG{} and \MDE{} use the steps
$R/(G\sqrt N)$ and $R\sqrt{2\log(2n)}/(G\sqrt N)$, the latter on the lift
$w\mapsto R(w_+-w_-)$ of $\Delta_{2n}$.

\paragraph{Bilinear games.}
$\Phi(x,y)=\ip x{My}+\ip bx-\ip cy$ on $B_1^n(1)\times B_1^n(1)$, where $M$ is
Gaussian scaled to $\nrm M_2=1$ (so $L=1$), $b=-My^\star$ and $c=M^\top x^\star$ for
the $3$-sparse pair $x^\star=(0.5,-0.3,0.1,0,\dots,0)$,
$y^\star=(-0.4,0.35,0.15,0,\dots,0)$, which is then the unique saddle point; $n=2Q$
for a budget of $Q$ calls. The saddle gap of $(\bar x,\bar y)$ is
$\ip b{\bar x}+\ip c{\bar y}+\nrm{M^\top\bar x-c}_\infty+\nrm{M\bar y+b}_\infty$. \EG{}
uses the step $1/(2L)$ and Euclidean projections and returns the average of its
extrapolated points.

\subsection{The complete bracket search}
\label{app:experiments-full}

\begin{table}[t]
\centering
\caption{\SPA{} with $m=64$ after $N$ joint queries on the chains, $n=2N+1$.}
\label{tab:sp-full}
\setlength{\tabcolsep}{5pt}\begin{tabular}{@{}llccccc@{}}
\toprule
chain & $N$ & $n$ & $f(0)-\fstar$ & \AGDE & \SPA{} (best) & last level call\\
\midrule
quadratic & 32 & 65 & $1.80\cdot10^{-5}$ & $1.54\cdot10^{-5}$ & $1.80\cdot10^{-5}$ & $5.68\cdot10^{-5}$\\
quadratic & 64 & 129 & $4.63\cdot10^{-6}$ & $3.95\cdot10^{-6}$ & $4.63\cdot10^{-6}$ & $9.64\cdot10^{-6}$\\
quadratic & 128 & 257 & $1.17\cdot10^{-6}$ & $1.00\cdot10^{-6}$ & $1.17\cdot10^{-6}$ & $1.52\cdot10^{-6}$\\
nonquadratic & 32 & 65 & $1.80\cdot10^{-5}$ & $1.54\cdot10^{-5}$ & $1.80\cdot10^{-5}$ & $5.77\cdot10^{-5}$\\
nonquadratic & 64 & 129 & $4.63\cdot10^{-6}$ & $3.95\cdot10^{-6}$ & $4.63\cdot10^{-6}$ & $9.93\cdot10^{-6}$\\
nonquadratic & 128 & 257 & $1.17\cdot10^{-6}$ & $1.00\cdot10^{-6}$ & $1.17\cdot10^{-6}$ & $1.52\cdot10^{-6}$\\
\bottomrule
\end{tabular}
\end{table}

\Cref{tab:sp-full} reports the errors of \SPA, which also searches for the level,
of \AGDE{} after the same number of queries, and of the output of the last level
call. The initial point has a small gap, while the bracket starts with the width
$R\nrm{g_0}_\infty$, several orders of magnitude larger. The phases with horizons
$2,4,\dots$ reduce the width, but the outputs of their level calls stay above the
initial gap, so the best measured point is the initial point, whose gap exceeds
the error of \AGDE{} by about $17\%$. The last level call, with horizon $N/2$, was
stopped by the budget after $12$, $27$ and $58$ trials for $N=32,64,128$; its
output approaches the initial gap as $N$ grows and is within $30\%$ of it at
$N=128$. With $m=64$ directions even the exact-level method stays near $0.75$ of
the initial gap (\cref{app:selector-robustness}), so the overhead of the search
and the effect of the sample size cannot be separated here; with $m=256$ at fixed
$n=65$ the complete method does separate from the baselines
(\cref{app:server-smooth}). Larger budgets were not run because of the cost of the
linear programs.

\subsection{The gradient-only method}
\label{app:experiments-prox}

\begin{table}[t]
\centering
\caption{\SPP{} on the nonquadratic chain with $n=65$, stages $j\ge12$.}
\label{tab:sp-prox}
\setlength{\tabcolsep}{4pt}
\begin{tabular}{rrrrrrcc}
\toprule
$j$ & $\delta_j$ & trials & accepted & gradients & total & certificate & $f(y)-\fstar$\\
\midrule
12 & $1.22\cdot10^{-4}$ & 2 & 0 & 1\,600 & 5\,709 & $1.03\cdot10^{-4}$ & $1.74\cdot10^{-5}$\\
13 & $6.10\cdot10^{-5}$ & 4 & 1 & 2\,574 & 8\,283 & $5.81\cdot10^{-5}$ & $1.67\cdot10^{-5}$\\
14 & $3.05\cdot10^{-5}$ & 15 & 14 & 10\,360 & 18\,643 & $3.01\cdot10^{-5}$ & $1.29\cdot10^{-5}$\\
15 & $1.53\cdot10^{-5}$ & 24 & 22 & 16\,164 & 34\,807 & $1.52\cdot10^{-5}$ & $6.32\cdot10^{-6}$\\
16 & $7.63\cdot10^{-6}$ & 27 & 26 & 21\,014 & 55\,821 & $7.30\cdot10^{-6}$ & $2.77\cdot10^{-6}$\\
17 & $3.81\cdot10^{-6}$ & 24 & 23 & 19\,539 & 75\,360 & $3.74\cdot10^{-6}$ & $1.29\cdot10^{-6}$\\
18 & $1.91\cdot10^{-6}$ & 27 & 25 & 23\,280 & 98\,640 & $1.87\cdot10^{-6}$ & $5.80\cdot10^{-7}$\\
19 & $9.54\cdot10^{-7}$ & 28 & 27 & 27\,132 & 125\,772 & $9.42\cdot10^{-7}$ & $2.47\cdot10^{-7}$\\
20 & $4.77\cdot10^{-7}$ & 31 & 30 & 30\,616 & 156\,388 & $4.63\cdot10^{-7}$ & $9.66\cdot10^{-8}$\\
\bottomrule
\end{tabular}
\end{table}

\Cref{tab:sp-prox} reports \SPP{} (\cref{alg:spprox}) with $32$ directions shared by
the centers of a stage and the Euclidean projection in the inner
projected-gradient loop, on the nonquadratic chain with $n=65$ and $L=R=1$,
started at $0$ ($f(0)-\fstar=1.80\cdot10^{-5}$). For each stage $j$ it lists the
cut depth $\delta_j=2^{-j-1}$, the numbers of trials and of accepted proximal steps,
the gradient queries of the stage and in total, and the certificate and the true
error of the stage output. The method uses no function values; the true errors are
computed afterward. The stages $j\le11$ end at their first trial with an empty
cut, which certifies that the current point is $\delta_j$-optimal without
comparing values. Every stage ended with an empty cut after at most $31$ trials,
far below the worst-case allowance, and almost every trial with a nonempty cut
was an accepted proximal step, so the small-step case of \eqref{eq:dichotomy},
which the path bound controls, was almost never used. The certificate bounds the
true error at every stage, by a factor between $2.3$ and $4.8$ for $j\ge14$. The
cost lies in the inner loops: each trial runs a bisection whose steps are loops of
projected-gradient steps, and at $n=65$ the stages with more than one trial use
between $6\cdot10^2$ and $10^3$ gradients per trial. The final error $10^{-7}$
takes $1.6\cdot10^5$ gradients at $n=65$; at $n=17$ and $33$, $16$ and $18$ stages
take $5.0\cdot10^4$ and $8.9\cdot10^4$ gradients and reach $1.5\cdot10^{-6}$ and
$3.8\cdot10^{-7}$. In gradient evaluations the method is therefore not competitive
with the joint-oracle methods or with \AGDE; the experiment supports the mechanism
and the certificates, not a practical advantage.

\subsection{Implementation of \CLA{} and the quadratic families}
\label{app:experiments-cla}

\paragraph{Instance families.}
Both families use the matrix $A$ of \cref{app:experiments-instances} in dimension
$n$. For $\mathcal H_N$, $n=2N+1$, $Q=UAU$, where $U$ is the Householder reflection
that maps $A^{-1}e_1/\nrm{A^{-1}e_1}_2$ to $e_1$, and
$f(x)=\frac12\ip x{Qx}-\ip{Q\opt}x$ with $\opt=0.9e_1$; in the coordinates $Ux$
the linear term is a multiple of $e_1$, so Krylov-type methods have error of order
$\nrm{\opt}_2^2/N^2$ for $N\le(n-1)/2$. For $\mathcal B_N^{(p)}$, $n=3N+2$,
$Q=UAU^\top$ with a random orthogonal $U$, and $\opt$ is proportional to
$UA^{-1}e_1$ with $\nrm{\opt}_p=0.9$; then $f(0)-\fstar$ decays like
$N^{-(1+2/p)}$. Three rotations are used for each pair $(p,N)$, and medians are
reported. All methods receive exactly $N$ gradient queries; for \CLA{} these are
the query at $0$ and $N-1$ products.

\paragraph{Learned model.}
The directions requested by \cref{alg:cla} are orthonormalized against the current
basis $V_S$ of the learned span $S$ by two Gram--Schmidt passes; a direction whose
residual has norm below $10^{-10}$ is redundant but still counted as a query. For
a new unit vector $d$, the product $Qd=(\nabla f(hd)-\nabla f(0))/h$ with
$h=1/\max\{1,\nrm d_p\}$ uses the feasible query $hd\in B_p^n(1)$. The learned
matrix is the ridge model \eqref{eq:ridge-model}, stored as $EE^\top$ with
$E\in\R^{n\times\dim S}$.

\paragraph{Model minimization.}
Every model has the form
$J(x)=\lambda\Psi_I(x)+\frac12\nrm{E^\top x}_2^2+\ip bx+\frac\zeta2\nrm x_2^2$ on
$B_p^n(1)$ with a weight $\lambda>0$. We maximize the concave dual function
\[
 \varphi(u)=-\tfrac12\nrm u_2^2+\min_{x\in B_p^n(1)}\bigl\{\lambda\Psi_I(x)+\ip{b+Eu}x+\tfrac\zeta2\nrm x_2^2\bigr\},
 \qquad u\in\R^{\dim S},
\]
by Newton's method with backtracking. The inner problem is separable up to the
ball constraint and is solved by a scalar search for the multiplier of the
constraint, with closed-form coordinate roots for $p\in\{1,\frac43,\frac32\}$;
the generalized Jacobian of its solution map gives the Newton matrix. At the pair
$(x(u),u)$ the duality gap equals $\frac12\nrm{E^\top x(u)-u}_2^2$, and it bounds
$J(x(u))-\min J$ when the inner minimization is exact. \CLA{} stops when this gap
is at most $10^{-13}\max\{1,|\varphi(u)|\}$, and \CLA{} (c.p.) when it is at most
$\eta$ of \eqref{eq:certified-parameters}; consecutive solves are warm started.
This rule does not check the residual required by \cref{def:inner-certificate},
so \CLA{} (c.p.) is not a certified realization of \cref{thm:cla-certified}.

\paragraph{Step selection and parameters.}
The step parameter is selected as in \cref{alg:cla,alg:cla-certified}: the model
at $\gamma=0$ is solved first; if its largest unpromoted coordinate is at least
$(1-\sigma)\tau$, the stage promotes; otherwise the model at $\bar\gamma_t$ is
solved, and if its largest unpromoted coordinate exceeds $\tau$, a bisection on
$\gamma$ stops once this coordinate lies in $[(1-\sigma)\tau,\tau]$. Both variants
use $\zeta=L_\tau/(p(T+1)^3)$ and the ridge parameter $\zeta/18$ of
\eqref{eq:certified-parameters}. \CLA{} (c.p.) uses all parameters of
\eqref{eq:certified-parameters}; \CLA{} uses $\tau^p=6/((2-p)(T+1))$ and
$L_\tau=L\tau^{2-p}$ of \cref{alg:cla} and $\sigma=10^{-6}$. The stabilizer then
changes the objective by at most $\zeta/2=O(L\tau^{2-p}T^{-3})$ on $B_p^n(1)$,
below the guarantee of \cref{thm:cla}. On $\mathcal H_N$, \CLA{} made only full
steps for $N\le100$; for $N\ge200$ one stage was a partial step that promoted one
coordinate. On one core, a run on $\mathcal H_{800}$ took about 21 minutes for
\CLA{} and 2.5 minutes for \CLA{} (c.p.).

\paragraph{Baselines.}
\AGDE{} uses the indicator of $B_p^n(1)$, the step $1/L$ with $L=\lambda_{\max}(Q)$,
and its last iterate as output. The projection onto $B_1^n(1)$ uses sorting
\citep{duchi2008projection}, and the projection onto $B_p^n(1)$, $p>1$, the
separable solver above. \PG{} is $x_{k+1}=\proj_K(x_k-\nabla f(x_k)/L)$ from
$x_0=0$. \AGDEnt{} is the similar-triangles method with $\theta_k=2/(k+2)$ on the
lift $w\mapsto w_+-w_-$ of $\Delta_{2n}$ onto $B_1^n(1)$, with entropic mirror steps
of size $1/(L_1\theta_k)$ from the uniform distribution, where
$L_1=\max_{ij}|Q_{ij}|$ is the $\ell_1$ smoothness constant of the lifted function;
on the nonquadratic chain it uses the constant of the quadratic chain, which is
larger.

\subsection{Sensitivity to selector sampling}
\label{app:selector-robustness}

We repeat the exact-level runs on both chains at $N\in\{16,32,64\}$, $n=2N+1$, with
$64$ directions per center and five seeds. Each seed is used twice: with directions
shared by all centers of the level call, and with a fresh batch at every center;
every run returns its last accelerated iterate. All $60$ runs use exactly $N$
queries, stay feasible to within $10^{-10}$ and accept every trial.

\begin{figure}[t]
\centering
\includegraphics[width=\linewidth]{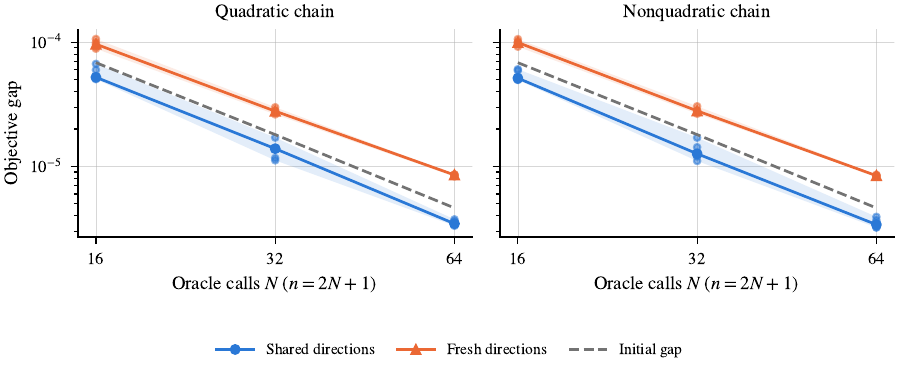}
\caption{Exact-level \SPL{} with $64$ shared or fresh directions per center:
medians, ranges and all runs over five seeds.}
\label{fig:selector-robustness}
\end{figure}

\Cref{fig:selector-robustness} shows a strong sampling effect. At $N=64$ on the
quadratic chain the median gap is $3.46\cdot10^{-6}$ with shared directions
(range $3.33$--$3.75\cdot10^{-6}$) and $8.53\cdot10^{-6}$ with fresh ones
($8.31$--$8.62\cdot10^{-6}$), for an initial gap of $4.63\cdot10^{-6}$; on the
nonquadratic chain the medians are $3.40\cdot10^{-6}$ and $8.42\cdot10^{-6}$. With
fresh directions the last iterate is worse than the initial point at every budget
and for every seed; with shared directions the median stays between $0.70$ and
$0.77$ of the initial gap. Fresh batches satisfy the independence assumption of
\cref{thm:random-selector}, but $64$ directions are far below its schedule. Shared
directions shorten the path of the centers (median $2.06$ against $4.98$ at
$N=64$) and couple later localization sets to the sample. At this sample size
neither variant shows the rate of the level method.

\paragraph{Larger fresh samples.}
\Cref{tab:selector-fresh} repeats the quadratic-chain runs with $m$ fresh
directions per center and three seeds. The relative error decreases with $m$, and
for $m=1024$ also with $N$: the fitted slope over $N=16,32,64$ is $-2.29$, against
$-2.08$ for $m=256$ and $-2.42$ for the shared runs with $m=256$ of
\cref{fig:main-mechanism}(b). The spread over seeds is at most $0.09$, and every
trial was accepted. With $N+1$ selector calls and failure probability $0.1$, the
schedule of \cref{thm:random-selector} requires about $1.3\cdot10^3$,
$4.1\cdot10^3$ and $1.4\cdot10^4$ directions for $N=16,32,64$, so only $m=4096$ at
$N=16$ meets it.

\begin{table}[t]
\centering
\caption{Exact-level \SPL{} on the quadratic chain: median of
$(f(\widehat x)-\fstar)/(f(0)-\fstar)$ with $m$ fresh directions per center.}
\label{tab:selector-fresh}
\begin{tabular}{lccc}
\toprule
directions per center & $N=16$ & $N=32$ & $N=64$\\
\midrule
$64$ (five seeds) & $1.41$ & $1.54$ & $1.84$\\
$256$ & $0.85$ & $0.81$ & $0.71$\\
$1024$ & $0.67$ & $0.57$ & $0.42$\\
$4096$ & $0.66$ & $0.50$ & --\\
\midrule
$256$, shared & $0.71$ & $0.55$ & $0.37$\\
\bottomrule
\end{tabular}
\end{table}

\subsection{Controlled study at fixed dimension}
\label{app:server-study}

This study complements the budget-indexed runs: it uses fixed instances, three
instance seeds at selected endpoints, more baselines and measured times. It ran
on a Linux server with Python~3.12, NumPy~2.4 and SciPy~1.17 (HiGHS), with twelve
tasks in parallel and one BLAS thread each; times are elapsed times of a task
under this load. We report medians with observed ranges, or single runs, not
confidence intervals. The relative error of an output $\widehat x$ is
$r=(f(\widehat x)-\fstar)/(f(0)-\fstar)$; for games, $r$ is the ratio of saddle
gaps.

\paragraph{Baselines and oracles.}
The study adds Frank--Wolfe (\alg{FW}) and its away-step and pairwise variants
(\alg{AFW}, \alg{PFW}) \citep{lacostejulien2015global}, all with the short step
clipped to $[0,1]$; \alg{AGD-E-feasible}, the similar-triangles method of
\citet{dvurechensky2017triangles} with the Euclidean prox-function, whose queries
are convex combinations of feasible points; and \alg{E-Bundle}, a Euclidean
bundle-level method \citep{lemarechal1995bundle} that projects onto a level set of
the stored cuts and returns its best evaluated point. \alg{PSG-mix} and
\alg{EG-mix} are the methods ``\PSG{} + mixture LP'' and ``\EG{} + mixture LP'' of
\cref{app:experiments-main}, where \CLA{} (c.p.) is also defined. \PG, the
accelerated methods, the Frank--Wolfe variants and \CLA{} use gradients only;
\alg{E-Bundle} and the Steiner-point methods also use values, and \SPL{} is given
$\fstar$. A smaller error at equal $N$ therefore does not mean a lower total cost.

\subsubsection{Fixed smooth instances}
\label{app:server-smooth}

The instances are the chains of \cref{app:experiments-instances} with $n=65$, a
$5$-sparse minimizer of $\ell_1$-norm $0.9$ and budgets $N=16,32,64$. The
nonquadratic chain of \cref{app:experiments-instances} is almost quadratic near its
minimizer, so here the nonquadratic chain uses
\[
 \psi_s(t)=s^2\bigl(\sqrt{1+(t/s)^2}-1\bigr),\qquad
 s=\tfrac1n\textstyle\sum_i|(BU\opt)_i|,
\]
whose curvature $\psi_s''(t)=(1+(t/s)^2)^{-3/2}$ varies on the scale of the
solution, while $L=1$ still holds. The Steiner-point methods use $m=256$ shared
directions. \Cref{fig:server-smooth} shows seed~$0$ at $N=16,32$ (open markers) and
the median and range over three seeds at $N=64$ (filled markers).

\begin{figure}[t]
\centering
\includegraphics[width=\linewidth]{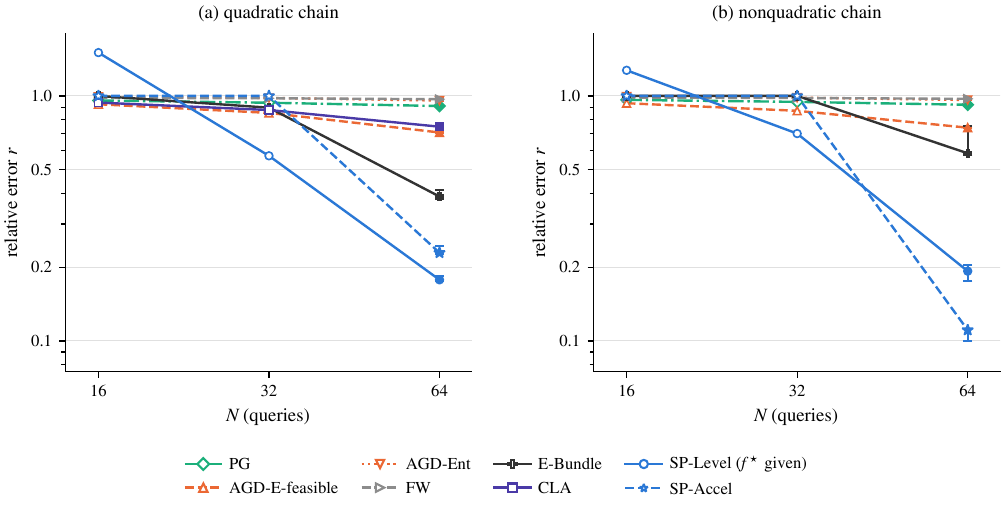}
\caption{Relative error $r$ on fixed chains with $n=65$.}
\label{fig:server-smooth}
\end{figure}

\begin{table}[t]
\centering\small
\caption{Median relative error $r$ and time at $n=65$, $N=64$, over three seeds
($\star$: given $\fstar$).}
\label{tab:server-smooth}
\begin{tabular}{lrrrr}
\toprule
Method & $r$, quadratic & Time (s) & $r$, nonquadratic & Time (s)\\
\midrule
PG & $0.909$ & $0.00413$ & $0.92$ & $0.00547$\\
AGD-E & $0.709$ & $0.00441$ & $0.742$ & $0.00555$\\
AGD-E-feasible & $0.709$ & $0.00584$ & $0.742$ & $0.00752$\\
AGD-Ent & $0.957$ & $0.00565$ & $0.963$ & $0.00708$\\
FW & $0.969$ & $0.00597$ & $0.973$ & $0.008$\\
AFW & $0.969$ & $0.00983$ & $0.973$ & $0.0107$\\
PFW & $0.97$ & $0.0092$ & $0.973$ & $0.0108$\\
E-Bundle & $0.388$ & $3.73$ & $0.583$ & $1.18$\\
CLA & $0.749$ & $0.234$ & --- & ---\\
SP-Level$^{\star}$ & $0.178$ & $92.4$ & $0.193$ & $90.6$\\
SP-Accel & $0.229$ & $74.1$ & $0.111$ & $93.6$\\
\bottomrule
\end{tabular}

\end{table}

On the nonquadratic chain \SPA{} leaves $r=0.111$ (range $0.100$--$0.111$), against
$0.742$ for \alg{AGD-E-feasible} and $0.583$ ($0.573$--$0.757$) for
\alg{E-Bundle}; on the quadratic chain the medians are $0.229$, $0.709$ and
$0.388$ (\cref{tab:server-smooth}). \SPL, although given $\fstar$, leaves $0.193$
on the nonquadratic chain, so knowing the level does not guarantee a smaller
error at a fixed budget; \CLA{} was run only on the quadratic chain. \SPA{} takes
about $94$\,s on the nonquadratic chain, against $1.18$\,s for \alg{E-Bundle} and milliseconds for the accelerated
methods. The advantage depends on the dimension: at $n=129$ and $N=64$, on one
nonquadratic instance, \SPA{} leaves $r=0.853$, \SPL{} $0.564$,
\alg{AGD-E-feasible} $0.870$ and \alg{E-Bundle} $1.00$.

\subsubsection{Sparsity and dimension in curvature learning}
\label{app:server-geometry}

\begin{figure}[t]
\centering
\includegraphics[width=\linewidth]{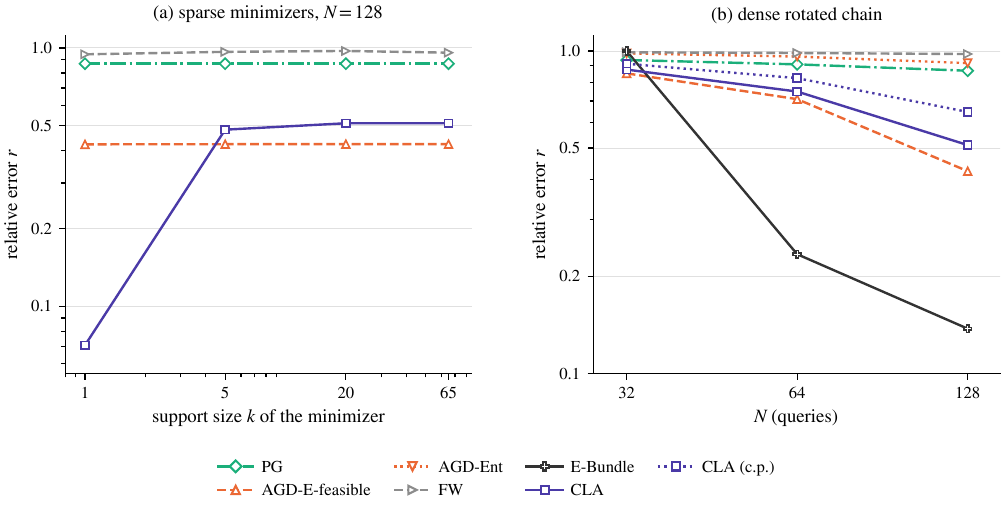}
\caption{Relative error at $n=65$, $p=1$: (a) against the support size $k$ of the
minimizer, $N=128$; (b) on the dense rotated chain.}
\label{fig:server-geometry}
\end{figure}

\Cref{fig:server-geometry} uses instance seed~$0$ and a minimizer of $\ell_1$-norm
$0.9$; each marker is one run. At $n=65$, $N=128$ and $p=1$, \CLA{} leaves
$r=0.0706$ for a $1$-sparse minimizer, against $0.424$ for \AGDE; for $5$-sparse
minimizers the medians over three instance seeds are $0.456$ and $0.424$, and on
the dense rotated chain
\CLA{} gives $0.511$, \AGDE{} $0.424$ and \alg{E-Bundle} $0.138$. The advantage of
\CLA{} is therefore tied to sparse minimizers. For a $1$-sparse minimizer,
increasing $n$ from $65$ to $129$ and $513$ changes $r$ from $0.0706$ to $0.400$ and
$0.936$ for \CLA, against $0.424$, $0.710$ and $0.927$ for \AGDE; for $p=4/3$ and
$3/2$ at $n=65$, \CLA{} gives $0.385$ and $0.481$, against $0.424$ for \AGDE.
At $n=65$, $N=128$ and a $5$-sparse minimizer, tightening the inner tolerance of
\CLA{} and \CLA{} (c.p.) from $10^{-11}$ to $10^{-15}$ changes $r$ by less than
$6\cdot10^{-8}$ for each $p$.

\begin{table}[t]
\centering\small
\setlength{\tabcolsep}{3.8pt}
\caption{\CLA{} and \AGDE{} on $\mathcal H_N$, $p=1$, $n=2N+1$.}
\label{tab:server-growing}
\begin{tabular}{rrrrrrr}
\toprule
$N$ & AGD-E gap & CLA gap & Ratio & AGD-E (s) & CLA (s) & Misses\\
\midrule
100 & $6.37\cdot10^{-6}$ & $6.51\cdot10^{-6}$ & $1.02$ & $0.00353$ & $0.854$ & 0\\
200 & $1.61\cdot10^{-6}$ & $1.58\cdot10^{-6}$ & $0.982$ & $0.0129$ & $9.08$ & 0\\
400 & $4.04\cdot10^{-7}$ & $2.42\cdot10^{-7}$ & $0.599$ & $0.0809$ & $96.9$ & 0\\
800 & $1.01\cdot10^{-7}$ & $2.79\cdot10^{-8}$ & $0.275$ & $0.7$ & $1.32\cdot10^{3}$ & 0\\
1600 & $2.54\cdot10^{-8}$ & $2.76\cdot10^{-9}$ & $0.109$ & $10.2$ & $2.56\cdot10^{4}$ & 47\\
\bottomrule
\end{tabular}

\end{table}

\Cref{tab:server-growing} lists the gaps of \CLA{} and \AGDE, their ratio, the times
and the number of inner solves of \CLA{} that miss their tolerance. It confirms the
separation seen in \cref{fig:rates} up to $N=800$. At $N=1600$ the gap of \CLA{} is
$0.109$ times that of \AGDE, but $47$ inner solves miss their tolerance, so this
point is not evidence for the exact method; the run took $25\,614$\,s, against
$10.2$\,s for \AGDE, and no fitted exponent uses it. The queries of \CLA{} stay in
the ball, while those of \AGDE{} reach the $\ell_1$-norm $1.000754$ at $N=1600$.

\subsubsection{Selector sample size, nonsmooth problems and games}
\label{app:server-diagnostics}

\paragraph{Selector sample size.}
On one nonquadratic instance of \cref{app:server-smooth} with $n=65$, a $5$-sparse
minimizer and $N=32$,
\SPL{} with $m=32$, $128$ and $512$ directions and three selector seeds reaches the
median relative errors $0.9994$, $0.7502$ and $0.6726$ with shared directions and
$1.9143$, $1.0236$ and $0.8331$ with fresh ones; the shared runs take $3.9$,
$15.1$ and $60.5$\,s. Larger samples help, at a cost linear in $m$, and shared
directions do better than fresh ones on this instance. All $18$ corresponding runs
of \SPA{} end with $r=1$: at this budget the overhead of the search for the level
outweighs the gain.

\paragraph{$\ell_\infty$ regression and bilinear games.}
\Cref{tab:server-other} reports medians over three instances with $64$ oracle
calls, and $128$ operator calls for games, where $n$ is the total dimension of
both blocks. On $\ell_\infty$ regression with $n=32$, \SPC{} reaches $r=0.00391$,
against $0.101$ for \PSG{} and $0.0268$ for \MDE, but \alg{E-Bundle}, which also
uses values, reaches $3.79\cdot10^{-5}$ in $0.368$\,s, against $15.4$\,s for
\SPC; at $n=128$ \alg{E-Bundle} is again more accurate ($0.006$ against
$0.0128$). On bilinear games with $n=128$, \SPV{} reaches $1.96\cdot10^{-5}$, against
$0.07$ for \EG{} and $0.0266$ for \alg{EG-mix}, in $46$\,s against $0.0129$\,s
for \EG; at $n=32$ it reaches $4.88\cdot10^{-7}$, against $0.0377$ for \EG. The
Steiner-point methods thus save oracle calls at a large cost in linear programs.

\begin{table}[!htb]
\centering\small
\caption{$\ell_\infty$ regression (first five rows) and bilinear games (last three
rows): median relative error $r$ and time over three instances.}
\label{tab:server-other}
\begin{tabular}{lrrrr}
\toprule
Method & $r$, $n=32$ & Time (s) & $r$, $n=128$ & Time (s)\\
\midrule
PSG & $0.101$ & $0.00407$ & $0.115$ & $0.00735$\\
PSG-mix & $0.0936$ & $0.0114$ & $0.115$ & $0.0269$\\
MD-Ent & $0.0268$ & $0.00352$ & $0.441$ & $0.00628$\\
E-Bundle & $3.79\cdot10^{-5}$ & $0.368$ & $0.006$ & $0.837$\\
SP-Cut & $0.00391$ & $15.4$ & $0.0128$ & $37$\\
EG & $0.0377$ & $0.0104$ & $0.07$ & $0.0129$\\
EG-mix & $0.0138$ & $0.0175$ & $0.0266$ & $0.0271$\\
SP-VI & $4.88\cdot10^{-7}$ & $15.9$ & $1.96\cdot10^{-5}$ & $46$\\
\bottomrule
\end{tabular}

\end{table}

\paragraph{Other diagnostics.}
With a memory of $32$ cuts, \alg{E-Bundle} was not monotone in its inner tolerance
on one instance with $n=33$ and $N=64$: the tolerances $10^{-7}$, $10^{-9}$ and
$10^{-11}$ gave $r=0.0468$, $0.0669$ and $0.136$. The four runs of \SPP{}, on both
chains with $n=33$ and budgets of $64$ and $256$ gradients, stopped at the budget
before completing their four stages: with $64$ gradients no stage was completed,
and with $256$ two, leaving $r=0.978$ on the quadratic chain and $0.990$ on the
nonquadratic one. These budgets are too small for the method; in
\cref{app:experiments-prox}, $18$ stages at $n=33$ take $8.9\cdot10^4$ gradients.

\section{Concurrent work}
\label{app:concurrent}

Four preprints posted on arXiv between 17 and 20 September 2026 study closely
related questions. We discuss them for completeness; our proofs do not use their
results or constructions. We have not verified the proofs of these preprints and
do not vouch for their correctness. Three of them consider the domain $B_p^n(R)$ with
Lipschitz or H\"older constants measured in the $\ell_q$ norm; in this appendix
$q$ denotes this norm, not the conjugate exponent of $p$. Our setting is $p=1$,
$q=2$. Their rates are stated through the exponent
$\rho_{p,q}=1/p-\max\{1/q-1/2,0\}$, which equals $1$ at $(p,q)=(1,2)$ and $1/2$
when $p=q\le2$.

\paragraph{Lipschitz convex optimization.}
For convex objectives that are $G$-Lipschitz in $\ell_q$, with values and
subgradients, \citet[Corollary~11]{martinezrubio2026nondual} prove the
deterministic oracle bound $\tO_{p,q}(GR/N^{\rho_{p,q}})$ for
$1\le p\le q\le\infty$; their implementation may take time exponential in the
dimension and the number of queries. \citet[Theorems~3 and~12]{martinezrubio2026stable}
attain the same rate for $p<\min\{q,2\}$ with feasible queries and, with high
probability, polynomial computation, and they bound the movement of their
centers along nested subsets of $B_p^n(R)$ (their Theorem~2). At $(p,q)=(1,2)$
both give $\tO(GR/N)$, the rate of \cref{thm:spcut}. Section~4 of
\citet{martinezrubio2026stable} uses, at $(p,q)=(1,2)$, the Steiner point of the
sublevel sets of a bundle model, deep cuts, a movement bound deduced from
\citet{bubeck2020chasing}, and a Monte Carlo implementation with one linear
program per sample; this is the mechanism of \SPC. Our version uses
subgradients without function values, returns a certificate computed from them,
and applies to every known polytope in $c+B_1^n(R)$, including the simplex.
Appendix~G of \citet{martinezrubio2026stable} proves randomized lower bounds for the Lipschitz class in
dimension $n\gtrsim\varepsilon^{-2+\delta}$ for any fixed $\delta>0$;
\cref{thm:rand-nonsmooth} needs $n\ge8N$.

\paragraph{Smooth and H\"older convex optimization.}
For gradients that are $\nu$-H\"older in $\ell_q$ with constant $L$,
\citet[Theorem~1]{martinezrubio2026smooth} obtain, after $N\le n$ queries of
values and gradients, a feasible point with error
$\tO_{\nu,p,q}(LR^{1+\nu}/N^{(1+\nu)(1+\rho_{p,q})-1})$ for $p<\min\{q,2\}$ and in
the matched cases $p=q\le2$, with high probability and in polynomial time; they
also state a deterministic version without an efficiency guarantee. At
$(p,q)=(1,2)$ the exponent is $1+2\nu$, so for $N\le n$ their result has the
exponents of \cref{thm:overview,thm:holder}, for a wider range of norm pairs.
Their method combines a stable center, a line search on the segment between the
incumbent and the center, and a Moreau envelope for feasibility. \SPL{} uses the
Steiner point in place of the prox step of an accelerated level method, and
\SPP{} uses a related envelope search without function values. Our upper bounds
hold for all $N$ and $n$; the bounds for oracles of higher order, the randomized
lower bounds for smooth functions and the results on strong convexity have no
counterpart there.

\paragraph{Variational inequalities of higher order.}
\citet{zhang2026optimalvi} study monotone variational inequalities in Euclidean
geometry with an oracle that returns $F$ and its derivatives up to order $k$,
where $D^kF$ is Lipschitz with constant $L$. For $k\ge1$ their Theorem~3.1
finds a point $x$ with $\dist(0,F(x)+N_K(x))\le\varepsilon$ after
$O_k((LD_0^{k+1}/\varepsilon)^{2/(3k+2)})$ calls, where $D_0$ is the initial
Euclidean distance to a solution and $N_K(x)$ is the normal cone of $K$ at $x$.
A point with this residual has weak gap at most $\varepsilon$ times the
Euclidean diameter of $K$, so on the $\ell_1$ ball their method gives the weak gap
$O(LR^{k+2}/N^{(3k+2)/2})$ after $N$ calls. For the same oracle,
\cref{thm:holder-tensor-vi} with $r=k+1$ gives $\tO(LR^{k+2}/N^{k+2})$. Our
exponent is larger for $k=1$ ($3$ against $5/2$), the two coincide for $k=2$,
and theirs is larger for $k\ge3$; the gain at $k=1$ comes from the $\ell_1$
geometry, while their method applies to general convex domains.
\citet{zhang2026matching} prove the same Euclidean rate with a matching lower
bound (\cref{app:vi-models}).

\end{document}